\documentclass[11pt]{amsbook}

\usepackage{amsmath}
\usepackage{amsthm}
\usepackage{amssymb}
\usepackage{amscd}
\usepackage{enumerate}
\usepackage{textcomp}

\theoremstyle{plain}
\newtheorem{theorem}{Theorem}
\newtheorem{lemma}[theorem]{Lemma}
\newtheorem{corollary}[theorem]{Corollary}
\newtheorem{proposition}[theorem]{Proposition}

\newtheorem{example}[theorem]{Example}
\newtheorem*{claim}{Claim}
\newtheorem*{case1}{Case 1}
\newtheorem*{case2}{Case 2}

\theoremstyle{definition}
\newtheorem{definition}[theorem]{Definition}
\newtheorem{setting}[theorem]{Setting}

\numberwithin{theorem}{section}

\def\forces{\mathrel {||}\joinrel \relbar}

\DeclareMathOperator{\Add}{Add}
\DeclareMathOperator{\cf}{cf}
\DeclareMathOperator{\cof}{cof}
\DeclareMathOperator{\Coll}{Coll}

\DeclareMathOperator{\dcl}{dcl}
\DeclareMathOperator{\dom}{dom}
\DeclareMathOperator{\Fil}{Fil}
\DeclareMathOperator{\HOD}{HOD}
\DeclareMathOperator{\im}{im}
\DeclareMathOperator{\Lev}{Lev}
\DeclareMathOperator{\lh}{lh}
\DeclareMathOperator{\OR}{OR}
\DeclareMathOperator{\ro}{ro}
\DeclareMathOperator{\ssup}{ssup}
\DeclareMathOperator{\TC}{TC}
\DeclareMathOperator{\Ult}{Ult}
\DeclareMathOperator{\ZF}{ZF}

\author{Jacob Davis}
\title{Universal Graphs at $\aleph_{\omega_1+1}$ and Set-theoretic Geology}

\begin{document}

\frontmatter
\maketitle

\section*{Acknowledgements}

I would like to thank my advisor James Cummings for all his explanations, suggestions and enthusiasm throughout my time in graduate school. I am grateful to Joel Hamkins for his assistance with the writing of this thesis. I would like to thank Daniel Rodr\'iguez and many other graduate students for their help and support over the past six years.

Furthermore I would like to thank Professors Cummings, Schimmerling and Grossberg for their classes in logic; the mathematics department of Carnegie Mellon for its support; and all the undergraduates I have had the pleasure of teaching.

\section*{Abstract}

This thesis consists of two parts: the construction of a jointly universal family of graphs, and then an exploration of set-theoretic geology.

Firstly we shall construct a model in which $2^{\aleph_{\omega_1}}=2^{\aleph_{\omega_1+1}}=\aleph_{\omega_1+3}$ but there is a jointly universal family of size $\aleph_{\omega_1+2}$ of graphs on $\aleph_{\omega_1+1}$. We take a supercompact cardinal $\kappa$ and will use Radin forcing with interleaved collapses to change $\kappa$ into $\aleph_{\omega_1}$. Prior to the Radin forcing we perform a preparatory iteration to add functions from $\kappa^+$ into Radin names for what will become members of the jointly universal family on $\kappa^+$. The same technique can be used with any uncountable cardinal in place of $\omega_1$.

Secondly we explore various topics in set-theoretic geology. We begin by showing that a class Easton support iteration of $\Add(\kappa,1)$ at $\kappa$ regular results in a universe that is its own generic mantle. We then consider set forcings $\mathbb{P}$, $\mathbb{Q}$, $\mathbb{R}$ and $\mathbb{S}$ with respective generics $G$, $H$, $I$ and $J$ such that $V[G][I]=V[H][J]$ and show that $V[G]$ and $V[H]$ must have a shared ground via $(|\mathbb{R}|+|\mathbb{S}|)^+$-cc forcing. This allows a similar analysis of the related situation when $\mathbb{P}$ is replaced by a class iteration and $V[H]$ by a generic ground of $V[G]$. We conclude with a simple characterisation of the mantle of a class forcing extension, and an investigation of the possibilities for a version of the intermediate model theorem that applies to class forcing.

\tableofcontents

\mainmatter

\chapter{Introduction}

We write $x:=y$ to mean $x$ is defined to equal $y$, and $x=:y$ to mean $y$ is defined to equal $x$. We write $f:A\rightharpoonup B$ for a partial function from $A$ to $B$. Our forcing convention is that $p\leq q$ means $p$ is stronger (more informative) than $q$. For forcing conditions $p$ and $q$ we write $p\parallel q$ to mean $p$ is compatible with $q$; for a formula $\varphi$ we write $p\parallel\varphi$ to mean $p$ decides whether or not $\varphi$ is true. Given an ultrafilter $u$, the quantification $\forall_u x:\varphi(x)$ will signify that $\{x\mid\varphi(x)\}\in u$.

\section{Universal graphs at $\aleph_{\omega_1+1}$}

For a cardinal $\mu$, a {\em universal graph on $\mu$} is a graph on $\mu$ into which every graph on $\mu$ can be embedded as an induced subgraph. A family of graphs on $\mu$ is {\em jointly universal on $\mu$} if every graph on $\mu$ can be embedded into at least one of them. We are interested in obtaining jointly universal families of small cardinality for $\mu$ a successor cardinal of the form $\kappa^+$.

If $2^{\kappa}=\kappa^+$ then by a standard model-theoretic construction there is a saturated and hence universal graph on $\kappa^+$. This holds even if $2^{\kappa^+}$ is large. So we are interested in cases when $2^{\kappa}>\kappa^+$. If $\kappa$ is regular then as shown by D{\v z}amonja and Shelah in \cite{universalModels} it is consistent to have a jointly universal family on $\kappa^+$ of size $\kappa^{++}$ whilst $2^{\kappa^+}$ is arbitrarily large. If $\kappa$ is singular than matters are generally more problematic. D{\v z}amonja and Shelah introduce a new approach in \cite{2author} that begins with $\kappa$ supercompact and performs a preparatory iteration to add functions that after Prikry forcing will become embeddings into a family of  jointly universal graphs, whilst preserving some of the supercompactness of $\kappa$, followed by Prikry forcing to change the cofinality of $\kappa$. This enables them to build a model where $\cf(\kappa)=\omega$, $2^{\kappa^+}>\kappa^{++}$ and there is a jointly universal family on $\kappa^{+}$ of size $\kappa^{++}$. In \cite{5author} Cummings, D{\v z}amonja, Magidor, Morgan and Shelah modify this construction to use Radin forcing and achieve $\cf(\kappa)>\omega$ and $2^{\kappa^+}>\kappa^{++}$ with a jointly universal family on $\kappa^+$ of size $\kappa^{++}$. Then in \cite{3author} Cummings, D{\v z}amonja and Morgan employ Prikry forcing with interleaved collapses to build a model with $2^{\aleph_{\omega+1}}>\aleph_{\omega+2}$ and a jointly universal family on $\aleph_{\omega+1}$ of size $\aleph_{\omega+2}$. We will use a preparatory forcing followed by Radin forcing with interleaved collapses to prove the following theorem.

\begin{theorem}
  Let $\kappa$ be supercompact and $\lambda<\kappa$ regular uncountable. Then there is a forcing extension in which $\kappa=\aleph_{\lambda}$, $2^{\aleph_{\lambda}}=2^{\aleph_{\lambda+1}}=\aleph_{\lambda+3}$ and there is a jointly universal family of graphs on $\aleph_{\lambda+1}$ of size $\aleph_{\lambda+2}$.
\end{theorem}

In section \ref{ultrafilter_sequences} we consider sequences of ultrafilters $\vec{u}$ from which it is possible to derive a version $\mathbb{R}_{\vec{u}}$ of Radin forcing with interleaved collapses. The forcing is similar to the one used by Foreman and Woodin in \cite{ForemanWoodin} but differs in the forcing interleaved and some technical details. Also we will show the desired properties of the forcing directly rather than proving that a supercompact Radin forcing has these properties and then projecting them.

We identify certain useful properties of sequences of ultrafilters that have been derived from supercompactness embeddings, and denote the class of sequences possessing these properties by $\mathcal{U}$. In section \ref{properties_of_radin} we prove some results about the forcing $\mathbb{R}_{\vec{u}}$ when $\vec{u}\in\mathcal{U}$; in particular that it has the Prikry property and that its generic filters can be conveniently characterised. In section \ref{preparatory_forcing} we define a preparatory forcing $\mathbb{Q}_{\vec{u}}$ that adds functions which, after Radin forcing, will become embeddings from graphs on $\kappa^+$ into a graph on $\kappa^+$ that we intend to make a member of our jointly universal family. We also prove that this preparatory forcing has properties including $\kappa$-directed closure and the $\kappa^+$-cc.

In section \ref{construction_of_model} we begin with $\kappa$ supercompact and perform a Laver preparation forcing. We then use a diamond sequence to identify ultrafilter sequences $\vec{u}^{\gamma}$ for $\gamma<\kappa^{+4}$, and carry out an iteration of the $\mathbb{Q}_{\vec{u}^{\gamma}}$ forcings. This allows us to extend a supercompactness embedding $j$ from $V$ to the generic extension, and from this $j$ we derive an ultrafilter sequence $\vec{u}$ in $\mathcal{U}$ and take $J$ that is $\mathbb{R}_{\vec{u}}$-generic over the universe resulting from the $\mathbb{Q}_{\vec{u}^{\gamma}}$-iteration. We show that there is a stationary set $S$ of points $\gamma$ in $\kappa^{+4}$ where $\vec{u}$ restricts to $\vec{u}^{\gamma}$ and $\vec{u}^{\gamma}\in\mathcal{U}$; then the characterisation of generic filters will show that $J$ is also generic for $\mathbb{R}_{\vec{u}^{\gamma}}$ over the same universe. Our final model will be built by stopping the iteration at a point in $S$ that is also a limit of $\kappa^{++}$-many members of $S$ and adjoining $J$; we will then have constructed $\kappa^{++}$-many graphs to use as members of our joint universal family, together with embeddings of every graph on $\kappa^+$ into them.

\section{Set-theoretic geology}

A \textit{ground} of the universe $V$ is a model $W\subseteq V$ of ZFC such that there is a forcing $\mathbb{P}\in W$ with a generic $G$ such that $W[G]=V$. Laver in \cite{laver_geology} and independently Woodin in \cite{woodin} proved that in this case $W$ will be a class of $V$, using parameters from $W$. Hamkins and Reitz in \cite{hamkins} and \cite{Reitz} used these ideas to formulate the \textit{Ground Axiom} which asserts that there are no grounds of $V$ other than $V$ itself; in particular they showed that this axiom is first-order expressible. In \cite{FHR} these ideas were developed by Fuchs, Hamkins and Reitz to define the \textit{mantle} of $V$, written $M^V$, as the intersection of all of the grounds of $V$ and to show that it is a class of $V$.

The associated notion of a \textit{generic ground} of $V$, which is defined to be the ground of some forcing extension of $V$, is also given in \cite{FHR}. From this we can build the \textit{generic mantle}, written $gM^V$, which is the intersection of all of the generic grounds of $V$. It is clear that every ground is a generic ground (since $V$ is trivially a generic extension of itself) so $gM^V\subseteq M^V$. Fuchs, Hamkins and Reitz ask whether the mantle is in fact always equal to the generic mantle. Recent work by Usuba in \cite{usuba} answers this question in the affirmative.

It is possible to generalise the idea of forcing with set partial orders to the use of class partial orders, for which we obtain class generics; see \cite[Chapter 8]{FK} for a detailed exposition. This presents new challenges in ensuring that the generic universe will be a model of ZFC, but has the potential to make changes to the entire structure of the universe and so obtain interesting behaviours of the mantle and generic mantle. In particular in \cite{FHR} class forcing is used to show that every model of ZFC is the mantle of another model of ZFC.

In Section \ref{easton_generic_mantle} we consider the class Easton support iteration of $\Add(\kappa,1)$ at $\kappa$ regular, which was used in \cite{FHR}, and show that the generic mantle of the resulting universe $V[G]$ is equal to $V[G]$ itself; this answers Question 69 of that paper. We then look for ways to generalise to a wider range of class forcing extensions.

We begin in Section \ref{set_forcing_intersection} by considering the situation where $\mathbb{P}$ and $\mathbb{Q}$ are set-sized forcings with generics $G$ and $H$ respectively and there are further forcings $\mathbb{R}\in V[G]$ and $\mathbb{S}\in V[H]$ with respective generics $I$ and $J$ such that $V[G][I]=V[H][J]$. This situation is a reduced version of one in which $\mathbb{P}$ is a class forcing and $V[H]$ is a generic ground of $V[G]$, so we are interested in the possibilities for $V[G]\cap V[H]$. In Theorem \ref{main_sets} we show that, regardless of the properties of $\mathbb{P}$ and $\mathbb{Q}$, there will be an inner model $U\subseteq V[G]\cap V[H]$ from which $V[G]$ and $V[H]$ can each be recovered by $((|\mathbb{R}|+|\mathbb{S}|)^+)^{V[G]}$-cc forcings. This contrasts with Proposition \ref{intersection_not_model_of_ZFC} where we see that $V[G]\cap V[H]$ itself may not be a model of ZFC, and with Proposition \ref{coll_add_example} which gives an example where $V[G]\cap V[H]=V$ but $\mathbb{P}$ is not $((|\mathbb{R}|+|\mathbb{S}|)^+)^V$-cc.

We use these results in Section \ref{intersections_part_1} to show that if $\mathbb{P}$ is a class forcing with generic $G$ that preserves a sufficient number of weakly compact cardinals and $W$ is a generic ground of $V[G]$ via forcings $\mathbb{R}\in V[G]$ and $\mathbb{S}\in W$, with respective generics $I$ and $J$ such that $V[G][I]=W[J]$, then there is a common ground of $V[G]$ and $W$ via $((|\mathbb{R}|+|\mathbb{S}|)^+)^{V[G]}$-cc forcings.

In Section \ref{characterising_the_mantle} we consider a universe $V[G]$ formed by class forcing and give a simple characterisation of $M^{V[G]}$ that avoids any reference to the posets which are naturally involved in the construction of the mantle. Then in Section \ref{intersections_part_2} we present an alternative analysis of the intersection of a universe resulting from a class forcing extension with one of its generic grounds; this approach relies on covering rather than weakly compact cardinals, and shows that the intersection cannot just be a set-sized extension of the starting model.

Whenever we have a set forcing $\mathbb{P}$ with generic $G$ and an intermediate model $V\subseteq W\subseteq V[G]$ then we are able to form a complete sub-algebra $\mathbb{A}$ of $\ro(\mathbb{P})$ such that $V[G\cap\mathbb{A}]=W$, and this is extremely useful in the analysis of such intermediate models. In Section \ref{class_intermediate_models} we explore the possibilities for a similar result when $\mathbb{P}$ is a class forcing. The naive $\ro(\mathbb{P})$ would be a collection of classes and so not itself a class, but we are able to form an different Boolean algebra forcing-equivalent to $\mathbb{P}$ that is complete under set-sized supremums and infimums, though not class-sized ones. We discuss the difficulties this entails and conclude with a theorem that constructs the intermediate model $W$ as a forcing extension of $V$ via a weakened notion of class forcing.

\chapter{Universal graphs at $\aleph_{\omega_1+1}$}

\section{Ultrafilter sequences and the definition of $\mathbb{R}_{\vec{u}}$} \label{ultrafilter_sequences}

\subsection{The nature of ultrafilter sequences} We will be building sequences of the following form.

\begin{definition}
  A sequence $\vec{u}=\langle \kappa, u_i, \mathcal{F}_i\mid i<\lambda \rangle$ (which means that there is a single $\kappa$ together with $\lambda$-many each of the $u_i$ and $\mathcal{F}_i$) is a {\em proto ultrafilter sequence} if $\lambda<\kappa$, the $u_i$ are $\kappa$-complete ultrafilters on $V_{\kappa}$ and the $\mathcal{F}_i$ are sets of partial functions from $V_{\kappa}$ to $V_{\kappa}$. We will write $\kappa(\vec{u})$ for $\kappa$ and $\lh\vec{u}$ for $\lambda$ which we also call the {\em length} of $\vec{u}$. We stress that our use of the term ``length'' here differs from the usual convention.
\end{definition}

For $\beta$ a strongly inaccessible cardinal we define $\mathbb{C}(\alpha,\beta)$ to be the poset $\Coll(\alpha^{+5},<\beta)$ and $\mathbb{B}(\alpha,\beta)$ to be the regular open algebra derived from this poset. Note that $\mathbb{C}(\alpha,\beta)$ is contained in $V_{\beta}$ and has the $\beta$-cc so we are free to regard conditions in $\mathbb{B}(\alpha,\beta)$ as members of $V_{\beta}$. Given sequences $\vec{v}$ and $\vec{w}$ with $\kappa(\vec{v})<\kappa(\vec{w})$ we will also write $\mathbb{B}(\vec{v},\vec{w})$ for $\mathbb{B}(\kappa(\vec{v}),\kappa(\vec{w}))$. This is the forcing that we will interleave into our Radin generic sequence.

\begin{definition}
  Let $\kappa$ be strongly inaccessible, $i<\kappa$ and $u$ a $\kappa$-complete ultrafilter on $V_{\kappa}$ concentrating on proto ultrafilter sequences of length $i$. Then a {\em $u$-constraint} is a partial function $h:V_{\kappa}\rightharpoonup V_{\kappa}$ such that:
  \begin{itemize}
    \item $\dom h$ is in $u$ and consists of proto ultrafilter sequences of length $i$.
    \item For all $\vec{w}$ in $\dom h$, $h(\vec{w})\in \mathbb{B}(\kappa(\vec{w}),\kappa)-\{0\}$.
  \end{itemize}
  An {\em ultrafilter sequence} is defined by recursion on $\kappa(\vec{u})$ to be a proto ultrafilter sequence $\vec{u}=\langle \kappa, u_i, \mathcal{F}_i\mid i<\lambda\rangle$ such that each $\mathcal{F}_i$ is a non-empty set of $u_i$-constraints, and each $u_i$ concentrates on ultrafilter sequences of length $i$.
\end{definition}

Observe that if we form the ultrapower $j_u:V\rightarrow\Ult(V,u)$ we can regard $u$-constraints (modulo $u$) as representing members of the Boolean algebra $\mathbb{B}(\kappa,j_u(\kappa))^{\Ult(V,u)}$.

\begin{definition}
  We will need an auxiliary notion of {\em supercompact ultrafilter sequences}. Such sequences will be recursively defined to have the form $\vec{u}^*=\langle z, u^*_i, H^*_i \mid i<\lambda\rangle$ where there is some $\kappa(\vec{u}^*):=\kappa>\lambda$ with $z$ a set of ordinals that is a superset of $\kappa$, each $u^*_i$ is an ultrafilter on $[\kappa^{+4}]^{<\kappa}\times V_{\kappa}^{2i}$ that concentrates on supercompact ultrafilter sequences of length $i$, and each $H^*_i$ is a {\em $u^*_i$-constraint}. This last means that $\dom H^*_i \in u^*_i$, and for $\vec{w}^*\in\dom H^*_i$ we have $H^*_i(\vec{w}^*)\in\mathbb{B}(\vec{w}^*,\vec{u}^*)-\{0\}$.

  We also define an ordering on $u^*_i$-constraints by $L^*\leq K^*$ if $\dom L^*\subseteq\dom K^*$ and $L^*(\vec{w}^*)\leq K^*(\vec{w}^*)$ for all $\vec{w}^*$ in $\dom L^*$. We shall use similar orderings for other functions whose domains are required to lie in some ultrafilter.
\end{definition}

Observe that if we form the ultrapower $j_{u^*}:V\rightarrow\Ult(V,u^*)$ then we can regard $u^*$-constraints (modulo $u^*$) as representing members of the Boolean algebra $\mathbb{B}(\kappa, j_{u^*}(\kappa))^{\Ult(V,u^*)}$.

\subsection{Constructing ultrafilter sequences}

For the remainder of this section we work in the following context.

\begin{setting}
  Let $2^{\kappa} = \kappa^{+4}$ with $j:V\rightarrow M$ witnessing that $\kappa$ is $\kappa^{+4}$-supercompact. Let $\lambda<\kappa$ be regular uncountable.
\end{setting}

We will use $j$ to inductively build an ultrafilter sequence $\vec{u}=\langle \kappa, u_i, \mathcal{F}_i\mid i<\lambda \rangle$ with $\kappa(\vec{u})=\kappa$. In doing so we will need to construct an auxiliary supercompact ultrafilter sequence $\vec{u}^*=\langle j``\kappa^{+4}, u^*_i, H^*_i\mid i<\lambda \rangle$.

We will also define a function $\pi$ from supercompact ultrafilter sequences to ultrafilter sequences, given by
$$\pi(\langle z^*, w^*_i, K^*_i \mid i<\bar{\lambda}\rangle) := \langle z^*\cap\bar{\kappa}, \pi_i(w^*_i), \pi'_i(w^*_i,K^*_i)\mid i<\bar{\lambda}\rangle$$
with $\pi_i$ and $\pi'_i$ to be built as part of the induction and $\bar{\kappa}:=\kappa(\langle z^*, w^*_i, K^*_i \mid i<\bar{\lambda}\rangle)$. Note that the $u^*_i$ concentrate on sequences where $z^*\cap\bar{\kappa}$ is inaccessible. We will ensure as we induct on $\bar{\lambda}\leq\lambda$ that
\begin{equation}\tag{*}
  j(\pi)(\langle j` `\kappa^{+4}, u^*_i, H^*_i \mid i<\bar{\lambda}\rangle) = \langle \kappa, u_i, \mathcal{F}_i \mid i<\bar{\lambda}\rangle.
\end{equation}
Suppose we have defined $u_i$, $u^*_i$, $\mathcal{F}_i$, $H^*_i$, $\pi_i$ and $\pi'_i$ for $i<\bar{\lambda}$; this gives us the definition of $\pi$ on sequences of length up to $\bar{\lambda}$. Define
$$u^*_{\bar{\lambda}} := \{X\subseteq[\kappa^{+4}]^{<\kappa}\times V_{\kappa}^{2\bar{\lambda}} \mid \langle j``\kappa^{+4}, u^*_i, H^*_i \mid i<\bar{\lambda}\rangle \in j(X)\}$$
and
$$u_{\bar{\lambda}} := \{Y\subseteq V_{\kappa}\mid \langle \kappa, u^i, \mathcal{F}^i \mid i<\bar{\lambda}\rangle \in j(Y)\}.$$
For $w^*$ an ultrafilter on $[\bar{\kappa}^{+4}]^{<\bar{\kappa}}\times V_{\bar{\kappa}}^{2\bar{\lambda}}$ define
$$\pi_{\bar{\lambda}}(w^*):= \{Y\subseteq V_{\bar{\kappa}}\mid \pi^{-1}``Y \in w^* \}.$$
Note by (*) that $Y\in u_{\bar{\lambda}}$ is equivalent to $\pi^{-1}``Y \in u^*_{\bar{\lambda}}$ which, since $\pi$ and $j(\pi)$ agree on $V_{\kappa}$, is equivalent to $j(\pi)^{-1}``Y\in u^*_{\bar{\lambda}}$ and so to $Y\in j(\pi_{\bar{\lambda}})(u^*_{\bar{\lambda}})$. Therefore $u_{\bar{\lambda}}=j(\pi_{\bar{\lambda}})(u^*_{\bar{\lambda}})$. We now pause the construction to make some definitions.

\begin{definition}
  Let $w^*$ be an ultrafilter on $[\bar{\kappa}^{+4}]^{<\bar{\kappa}}\times V_{\bar{\kappa}}^{2\bar{\lambda}}$ that concentrates on supercompact ultrafilter sequences of length $\bar{\lambda}$, and $K^*$ a $w^*$-constraint. Then for $A\in w^*$ and $\vec{x}$ an ultrafilter sequence we define
  $$b(K^*, A)(\vec{x}):=\bigvee\{K^*(\vec{x}^*)\mid \pi(\vec{x}^*)=\vec{x}, \vec{x}^*\in A\}\in\mathbb{B}(\kappa(\vec{x}),\bar{\kappa})-\{0\}.$$
  Observe that the domain of $b(K^*,A)$ is the projection of $A$ under $\pi_{\bar{\lambda}}$, so it is in $\pi_{\bar{\lambda}}(w^*)$. Observe also that for $A'\subseteq A$ we have $b(K^*,A')\leq b(K^*,A)$ pointwise, so as $A$ ranges over $w^*$ the equivalence classes generated by the $b(K^*,A)$ yield a non-trivial filter base in $\mathbb{B}(\bar{\kappa},j_{\pi_{\bar{\lambda}}(w^*)}(\bar{\kappa}))^{\Ult(V,\pi_{\bar{\lambda}}(w^*))}$. We shall call the induced filter $\Fil(K^*)$.
\end{definition}

Now given $w^*$ and $K^*$ we define $\pi'_{\bar{\lambda}}(w^*,K^*) = \{g\mid [g]_{\pi_{\bar{\lambda}}(w^*)}\in \Fil(K^*)\}$, which will conclude our definition of $\pi$ for sequences of length up to $k+1$. We note that $\pi'_{\bar{\lambda}}(w^*,K^*)$ consists of all $g$ such that $g\geq b(K^*, A)$ for some $A\in w^*$, where we ensure a pointwise inequality be shrinking the $A$ as necessary. It remains to choose $H^*_{\bar{\lambda}}$, and then once we have done so we will conclude by defining $\mathcal{F}_{\bar{\lambda}} = j(\pi'_{\bar{\lambda}})(u^*_{\bar{\lambda}}, H^*_{\bar{\lambda}})$, which is to say $\mathcal{F}_{\bar{\lambda}}=\{h\mid [h]_{u_{\bar{\lambda}}}\in\Fil(H^*_{\bar{\lambda}})\}$. Care must be taken in selecting $H^*_{\bar{\lambda}}$ because we wish to ensure that the filter $\mathcal{F}_{\bar{\lambda}}$ it induces will be an ultrafilter. The following lemma will be helpful to that end.

\begin{lemma}
  Let $b\in \mathbb{B}(\kappa,j_{_{\bar{\lambda}}}(\kappa))^{\Ult(V,_{\bar{\lambda}})}$ and $K^*$ a $u^*_{\bar{\lambda}}$-constraint. Then there is a $u^*_{\bar{\lambda}}$-constraint $L^*\leq K^*$ such that either $b\in\Fil(L^*)$ or $\neg b\in\Fil(L^*)$.
\end{lemma}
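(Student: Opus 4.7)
The plan is to pick a concrete representing function $h$ for $b$, use the Boolean algebra structure to split $\dom K^*$ into a ``$b$-side'' and a ``$\neg b$-side'', and then apply the ultrafilter property of $u^*_{\bar\lambda}$ to decide on which side we should refine $K^*$.

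First I would choose $h: V_\kappa\rightharpoonup V_\kappa$ with $[h]_{u_{\bar\lambda}}=b$ and, by shrinking, arrange that $h(\vec{x})\in\mathbb{B}(\kappa(\vec{x}),\kappa)$ for every $\vec{x}\in\dom h$. The set $X:=\{\vec{x}^*\in\dom K^* \mid \pi(\vec{x}^*)\in\dom h\}$ lies in $u^*_{\bar\lambda}$ because its image under $\pi$ is $\dom h \in u_{\bar\lambda}=\pi_{\bar\lambda}(u^*_{\bar\lambda})$. Crucially, $\pi$ preserves the $\kappa$-value (since the $u^*_i$ concentrate on sequences where $z^*\cap\bar\kappa=\bar\kappa$), so for $\vec{x}^*\in X$ the elements $K^*(\vec{x}^*)$ and $h(\pi(\vec{x}^*))$ live in the same Boolean algebra $\mathbb{B}(\kappa(\vec{x}^*),\kappa)$, and we can form meets and complements.

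Now define, for $\vec{x}^*\in X$,
$$L^+(\vec{x}^*):=K^*(\vec{x}^*)\wedge h(\pi(\vec{x}^*)),\qquad L^-(\vec{x}^*):=K^*(\vec{x}^*)\wedge\neg h(\pi(\vec{x}^*)),$$
and let $A^\pm:=\{\vec{x}^*\in X\mid L^\pm(\vec{x}^*)\neq 0\}$. Boolean distributivity gives $L^+(\vec{x}^*)\vee L^-(\vec{x}^*)=K^*(\vec{x}^*)\neq 0$, so $A^+\cup A^-=X\in u^*_{\bar\lambda}$; since $u^*_{\bar\lambda}$ is an ultrafilter, one of the $A^\pm$ lies in it. Suppose $A^+\in u^*_{\bar\lambda}$ (the other case is symmetric) and put $L^*:=L^+\restriction A^+$. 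Then $L^*$ is a $u^*_{\bar\lambda}$-constraint with $L^*\leq K^*$ pointwise.

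To finish I would verify $b\in\Fil(L^*)$ using the representative $h$. For each $\vec{x}$ in the $\pi$-image of $A^+$,
$$b(L^*,A^+)(\vec{x})=\bigvee\{L^*(\vec{x}^*)\mid\vec{x}^*\in A^+,\,\pi(\vec{x}^*)=\vec{x}\}\leq h(\vec{x}),$$
because each joinand satisfies $L^*(\vec{x}^*)\leq h(\pi(\vec{x}^*))=h(\vec{x})$ by construction; so the pointwise domination $h\geq b(L^*,A^+)$ holds on the whole domain, and the remark following the definition of $\Fil$ gives $[h]_{u_{\bar\lambda}}=b\in\Fil(L^*)$. In the $A^-$ case the same argument with $\neg h$ in place of $h$ yields $\neg b\in\Fil(L^*)$. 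The main obstacle is bookkeeping rather than any genuinely difficult step: one must ensure the ambient algebras match (handled by $\pi$'s preservation of $\kappa(\vec{x})$), and upgrade the almost-everywhere facts to true pointwise domination (handled by restricting to $A^\pm$); the key combinatorial content is simply the Boolean dichotomy combined with the ultrafilter property of $u^*_{\bar\lambda}$.
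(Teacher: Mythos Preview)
Your proof is correct and follows essentially the same route as the paper's: represent $b$ by a function, refine $K^*$ pointwise to lie below either $h(\pi(\vec{x}^*))$ or its complement, and use the ultrafilter property of $u^*_{\bar\lambda}$ to choose a side. Your version is in fact slightly more explicit than the paper's, since you take the concrete meets $K^*(\vec{x}^*)\wedge h(\pi(\vec{x}^*))$ and spell out the verification that $b\in\Fil(L^*)$, which the paper leaves to the reader.
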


\begin{proof}
  Say $b =: [f]_{_{\bar{\lambda}}}$ and define $A:=\{\vec{x}^*\in \dom K^*\mid \pi(\vec{x})\in \dom f\} \in u^*_{\bar{\lambda}}$. Then for each $\vec{x}^* \in A$ take $L^*(\vec{x}^*)\leq K(\vec{x}^*)$ such that either $L^*(\vec{x}^*)\leq f(\pi(\vec{x}^*))$ or $L^*(\vec{x}^*)\leq \neg f(\pi(\vec{x}^*))$. Define $A^+$ to be the set of places in $A$ where the first case occurs, and $A^-$ to be the set of places where the second does. One of these is in $u^*_{\bar{\lambda}}$ and restricting the domain of $L^*$ to this set will give $L^*$ the required properties.
\end{proof}

For $\bar{\kappa}<\kappa$ the forcing $\mathbb{C}(\bar{\kappa}, \kappa)$ has the $\kappa$-chain condition, so $|\mathbb{B}(\bar{\kappa},\kappa)|=\kappa$. This tells us by elementarity that $|\mathbb{B}(\kappa,j_{_{\bar{\lambda}}}(\kappa))^{\Ult(V,_{\bar{\lambda}})}| = |j_{_{\bar{\lambda}}}(\kappa)| = 2^{\kappa} = \kappa^{+4}$. Now the $u^*_{\bar{\lambda}}$-constraints can be regarded as members of the regular open algebra $\mathbb{B}(\kappa, j_{u^*_{\bar{\lambda}}}(\kappa))^{\Ult(V,u^*_{\bar{\lambda}})}$, in the non-zero part of which the forcing $\mathbb{C}(\kappa, j_{u^*_{\bar{\lambda}}}(\kappa))^{\Ult(V,u^*_{\bar{\lambda}})}=\Coll(\kappa^{+5},<j_{u^*_{\bar{\lambda}}}(\kappa))^{\Ult(V,u^*_{\bar{\lambda}})}$ is dense. The $\kappa^{+4}$-supercompactness of $j_{u^*_{\bar{\lambda}}}$ tells us that the latter forcing is $\kappa^{+5}$-closed, so we can repeatedly apply the above lemma to obtain a $u^*_{\bar{\lambda}}$-constraint $H^*_{\bar{\lambda}}$ such that $\Fil(H^*_{\bar{\lambda}})$ is an ultrafilter. This concludes the inductive construction.

\subsection{Properties of ultrafilter sequences I}

We collect together all save one of the properties that we will want our ultrafilter sequences to possess. The final property is postponed because it requires the definition of $\mathbb{R}_{\vec{u}}$ to state.

Note that for $h\in \mathcal{F}_i$ and $s\in V_{\kappa}$ the $h\downharpoonright s$ notation used here means that the domain of $h$ is restricted to $\{\vec{w}\mid s \in V_{\kappa(\vec{w})}\}$.

\begin{definition}
  We define $\mathcal{U}'$ to be the class of all ultrafilter sequences $\vec{u}=\langle \bar{\kappa}, u_i, \mathcal{F}_i \mid i < \bar{\lambda} \rangle$ that satisfy the following properties:
  \begin{enumerate}
    \item The ultrafilter $u_i$ concentrates on sequences from $\mathcal{U}'$ of length $i$ (so this definition is recursive).
    \item If $h$ is in $\mathcal{F}_i$ and $h$ is equal to $h'$ modulo $u_i$ then $h'$ is also in $\mathcal{F}_i$.
    \item The set of Boolean values represented by the functions $\mathcal{F}_i$ is a $\bar{\kappa}$-complete ultrafilter on $\mathbb{B}(\bar{\kappa},j_{u_i}(\bar{\kappa}))^{\Ult(V,u_i)}$.
    \item (Normality) For all $i<\bar{\lambda}$, given $\langle h^s \mid s \in V_{\bar{\kappa}} \rangle$ with $h^s \in \mathcal{F}_i$ then there is $h \in \mathcal{F}_i$ such that $h \leq h^s \downharpoonright s$ for all $s \in V_{\bar{\kappa}}$.
    \item Let $i'<i''<\bar{\lambda}$ and $e\in \mathcal{F}_{i'}$. Then there is a $u_{i''}$-large set of ultrafilter sequences $\vec{w}=\langle\bar{\kappa}(\vec{w}), w_i, \mathcal{G}_i \mid i<i''\rangle$ such that $e\upharpoonright\bar{\kappa}(\vec{w})$ is in $\mathcal{G}_{i'}$.
  \end{enumerate}
\end{definition}

\begin{lemma}
  Let $\vec{u}$ be constructed from $j$ as above. Then $\vec{u}\in\mathcal{U'}$.
\end{lemma}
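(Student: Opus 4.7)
The plan is to verify the five clauses simultaneously by induction on the construction stage $\bar{\lambda}$. Clauses (2) and (3) are virtually immediate from the choices made in the construction: since $\mathcal{F}_{\bar{\lambda}}=\{h\mid [h]_{u_{\bar{\lambda}}}\in\Fil(H^*_{\bar{\lambda}})\}$, membership of $\mathcal{F}_{\bar{\lambda}}$ depends only on the $u_{\bar{\lambda}}$-equivalence class of $h$, which gives (2); and the set of Boolean values represented by $\mathcal{F}_{\bar{\lambda}}$ equals $\Fil(H^*_{\bar{\lambda}})$, which is an ultrafilter by the choice of $H^*_{\bar{\lambda}}$ using the preceding lemma together with the $\kappa^{+5}$-closure of $\Coll(\kappa^{+5},<j_{u^*_{\bar{\lambda}}}(\kappa))^{\Ult(V,u^*_{\bar{\lambda}})}$. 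The required $\kappa$-completeness is inherited from $u^*_{\bar{\lambda}}$: fewer than $\kappa$ many elements of $\Fil(H^*_{\bar{\lambda}})$ are each bounded below by some $b(H^*_{\bar{\lambda}},A_\alpha)$ with $A_\alpha\in u^*_{\bar{\lambda}}$, and then $b(H^*_{\bar{\lambda}},\bigcap_\alpha A_\alpha)$ lies below their meet.

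Clauses (1) and (5) are then straightforward $j$-calculations. For (1), $u_{\bar{\lambda}}$ concentrates on $\mathcal{U}'$-sequences of length $\bar{\lambda}$ iff $\vec{u}\upharpoonright\bar{\lambda}\in(\mathcal{U}')^M$; this follows from the inductive hypothesis together with the fact that $\vec{u}\upharpoonright\bar{\lambda}$ and all parameters needed to verify the clauses for it lie in $V_{\kappa+1}\subseteq M$, via the $\kappa^{+4}$-closure of $M$, so the clauses transfer to $M$. For (5), the condition $\{\vec{w}\mid e\upharpoonright\kappa(\vec{w})\in\mathcal{G}_{i'}(\vec{w})\}\in u_{i''}$ unpacks under $j$ to the requirement $j(e)\upharpoonright\kappa\in\mathcal{F}_{i'}$, and since $j$ restricts to the identity on $V_\kappa$ while the values of $e$ already lie in $V_\kappa$, this reduces immediately to $e\in\mathcal{F}_{i'}$, which is the hypothesis.

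The main work is clause (4), normality, and this is where I expect the only real obstacle. Given a family $\langle h^s\mid s\in V_\kappa\rangle$ with each $h^s\in\mathcal{F}_i$, I may after shrinking fix $A^s\in u^*_i$ witnessing $h^s\geq b(H^*_i,A^s)$ pointwise on $\pi``A^s$. The key step is to show that the diagonal intersection $A:=\{\vec{x}^*\in\dom H^*_i\mid\forall s\in V_{\kappa(\pi(\vec{x}^*))},\,\vec{x}^*\in A^s\}$ lies in $u^*_i$. One unpacks the definition of $u^*_i$ and computes $j(A)$, noting via $(*)$ that $\kappa(j(\pi)(\vec{u}^*\upharpoonright i))=\kappa$ and via $j\upharpoonright V_\kappa=\mathrm{id}$ that $j(s\mapsto A^s)(s)=j(A^s)$ for every $s\in V_\kappa$, each of which contains $\vec{u}^*\upharpoonright i$ by the choice of $A^s$. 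One then defines $h(\vec{w}):=\bigwedge_{s\in V_{\kappa(\vec{w})}}h^s(\vec{w})$ on $\pi``A\in u_i$; the meet exists in $\mathbb{B}(\kappa(\vec{w}),\kappa)$ by the $\kappa(\vec{w})^{+5}$-closure of the collapse, and on $\pi``A$ one has $b(H^*_i,A)(\vec{w})\leq h^s(\vec{w})$ for every $s\in V_{\kappa(\vec{w})}$, hence $h\geq b(H^*_i,A)$ pointwise. In particular $h$ is nonzero wherever defined, making it a genuine $u_i$-constraint in $\mathcal{F}_i$, while $h\leq h^s\downharpoonright s$ holds by construction. The packaging of this diagonal argument, and in particular the observation that $h$ does not collapse to $0$ precisely because it is pinned above a single $b(H^*_i,A)$ coming from the supercompact measure $u^*_i$, is the one nontrivial step in the proof.
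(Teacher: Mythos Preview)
Your proposal is correct and follows essentially the same approach as the paper. The only cosmetic difference is in clause (4): you take the witness $h(\vec{w}):=\bigwedge_{s\in V_{\kappa(\vec{w})}}h^s(\vec{w})$ and then argue it is bounded below by $b(H^*_i,A)$, whereas the paper simply takes $h:=b(H^*_i,A)$ as the witness directly and verifies $b(H^*_i,A)(\vec{w})\leq h^s(\vec{w})$ for $s\in V_{\kappa(\vec{w})}$ by exactly the containment $A\cap\pi^{-1}\{\vec{w}\}\subseteq A^s$ that you also use; both arguments rest on the same diagonal-intersection computation showing $A\in u^*_i$.
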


\begin{proof}
  The first three clauses are immediate.
  \begin{enumerate} [(1)]
    \setcounter{enumi}{3}
    \item (Normality) We are given $i<\lambda$ and $\langle h^s \mid s \in V_{\kappa} \rangle \subseteq \mathcal{F}_i$. Say $h^s\geq b(H^*_i,A^s)$ with $A^s\in u^*_i$. Take the diagonal intersection of the $A^s$,
  $$A:=\{\vec{w}^* \mid \forall s \in V_{\kappa(\vec{w}^*)}: \vec{w}^*\in A^s\}.$$
  We have $\forall s\in V_{\kappa(\vec{u}^*\upharpoonright i)}: \vec{u}^*\upharpoonright i \in j(A^s)$, which is to say $\vec{u}^*\upharpoonright i\in j(A)$ so $A\in u^*_i$. Then $h:=b(H^*_i,A)$ will be our candidate.

  Given $s\in V_{\kappa}$ we want $h \leq h^s \downharpoonright s$, so given $\vec{w}\in \dom h$ above $s$ we want $h(\vec{w})\leq h^s(\vec{w})$. Now $h(\vec{w})$ is the supremum of $K^*(\vec{w}^*)$ over $\vec{w}^*\in A$ such that $\pi(\vec{w}^*)=\vec{w}$. All of these $\vec{w}^*$ have $\kappa(\vec{w}^*)=\kappa(\vec{w})$ above $s$, so they must also be members of $A^s$. But $h^s(\vec{w})$ is the supremum of $K^*(\vec{w}^*)$ over members of $A^s$, so $h^s(\vec{w})\geq h(\vec{w})$.
    \item We are given $i'<i''<\lambda$ and $e\in \mathcal{F}_{i'}$ and note that $j(e)\upharpoonright\kappa(\vec{u}\upharpoonright i'') = e \in \mathcal{F}_{i'}$. Then by elementarity there is a $u_{i''}$-large set of sequences $\vec{w}=\langle \kappa(\vec{w}), w_i, \mathcal{G}_{i'}\rangle$, as required.
  \end{enumerate}
\end{proof}

\subsection{Definition of the Radin forcing $\mathbb{R}_{\vec{u}}$} \label{defineR}

We are given an ultrafilter sequence $\vec{u}\in\mathcal{U'}$ and define $\bar{\kappa}:=\kappa(\vec{u})$ and $\bar{\lambda}:=\lh\vec{u}$.

For notational convenience, given $\vec{w} =: \langle \kappa(\vec{w}), w_i, \mathcal{F}_i \mid i<\lh\vec{w}\rangle$ we will start writing $\mathcal{F}_{\vec{w},i}$ for $\mathcal{F}_i$ and $\mathcal{F}_{\vec{w}}$ for the set of
functions $e:V_{\kappa(\vec{w})}\rightharpoonup V_{\kappa(\vec{w})}$ such that defining $e_i:=e\upharpoonright\{\vec{v}\mid\lh\vec{v}=i\}$ gives us $e_i\in \mathcal{F}_{\vec{w},i}$ for all $i<\lh\vec{w}$. Note that $\dom e$ is permitted to include sequences that are longer than $\vec{w}$ itself.

\begin{definition}
  Let $\vec{w}\in\mathcal{U}'$. Then an {\em upper part} for $\mathbb{R}_{\vec{w}}$ is a member $e$ of $\mathcal{F}_{\vec{w}}$ such that:
  $$\forall\vec{v}\in\dom e: e\upharpoonright\kappa(\vec{v})\in \mathcal{F}_{\vec{v}}.$$
\end{definition}

We claim that any $e$ in $\mathcal{F}_{\vec{w}}$ can have its domain shrunk to make it into an upper part. Define $e^0:=e$ and then by the final clause of the definition of $\mathcal{U}'$ we have that
$$A^1:=\{\vec{v}\in\dom e\mid e^0\upharpoonright\kappa(\vec{v})\in \mathcal{F}_{\vec{v}}\}\in \bigcap w_i$$
so we can define $e^1:=e^0\upharpoonright A^1 \in \mathcal{F}_{\vec{w}}$. Iterating this process $\omega$ times and intersecting the $A^n$ we reach $e'\leq e$ which has the required property. From now on we shall perform such shrinking without comment when building forcing conditions.

\begin{definition}
  A {\em suitable triple} is $(\vec{w},e,q)$ satisfying the following conditions:
  \begin{itemize}
    \item $\vec{w}\in\mathcal{U}'$.
    \item  $e$ is an upper part for $\mathbb{R}_{\vec{w}}$.
    \item $q \in \mathbb{B}(\kappa(\vec{w}),\kappa)-\{0\}$.
  \end{itemize}
  A {\em direct extension} of $(\vec{w},e,q)$ is a suitable triple $(\vec{w},e',q')$ such that:
  \begin{itemize}
    \item $e'\leq e$ (i.e. $\dom e'\subseteq\dom e$ and $e' \leq e$ pointwise).
    \item $q' \leq q$.
  \end{itemize}
  Another suitable triple $(\vec{v},d,p)$ is {\em addable below} $(\vec{w},e,q)$ if it satisfies the following:
  \begin{itemize}
    \item $\vec{v} \in \dom e$.
    \item $d \leq e\upharpoonright\kappa(\vec{v})$.
    \item $p\leq e(\vec{v})$.
  \end{itemize}
\end{definition}

We observe that for every $\vec{v}\in\dom e$ the definition of ``upper part'' has assured us that $(\vec{v},e\upharpoonright\kappa(\vec{v}),e(\vec{v}))$ is both a suitable triple and addable below $(\vec{w},e,q)$.

\begin{definition}
  A {\em condition} in $\mathbb{R}_{\vec{u}}$ is a finite sequence
  $$s = ((\vec{w}_0,e_0,q_0),...,(\vec{w}_{n-1},e_{n-1},q_{n-1}), (\vec{u}, h))$$
  such that each $(\vec{w}_k,e_k,q_k)$ is a suitable triple, the $\kappa(\vec{w}_k)$ are increasing, $q_k\in\mathbb{B}(\vec{w}_k,\vec{w}_{k+1})$, and $h$ is an upper part for $\mathbb{R}_{\vec{u}}$. We also require that $\kappa(\vec{w}_0)=\omega$, $\lh\vec{w}_0=0$ and $e_0=\phi$. We will call such a $((\vec{w}_0,e_0,q_0),...,(\vec{w}_{n-1},e_{n-1},q_{n-1}))$ a {\em lower part} for the forcing.

  {\em Extension} in $\mathbb{R}_{\vec{u}}$ is given by $s'\leq s$ if
  $$s'=((\vec{v}_0,d_0,p_0),...,(\vec{v}_{m-1},d_{m-1},p_{m-1}),(\vec{u},h'))$$
  such that $h'\leq h$, every $\vec{w}_k$ occurs as some $\vec{v}_l$, and every $(\vec{v}_l,d_l,p_l)$ is either a direct extension of one of the $(\vec{w}_k,e_k,q_k)$ or addable below one of them or addable below $(\vec{u},h)$.

  {\em Direct extension} in $\mathbb{R}_{\vec{u}}$ is given by $s'\leq^* s$ if
  $$s'=((\vec{w}_0,e'_0,q'_0),...,(\vec{w}_{n-1},e'_{n-1},q'_{n-1}), (\vec{u}, h'))$$
  with $h'\leq h$ and $(\vec{w}_k,e'_k,q'_k)$ a direct extension of $(\vec{w}_k,e_k,q_k)$ for $k<n$. \

  For lower parts $r$ and $r'$ we define extension $r'\leq r$ in the same way as for conditions, except that all triples from $r'$ must by either direct extensions of, or addable below, a triple from $r$. Note that this compels $\kappa(\max r') = \kappa(\max r)$. Likewise we have a notion of $\leq^*$ on lower parts, and a {\em $^*$-open} set of lower parts is one that is downward-closed under this relation.
\end{definition}

Observe that a forcing condition is required to have a triple $((\langle\omega\rangle,\phi,p))$ as a member of its stem for some $p$. However we shall write $((\vec{u},h))$ as an abbreviation for $((\langle\omega\rangle,\phi,\phi),(\vec{u},h))$ at times when we are only interested in the upper part of the forcing.

If we force below a condition $((\vec{u},h))$ such that $\dom h$ contains only sequences of length less than $\lambda$ then $\mathbb{R}_{\vec{u}}$ will add a generic sequence of the form $\langle \vec{w}_{\alpha}, g_{\alpha} \mid \alpha<\omega^{\lambda}\rangle$, where $g_i$ is generic in $\mathbb{B}(\vec{w}_{\alpha},\vec{w}_{\alpha+1})$. The $\omega^{\lambda}$ is ordinal exponentiation so as $\lambda$ is regular uncountable we in fact have $\omega^\lambda=\lambda$. This collapses all cardinals in the intervals $(\kappa(\vec{w}_{\alpha})^{+5},\kappa(\vec{w}_{\alpha+1}))$ and we shall see later that it preserves all other cardinals, so it will make $\kappa$ into $\aleph_{\lambda}$.

More generally, forcing with $\mathbb{R}_{\vec{u}}$ will add a generic sequence $\langle \vec{w}_{\alpha}, g_{\alpha} \mid \alpha<\theta+\lambda\rangle$
for some ordinal $\theta$.

\subsection{Properties of ultrafilter sequences II}

We are finally in a position to make the definition that we will use during the main construction.

\begin{definition}
  The class $\mathcal{U}$ is defined recursively to consist of all $\vec{u}\in\mathcal{U}'$ such that the $u_i$ concentrate on members of $\mathcal{U}$, and $\vec{u}$ satisfies the following additional property. (Note that the $h'\upharpoonright\vec{w}$ is given by restricting the domain of $h'$ to sequences $\vec{v}$ such that $\kappa(\vec{v})<\kappa(\vec{w})$ and $\lh\vec{v}<\lh\vec{w}$.)
  \begin{enumerate}[(1)]
    \setcounter{enumi}{5}
    \item (Capturing) Let $h$ be an upper part for $\mathbb{R}_{\vec{u}}$ and $X$ a $^*$-open set of lower parts for $\mathbb{R}_{\vec{u}}$. Then there is an upper part $h' \leq h$ such that for all lower parts $s$ and all $i<\lh\vec{u}$ we have one of:
    \begin{enumerate}[(i)]
      \item For all $\vec{w}\in \dom h'_i$ there do not exist $e$ and $q\leq h'(\vec{w})$ such that $s\frown((\vec{w},e,q)) \in X$.
      \item For all $\vec{w}\in \dom h'_i$ and $\vec{x} \in \dom h'$ such that $\kappa(\vec{w})<\kappa(\vec{x})$ there are densely many $q$ in $\mathbb{B}(\vec{w},\vec{x})$ below $h'_i(\vec{w})$ such that $s\frown((\vec{w},h'\upharpoonright\vec{w},q)) \in X$.
    \end{enumerate}
  \end{enumerate}
\end{definition}

\begin{proposition}
  Let $\vec{u}$ be constructed from a supercompactness embedding $j$ as before. Then $\vec{u}\in\mathcal{U}$.
\end{proposition}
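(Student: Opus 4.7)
The plan is to prove Capturing by a transfinite recursion on $i < \lambda$, at each stage combining a supercompact ultrapower argument (via the Lemma just proved) with normality (property (4)). Throughout I exploit the $^*$-openness of $X$ to reduce the existential ``some witnessing upper part $e$'' to the specific middle slot $e = h' \upharpoonright \vec{w}$ demanded in case (ii).

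Fix $i < \lambda$, with $h'_j \leq h_j$ already defined for $j < i$. For each lower part $s \in V_\kappa$, consider in $N := \Ult(V, u_i^*)$ the following question about each $\vec{w}^* \in \dom H_i^*$: is there $q \leq H_i^*(\vec{w}^*)$ with $s \frown ((\pi(\vec{w}^*), h' \upharpoonright \pi(\vec{w}^*), \pi(q))) \in X$? Note that $h' \upharpoonright \pi(\vec{w}^*)$ is already determined by $h'_j$ for $j < i$. Iterating the Lemma stated immediately before this proposition --- using the $\kappa^{+5}$-closure of $\mathbb{C}(\kappa, j_{u_i^*}(\kappa))^N$ given by $\kappa^{+4}$-supercompactness of $j_{u_i^*}$, and applying it to each of the at most $\kappa^{+4}$-many Boolean values in $\mathbb{B}(\vec{w}^*, \vec{x}^*)^N$ below $H_i^*(\vec{w}^*)$ as $\vec{x}^*$ ranges over $\dom H^*$ --- I descend to a subconstraint $L^*_{s,i} \leq H_i^*$ on whose domain a uniform dichotomy holds: either no $q$ below $L^*_{s,i}(\vec{w}^*)$ gives such an extension (which projects via $\pi$ to case (i)), or the extending $q$'s are dense below $L^*_{s,i}(\vec{w}^*)$ in each $\mathbb{B}(\vec{w}^*, \vec{x}^*)^N$ (case (ii)). Setting $h'_{s,i}$ to be the element of $\mathcal{F}_i$ represented by $L^*_{s,i}$ captures the local solution.

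The main subtlety is justifying that restricting to middle slot $h' \upharpoonright \vec{w}$ loses no generality. If $s \frown ((\vec{w}, e, q)) \in X$ for some upper part $e$ and some $q \leq h_i(\vec{w})$, then $^*$-openness says any direct extension is in $X$; since $h' \upharpoonright \vec{w} \leq h \upharpoonright \vec{w}$ pointwise on its domain by the inductive choices, and the witness $e$ in the decision procedure may be taken to be $h \upharpoonright \vec{w}$ itself, the triple $s \frown ((\vec{w}, h' \upharpoonright \vec{w}, q))$ is automatically in $X$. Thus the question posed above (with the specific middle slot) really captures all possibilities, and the dichotomy produced on $\dom L^*_{s,i}$ is the genuine dichotomy required by Capturing.

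For each fixed $i$, I then apply property (4) (normality) to diagonalize the family $\{h'_{s,i} : s \in V_\kappa\}$ into a single $h'_i \in \mathcal{F}_i$ with $h'_i \leq h'_{s,i} \downharpoonright s$ for each $s$, so that the chosen clause of the dichotomy for a given $s$ holds above all $\vec{w}$ with $s \in V_{\kappa(\vec{w})}$. Iterating the construction through $i < \lambda$ produces the global upper part $h' \leq h$ witnessing Capturing. The hardest part of the argument is the single-$(s,i)$ supercompact ultrapower analysis, where one must simultaneously decide $\kappa^{+4}$-many Boolean values within the $\kappa^{+5}$-closed collapse in $N$ and uniformize the outcome along the ultrafilter $u_i^*$.
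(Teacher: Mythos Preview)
Your reduction of the middle slot to $h'\upharpoonright\vec{w}$ via $^*$-openness does not work, and this is the crux of the argument.  Openness says that if $s\frown((\vec{w},e,q))\in X$ then $s\frown((\vec{w},e',q'))\in X$ whenever $e'\leq e$ and $q'\leq q$; it does \emph{not} say that an arbitrary other upper part, such as $h'\upharpoonright\vec{w}$, may be substituted for $e$.  There is no reason the witnessing $e$ should lie above $h'\upharpoonright\vec{w}$ (or above $h\upharpoonright\vec{w}$), so your assertion that ``the witness $e$ in the decision procedure may be taken to be $h\upharpoonright\vec{w}$ itself'' is unjustified.  Consequently your dichotomy may produce ``case~(i)'' at some $\vec{w}$ where in fact some $e$ and $q\leq h'(\vec{w})$ do give $s\frown((\vec{w},e,q))\in X$, violating the Capturing requirement.

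The paper addresses exactly this point and it is the most delicate part of the proof.  It first chooses, for each lower part $s$, an \emph{arbitrary} witness function $f^s$ so that whenever some $e$ works at $\vec{w}$ then $f^s(\vec{w})$ works; the dichotomy is then established with $f^s(\vec{w})$ in the middle slot (together with a $\kappa$-cc bounding argument to pass from density in $\mathbb{B}(\vec{w},\vec{u})$ to density in $\mathbb{B}(\vec{w},\vec{x})$, a step you also gloss over).  Only afterwards is $f^s(\vec{w})$ replaced by $h''\upharpoonright\vec{w}$, and this requires a separate argument using the supercompactness embedding $j$ directly: one studies $d^s_{i,k}:=j(f^s)(\vec{u}\upharpoonright k)\upharpoonright\{\vec v:\lh\vec v=i\}$, reflects the equality $j(d^s_{i,k})(\vec{v})=j(f^s)(\vec{u}\upharpoonright k)(\vec{v})$ down along $u_k$, and then shrinks $h'$ so that $h''\upharpoonright\vec{w}\leq f^s(\vec{w})$ for the relevant $\vec{w}$.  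This step cannot be absorbed into a simple appeal to $^*$-openness, and without it your inductive scheme does not deliver the stated dichotomy.
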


\begin{proof}
  Say $\vec{u}$ is of the form $\langle \kappa, u_i, \mathcal{F}_i \mid i < \lambda \rangle$ and the supercompact ultrafilter sequence used in the construction is $\langle z,u^*_i,H_i\mid i<\lambda\rangle$. We have already established that $\vec{u} \in \mathcal{U}'$ so it remains to prove capturing. For each lower part $s$ begin by defining witnesses $f^s$ such that for all $\vec{w}\in\dom h$, if there are $e$ and $q\leq h(\vec{w})$ such that $s\frown ((\vec{w},e,q))\in X$ then there is $q\leq h(\vec{w})$ such that $s\frown ((\vec{w},f^s(\vec{w}),q))\in X$.

  We may assume that each $h_i$ is of the form $b(H_i, B_i)$ for some $B_i\in u^*_i$. For each lower part $s$ and each $i<\lambda$ choose $H^s_i \leq H_i$ such that for all $\vec{w}^* \in \dom H^s_i$ for which there exists $q \leq H_i(\vec{w}^*)$ with $s\frown ((\pi(\vec{w}^*),f^s(\pi(\vec{w}^*)),q))\in X$ we have $s \frown ((\pi(\vec{w}^*),f^s(\pi(\vec{w}^*)),H^s_i(\vec{w}^*)))\in X$. By normality take $H'_i$ such that for all $i$ and $s$ we have $H'_i \leq H^s_i\downharpoonright s$. For each $i<\lambda$ and lower part $s$ we can choose $C^s_i \subseteq B_i$ a member of $u^*_i$ such that one of the following occurs:
  \begin{enumerate}[(i)]
    \item For every $\vec{w}^*$ in $C^s_i$ there does not exist a $q \leq H_i(\vec{w}^*)$ such that $s\frown ((\pi(\vec{w}^*),f^s(\pi(\vec{w}^*)),q)) \in X$.
    \item For every $\vec{w}^*$ in $C^s_i$ we have $s\frown ((\pi(\vec{w}^*),f^s(\pi(\vec{w}^*)),H'_i(\vec{w}^*)))\in X$.
  \end{enumerate}
  Define $C_i := \triangle_s C^s_i$ and $h'_i:=b(H'_i, C_i)$. Observe that by construction $\Fil(H_i)$ is an ultrafilter and so equal to $\Fil(H'_i)$, whence $h'_i\in\mathcal{F}_i$. We can now prove a weaker version of the desired dichotomy.

  \begin{claim}
    Let $s$ be a lower part and $i<\lambda$. Then we have one of:
      \begin{enumerate}[(i)]
        \item For all $\vec{w}\in \dom h'_i$ there do not exist $e$ and $q\leq h'(\vec{w})$ such that $s\frown((\vec{w},e,q)) \in X$.
        \item For all $\vec{w}\in \dom h'_i$ there are densely many $q$ in $\mathbb{B}(\vec{w},\vec{u})$ below $h'_i(\vec{w})$ such that $s\frown((\vec{w},f^s(\vec{w}),q)) \in X$.
      \end{enumerate}
  \end{claim}

  \begin{proof}
    Suppose (i) is false, so we have $\vec{w}\in\dom h'_i$, $e$ and $q' \leq h'(\vec{w})\leq h(\vec{w})$ such that $s\frown((\vec{w},e,q'))\in X$. The choice of $f^s$ then gives us $q\leq h(\vec{w})$ such that $s\frown((\vec{w},f^s(\vec{w}),q)) \in X$. Now
    $$q \leq h_i(\vec{w}) = b(H_i, B_i)(\vec{w}) = \bigvee_{\pi(\vec{w}^*)=\vec{w}, \vec{w}^*\in B_i} H_i(\vec{w}^*)$$
    so there must be some $\vec{w}^* \in B_i$ with $\pi(\vec{w}^*)=\vec{w}$ such that $q \parallel H_i(\vec{w}^*)$. But $\vec{w}\in\dom h'_i$ so $\vec{w}^*\in C_i\subseteq C^s_i\downharpoonright s$; and $X$ is downwards closed so we cannot have been in the first case when we defined $C^s_i$, and must therefore be in the second case.

    We wish to show that (ii) holds, so we are given some $\vec{w}\in \dom h'_i$ and $r \in \mathbb{B}(\vec{w},\vec{u})$ below $h'_i(\vec{w})$. By similar reasoning we have that $r$ is compatible with $H'_i(\vec{w}^*)$ for some $\vec{w}^* \in C_i$ with $\pi(\vec{w}^*)=\vec{w}$. By the definition of $C^s_i$ we know that $s\frown ((\vec{w},f^s(\vec{w}),H'_i(\vec{w}^*)))\in X$ so it is possible to take $q\leq r$ with $s\frown ((\vec{w},f^s(\vec{w}),q))\in X$.
  \end{proof}

  For each lower part $s$ and each $i<\lambda$ that falls into case (ii) of the claim, and for each $\vec{w}\in \dom h'_i$ we have a dense open set of $q \in \mathbb{B}(\vec{w},\vec{u})$ such that $s\frown ((\vec{w},f^s(\vec{w}),q))\in X$, and we take a maximal antichain contained in both this set and $\mathbb{C}(\vec{w},\vec{u})$. The $\kappa$-chain condition of the forcing tells us that this antichain is bounded, which is to say there is some $\eta_{s, i, \vec{w}} < \kappa$ with the antichain contained in $\mathbb{C}(\vec{w},\eta_{s, i, \vec{w}})$. We now refine $\dom h'$ to contain only $\vec{x}$ such that $\kappa(\vec{x})$ is a closure point of the function $(s, i, \vec{w}) \mapsto \eta_{s, i, \vec{w}}$ and immediately have the following strengthening of the claim.

  For all lower parts $s$ and $i < \lambda$ we have one of:
  \begin{enumerate}[(i)]
    \item For all $\vec{w}\in \dom h'_i$ there do not exist $e$ and $q\leq h'(\vec{w})$ such that $s\frown((\vec{w},e,q)) \in X$.
    \item For all $\vec{w}\in \dom h'_i$ and $\vec{x}\in\dom h'$ such that $\kappa(\vec{w})<\kappa(\vec{x})$ there are densely many $q$ in $\mathbb{B}(\vec{w},\vec{x})$ below $h'_i(\vec{w})$ such that $s\frown((\vec{w},f^s(\vec{w}),q)) \in X$.
  \end{enumerate}

  To conclude the proof we will need to make further reductions of the domains of the $h'_i$. For each lower part $s$ and each $i<k<\lambda$ define $d^s_{i,k}$ to be the function $j(f^s)(\vec{u}\upharpoonright k)$ restricted to lower parts of length $i$. We observe that $j(f^s)(\vec{u}\upharpoonright k)$ is an upper part for $\mathbb{R}_{\vec{u}\upharpoonright k}$ so $d^s_{i,k}$ will have domain in $u_i$ and is a partial function from $V_{\kappa}$ to $V_{\kappa}$. Thus for any $\vec{v}$ in its domain we have $j(d^s_{i,k})(\vec{v}) = j(d^s_{i,k})(j(\vec{v})) = j(d^s_{i,k}(\vec{v}))=d^s_{i,k}(\vec{v})$, giving us
  \begin{align*}
    & \forall\vec{v}\in\dom d^s_{i,k}: j(d^s_{i,k})(\vec{v}) = d^s_{i,k}(\vec{v}) = j(f^s)(\vec{u}\upharpoonright k)(\vec{v}) \\
    \Rightarrow& \forall_{u_k}\vec{w}: \forall\vec{v}\in\dom d^s_{i,k}\cap V_{\kappa(\vec{w})}: d^s_{i,k}(\vec{v}) = f^s(\vec{w})(\vec{v})
  \end{align*}
  Call this $u_k$-large set $X^s_{i,k}$ and take $h''^s\leq h'$ such that for all $j<\lambda$ we have $\dom h''^s_j\subseteq (\bigcap_{k>j}\dom d^s_{j,k})\cap(\bigcap_{i<j}X^s_{i, j})$. Also ensure $h''^s_i\leq j(f^s)(\vec{u}\upharpoonright k)$ for all $i<k<\lambda$; this is possible since all the functions involved are members of the $\kappa$-complete filter $\mathcal{F}_i$. Then by normality take $h''$ such that $h''\downharpoonright s\leq h''^s$ for all $s$. For any lower part $s$, $\vec{w}\in\dom h''$ and $\vec{v}\in\dom h''\upharpoonright\vec{w}$ above $s$ this gives
  $$h''(\vec{v})\leq h''^s(\vec{v})\leq j(f^s)(\vec{u}\upharpoonright (\lh\vec{w}))(\vec{v})=f^s(\vec{w})(\vec{v}).$$
  Hence $h''\upharpoonright\vec{w}\leq f^s(\vec{w})$ and since $X$ is $^*$-open we get $s\frown((\vec{w},h''\upharpoonright\vec{w},q))\in X$ for densely-many $q$ as required.
\end{proof}

This lemma is valuable because it allows us to express the crucial properties of $\vec{u}$ solely in terms of subsets of $V_{\kappa}$, rather than large supercompactness embeddings. When we perform the forcing iteration it will be possible to reflect these properties from the $\vec{u}$ that occurs at the end of the iteration to the $\vec{u}$ at earlier stages.

\section{Properties of the Radin forcing $\mathbb{R}_{\vec{u}}$} \label{properties_of_radin}

\subsection{The Prikry property}

\begin{proposition}
  Let $\vec{u}\in\mathcal{U}$. Then $\mathbb{R}_{\vec{u}}$ has the Prikry property.
\end{proposition}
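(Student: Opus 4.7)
The plan is to exploit the capturing property (clause (6) of $\mathcal{U}$) to produce, for any condition $s=(t,(\vec{u},h))$ and sentence $\varphi$, a direct extension $s^*\leq^* s$ deciding $\varphi$. First I would form the $^*$-open sets of lower parts
\[
X^+ := \{r : \exists\, h'\leq h \text{ with } (r,(\vec{u},h')) \Vdash \varphi\}, \quad X^- := \{r : \exists\, h'\leq h \text{ with } (r,(\vec{u},h')) \Vdash \neg\varphi\},
\]
whose closure under $\leq^*$ is immediate because direct extensions preserve forcing. I would then apply capturing successively, first to $(h,X^+)$ to produce $h^+\leq h$ and then to $(h^+,X^-)$ to produce $h^*\leq h^+$. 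Case (i) is plainly hereditary under further shrinking, and case (ii) for $X^+$ persists with $h^*$ because $X^+$ is $^*$-open, so direct extensions of the witnessing triples $(\vec{w},h^+\upharpoonright\vec{w},q)$ to $(\vec{w},h^*\upharpoonright\vec{w},q)$ remain in $X^+$. Thus $h^*$ witnesses the dichotomies for both $X^+$ and $X^-$ simultaneously.

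Two structural facts would then unlock the proof. First, $X^+\cap X^-=\emptyset$: a common member $r$ supplies witnesses $h'_\pm\leq h$, and their pointwise meet $h'_+\wedge h'_-$ is a valid upper part — clause (3) of $\mathcal{U}'$ says the Boolean values represented by each $\mathcal{F}_i$ form a $\bar{\kappa}$-complete ultrafilter, so finite meets stay non-zero — whence $(r,(\vec{u},h'_+\wedge h'_-))$ forces both $\varphi$ and $\neg\varphi$. Second, at any pair $(s,i)$ cases (ii+) and (ii-) cannot both hold: fix $\vec{w}\in\dom h^*_i$ and $\vec{x}\in\dom h^*$ above $\vec{w}$, and consider the dense subsets $D^\pm$ of $\mathbb{B}(\vec{w},\vec{x})$ below $h^*_i(\vec{w})$ witnessing (ii+) and (ii-). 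Taking any $q^+\in D^+$ and then $q^-\leq q^+$ in $D^-$, the $^*$-openness of $X^+$ forces $q^-\in D^+$ as well, producing $s\frown((\vec{w},h^*\upharpoonright\vec{w},q^-))\in X^+\cap X^-$, contradicting the first fact.

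I would then claim that $s^*:=(t,(\vec{u},h^*))$ itself decides $\varphi$, arguing by contradiction. If it does not, both $X^+$ and $X^-$ contain lower parts properly extending $t$, since $t\in X^+\cup X^-$ would give a direct extension of $s$ deciding $\varphi$. Take $r^+\in X^+$ and $r^-\in X^-$ of minimal added-triple counts $n^+$ and $n^-$, and suppose $n^+\leq n^-$ without loss of generality. Write $i^+$ for the level of the final triple of $r^+$; then the prefix $r^+_{\mathrm{pre}}$ of depth $n^+-1$ lies outside $X^+\cup X^-$ by minimality, case (ii+) holds at $(r^+_{\mathrm{pre}},i^+)$ because the final triple of $r^+$ rules out (i+), and then the mutual exclusivity of the previous paragraph forces (i-) at $(r^+_{\mathrm{pre}},i^+)$.

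The main obstacle is completing the descent cleanly. I would proceed by induction on $n^+$: the base $n^+=0$ gives $t\in X^+$, contradicting the assumption on $s^*$. For the inductive step, the condition $s':=(r^+_{\mathrm{pre}},(\vec{u},h^*))$ itself fails to decide $\varphi$ (else a direct extension would place $r^+_{\mathrm{pre}}$ in $X^+\cup X^-$), so the same analysis applies to $s'$; the witnessing $X^+$-extension $r_1$ of $r^+_{\mathrm{pre}}$ obtained from (ii+) then either admits case (ii-) at some level, producing an immediate contradiction via the Boolean meet of witnessing upper parts with $h^*$ and hence a condition forcing both $\varphi$ and $\neg\varphi$, or forces a recursive descent through a deeper $X^-$-extension of $r_1$, whose strict reduction in depth relative to $n^+$ is what drives the induction. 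The delicate bookkeeping here concerns keeping the domains of upper parts large when taking meets; the $\bar{\kappa}$-completeness of each $u_i$ ensures that intersecting finitely many measure-one sets remains measure-one, which together with $^*$-openness lets the density claims in the witnessing (ii) cases be restricted to the shared domains without loss. Once the descent terminates we conclude $s^*$ decides $\varphi$, which is the Prikry property.
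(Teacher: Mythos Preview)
Your argument has a genuine gap: you only apply capturing to the top upper part $h$, but a general condition $(t,(\vec{u},h))$ has a stem $t=((\vec{w}_0,e_0,q_0),\ldots,(\vec{w}_{n-1},e_{n-1},q_{n-1}))$ in which each $e_k$ is itself an upper part for $\mathbb{R}_{\vec{w}_k}$. When you take a minimal $r^+\in X^+$ below $s^*$ and remove its ``final'' added triple $(\vec{v},e,q)$, that triple may have been inserted between $\vec{w}_{k-1}$ and $\vec{w}_k$ rather than above $\max t$; then $\vec{v}\in\dom e_k$ and $q\leq e_k(\vec{v})$, not $\vec{v}\in\dom h^*$ and $q\leq h^*(\vec{v})$. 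Your capturing dichotomy for $h^*$ therefore says nothing about this case, so you cannot conclude that $(r^+_{\mathrm{pre}},i^+)$ falls into case~(ii+). The paper handles this by running the argument at every level: it first uses normality (clause~(4)) at each $k\leq n$ to arrange that whenever some direct extension of the tail $p'\downharpoonright k$ makes a lower part $s\in\mathbb{R}_{\vec{w}_k}$ decide, already $s\frown p'\downharpoonright k$ decides; it then defines $X^+_k,X^-_k$ as sets of lower parts in $\mathbb{R}_{\vec{w}_k}$ and shrinks each $e_k$ to capture both. The minimality argument then locates the largest new triple between some $\vec{w}_{k-1}$ and $\vec{w}_k$ and uses the capturing of $X^+_k$ at that level.

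Even if you assume the last added triple lies above $\max t$, your inductive descent does not make progress. You remove one triple from $r^+$ to get $r^+_{\mathrm{pre}}$ and then, via case~(ii+), append a new triple to obtain $r_1\in X^+$; but $r_1$ has the same added-triple count as $r^+$, so nothing has decreased. The paper avoids induction entirely here: having established case~(ii) for the prefix $r$ at level $\epsilon$, it proves directly by a density argument that $r\frown p''\downharpoonright k$ forces $\varphi$. Given any extension, it locates (or inserts) a triple of length $\epsilon$ in the new block, uses case~(ii) to strengthen its collapse coordinate so the resulting lower part lies in $X^+_k$, and checks that the strengthened condition is still below the given extension. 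This yields a strictly shorter condition forcing $\varphi$, contradicting minimality in one step.
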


\begin{proof}
  We are given some condition
  $$p = ((\vec{w}_0,h_0,p_0),...,(\vec{w}_{n-1},h_{n-1},p_{n-1}),(\vec{u},h_n))$$
  from $\mathbb{R}_{\vec{u}}$ and a proposition $\varphi$, and wish to find a direct extension of $p$ that forces either $\varphi$ or $\neg\varphi$. For notational convenience we will deem $\vec{w}_n$ to be $\vec{u}$.

  We define a descending sequence of $p^k$ by induction on $k\leq n$; starting with $p^0\leq^* p$ such that $p^0\parallel\phi$ if possible, or else $p^0:=p$. Given $p^{k-1}$, for each lower part $s\leq p^{k-1}\upharpoonright k$ for $\mathbb{R}_{\vec{w}_k}$, if possible take $((\vec{w}_k,h^s_k,p^s_k))\frown y^s_k \leq^* p^{k-1}\downharpoonright k$ such that $s\frown((\vec{w}_k,h^s_k,p^s_k))\frown y^s_k\parallel\phi$. Then by normality we can form $h'_k\leq h^s_k\downharpoonright s$ for all lower parts $s$, and by closure we can form $p'_k\leq p^s_k$ and $y'_k\leq y^s_k$ for all such $s$. Define $p^k := p^{k-1}\upharpoonright k\frown((\vec{w}_k,h'_k,p'_k))\frown y'_k$. The construction concludes with $p':=p^n$, so
  $$p' = ((\vec{w}_0,h'_0,p'_0),...,(\vec{w}_{n-1},h'_{n-1},p'_{n-1}),(\vec{w}_n,h'_n))$$
  such that for all $k\leq n$ and all lower parts $s \in \mathbb{R}_{\vec{w}_k}$, if there is some direct extension $t$ of $p'\downharpoonright k$ with $s\frown t\parallel\varphi$ then already $s\frown p'\downharpoonright k\parallel\varphi$. Then for each $k\leq n$ define $X^+_k$ to be the set of lower parts $s$ in $\mathbb{R}_{\vec{w}_k}$ such that $s\frown p'\downharpoonright k\forces\varphi$. Similarly define $X^-_k$ with $\neg\varphi$ in place of $\varphi$. Take $h''_k \leq h'_k$ that captures both $X^+_k$ and $X^-_k$.

  We claim that $$p'':=((\vec{w}_0,h''_0,p'_0),...,(\vec{w}_{n-1},h''_{n-1},p'_{n-1}),(\vec{w}_n,h''_n))$$
  decides $\varphi$. Suppose this is not so, and take $t$ of minimal length below $p''$ that decides $\varphi$; without loss of generality we can assume $t\forces\varphi$. Fix $k\leq n$ such that the largest new triple appearing in $t$ lies between $\vec{w}_{k-1}$ and $\vec{w}_k$. Call this triple $(\vec{v},e,q)$ and split $t$ as $r\frown((\vec{v},e,q))\frown s$. Observe that by the construction we actually have $r\frown((\vec{v},e,q))\frown p''\downharpoonright k \forces \varphi$. Observe further that the existence of such a $(\vec{v},e,q)$ tells us that $r$ and $\epsilon:=\lh\vec{v}$ fall into case (ii) of the capture of $X^+_k$. We will show by density that in fact $r\frown p''\downharpoonright k\forces\varphi$, which will contradict the minimality of the length of $t$ and conclude the proof.

  We are given some extension of $r\frown p''\downharpoonright k$, say of the form
  $$r'\frown ((\vec{v}_0,e_0,q_0),...,(\vec{v}_{m-1},e_{m-1},q_{m-1}))\frown y$$
  where $\kappa(\max r') = \kappa(\max r)$ and $\kappa(\min y) = \kappa(\vec{w}_k)$, and we seek an extension that forces $\varphi$. Fix $j$ such that $\lh\vec{v}_j = \epsilon$ and $\lh\vec{v}_i < \epsilon$ for all $i<j$; if there is no such $\vec{v}_j$ then we can easily insert one. We have $\vec{v}_j \in \dom h''_k$ and $q_j\leq h''_k(\vec{v}_j)$, and case (ii) of the capturing of $X^+_k$ occurs for $r$ and $\epsilon$, so we can find $q^*\leq q_j$ such that $r\frown((\vec{v}_j,h''_k\upharpoonright\vec{v}_j,q^*))\in X^+_k$, which is to say
  $$r\frown((\vec{v}_j,h''_k\upharpoonright\vec{v}_j,q^*))\frown p'\downharpoonright k\forces\varphi.$$
  For $i<j$ the fact that $(\vec{v}_i,e_i,q_i)$ could be added below $(\vec{w}_k,h''_k)$ shows us that it can also be added below $(\vec{v}_j,h''_k\upharpoonright\vec{v}_j)$. This establishes that
  $$r'\frown ((\vec{v}_0,e_0,q_0),... (\vec{v}_j,e_j\wedge h''_k\upharpoonright\vec{v}_j, q^*) ,...,(\vec{v}_{m-1},e_{m-1},q_{m-1}))\frown y$$
  is below $r\frown((\vec{v}_j,h''_k\upharpoonright\vec{v}_j,q^*))\frown p'\downharpoonright k$ and hence forces $\varphi$, and it is also an extension of $r'\frown ((\vec{v}_0,e_0,q_0),...,(\vec{v}_{m-1},e_{m-1},q_{m-1}))\frown y$ as required.
\end{proof}

We can use this result to show that $\mathbb{R}_{\vec{u}}$ preserves enough cardinals.

\begin{proposition} \label{preserveCardinals}
  \begin{enumerate}[(a)]
    \item Let $\vec{u}\in\mathcal{U}$ and $\langle\vec{w}_{\alpha},g_{\alpha}\mid\alpha<\theta\rangle$ the generic sequence of ultrafilter sequences and collapses added by $\mathbb{R}_{\vec{u}}$.

    Then for $\alpha<\theta$, $\mathbb{R}_{\vec{u}}$ preserves the cardinals in $[\kappa(\vec{w}_{\alpha}),\kappa(\vec{w}_{\alpha})^{+5}]$.
    \item If we force below $((\vec{u},h))$ such that $\dom h$ contains only sequences of length less than $\lh\vec{u}$ then $\kappa$ becomes $\aleph_{\lh\vec{u}}$.
  \end{enumerate}
\end{proposition}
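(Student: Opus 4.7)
Below any condition $p$ whose stem contains the triple at $\vec{w}_\alpha$, I would factor $\mathbb{R}_{\vec{u}}/p$ into a \emph{lower} part that is essentially $\mathbb{R}_{\vec{w}_\alpha}$, and an \emph{upper} part consisting of the triple at $\vec{w}_\alpha$ (carrying its Boolean value $q\in\mathbb{B}(\vec{w}_\alpha,\vec{w}_{\alpha+1})$) together with everything sitting above. For the lower factor I would run the standard stem-counting argument: a condition's stem is a finite sequence of triples in $V_{\kappa(\vec{w}_\alpha)}$, and by clause (3) of $\mathcal{U}'$ any two conditions with the same stem are compatible, since their upper parts can be amalgamated through the $\kappa(\vec{w}_\alpha)$-complete ultrafilter structure on $\mathcal{F}$. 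This yields the $\kappa(\vec{w}_\alpha)^+$-cc (in fact $\kappa(\vec{w}_\alpha)$-cc with a little care), preserving every cardinal $\geq\kappa(\vec{w}_\alpha)^+$.

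\textbf{Upper factor.} The upper forcing has the Prikry property just proved, and I would argue that its direct-extension order $\leq^*$ is $\kappa(\vec{w}_\alpha)^{+5}$-closed: strengthening $q$ uses that $\Coll(\kappa(\vec{w}_\alpha)^{+5},<\kappa(\vec{w}_{\alpha+1}))$ is $\kappa(\vec{w}_\alpha)^{+5}$-closed, while shrinking any upper part $e_k$ for $k>\alpha$ uses the $\kappa(\vec{w}_k)$-completeness of $\mathcal{F}_k$ together with $\kappa(\vec{w}_k)>\kappa(\vec{w}_\alpha)^{+5}$. Given a name $\dot g$ for a function from some $\mu<\kappa(\vec{w}_\alpha)^{+5}$ into the ordinals, Prikry lets us direct-extend to decide $\dot g(\beta)$ one value at a time, and the closure of $\leq^*$ amalgamates these into a single direct extension deciding all of $\dot g$. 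Hence the upper factor adds no new short sequences of ordinals and so preserves every cardinal in $[\kappa(\vec{w}_\alpha),\kappa(\vec{w}_\alpha)^{+5}]$. Combined with the lower factor's chain condition this gives (a), with $\kappa(\vec{w}_\alpha)$ itself surviving via the standard preservation of the underlying inaccessible in Radin-type forcing.

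\textbf{Part (b); main obstacle.} Part (b) reduces to a bookkeeping computation. Let $\iota(\alpha)$ denote the aleph-index of $\kappa(\vec{w}_\alpha)$ in the extension; from $\kappa(\vec{w}_0)=\omega$ we have $\iota(0)=0$. By (a) the cardinals $\kappa(\vec{w}_\alpha),\ldots,\kappa(\vec{w}_\alpha)^{+5}$ are all preserved, and the collapse $g_\alpha\in\Coll(\kappa(\vec{w}_\alpha)^{+5},<\kappa(\vec{w}_{\alpha+1}))$ forces $\kappa(\vec{w}_{\alpha+1})=\kappa(\vec{w}_\alpha)^{+6}$, giving $\iota(\alpha+1)=\iota(\alpha)+6$ together with continuity at limits. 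Induction yields $\iota(\alpha)=\alpha$ for every limit $\alpha\geq\omega$, and since $\lh\vec{u}$ is regular uncountable the generic has length $\omega^{\lh\vec{u}}=\lh\vec{u}$, so $\kappa=\sup_{\alpha<\lh\vec{u}}\aleph_{\iota(\alpha)}=\aleph_{\lh\vec{u}}$. The main obstacle is making the upper-factor preservation in (a) precise: verifying the $\kappa(\vec{w}_\alpha)^{+5}$-closure of $\leq^*$ while tracking that the top cardinal in $\mathbb{B}(\vec{w}_\alpha,\vec{w}_{\alpha+1})$ is itself determined by the generic, and confirming that Prikry plus this closure really does capture every name for a short sequence of ordinals over the quotient forcing.
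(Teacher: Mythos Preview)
Your approach is essentially the paper's: factor below a condition with $\vec w_\alpha$ in its stem as $\mathbb{R}_{\vec w_\alpha}\times\mathbb{R}'_{\vec u}$, use the $\kappa(\vec w_\alpha)^+$-cc of the lower factor, and use Prikry plus $\kappa(\vec w_\alpha)^{+5}$-closure of $\leq^*$ on the upper factor.

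The one point that needs sharpening is the preservation of $\kappa(\vec w_\alpha)$ itself. The chain condition of the lower factor only gives preservation of cardinals $\geq\kappa(\vec w_\alpha)^+$; it says nothing about $\kappa(\vec w_\alpha)$. Your phrase ``standard preservation of the underlying inaccessible in Radin-type forcing'' is the right instinct but hides the actual mechanism: the paper runs the whole argument as an \emph{induction on $\kappa(\vec u)$}. The inductive hypothesis applied to $\vec w_\alpha\in\mathcal U$ says that $\mathbb{R}_{\vec w_\alpha}$ preserves the cardinals in each interval $[\kappa(\vec w_\beta),\kappa(\vec w_\beta)^{+5}]$ for $\beta<\alpha$, and since these are cofinal in $\kappa(\vec w_\alpha)$ the cardinal $\kappa(\vec w_\alpha)$ survives. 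Without making this induction explicit you do not actually have an argument that the lower factor preserves $\kappa(\vec w_\alpha)$.

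Your part (b) is the same computation as the paper's, just written out more carefully. The obstacle you flag about $\vec w_{\alpha+1}$ being generic is not really an obstacle: in the factorization the first collapse of $\mathbb{R}'_{\vec u}$ lives in $\mathbb{B}(\kappa(\vec w_\alpha),\cdot)$, which is $\Coll(\kappa(\vec w_\alpha)^{+5},<\cdot)$ regardless of where the next Radin point lands, so $\kappa(\vec w_\alpha)^{+5}$-closure of $\leq^*$ is immediate.
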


\begin{proof}
  \begin{enumerate}[(a)]
    \item Our proof is by induction on $\kappa(\vec{u})$. Given $\alpha<\theta$ take a condition $p$ in the generic filter of the form $p_1\frown((\vec{w}_{\alpha},e,q))\frown p_2$. Below $p$, $\mathbb{R}_{\vec{u}}$ splits as
    $$\mathbb{R}_{\vec{w}_{\alpha}}/p_1\frown((\vec{w}_{\alpha},e)) \:\:\times\:\: \mathbb{R}'_{\vec{u}}/((\langle\kappa(\vec{w}_{\alpha})\rangle,\phi,q))\frown p_2$$
    where $\mathbb{R}'_{\vec{u}}$ is the same as $\mathbb{R}_{\vec{u}}$ except with its first collapse starting from $\kappa(\vec{w}_{\alpha})^{+5}$ instead of $\omega^{+5}$. By hypothesis the first of these forcings preserves many cardinals below $\kappa(\vec{w}_{\alpha})$ and hence $\kappa(\vec{w}_{\alpha})$ itself; since it has the $\kappa(\vec{w}_{\alpha})^+$-cc it also preserves all larger cardinals. The second forcing has the Prikry property by a proof identical to the one above, and it is $\kappa(\vec{w}_{\alpha})^{+5}$-closed in the $\leq^*$-ordering, so it will preserve all remaining cardinals up to and including $\kappa(\vec{w}_{\alpha})^{+5}$.
    \item Here we have $\theta=\omega^{\lh\vec{u}}=\lh\vec{u}$. By part (a) we have that cardinals in $[\kappa(\vec{w}_{\alpha}),\kappa(\vec{w}_{\alpha})^{+5}]$ are preserved, and it is clear that all other cardinals below $\kappa(\vec{u})$ are collapsed. So the forcing leaves $\lh\vec{u}$-many cardinals below $\kappa$.
  \end{enumerate}
\end{proof}

\subsection{Analysis of names}

Next we prove a technical lemma allowing us to replace $\mathbb{R}_{\vec{u}}$-names with names in smaller forcings; it will be useful to us in Lemma \ref{Qstrong} when we need to establish tight control over such names.

\begin{lemma} \label{nameAnalysis}
  Let $\vec{u}\in\mathcal{U}$, $\dot{x}$ a Boolean $\mathbb{R}_{\vec{u}}$-name (i.e. a name for a single true/false value), and $s\frown((\vec{u},h))\in\mathbb{R}_{\vec{u}}$.

  Then there is an ordinal $\beta<\kappa$, an upper part $h'\leq h$, and a $\mathbb{R}_{\max s}\times\mathbb{B}(\max s, \beta)$-name $\dot{y}$ such that $\dom h'$ lies above $\beta$ (so below $((\vec{u},h'))$ this $\dot{y}$ can be regarded as a $\mathbb{R}_{\vec{u}}$-name) and such that $s\frown((\vec{u},h'))\forces \dot{x}=\dot{y}$.
\end{lemma}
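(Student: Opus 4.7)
The plan is to apply the Prikry property of the preceding proposition in a controlled fashion, amalgamating decisions by using the $\bar\kappa$-completeness and normality of the filters $\mathcal{F}_i$ (items (3) and (4) in the definition of $\mathcal{U}'$), in a manner that reveals all the needed information through a product forcing of size less than $\kappa$.

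First I would choose a cardinal $\beta$ with $\kappa(\max s)<\beta<\kappa$ and, invoking the $\kappa$-completeness of the $\mathcal{F}_i$, shrink $h$ so that $\dom h\subseteq\{\vec{w}:\kappa(\vec{w})>\beta\}$. This accomplishes two things: it makes $\mathbb{B}(\max s,\beta)$ embed as a complete subalgebra of $\mathbb{B}(\max s,\vec{v})$ for every $\vec{v}$ still allowed in the upper part, and it ensures the final $h'$ will have the domain requirement stated in the lemma.

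Next, let $\mu<\kappa$ enumerate a maximal antichain $\{t_{\alpha}\mid\alpha<\mu\}$ of $\mathbb{R}_{\max s}\times\mathbb{B}(\max s,\beta)$ below $s$; the bound $\mu<\kappa$ is available because both factors have cardinality less than $\kappa$. Each $t_{\alpha}$ corresponds, via the natural embedding of the product into the decomposition of $\mathbb{R}_{\vec{u}}/s\frown((\vec{u},h))$ (with the $\mathbb{B}(\max s,\beta)$-component absorbed into a strengthening of $q_{n-1}$), to a condition $p_{\alpha}\leq s\frown((\vec{u},h))$. Applying the Prikry property of the preceding proposition to each $p_{\alpha}$ and to the statement that $\dot{x}$ holds produces an upper part $h^{(\alpha)}\leq h$ and a Boolean value $y_{\alpha}\in\{0,1\}$ such that the associated direct extension forces $\dot{x}=\check{y}_{\alpha}$. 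Then set $h':=\bigwedge_{\alpha<\mu}h^{(\alpha)}$: since the Boolean values represented by the $h^{(\alpha)}_i$ lie in a $\kappa$-complete ultrafilter (clause (3)) and $\mu<\kappa$, the infimum still lies in the filter, and a further normality-based shrinking (clause (4)) turns $h'$ into a genuine upper part, with $\dom h'$ still above $\beta$.

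Finally I would define $\dot{y}$ as the $\mathbb{R}_{\max s}\times\mathbb{B}(\max s,\beta)$-name taking the constant value $\check{y}_{\alpha}$ below each $t_{\alpha}$; the antichain property makes $\dot{y}$ well-defined. To verify $s\frown((\vec{u},h'))\forces\dot{x}=\dot{y}$, one passes to an arbitrary generic through $s\frown((\vec{u},h'))$, identifies the unique $t_{\alpha}$ in the induced product-generic, and reads off the corresponding Prikry decision. The main obstacle is the mismatch between the output of the Prikry property, a direct extension of $s\frown((\vec{u},h))$ that may refine $q_{n-1}\in\mathbb{B}(\max s,\vec{u})$ in ways not literally contained in the subalgebra $\mathbb{B}(\max s,\beta)$, and the stated product-name structure for $\dot{y}$; this is precisely what forces the inclusion of the $\mathbb{B}(\max s,\beta)$-factor in the lemma and what the iteration over the antichain is designed to absorb, using the projection $\mathbb{B}(\max s,\vec{u})\to\mathbb{B}(\max s,\beta)$ and the Prikry decisions below each antichain element to resolve the residual ambiguity.
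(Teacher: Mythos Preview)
Your approach has a genuine gap at the point you yourself flag as ``the main obstacle'': the Prikry property applied to $p_\alpha$ yields a direct extension that may refine the last collapse coordinate $q_{n-1}$ to some $q_{n-1}'\in\mathbb{B}(\max s,\vec{u})$, and this algebra has size $\kappa$, not $<\kappa$. There is no projection $\mathbb{B}(\max s,\vec{u})\to\mathbb{B}(\max s,\beta)$ that preserves the forcing statement ``$\dot{x}=\check{y}_\alpha$'', so your suggestion to ``absorb'' this via the antichain does not work: a generic through $s\frown((\vec{u},h'))$ that selects $t_\alpha$ need not be compatible with the specific $q_{n-1}'$ produced by the Prikry decision, and you have no mechanism to bound all such $q_{n-1}'$ inside a fixed $\mathbb{B}(\max s,\beta)$ chosen in advance. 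The circularity is that $\beta$ must emerge from the argument, not be fixed at the outset.

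The paper's proof avoids this by a different factorisation. It first preprocesses, via normality and the capturing property (clause~(6) of $\mathcal{U}$), to obtain an $h'$ that captures, for every candidate small name $\dot{y}$, the set of lower parts forcing $\dot{x}=\dot{y}$. It then picks some $\vec{w}\in\dom h'$ of length $0$ and factors $\mathbb{R}_{\vec{u}}$ below $s\frown((\vec{w},\phi,0),(\vec{u},h'))$ as $\mathbb{R}_{\vec{w}}\times\mathbb{R}'_{\vec{u}}$, where the second factor is $\kappa(\vec{w})^{+5}$-closed under $\leq^*$. Now the Prikry property plus closure of $\mathbb{R}'_{\vec{u}}$ decides the entire $\mathbb{R}_{\vec{w}}$-name for $\dot{x}$ (which has at most $\kappa(\vec{w})$ bits of information by the chain condition), yielding a ground-model name $\dot{y}$; the bound $\beta<\kappa(\vec{w})$ then falls out of the $\kappa(\vec{w})$-cc of $\mathbb{R}_{\max s}\times\mathbb{B}(\max s,\vec{w})$. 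Finally, case~(ii) of capturing is used to remove the auxiliary triple $(\vec{w},\phi,q)$ and conclude that already $s\frown((\vec{u},h'))\forces\dot{x}=\dot{y}$. The essential difference from your plan is that the Prikry decisions are made in the closed tail forcing $\mathbb{R}'_{\vec{u}}$, where direct extensions do not touch the problematic collapse coordinate at all.
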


\begin{proof}
  For each $\beta>\kappa(\max s)$ and each $\mathbb{R}_{\max s}\times\mathbb{B}(\max s,\beta)$-name $\dot{y}$ use normality to take $h_{\dot{y}}\leq h$ such that for every lower part $t$, if there is some $h^*\leq h$ with $t\frown((\vec{u},h^*))\forces\dot{x}=\dot{y}$ then $t\frown((\vec{u},h_{\dot{y}}))\forces\dot{x}=\dot{y}$. Define $X_{\dot{y}}$ to be the set of lower parts $t$ such that
  $$t\frown((\vec{u},h_{\dot{y}}))\forces\dot{x}=\dot{y}$$
  and take $h'_{\dot{y}}\leq h_{\dot{y}}$ capturing $X_{\dot{y}}$. Then use normality again to get $h'$ such that $h'\downharpoonright (\beta+1)\leq h_{\dot{y}}$ for all $\mathbb{R}_{\max s}\times\mathbb{B}(\max s,\beta)$-names $\dot{y}$.

  Take $\vec{w}\in\dom h'$ above $s$ and of length $0$ (i.e. $\vec{w}=\langle\kappa(\vec{w})\rangle$). We can split $\mathbb{R}_{\vec{u}}$ below $s\frown((\vec{w},\phi,0),(\vec{u},h'))$ as
  $$\mathbb{R}_{\vec{w}}/s\frown((\vec{w},\phi))\times \mathbb{R}'_{\vec{u}}/((\vec{w},\phi, 0),(\vec{u},h'\downharpoonright \kappa(\vec{w})))$$
  where $\mathbb{R}'_{\vec{u}}$ is the usual forcing derived from $u$ except that its first collapse starts from $\kappa(\vec{w})^{+5}$ rather than $\omega^{+5}$. Now we can view $\dot{x}$ as being a $\mathbb{R}'_{\vec{u}}$-name for a $\mathbb{R}_{\vec{w}}$-name. We know that $\mathbb{R}_{\vec{w}}$ has the $\kappa(\vec{w})^+$-cc, so the $\mathbb{R}_{\vec{w}}$-name in question consists of at most $\kappa(\vec{w})$-many pieces of information. This allows us to use the Prikry property and closure of $\mathbb{R}'_{\vec{u}}$ to take a direct extension $((\langle\omega\rangle,\phi,q), (\vec{u},h''))$ of $((\langle\omega\rangle,\phi,0), (\vec{u},h'\downharpoonright \kappa(\vec{w})))$ that determines the value of the $\mathbb{R}_{\vec{w}}$-name, say as $\dot{y}$, a $\mathbb{R}_{\vec{w}}$-name in the ground model. So returning to $\mathbb{R}_{\vec{u}}$ we have
  $$s\frown((\vec{w},\phi,q),(\vec{u},h''))\forces\dot{x}=\dot{y}.$$

  Observe that (since $\lh\vec{w}=0$) $\mathbb{R}_{\vec{w}}$ splits below $s\frown((\vec{w},\phi))$ as $\mathbb{R}_{\max s}/s \times \mathbb{B}(\max s, \vec{w})$; this has the $\kappa(\vec{w})$-cc so $\dot{y}$ is in fact a $\mathbb{R}_{\max s}\times\mathbb{B}(\max s, \beta)$-name for some $\beta<\kappa(\vec{w})$. Now by construction $X_{\dot{y}}$ contains $s\frown((\vec{w},\phi,q))$, and $\vec{w}\in\dom h_{\dot{y}}$, so when we captured $X_{\dot{y}}$ we must have been in case (ii) for $s$ and $0$. We can now use the same argument as in the proof of the Prikry condition to show that any extension of $s$ must be extensible to some condition in $X_{\dot{y}}$, so we have that $s\frown((\vec{u},h'))\forces\dot{x}=\dot{y}$ as required.
\end{proof}

\subsection{Characterisation of genericity}

Finally we look for a way to characterise genericity that will allow us to take generic sequences for one Radin forcing and show that they are also generic for other Radin forcings. This characterisation develops similar ideas for simpler forcings found in \cite{Mathias} and \cite{Mitchell}.

\definition
Let $\vec{u}\in\mathcal{U}$. A sequence $\langle \vec{w}_{\alpha}, g_{\alpha} \mid \alpha < \theta\rangle$ in some outer model of set theory is {\em geometric} for $\mathbb{R}_{\vec{u}}$ if it satisfies:
\begin{enumerate}
  \item $\{ \kappa(\vec{w}_{\alpha}) \mid \alpha < \theta\}$ is club in $\kappa(\vec{u})$ with $\kappa(\vec{w}_0)=\omega$.
  \item For all limit ordinals $\alpha < \theta$, $\langle \vec{w}_{\beta}, g_{\beta} \mid \beta < \alpha\rangle$ is generic for $\mathbb{R}_{\vec{w}_{\alpha}}$.
  \item For all $\alpha$, $g_{\alpha}$ is $\mathbb{B}(\vec{w}_{\alpha},\vec{w}_{\alpha+1})$-generic.
  \item For every $X\in V_{\kappa(\vec{u})+1}$; $X \in \bigcap_{i<\lh\vec{u}}u_i$ iff for all large $\alpha$, $\vec{w}_{\alpha}\in X$.
  \item For every upper part $h$ for $\mathbb{R}_{\vec{u}}$, for all large $\alpha$, $h(\vec{w}_{\alpha})\in g_{\alpha}$.
\end{enumerate}

Note (4) implies that for all $i<\lh\vec{u}$ there are unboundedly many $\alpha<\theta$ such that $\lh(\vec{w}_{\alpha})=i$.

It is clear that a generic sequence for $\mathbb{R}_{\vec{u}}$ is geometric; we aim to show the converse.

\begin{definition} \label{defnGenericFilter}
  We have already seen that from a generic filter for $\mathbb{R}_{\vec{u}}$ it is possible to derive a generic sequence $G=\langle \vec{w}_{\alpha}, g_{\alpha} \mid \alpha < \theta\rangle$. Conversely, given such a generic sequence we can rederive the generic filter $F_G$, which will consist of all conditions $((\vec{v}_0,e_0,p_0),...,(\vec{v}_{n-1},e_{n-1},p_{n-1}),(\vec{u},h))$ with $(\vec{v}_n,e_n):=(\vec{u},h)$ such that:
  \begin{itemize}
    \item For all $k<n$ there is $\alpha$ such that $\vec{v}_k=\vec{w}_{\alpha}$ and  $p_k\in g_{\alpha}$
    \item For $\alpha$ and $k\leq n$, if $\kappa(\vec{v}_{k-1})<\kappa(\vec{w}_{\alpha})<\kappa(\vec{v}_k)$, then $\vec{w}_{\alpha}\in\dom e_k$ and $e_k(\vec{w}_{\alpha})\in g_{\alpha}$.
  \end{itemize}
\end{definition}

\begin{definition}
  We will say a sequence $\langle \vec{w}_{\alpha}, g_{\alpha} \mid \alpha < \theta\rangle$ {\em respects} $(\vec{u},h)$ if for all $\alpha < \theta$ we have $\vec{w}_{\alpha} \in \dom h$ and $h(\vec{w}_{\alpha})\in g_{\alpha}$.
\end{definition}

\begin{lemma} \label{geometricLemma}
  Let $\vec{u} \in \mathcal{U}$, $h$ an upper part for $\mathbb{R}_{\vec{u}}$ and $D\subseteq \mathbb{R}_{\vec{u}}$ dense open. Then there is an upper part $h' \leq h$ such that for every geometric sequence $G$ respecting $(\vec{u},h')$ we have $F_G\cap D \neq \phi$.
\end{lemma}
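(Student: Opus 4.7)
The plan is to apply the capturing property (6) of $\mathcal{U}$ to the set $X := \{s : s \frown ((\vec u, h)) \in D\}$. This $X$ is $^*$-open because openness of $D$ implies that if $s' \leq^* s$ and $s \in X$, then $s' \frown ((\vec u, h)) \leq s \frown ((\vec u, h)) \in D$, so $s' \in X$. Capturing then provides $h' \leq h$ obeying the (i)/(ii) dichotomy uniformly over lower parts $s$ and lengths $i < \lh \vec u$.

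Given a geometric $G = \langle \vec w_\alpha, g_\alpha \mid \alpha < \theta \rangle$ respecting $(\vec u, h')$, I would exhibit $p \in F_G \cap D$ by locating a ``$G$-matching'' lower part $s$ (meaning each triple has the form $(\vec w_{\alpha_k}, h' \upharpoonright \vec w_{\alpha_k}, q_k)$ with $q_k \in g_{\alpha_k}$) and a length $i < \lh \vec u$ at which case (ii) applies. Pick an $\alpha$ on the sequence with $\lh \vec w_\alpha = i$, which exists by clause (4) of geometricity combined with the fact that $u_j$ concentrates on length-$j$ sequences, and set $\vec x := \vec w_{\alpha+1} \in \dom h'$. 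Case (ii) then supplies a dense set of $q \in \mathbb B(\vec w_\alpha, \vec w_{\alpha+1})$ below $h'_i(\vec w_\alpha)$ with $s \frown ((\vec w_\alpha, h' \upharpoonright \vec w_\alpha, q)) \in X$; since $g_\alpha$ is generic it meets this set, and the resulting condition $s \frown ((\vec w_\alpha, h' \upharpoonright \vec w_\alpha, q), (\vec u, h))$ lies in $D$. Membership in $F_G$ is then routine: the explicit triples track the sequence with $q_k \in g_{\alpha_k}$, the intermediate upper parts $h' \upharpoonright \vec w$ evaluated at intervening $\vec w_\beta$ give $h'(\vec w_\beta) \in g_\beta$ by the respects assumption, and for the top $(\vec u, h)$ the pointwise bound $h \geq h'$ together with upward closure of $g_\beta$ gives $h(\vec w_\beta) \in g_\beta$.

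The main obstacle is to guarantee that case (ii) fires for some $G$-matching $s$ and some $i$. I would first dispose of the trivial subcase where some $((\vec u, h^*)) \in D$ with $h^* \leq h$, by simply taking $h' := h^*$; then $((\vec u, h'))$ itself lies in $D \cap F_G$ whenever $G$ respects $(\vec u, h')$. In the remaining subcase every condition in $D$ below $((\vec u, h'))$ has a non-trivial lower part, and I would show by density that case (ii) must fire somewhere. The delicate point is that a witness $p = s \frown ((\vec u, h^*)) \in D$ may have top $h^* < h$, so that $p$ does not on its own witness that any prefix of its lower part lies in $X$; I would resolve this by enlarging $X$ to the $^*$-open set $\{s : \exists h^* \leq h,\; s \frown ((\vec u, h^*)) \in D\}$ and using the normality clause (4) of $\mathcal{U}'$ to diagonalise over the possible witness tops, yielding a single $h'$ for which the witness top can always be taken to dominate $h'$ pointwise on $\dom h'$, so that respect of $G$ for $h'$ transfers to respect of the witness top and the extracted condition remains in $F_G$.
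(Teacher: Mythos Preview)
Your approach has a genuine gap: a single application of the capturing property is not enough. Case (ii) of capturing, applied to your set $X$, only tells you that from a given lower part $s$ you can add \emph{one} further triple $(\vec w, h'\upharpoonright\vec w, q)$ and land in $X$. But an arbitrary dense open $D$ may require conditions whose lower parts contain many triples beyond the initial one $(\langle\omega\rangle,\phi,q_0)$; for the starting lower part $((\langle\omega\rangle,\phi,q_0))$, case (i) could very well hold for \emph{every} $i<\lh\vec u$, since no single-triple extension need lie in $X$. Your density sketch never explains how to bootstrap from the trivial lower part up to a $G$-matching $s$ at which case (ii) fires, and the enlargement of $X$ together with the normality diagonalisation in your last paragraph only handles the variation of the top part $h^*$ --- it is essentially the paper's preliminary step of forming $h^*$, not a substitute for what follows.

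The paper resolves this by \emph{iterating} the capturing. After fixing $h^*$ via normality (your enlarged-$X$ idea), it sets $X^\phi := \{t : t\frown((\vec u, h^*))\in D\}$ and then, for each finite sequence $\eta\in\lambda^{<\omega}$, takes $h^\eta\leq h^*$ capturing $X^\eta$ and defines $X^{\langle i\rangle\frown\eta}$ to be the set of lower parts falling into case (ii) for $i$ in that capture; a single $h'$ is taken below all the $h^\eta$. Density of $D$ then yields, below any $r\in\mathbb B(\omega,\kappa)$, some $r'\leq r$ and finite $\eta$ with $((\langle\omega\rangle,\phi,r'))\in X^\eta$, so the genericity of $g_0$ locates $q_0\in g_0$ and $\eta=\langle i_1,\dots,i_{n-1}\rangle$ with the trivial lower part in $X^\eta$. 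One now climbs down the $\eta$-indexed tower: at step $k$, case (ii) for $X^{\langle i_{k+1},\dots,i_{n-1}\rangle}$ together with the genericity of $g_{\alpha_k}$ supplies the next triple, choosing $\alpha_{k+1}>\alpha_k$ \emph{minimal} with $\lh\vec w_{\alpha_{k+1}}=i_k$ so that all intermediate $\vec w_\beta$ have strictly shorter length and hence lie in $\dom(h'\upharpoonright\vec w_{\alpha_{k+1}})$. This minimality is what secures $F_G$-membership at the end, and is a second point your sketch does not address.
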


\begin{proof}
  Define $\kappa:=\kappa(\vec{u})$ and $\lambda:=\lh\vec{u}$. Invoke normality to take $h^*\leq h$ such that for every lower part $t$, if there is an $h'$ such that $t\frown((\vec{u},h'))\in D$ then $t\frown((\vec{u},h^*\downharpoonright t))\in D$.  Define $X^{\phi}$ to be the set of lower parts $t$ such that $t\frown((\vec{u},h^*))\in D$. We will inductively define $X^{\eta}$ for $\eta$ any finite sequence of $i<\lambda$. First take $h^{\eta}\leq h^*$ capturing $X^{\eta}$. Then for each $i<\lambda$, the set $X^{\langle i\rangle\frown\eta}$ will consist of all $t$ that with $i$ fall into case (ii) of the capturing of $X^{\eta}$. The number of possible $\eta$ is less than $\kappa$ so by $\kappa$-completeness we can fix $h'$ that is below $h^{\eta}$ for all such $\eta$.

  We note that there are densely-many $r$ in $\mathbb{B}(\omega,\kappa)$ such that $((\langle\omega\rangle,\phi,r))\in X^{\eta}$ for some $\eta$. This is because for any $r\in\mathbb{B}(\omega,\kappa)$ we can extend $((\langle\omega\rangle,\phi,r),(\vec{u},h'))$ to a condition $t\frown((\vec{u},h^*)) \in D$; then $t\in X^{\phi}$ and inductively removing triples of $t$ from the right yields $((\langle\omega\rangle,\phi,r'))\in X^{\eta}$ for some $r'\leq r$ and $\eta$.

  Now we are given a geometric sequence $G=\langle \vec{w}_{\alpha}, g_{\alpha} \mid \alpha < \theta\rangle$ that respects $(\vec{u},h')$ and must show $F_G\cap D \neq \phi$.

  By the density in $\mathbb{B}(\omega,\kappa)$ just noted, and by the genericity of $g_0$, take $q_0\in g_0$ and $\eta$ such that $((\langle\omega\rangle,\phi,q_0))\in X^{\eta}$. Say $\eta=:\langle i_1,...,i_{n-1}\rangle$. Define $\alpha_0=0$ and then inductively take $\alpha_{k+1}>\alpha_k$ minimal such that $\lh\vec{w}_{\alpha_{k+1}}=i_k$. We note that this must be possible by clause (4) in the definition of geometricity. Then for $k\geq 1$ inductively choose $q_k\in g_{\alpha_k}$ such that
  $$s_k:=((\vec{w}_0,h'\upharpoonright\vec{w}_0,q_0),...,(\vec{w}_{\alpha_k},h'\upharpoonright\vec{w}_{\alpha_k},q_k))\in X^{\langle i_{k+1},...i_{n-1}\rangle},$$
  invoking the nature of case (ii) capturing, the genericity of the $g_k$, and the fact $h'(\vec{w}_{\alpha_k})\in g_{\alpha_k}$. This concludes with $s_{n-1}\in X^{\phi}$ so $s_{n-1}\frown((\vec{u},h'))\in D$ (as $h'\leq h^*$), and it remains to show that $s_{n-1}\frown((\vec{u},h'))\in F_G$.

  The first of the two requirements from Definition \ref{defnGenericFilter} is clear from the construction itself. Now we must consider the case of some $\beta < \theta$ and $k\leq n$ such that $\kappa(\vec{w}_{\alpha_{k-1}}) < \kappa(\vec{w}_{\beta}) < \kappa(\vec{w}_{\alpha_k})$, or equivalently $\alpha_{k-1}<\beta<\alpha_k$. Note that the $k=n$ case is taken care of by the respect of $G$ for $(\vec{u},h')$. The minimality of our choice of $\alpha_k$ (together with clauses 2 and 4 in the definition of geometric) tells us that $\lh\vec{w}_{\beta}<\lh\vec{w}_{\alpha_k}$, so $\vec{w}_{\beta}\in\dom h'\upharpoonright\vec{w}_{\alpha_k}$. Then from the respect of $G$ for $(\vec{u},h')$ we have $h'(\vec{w}_{\beta})\in g_{\beta}$ as required.
\end{proof}

\begin{proposition} \label{characteriseGenericity}
  Let $\vec{u}\in\mathcal{U}$. Then a sequence $G$ is generic for $\mathbb{R}_{\vec{u}}$ iff it is geometric for $\mathbb{R}_{\vec{u}}$.
\end{proposition}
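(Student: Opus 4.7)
The forward direction will follow from standard density arguments for each clause. Clause (1) combines the stem requirement $\kappa(\vec{w}_0)=\omega$ with the density of conditions introducing a new lower triple whose $\kappa(\vec{w})$ exceeds any prescribed bound. Clauses (2) and (3) exploit the factorisation of $\mathbb{R}_{\vec{u}}$ below a condition containing $(\vec{w}_{\alpha},\cdot,\cdot)$ into an $\mathbb{R}_{\vec{w}_{\alpha}}$-piece and an above-$\vec{w}_{\alpha}$ piece, identifying $g_{\alpha}$ as the interleaved $\mathbb{B}(\vec{w}_{\alpha},\vec{w}_{\alpha+1})$-generic. Clauses (4) and (5) reduce to the density of conditions whose upper part $h$ has $\dom h\subseteq X$, respectively lies pointwise below a prescribed upper part, together with the dual observation that otherwise an upper part witnessing the complement could be produced, contradicting the other direction.

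For the backward direction, given a geometric $G=\langle \vec{w}_{\alpha},g_{\alpha}\mid\alpha<\theta\rangle$ and a dense open $D\subseteq\mathbb{R}_{\vec{u}}$, the plan is to exhibit some $p\in F_G\cap D$. The first step is to invoke Lemma \ref{geometricLemma} with any initial upper part to obtain $h'$ such that every geometric sequence strictly respecting $(\vec{u},h')$ has its filter meeting $D$. The central difficulty is that $G$ may fail to strictly respect $h'$: clause (5) gives only eventual respect. To sidestep this, I use clause (5) to pick a limit $\alpha^{*}<\theta$ beyond which $\vec{w}_{\alpha}\in\dom h'$ and $h'(\vec{w}_{\alpha})\in g_{\alpha}$ hold, then use clause (2) to fix some $r=s\frown((\vec{w}_{\alpha^{*}},h^{*}))\in F_{G\upharpoonright\alpha^{*}}$, which exists because $G\upharpoonright\alpha^{*}$ is already $\mathbb{R}_{\vec{w}_{\alpha^{*}}}$-generic. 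Choosing $q^{*}\in g_{\alpha^{*}}$ below $h'(\vec{w}_{\alpha^{*}})$, a direct check against the definition of $F_G$ shows $p_0:=s\frown((\vec{w}_{\alpha^{*}},h^{*},q^{*}),(\vec{u},h'))\in F_G$: the respect conditions for the triples of $s$ and for the triple at $\vec{w}_{\alpha^{*}}$ come from $r\in F_{G\upharpoonright\alpha^{*}}$, and those for $(\vec{u},h')$ come from the choice of $\alpha^{*}$.

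The remaining step is to extend $p_0$ to a condition of $F_G$ that lies in $D$. Below $p_0$, the forcing $\mathbb{R}_{\vec{u}}$ factors as a product whose left factor is $\mathbb{R}_{\vec{w}_{\alpha^{*}}}/r$ and whose right factor $\mathbb{R}^{+}_{\vec{u}}$ is the variant of $\mathbb{R}_{\vec{u}}$ in which the first interleaved collapse starts from $\kappa(\vec{w}_{\alpha^{*}})^{+5}$ rather than $\omega^{+5}$. The dense open set $D/p_0$ projects to a dense open subset of the right factor once the left coordinate is fixed to $r$. Crucially, the tail $G_{\geq\alpha^{*}}$ is geometric for $\mathbb{R}^{+}_{\vec{u}}$ and now strictly respects $(\vec{u},h')$ by choice of $\alpha^{*}$. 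Applying the verbatim analogue of Lemma \ref{geometricLemma} for $\mathbb{R}^{+}_{\vec{u}}$ yields a condition $p^{+}$ in the right-factor generic filter derived from $G_{\geq\alpha^{*}}$ that meets $D/p_0$; combining $r$ and $p^{+}$ produces the desired $p\in F_G\cap D$.

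The main technical obstacle is checking that the capturing property and the proof of Lemma \ref{geometricLemma} transfer to the shifted forcing $\mathbb{R}^{+}_{\vec{u}}$. The capturing property is inherited because $\vec{u}\in\mathcal{U}$ does not change, and the only place the initial collapse cardinal appears in the proof of the lemma is the density argument in $\mathbb{B}(\omega,\kappa)$, which adapts to $\mathbb{B}(\kappa(\vec{w}_{\alpha^{*}}),\kappa)$ without modification.
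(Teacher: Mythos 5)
Your proposal correctly identifies the two structural ingredients of the backward direction — Lemma \ref{geometricLemma} and the factorisation of $\mathbb{R}_{\vec{u}}$ below a lower part — and the construction of $p_0\in F_G$ is sound. But the final step, which is the crux of the proof, has two real gaps that the paper's proof is specifically designed to circumvent.

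First, the claim that once the left coordinate is fixed to $r$, the set $D_r:=\{t:r\frown t\in D\}$ is a \emph{dense} open subset of the right factor $\mathbb{R}^{+}_{\vec{u}}$ is not true for an arbitrary lower part $r$. Density of $D$ in the product lets you extend $r\frown t_0$ to some $r'\frown t'\in D$, but $r'$ may be a strict strengthening of $r$, and since $D$ is merely downward closed this does not give $r\frown t'\in D$. In the paper, density of the analogous set $D_s$ is precisely what the ``good'' mechanism buys: using the Prikry property, one arranges $h_s$ so that $((\vec{u},h_s))$ decides whether a witnessing tail exists in the generic lower part, and density of $D_s$ below $((\vec{u},h_s))$ holds only for the lower parts $s$ that decide positively (the good ones). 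Your proof never establishes that the $r$ you selected is good, so you have no grounds to assert density of $D_r$.

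Second, there is a circularity in how $\alpha^{*}$ and the capturing upper part interact. You apply Lemma \ref{geometricLemma} to $D$ to obtain $h'$, pick $\alpha^{*}$ so that $G$ respects $h'$ from $\alpha^{*}$ on, and then wish to apply the analogue of Lemma \ref{geometricLemma} to $D_r$ over $\mathbb{R}^{+}_{\vec{u}}$. But that second application outputs a \emph{new} upper part $h''\leq h'$ depending on $D_r$, and the conclusion $F_{G_{\geq\alpha^{*}}}\cap D_r\neq\phi$ requires the tail to respect $h''$, not $h'$. Since $h''$ depends on $r$ which depends on $\alpha^{*}$ which depends on $h'$, there is no way to close the loop with a single choice. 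The paper resolves this by performing the Lemma \ref{geometricLemma} step for \emph{every} potentially good lower part $s$ \emph{before} selecting the tail index, combining all the resulting $h'_s$ into one upper part via the normality clause of $\mathcal{U}'$, then further refining by capturing the class of good lower parts to obtain $h''$; only then is the cutoff $\beta$ (your $\alpha^{*}$) chosen against this final $h''$. That reordering, together with the Prikry-based ``good''/``pre-good'' dichotomy, is exactly what your sketch is missing, and without it the argument does not go through.

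Your handling of the forward direction and the observation that the capturing property transfers verbatim to the shifted forcing $\mathbb{R}^{+}_{\vec{u}}$ are fine; the paper uses the latter fact elsewhere.
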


\begin{proof}
  We are given $D\subseteq\mathbb{R}_{\vec{u}}$ dense open and wish to show that $D\cap F_G\neq\phi$. We begin by using normality to take an upper part $h$ such that for all lower parts $s$, if there is some $\tilde{h}$ such that $s\frown((\vec{u},\tilde{h}))\in D$ then already $s\frown((\vec{u},h\downharpoonright s))\in D$. For each lower part $s$, use the Prikry property for $\mathbb{R}_{\vec{u}}$ to take $h_s\leq h\downharpoonright s$ such that
  $$((\vec{u},h_s))\parallel\exists t\in \dot{\Gamma}: s\frown t\frown ((\vec{u},h))\in D$$
  where $\dot{\Gamma}$ is the name for the set of all lower parts for $\mathbb{R}_{\vec{u}}$ that appear as the lower part of some condition in the generic filter. We say $s$ is {\em good} if the decision is positive. For good $s$ we have $$D_s:=\{t\frown((\vec{u},h^*))\mid s\frown t\frown((\vec{u},h^*))\in D\}$$
  dense open below $((\vec{u},h_s))$. For these $s$ use Lemma \ref{geometricLemma} to take $h'_s\leq h_s$ such that for all geometric $G$ respecting $(\vec{u},h'_s)$ we have $F_G\cap D \neq\phi$. Then by normality take $h'\leq h\downharpoonright s$ for all $s$, and note that also
  $$((\vec{u},h'))\parallel \exists t\in \dot{\Gamma}: s\frown t\frown ((\vec{u},h))\in D$$
  for all lower parts $s$. Finally take $h''\leq h'$ that captures the set of good lower parts; we say that a lower part $r$ that falls into case (ii) of this capturing is {\em pre-good}.

  Write the geometric $G$ we were given as $\langle \vec{w}_{\alpha}, g_{\alpha} \mid \alpha < \theta\rangle$, and use clause 5 of geometricity to take $\beta<\theta$ a limit ordinal such that
  $$\forall \alpha\geq \beta: \vec{w}_{\alpha}\in\dom h'', h''(\vec{w}_{\alpha})\in g_{\alpha}.$$
  Then take $\gamma>\beta$ also limit such that for all $\beta<\alpha<\gamma$, $\lh\vec{w}_{\alpha}<\lh\vec{w}_{\gamma}$; this is possible as $\cf\theta>\omega$ and all lengths occur cofinally in $\langle\vec{w}_{\alpha}\mid\alpha<\theta\rangle$.

  \begin{claim}
    There are densely-many $r\frown((\vec{w}_{\gamma},e))$ in $\mathbb{R}_{\vec{w}_{\gamma}}$ such that $r$ is pre-good.
  \end{claim}

  \begin{proof}
    We are given a condition $r\frown((\vec{w}_{\gamma},e))\in\mathbb{R}_{\vec{w}_{\gamma}}$ and have that $r\frown((\vec{w}_{\gamma},e,0),(\vec{u},h'')) \in \mathbb{R}_{\vec{u}}$ so we can extend it to
    $$r'\frown((\vec{w}_{\gamma},e',q))\frown t\frown((\vec{u},h''))\in D,$$
    and the choice of $h$ then gives
    $$r'\frown((\vec{w}_{\gamma},e',q))\frown t\frown((\vec{u},h))\in D.$$
    The decision made by $((\vec{u},h'))$ for $r'\frown((\vec{w}_{\gamma},e',q))$ must thus have been positive, which is to say $r'\frown((\vec{w}_{\gamma},e',q))$ is good. The fact that $\vec{w}_{\gamma}\in\dom h''$ and $q\leq h''(\vec{w})$ then gives that $r'$ is pre-good; we are now done because $r'\frown((\vec{w}_{\gamma},e'))\leq r\frown((\vec{w}_{\gamma},e))$ in $\mathbb{R}_{\vec{w}_{\gamma}}$.
  \end{proof}

  The claim allows us to use property 2 of $G$ to find some pre-good $r$ and $e$ with $r\frown((\vec{w}_{\gamma},e))\in F_{G\upharpoonright\gamma}$ and $\kappa(\max(r))>\kappa(\beta)$. Then we use case (ii) of capturing to take $p\in\mathbb{B}(\vec{w}_{\gamma},\vec{w}_{\gamma+1})$ such that $p\in g_{\gamma}$ and $s:=r\frown((\vec{w}_{\gamma},h''\upharpoonright\vec{w}_{\gamma},p))$ is good. For all $\alpha<\gamma$ with $\kappa(\vec{w}_{\alpha})$ above $r$ we have $\alpha>\beta$, so $\lh\vec{w}_{\alpha}<\lh\vec{w}_{\gamma}$ and $\vec{w}_{\alpha}\in\dom h''\upharpoonright\vec{w}_{\gamma}$; also $h''(\vec{w}_{\alpha})\in g_{\alpha}$ by the choice of $\beta$. Combining these shows us that $s\frown((\vec{w}_{\gamma+1},\phi))\in F_{G\upharpoonright(\gamma+1)}$.

  Now $D_s$ is dense and $G\downharpoonright(\gamma+1)$ respects $h'$ so we can take $t\frown((\vec{u},h^*))\leq((\vec{u},h'))$ in $F_{G\downharpoonright(\gamma+1)}\cap D_s$. Then by the definitions of $F_G$ and $D_s$ we have $s\frown t\frown((\vec{u},h^*))\in F_G\cap D$, and are done.
\end{proof}

\section{The preparatory forcing $\mathbb{Q}_{\vec{u}}$} \label{preparatory_forcing}

In this section we work in the following context.

\begin{setting}
  Let $\vec{u}\in\mathcal{U}$ with $\kappa:=\kappa(\vec{u})$ and $\lambda:=\lh\vec{u}$ regular uncountable. Let $\kappa^{<\kappa}=\kappa$ and $2^{\kappa^+}=\kappa^{+3}$. Assume there exists a binary tree $T$ of height and size $\kappa^+$ (i.e. a tree such that each node has two successors on the next level) with $\langle x_{\alpha}\mid \alpha<\kappa^{+3}\rangle$ an enumeration of its branches. Let $\langle \dot{\mathcal{E}}_{\alpha}\mid \alpha<\kappa^{+3}\rangle$ be an enumeration of the $\mathbb{R}_{\vec{u}}$-names for graphs on $\kappa^+$. We note that such an enumeration is possible since $\mathbb{R}_{\vec{u}}$ has size $2^{\kappa}$ and the $\kappa^+$-cc.
\end{setting}

\subsection{Defining the forcing $\mathbb{Q}_{\vec{u}}$}

We will want to perform an iteration that preserves $V_{\kappa}$ and successively expands $V_{\kappa+1}$ and thus the sequences of ultrafilters. With this in mind we consider a member $\vec{u}$ of $\mathcal{U}$, and seek to add a partial function $g$ from $V_{\kappa}$ to $V_{\kappa}$ such that defining $g_i := g\upharpoonright\{\vec{w}\in\mathcal{U}\mid\lh \vec{w} = i\}$ we could potentially expand $\vec{u}$ to some $\vec{u}'$ in the generic extension with $g_i\in \mathcal{F}_{\vec{u}',i}$. In order to accomplish this we will need the $g$ we build to be appropriately compatible with the pre-existing members of $\vec{u}$, motivating the following definition which generalises long Prikry forcing to the case of Radin forcing with collapses.

\begin{definition}
  Let $\vec{u}\in\mathcal{U}$. Then $\mathbb{M}_{\vec{u}}$ is defined to have conditions $(c,h)$ where $h$ is an upper part for $\vec{u}$ and there is $\rho^{(c,h)}:=\rho<\kappa$ such that $c$ is a partial function from $\mathcal{U}\cap V_{\rho}$ to $V_{\kappa}$ such that
  $$\forall \vec{v}\in \dom c: c(\vec{v})\in\mathbb{B}(\vec{v},\vec{\kappa})-\{0\}, c\upharpoonright\kappa(\vec{v}) \in \mathcal{F}_{\vec{v}}.$$
  We also require $\kappa(\vec{v})<\kappa(\vec{w})$ for $\vec{v}\in\dom c$ and $\vec{w}\in\dom h$.

  We define $(c',h')\leq(c,h)$ if $c'\upharpoonright\rho^{(c,h)}=c$, $h'\leq h$ and for each $\vec{w}\in\dom c' - \dom c$ we have $\vec{w}\in\dom h$ and $c(\vec{w})\leq h(\vec{w})$.

  Also define $a^{(c,h)}:=\{\kappa(\vec{w})\mid \vec{w}\in\dom c\}$.
\end{definition}

For any $(c,h)\in\mathbb{M}_{\vec{u}}$ and $\vec{w}\in\dom h$ the definition of upper part gives us that $(c\cup h\upharpoonright(\kappa(\vec{w})+1), h\downharpoonright(\kappa(\vec{w})+1))\leq(c,h)$ so by density $\mathbb{M}_{\vec{u}}$ will add a partial function $g$ from $V_{\kappa}$ to $V_{\kappa}$ such that for all upper parts $h$ there is a $\mu<\kappa$ with $g\downharpoonright\mu\leq h$.

We now augment this definition into one that will help us add a family of universal graphs together with functions witnessing their universality.

\begin{definition}
  Let $\vec{u}\in\mathcal{U}$. Then $\mathbb{Q}^*_{\vec{u}}$ has conditions $p=(c,h,t,f)$ such that:
  \begin{enumerate}
    \item $(c,h)\in\mathbb{M}_{\vec{u}}$. We define $a^p$ to be $a^{(c,h)}$.
    \item $t\in[(a^p\cap\sup a^p)\times \kappa^{+3}]^{<\kappa}$.
    \item $f =: \langle f^{\eta}_{\alpha}\mid (\eta,\alpha)\in t\rangle$ with $\dom f^{\eta}_{\alpha}\in [\kappa^+]^{<\kappa}$.
    \item For each $(\eta,\alpha)\in t$ and $\zeta\in\dom f^{\eta}_{\alpha}$ there is $\gamma < \kappa$ with $f^{\eta}_{\alpha}(\zeta) = (x_{\alpha}\upharpoonright\zeta, \gamma)$.
  \end{enumerate}
  We also write $t^{\eta}:=\{\alpha \mid (\eta,\alpha)\in t\}$.

  We define $(c',h',t',f')\leq(c,h,t,f)$ if $(c',h')\leq(c,h)$ in $\mathbb{M}_{\vec{u}}$, $t'\supseteq t$, and for all $(\eta, \alpha)\in t$ we have $f'^{\eta}_{\alpha}\supseteq f^{\eta}_{\alpha}$.

  Note that this definition is implicitly dependent on the $\langle x_{\alpha}\mid\alpha<\kappa^{+3}\rangle$ and $\langle\dot{\mathcal{E}}_{\alpha}\mid\alpha<\kappa^{+3}\rangle$ from the setting.
\end{definition}

In addition to the function $g$ added by $\mathbb{M}_{\vec{u}}$, this forcing will for each $\vec{w}\in \dom g$ and $\alpha<\kappa^{+3}$ add a function from $\kappa^+$ to $T\times\kappa$, the first co-ordinate of which will run along the branch $x_{\alpha}$. The idea here is that after Radin forcing the $\mathbb{R}_{\vec{u}}$-name $\dot{\mathcal{E}}_{\alpha}$ will be realised as a graph on $\kappa^+$ and then (for some $\eta$ to be selected later) the function $f^{\eta}_{\alpha}$ will map it into $T\times\kappa$. We will then include in our list of jointly-universal graphs the graph on $T\times\kappa$ induced by all these embeddings. This raises the problem that there may be disagreements between the many graphs we are trying to simultaneously embed as to whether or not a particular edge should exist. In order to gain better control of the situation we will add a fifth requirement on forcing conditions, and for this we need a technical definition.

\begin{definition}
  Let $s=\langle (\vec{w}_k,e_k,q_k)\mid k<n\rangle$ be a lower part for $\mathbb{R}_{\vec{u}}$, $c$ the first co-ordinate of a condition from $\mathbb{M}_{\vec{u}}$ and $\eta<\kappa$. Then we say $s$ is {\em harmonious with $c$ past $\eta$} if for all $k<n$ we have one of:
  \begin{itemize}
    \item $\kappa(\vec{w}_k)<\eta$.
    \item $\kappa(\vec{w}_k)=\eta$ and $\lh\vec{w}_k = 0$.
    \item $\kappa(\vec{w}_k)>\eta$, $e_k\leq c\upharpoonright\kappa(\vec{w}_k)$, $\kappa(\vec{v})>\eta$ for all $\vec{v}\in\dom e_k$, $\vec{w}_k\in\dom c$ and $q_k\leq c(\vec{w}_k)$.
  \end{itemize}
\end{definition}

\begin{lemma} \label{openHarmony}
  Let $s$ be harmonious with $c$ past $\eta$ and $s'\leq s$ (so $\kappa(\max s')=\kappa(\max s)$). Then $s'$ is harmonious with $c$ past $\eta$.
\end{lemma}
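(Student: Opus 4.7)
The plan is to carry out a direct case analysis on each triple appearing in $s'$. Since $s' \leq s$ as lower parts, every triple of $s'$ is either a direct extension of, or addable below, some triple $(\vec{w}_k, e_k, q_k)$ of $s$, and every $\vec{w}_k$ from $s$ still appears in $s'$. So I would verify that harmony past $\eta$ is inherited in each of these two cases, sub-divided by which of the three clauses of the definition holds for the original $(\vec{w}_k, e_k, q_k)$.

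For a direct extension $(\vec{w}_k, d, p)$ of a triple from $s$, the underlying sequence $\vec{w}_k$ is unchanged, so if $(\vec{w}_k, e_k, q_k)$ falls into clause one or two of harmony then $(\vec{w}_k, d, p)$ does as well. For the third clause, I would combine $d \leq e_k$ pointwise (with $\dom d \subseteq \dom e_k$) and $e_k \leq c\upharpoonright\kappa(\vec{w}_k)$ to get $d \leq c\upharpoonright\kappa(\vec{w}_k)$; the bound $\kappa(\vec{v}) > \eta$ on $\vec{v} \in \dom d$ is inherited from $\dom e_k$; $\vec{w}_k \in \dom c$ is unchanged; and $p \leq q_k \leq c(\vec{w}_k)$.

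For a triple $(\vec{v}, d, p)$ addable below $(\vec{w}_k, e_k, q_k)$, observe that $\vec{v} \in \dom e_k$ forces $\kappa(\vec{v}) < \kappa(\vec{w}_k)$. If $(\vec{w}_k, e_k, q_k)$ lies in clause one or two, then $\kappa(\vec{w}_k) \leq \eta$, hence $\kappa(\vec{v}) < \eta$ and $(\vec{v}, d, p)$ lies in clause one. If it lies in clause three, then $\vec{v} \in \dom e_k \subseteq \dom(c\upharpoonright\kappa(\vec{w}_k)) \subseteq \dom c$, and the condition that every $\vec{v}' \in \dom e_k$ has $\kappa(\vec{v}') > \eta$ yields both $\kappa(\vec{v}) > \eta$ and the corresponding bound on $\dom d \subseteq \dom(e_k\upharpoonright\kappa(\vec{v}))$; moreover $d \leq e_k\upharpoonright\kappa(\vec{v}) \leq c\upharpoonright\kappa(\vec{v})$ and $p \leq e_k(\vec{v}) \leq c(\vec{v})$, giving clause three for $(\vec{v}, d, p)$.

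There is no serious obstacle here; the lemma is essentially a bookkeeping statement that harmony is preserved under the two atomic moves generating the lower-part ordering. The only minor technicality is checking that the pointwise inequality together with the domain containment built into $e_k \leq c\upharpoonright\kappa(\vec{w}_k)$ is preserved under further restriction to $\kappa(\vec{v})$, which is immediate.
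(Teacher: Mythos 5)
Your proof is correct and follows essentially the same route as the paper's: a case analysis on each triple of $s'$ according to whether it is a direct extension of, or addable below, a triple of $s$, combined with the observation that clauses one and two of harmony depend only on the underlying sequence while clause three is preserved under the pointwise inequalities built into the ordering. The paper dismisses the direct-extension case as clear and writes out only the addable case; you spell out both, but the content is identical.
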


\begin{proof}
  Consider some element $(\vec{v},d,p)$ from $s'$. If $\vec{v}$ already occurs in $s$ then it is clear that is satisfies the conditions. Otherwise it was added below some element $(\vec{w},e,q)$ from $s$ (because $\kappa(\max s') = \kappa(\max s)$). If $\kappa(\vec{w})\leq\eta$ then $\kappa(\vec{v})<\eta$ so all is well. Otherwise since $\vec{v}\in\dom e$ we get $\kappa(\vec{v})>\eta$. The required conditions in this case follow since $d\leq e\upharpoonright V_{\kappa(\vec{v})}$ and $p\leq e(\vec{v})$.
\end{proof}

We are now ready to define the desired forcing.

\begin{definition}
  The forcing $\mathbb{Q}_{\vec{u}}$ consists of conditions $(c,h,t,f)$ that satisfy the four conditions from the definition of $\mathbb{Q}^*_{\vec{u}}$, which for convenience we repeat here, together with one more:
  \begin{enumerate}
    \item $(c,h)\in\mathbb{M}_{\vec{u}}$.
    \item $t\in[(a\cap\sup a)\times \kappa^{+3}]^{<\kappa}$ where $a:=a^{(c,h)}$.
    \item $f =: \langle f^{\eta}_{\alpha}\mid (\eta,\alpha)\in t\rangle$ with $\dom f^{\eta}_{\alpha}\in [\kappa^+]^{<\kappa}$.
    \item For each relevant $\eta$, $\alpha$ and $\zeta$ there is $\gamma < \kappa$ with $f^{\eta}_{\alpha}(\zeta) = (x_{\alpha}\upharpoonright\zeta, \gamma)$.
    \item Let $\eta\in a\cap\sup a$, $\alpha,\beta\in t^{\eta}$, $s$ a lower part for $\mathbb{R}_{\vec{u}}$ that is harmonious with $c$ past $\eta$, and $\zeta,\zeta'\in\dom f^{\eta}_{\alpha}\cap \dom f^{\eta}_{\beta}$. Let also $f^{\eta}_{\alpha}(\zeta) = f^{\eta}_{\beta}(\zeta)\neq f^{\eta}_{\alpha}(\zeta')=f^{\eta}_{\beta}(\zeta')$. Then
      $$s\frown((\vec{u},h))\forces_{\mathbb{R}_{\vec{u}}} \zeta \dot{\mathcal{E}}_{\alpha}\zeta' \leftrightarrow \zeta\dot{\mathcal{E}}_{\beta}\zeta'.$$
  \end{enumerate}
\end{definition}

We will be able to use this final condition at the end of the argument to ensure that graphs (given by the $\dot{\mathcal{E}}_{\alpha}$) that we wish to map to the same place will agree about which edges should exist. But first we must establish that the right kind of generic object is still added. In doing so we establish a slightly stronger result that tidies up the conditions and will aid some of our later reasoning.

\begin{lemma} \label{squareOff}
  Let $\vec{u}\in\mathcal{U}$, $l$ an upper part for $\mathbb{R}_{\vec{u}}$, $\eta<\mu<\kappa$, $\epsilon,\epsilon'<\kappa^{+3}$ and $\zeta,\zeta'<\kappa^+$. Let $p \in \mathbb{Q}_{\vec{u}}$ with $\eta \in a^p$. Then there are densely many conditions $q=(c,h,t,f)$ below $p$ such that $a^q$ has a maximal element greater than $\mu$, $h\leq l$, and there are $A\in[\kappa^{+3}]^{<\kappa}$ and $B\in[\kappa^+]^{<\kappa}$ with $t = (a^q\cap\sup a^q)\times A\ni(\eta,\epsilon),(\eta,\epsilon')$ and $\dom f^{\theta}_{\beta} = B\ni\zeta,\zeta'$ for each $(\theta, \beta)\in t$.
\end{lemma}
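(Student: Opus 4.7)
The plan is to build $q = (c,h,t,f) \leq p$ directly from $p = (c_0,h_0,t_0,f_0)$ in four explicit moves. First I pick $\vec w \in \dom h_0$ with $\kappa(\vec w) > \mu$ (possible since $\dom h_0$ is large for every $u_i$, so its $\kappa$-projection is unbounded in $\kappa$) and pass in $\mathbb{M}_{\vec u}$ to $(c_1,h_1) := (c_0 \cup \{\vec w \mapsto h_0(\vec w)\},\, h_0\downharpoonright(\kappa(\vec w)+1))$; then $\max a^{(c_1,h_1)} = \kappa(\vec w) > \mu$ and $a^{(c_1,h_1)}\cap\sup a^{(c_1,h_1)} = a^p$. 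Second, I take $h \leq h_1 \wedge l$ by forming the pointwise meet and performing the usual domain shrinkage to restore the upper-part closure property, using that the Boolean values represented by each $\mathcal F_i$ form a $\kappa$-complete ultrafilter. Set $c := c_1$.

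Third, I lay out the rectangle. Let $A := \{\alpha : (\exists\theta)\,(\theta,\alpha)\in t_0\} \cup \{\epsilon,\epsilon'\}$ and $B := \bigcup_{(\theta,\alpha)\in t_0}\dom (f_0)^{\theta}_{\alpha} \cup \{\zeta,\zeta'\}$, both of size $<\kappa$, and set $t := (a^{(c,h)}\cap\sup a^{(c,h)})\times A = a^p \times A$. For $(\theta,\beta)\in t$ and $\xi\in B$, inherit $f^{\theta}_{\beta}(\xi) := (f_0)^{\theta}_{\beta}(\xi)$ whenever already defined, and otherwise choose an ordinal $\gamma^{\theta}_{\beta,\xi}<\kappa$ that is \emph{fresh}---distinct from every second-coordinate value already appearing in $f_0$ and from every $\gamma$ previously chosen in this step---and set $f^{\theta}_{\beta}(\xi) := (x_{\beta}\upharpoonright\xi,\gamma^{\theta}_{\beta,\xi})$; since $|A|\cdot|B|<\kappa$ there is room.

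Clauses (1)--(4) of $\mathbb{Q}_{\vec u}$ hold by construction and the real work is clause (5). Freshness forces every collision $f^{\theta}_{\alpha}(\xi)=f^{\theta}_{\beta}(\xi)$ with $\alpha\ne\beta$ to involve only values inherited from $f_0$, so $(\theta,\alpha),(\theta,\beta)\in t_0$ and $\xi,\xi'\in\dom (f_0)^{\theta}_{\alpha}\cap\dom (f_0)^{\theta}_{\beta}$; clause (5) of $p$ then supplies the required biconditional for every $s'$ harmonious with $c_0$ past $\theta$. The main obstacle is transferring the biconditional to $s\frown((\vec u,h))$ for $s$ harmonious with $c_1$ past $\theta$. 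Because $\dom c_1 \setminus \dom c_0 = \{\vec w\}$ with $\kappa(\vec w)>\sup a^p$, every such $s$ is either $s_0$ (no $\vec w$-triple, which is then already harmonious with $c_0$ past $\theta$, and the claim follows by monotonicity from $h\leq h_0$) or $s_0\frown(\vec w, e^{\vec w}, q^{\vec w})$ with $s_0$ harmonious with $c_0$ past $\theta$. The latter is the technical heart of the argument: I would further strengthen $h$ by Prikry--normality shrinking together with Lemma~\ref{nameAnalysis}, using the latter to pin the atomic names $\xi\dot{\mathcal E}_{\alpha}\xi'$ and $\xi\dot{\mathcal E}_{\beta}\xi'$ to sub-forcings of rank below $\kappa(\vec w)$; then the biconditional is decided by the portion of the condition lying beneath $\vec w$, the $\vec w$-triple becomes immaterial to its forcing, and clause (5) of $p$ applied to $s_0$ concludes.
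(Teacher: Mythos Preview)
Your construction of $(c_1,h_1)$ is not a valid condition in $\mathbb{M}_{\vec u}$. The definition requires $c_1\upharpoonright\kappa(\vec v)\in\mathcal F_{\vec v}$ for every $\vec v\in\dom c_1$; at your new point $\vec w$ this says $c_1\upharpoonright\kappa(\vec w)=c_0$ must lie in $\mathcal F_{\vec w}$. But $\dom c_0\subseteq V_{\rho}$ with $\rho<\kappa(\vec w)$, so on each length-$i$ slice the domain is not $w_i$-large, and $c_0\notin\mathcal F_{\vec w}$ whenever $\lh\vec w>0$. The paper avoids this by setting $c'=c\cup h\upharpoonright(\kappa(\vec w)+1)$, transferring \emph{all} of $h$'s values up to $\kappa(\vec w)$ into the stem; then $c'\upharpoonright\kappa(\vec v)$ contains $h\upharpoonright\kappa(\vec v)$ for each new $\vec v$, which is in $\mathcal F_{\vec v}$ by the upper-part property. (Your move would be valid if you insisted $\lh\vec w=0$, but you do not, and your clause-(5) case ``$s_0\frown(\vec w,e^{\vec w},q^{\vec w})$'' visibly contemplates nontrivial $e^{\vec w}$.)

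This correction demolishes the two-case split for clause~(5): with the paper's $c'$ the set $\dom c'\setminus\dom c$ is large, so a lower part $s$ harmonious with $c'$ past $\theta$ may involve many new sequences, not just $\vec w$. The paper handles this not via Lemma~\ref{nameAnalysis} but by a direct density argument: given $s^*\frown((\vec u,h''))\leq s\frown((\vec u,h'))$, split $s^*$ at $\ssup a^r$ as $s^*_1\frown s^*_2$, observe that $s^*_1$ is harmonious with the \emph{old} $c$ so clause~(5) of the starting condition applies to $s^*_1\frown((\vec u,h))$, and shrink the second coordinate of $\min s^*_2$ so that $s^{**}_2$ is addable below $h$. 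Your appeal to Lemma~\ref{nameAnalysis} is neither necessary nor clearly sound here: even after pinning the names below $\kappa(\vec w)$, the condition $s_0\frown(\vec w,e^{\vec w},q^{\vec w})\frown((\vec u,h))$ is \emph{not} an extension of $s_0\frown((\vec u,h))$ (since $\vec w\notin\dom h$), so you cannot simply transport the biconditional across. Finally, the lemma asserts density below $p$, so the construction should begin from an arbitrary $r\leq p$, not from $p$ itself.
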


\begin{proof}
  We are given some condition $r=(c,h,t,f)\leq p$ to extend. Choose some $\vec{w}\in\dom h$ such that $\kappa(\vec{w})>\mu$ and define $c' = c\cup h\upharpoonright (\kappa(\vec{w})+1)$ and take $h'\leq l$, $h\downharpoonright\vec{w}$. Then $(c',h')$ will be in $\mathbb{M}_{\vec{u}}$ by the definition of upper part, and $a^{(c',h')}$ will have a maximum element $\kappa(\vec{w})$ as required.

  Now we wish to add new points to $t$ and the domains of the $f$ functions, in order to ensure they contain the required co-ordinates and are ``squared off'' as specified. There are $<\kappa$-many new points needed so we can choose values for the second co-ordinates of the $f^{\theta}_{\beta}(\tau)$ that are all distinct both from pre-existing values and each other. This will avoid creating any new instances of the fifth clause of the definition of $\mathbb{Q}_{\vec{u}}$. Call the resulting condition $q=(c',h',t',f')$.

  Suppose we are given $\theta \in a^{r'}\cap\sup a^{r'}$, $\alpha,\beta \in A$, $s$ a lower part for $\mathbb{R}_{\vec{u}}$ harmonious with $c'$ past $\theta$, and $\tau,\tau'\in B$ such that $f'^{\theta}_{\alpha}(\tau) = f'^{\theta}_{\beta}(\tau)\neq f'^{\theta}_{\alpha}(\tau')=f'^{\theta}_{\beta}(\tau')$. We want
    $$s\frown((\vec{u},h'))\forces \tau\dot{\mathcal{E}}_{\alpha}\tau'\leftrightarrow\tau\dot{\mathcal{E}}_{\beta}\tau'.$$
  Note by the construction of $f'$ that we must have $\theta\in a^r\cap\sup a^r$ with $\alpha,\beta\in t^{\theta}$ and $\tau,\tau'\in\dom f^{\theta}_{\alpha}\cap \dom f^{\theta}_{\beta}$ for these equalities to be possible. We would like to split $s$ as $s_1\frown s_2$ such that $s_1$ is harmonious with $c$ past $\theta$ and $s\frown((\vec{u},h'))\leq s_1\frown((\vec{u},h))$. Unfortunately this may not be possible because the smallest triple of $s_1$ may have a second co-ordinate that includes entries from $c$. So instead we show that $s\frown((\vec{u},h'))$ forces the required statement by a density argument.

  Given any $s^*\frown((\vec{u},h''))\leq s\frown((\vec{u},h'))$ split $s^*$ as $s^*_1\frown s^*_2$ such that $\kappa(\max s^*_1)<\ssup a^r$ and $\kappa(\min s^*_2)\geq\ssup a^r$. By Lemma \ref{openHarmony} we have that $s^*_1\frown s^*_2$ is harmonious with $c'$ past $\theta$, and so also $s^*_1$ is harmonious with $c$ past $\theta$. By the conditionhood of $r$ this gives
  $$s^*_1\frown((\vec{u},h))\forces \tau\dot{\mathcal{E}}_{\alpha}\tau'\leftrightarrow\tau\dot{\mathcal{E}}_{\beta}\tau'.$$
  Strengthen $s^*_2$ to $s^{**}_2$ by shrinking the second co-ordinate of $\min s^*_2$ as necessary to ensure that it lies above $\sup a^r$, so that $s^{**}_2$ can be added below $h$. Defining $s^{**}:=s^*_1\frown s^{**}_2$ this gives us $s^{**}\frown((\vec{u},h''))\leq s^*_1\frown((\vec{u},h))$, so $s^{**}\frown((\vec{u},h''))$ forces the desideratum. We also have $s^{**}\frown((\vec{u},h''))\leq s^*\frown((\vec{u},h''))$ so we are done.
\end{proof}

Being able to perform the argument above is a reason for the second co-ordinate in the definition of the $f^{\eta}_{\alpha}$. From another perspective the second co-ordinate gives us a greater degree of flexibility in our embeddings into the jointly-universal graphs.

\subsection{Properties of $\mathbb{Q}_{\vec{u}}$}

We now prove properties of the $\mathbb{Q}_{\vec{u}}$-forcings that will be valuable when we come to iterate them. First we recall some definitions.

\begin{definition}
  A subset $X$ of a forcing $\mathbb{P}$ is {\em centred} if every finite subset of $X$ has a lower bound. The forcing $\mathbb{P}$ is {\em $\kappa$-compact} if every centred subset of size less than $\kappa$ has a lower bound.
\end{definition}

Note that $\kappa$-compactness implies $\kappa$-directed closure.

\begin{lemma} \label{Qcompact}
  The forcing $\mathbb{Q}_{\vec{u}}$ is $\kappa$-compact.
\end{lemma}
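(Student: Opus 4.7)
The plan is: given a centred $X = \{p_\xi : \xi < \mu\} \subseteq \mathbb{Q}_{\vec{u}}$ of size $\mu < \kappa$ with $p_\xi = (c_\xi, h_\xi, t_\xi, f_\xi)$, I will construct a common lower bound $p^* = (c^*, h^*, t^*, f^*)$ componentwise. Set $c^* = \bigcup_\xi c_\xi$, $t^* = \bigcup_\xi t_\xi$, and $(f^*)^\eta_\alpha = \bigcup \{(f_\xi)^\eta_\alpha : (\eta, \alpha) \in t_\xi\}$. These unions are well-defined partial functions since pairwise compatibility of the $p_\xi$'s (from centredness applied to pairs) forces the $c_\xi$'s and the relevant $(f_\xi)^\eta_\alpha$'s to agree on their common domains. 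I take $h^*$ to be an upper part pointwise below every $h_\xi$, which exists by the $\kappa$-completeness of each filter $\mathcal{F}_{\vec{u}, i}$ from property (3) of $\mathcal{U}'$.

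Next, I would verify $p^* \in \mathbb{Q}_{\vec{u}}$ by checking its five defining clauses. Clauses (1)--(4) are routine: regularity of the inaccessible $\kappa$ bounds $\rho^* := \sup_\xi \rho^{(c_\xi, h_\xi)}$ below $\kappa$ and keeps $|t^*|$ and each $|\dom (f^*)^\eta_\alpha|$ below $\kappa$; the value clauses pass through from individual $p_\xi$'s; and the coherence requirement $c^* \upharpoonright \kappa(\vec{v}) \in \mathcal{F}_{\vec{v}}$ for $\vec{v} \in \dom c^*$ follows because extending the domain of a $u_i$-constraint with consistent values does not change its Boolean value modulo $u_i$ (property (2)).

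The main obstacle is clause (5). Given data $\eta, \alpha, \beta, \zeta, \zeta', s$ satisfying the hypothesis with respect to $p^*$, the plan is to reduce the forcing statement to a single condition from $X$. Finitely many witnesses $p_{\xi_1}, \ldots, p_{\xi_n} \in X$ collectively carry the base data (namely $\eta \in a^{p_{\xi_1}}$, membership of $(\eta,\alpha)$ and $(\eta,\beta)$ in the appropriate $t_{\xi_i}$'s, and $\zeta, \zeta'$ in the relevant $f$-domains), together with the finitely many $\vec{w}_k \in \dom c^*$ that occur in $s$ with $\kappa(\vec{w}_k) > \eta$. By centredness these admit a common lower bound $q = (c_q, h_q, t_q, f_q)$ to which clause (5) directly applies. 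Thus, for any refinement $s' \leq s$ exhibited as harmonious with $c_q$ past $\eta$, we obtain $s' \frown ((\vec{u}, h_q)) \forces \varphi$. A density argument combining $h^*$ with $h_q$ via pointwise meet (available through $\kappa$-completeness of $\mathcal{F}_{\vec{u}}$), together with Lemma \ref{openHarmony}, then yields $s \frown ((\vec{u}, h^*)) \forces \varphi$.

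The hardest step is the harmony transfer: $\bigcup_k \dom e_k$ lies in $\dom c^*$ but may genuinely span $p_\xi$'s outside $\{\xi_1, \ldots, \xi_n\}$, so a refinement $s' \leq s$ must be produced whose $e'_k$'s have domains contained in $\dom c_q$ while remaining upper parts. I would do this by intersecting each $\dom e_k$ with $\dom c_q$ on each level and verifying that the filter properties of $\mathcal{F}_{\vec{w}_k, i}$, together with property (5) of $\mathcal{U}'$ applied to $\vec{u}$, keep the result in $\mathcal{F}_{\vec{w}_k}$. The remaining bookkeeping---taking pointwise meets of upper parts, absorbing $h^*$ into further refinements of $h_q$---is then handled uniformly by the $\kappa$-completeness of the relevant filters.
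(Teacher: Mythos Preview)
Your construction of $c^*$, $t^*$, $f^*$ matches the paper's, and your strategy for clause (5)---finding finitely many witnesses in $X$ and passing to a common lower bound---is exactly right. But there is one substantive gap and one unnecessary complication.

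The gap is your choice of $h^*$. You take $h^*$ below each individual $h_\xi$; the paper instead fixes, for every finite $x \subseteq X$, a lower bound $(c^x, h^x, t^x, f^x)$ and then takes $h^*$ below every $h^x$ (there are fewer than $\kappa$ finite subsets, so this is possible by $\kappa$-completeness). Once clause (5) for the condition $(c^x, h^x, t^x, f^x)$ yields $s \frown ((\vec{u}, h^x)) \forces \varphi$, the paper is finished because $h^* \leq h^x$. Your $h^*$ need not lie below $h_q$, so the conclusion for $h_q$ does not transfer to $h^*$. Your proposed density argument does not rescue this: an extension $r$ of $s \frown ((\vec{u}, h^*))$ may add new triples $(\vec{v}, d, p)$ with $\vec{v} \in \dom h^* \setminus \dom h_q$, and such an $r$ need not be compatible with anything of the form $s' \frown ((\vec{u}, h_q))$. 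Taking the meet $h^* \wedge h_q$ only proves the statement for that meet, not for $h^*$.

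The unnecessary complication is your ``harmony transfer''. Pairwise compatibility in $\mathbb{M}_{\vec{u}}$ forces the $c_\xi$'s to be initial segments of one another: if $p_\xi, p_{\xi'}$ have a common extension and $\rho^{(c_\xi,h_\xi)} \leq \rho^{(c_{\xi'},h_{\xi'})}$, then $c_\xi = c_{\xi'} \upharpoonright \rho^{(c_\xi,h_\xi)}$. Consequently, once a single $p_\xi \in X$ has all the relevant $\vec{w}_k$ in $\dom c_\xi$, you already have $c^* \upharpoonright \kappa(\vec{w}_k) = c_\xi \upharpoonright \kappa(\vec{w}_k)$, so $s$ is already harmonious with $c_\xi$ past $\eta$---no shrinking of the $e_k$'s is required. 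The paper exploits this by simply listing ``$s$ is harmonious with $c$ past $\eta$'' as one of the properties to be witnessed by a single element of the finite set $x$.
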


\begin{proof}
  We are given some $X\subseteq\mathbb{Q}_{\vec{u}}$ with $|X|<\kappa$. For each finite subset $x$ of $X$, take a lower bound $(c^x,h^x,t^x,f^x)$ for $x$. It is clear that we can take some $h^*$ that is below $h^x$ for all such $x$ (using the $\kappa$-completeness of $\mathcal{F}_{\vec{u}}$). Also  form $c^*$, $t^*$ and $f^*$ by unions of all the individual $c$, $t$ and $f$ from conditions in $X$. Note we do not use the $c^x$, $t^x$ and $f^x$ as these are not guaranteed to be compatible.

  We can see that $(c^*,h^*,t^*,f^*)$ satisfies the requirements for being a member of $\mathbb{Q}_{\vec{u}}$ except possibly the fifth one. Suppose we are given $\eta,\alpha,\beta,\zeta$ and $\zeta'$ together with $s$ harmonious with $c^*$ past $\eta$ such that $f^{*,\eta}_{\alpha}(\zeta) = f^{*,\eta}_{\beta}(\zeta)\neq f^{*,\eta}_{\alpha}(\zeta')=f^{*,\eta}_{\alpha}(\zeta')$. Then choose a finite set $x \subseteq X$ that contains conditions $(c,h,t,f)$ which between them witness all of the following properties:
  \begin{itemize}
    \item $s$ is harmonious with $c$ past $\eta$.
    \item $(\eta,\alpha)\in t$ and $\zeta\in\dom f^{\eta}_{\alpha}$.
    \item $(\eta,\alpha)\in t$ and $\zeta'\in\dom f^{\eta}_{\alpha}$.
    \item $(\eta,\beta)\in t$ and $\zeta\in\dom f^{\eta}_{\beta}$.
    \item $(\eta,\beta)\in t$ and $\zeta'\in\dom f^{\eta}_{\beta}$.
  \end{itemize}
  Now $(c^x,h^x,t^x,f^x)$ is a condition in $\mathbb{Q}_{\vec{u}}$ so
  $$s\frown((\vec{u},h^x))\forces\zeta\dot{\mathcal{E}}_{\alpha}\zeta'\leftrightarrow\zeta\dot{\mathcal{E}}_{\beta}\zeta'$$
  and we ensured $h^*\leq h^x$ so $s\frown((\vec{u},h^*))$ will force the same thing.
\end{proof}

\begin{definition}
  The forcing $\mathbb{P}$ has the {\em strong $\kappa^+$-chain condition} if for every sequence $\langle p_{\alpha}\mid \alpha<\kappa^+\rangle$ from $\mathbb{P}$ there are a club $C\subseteq\kappa^+$ and a regressive function $f:(C\cap\cof\kappa)\rightarrow\kappa^+$ such that for all $\alpha,\beta\in C\cap\cof\kappa$ if $f(\alpha)=f(\beta)$ then $p_{\alpha}$ and $p_{\beta}$ are compatible.
\end{definition}

Note that by Fodor's theorem this property immediately implies the usual $\kappa^+$-chain condition.

\begin{lemma} \label{Qstrong}
  The forcing $\mathbb{Q}_{\vec{u}}$ has the strong $\kappa^+$-chain condition.
\end{lemma}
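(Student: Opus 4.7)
The plan is to combine a delta-system argument with an elementary substructure analysis in order to define the required regressive function. For each $\alpha<\kappa^+$, pick an elementary substructure $M_\alpha\prec H_\theta$ (for a sufficiently large $\theta$) of size $\kappa$, closed under $<\kappa$-sequences, and containing all the relevant parameters: $\vec{u}$, $T$, the enumerations $\langle x_\gamma\mid\gamma<\kappa^{+3}\rangle$ and $\langle\dot{\mathcal{E}}_\gamma\mid\gamma<\kappa^{+3}\rangle$, and the tail $\langle p_\beta\mid\beta\leq\alpha\rangle$. Using $\kappa^{<\kappa}=\kappa$, there is a club $C\subseteq\kappa^+$ on which $M_\alpha\cap\kappa^+=\alpha$. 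As a preliminary step, apply Lemma~\ref{squareOff} to replace each $p_\alpha$ with a ``squared-off'' extension in which $t^\alpha=(a^{p_\alpha}\cap\sup a^{p_\alpha})\times A^\alpha$ and $\dom f^{\alpha,\eta}_\gamma=B^\alpha$ for fixed $A^\alpha\in[\kappa^{+3}]^{<\kappa}$ and $B^\alpha\in[\kappa^+]^{<\kappa}$; since compatibility of the extensions implies compatibility of the originals, this reduction is harmless.

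For $\alpha\in C\cap\cof\kappa$ with $\alpha>\kappa$, define $f(\alpha)<\alpha$ to be an ordinal coding the following data, each of size less than $\kappa$ and hence codeable into $\kappa\leq\alpha$ via $\kappa^{<\kappa}=\kappa$: the first coordinate $c^\alpha\in V_\kappa$; the projection $a^{p_\alpha}\cap\sup a^{p_\alpha}\subseteq\kappa$; the trace $A^\alpha\cap M_\alpha$ together with an abstract ``type'' describing how $A^\alpha$ sits in $\kappa^{+3}$ relative to $M_\alpha$; the analogous trace and type for $B^\alpha\subseteq\kappa^+$; and for each $(\eta,\gamma)\in t^\alpha$ the restriction of $f^{\alpha,\eta}_\gamma$ to $B^\alpha\cap M_\alpha$ together with the abstract pattern of its second-coordinate labels on external vertices. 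To verify that $f(\alpha)=f(\beta)$ yields $p_\alpha\parallel p_\beta$, form the amalgamation $p^*=(c^*,h^*,t^*,f^*)$ with $c^*=c^\alpha=c^\beta$, $h^*$ the pointwise meet of $h^\alpha$ and $h^\beta$ on their common domain, shrunk to an upper part as allowed by clause~(3) of $\mathcal{U}'$, $t^*=t^\alpha\cup t^\beta$, and $f^*$ the componentwise union of $f^\alpha$ and $f^\beta$. Type-coincidence ensures that $f^\alpha$ and $f^\beta$ agree on overlaps, and clauses~(1)--(4) of the definition of $\mathbb{Q}_{\vec{u}}$ for $p^*$ follow essentially by construction.

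The main obstacle is verifying clause~(5) of the definition of $\mathbb{Q}_{\vec{u}}$ for ``cross-pairs'' $(\eta,\gamma_1)\in t^\alpha$ and $(\eta,\gamma_2)\in t^\beta$ with $\gamma_1\neq\gamma_2$. Here the binary tree structure of $T$ is essential: $x_{\gamma_1}$ and $x_{\gamma_2}$ diverge at some height $\zeta_0<\kappa^+$, so a value coincidence $f^{\alpha,\eta}_{\gamma_1}(\zeta)=f^{\beta,\eta}_{\gamma_2}(\zeta)$ requires both $\zeta\leq\zeta_0$ and agreement of the $\kappa$-labels in the second coordinate. Exploiting the freedom in choosing these labels (as in the proof of Lemma~\ref{squareOff}, where new labels are picked distinct from all pre-existing values and each other), the plan is to arrange through the type information encoded in $f(\alpha)$ that $M_\alpha$-external vertices receive labels from disjoint $\kappa$-palettes across the delta-system, so that cross-pair coincidences are impossible, while any coincidences confined to $M_\alpha\cap M_\beta$-vertices are inherited from clause~(5) for one of $p_\alpha$ and $p_\beta$. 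The main difficulty is to verify that the palette assignment can be made globally coherent --- perhaps by a further application of Lemma~\ref{squareOff} to re-choose labels before the delta-system reduction --- while still fitting all of this data into the $<\kappa$-budget available for $f(\alpha)$.
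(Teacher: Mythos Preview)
Your proposal has a genuine gap in the verification of clause~(5) for the amalgamated condition, precisely in the cross-pair case you single out as the main obstacle. Consider $\gamma_1\in A^\alpha\setminus A^\beta$, $\gamma_2\in A^\beta\setminus A^\alpha$, and $\zeta,\zeta'\in B^\alpha\cap B^\beta$ (both \emph{internal} vertices) with $f^{\alpha,\eta}_{\gamma_1}(\zeta)=f^{\beta,\eta}_{\gamma_2}(\zeta)\neq f^{\alpha,\eta}_{\gamma_1}(\zeta')=f^{\beta,\eta}_{\gamma_2}(\zeta')$. Your claim that such a coincidence is ``inherited from clause~(5) for one of $p_\alpha$ and $p_\beta$'' is false: neither $p_\alpha$ nor $p_\beta$ contains both graph indices, so neither condition's clause~(5) says anything about the pair $(\gamma_1,\gamma_2)$. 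The disjoint-palette idea cannot rescue this case either: the second-coordinate labels at internal vertices are already fixed by the original conditions and are not available for re-choosing, while the first coordinates $x_{\gamma_1}\upharpoonright\zeta$ and $x_{\gamma_2}\upharpoonright\zeta$ may well agree for $\zeta$ below the split height of the two branches. Nothing in your coded data forces the required biconditional $s\frown((\vec{u},h^*))\forces\zeta\dot{\mathcal{E}}_{\gamma_1}\zeta'\leftrightarrow\zeta\dot{\mathcal{E}}_{\gamma_2}\zeta'$.

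The paper closes this gap by a mechanism you do not mention: it first invokes Lemma~\ref{nameAnalysis} to reduce each Boolean statement ``$\zeta\dot{\mathcal{E}}_\gamma\zeta'$'' (for $\gamma\in A^i$, $\zeta,\zeta'\in B^i$, and lower parts $s$ bounded by $\sup a^i$) to a name $\dot{y}^i_{\gamma,s}(\zeta,\zeta')$ that lives in a forcing of size $<\kappa$, and then encodes these small names as a third component $F_3$ of the regressive function. When the regressive values agree at $i<i'$, one locates in $A^{i'}$ a surrogate index $\gamma'$ occupying the same enumeration position that $\gamma_1$ occupies in $A^i$; agreement of the $F_2$-component forces $f^{i',\eta}_{\gamma'}$ to match $f^{i,\eta}_{\gamma_1}$ on the internal vertices, so clause~(5) of $p^{i'}$ applies to the pair $(\gamma',\gamma_2)$ (both in $A^{i'}$), and agreement of $F_3$ gives $\dot{y}^i_{\gamma_1,\tilde s}=\dot{y}^{i'}_{\gamma',\tilde s}$, which transfers the conclusion from $(\gamma',\gamma_2)$ to $(\gamma_1,\gamma_2)$. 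This name-analysis step is the essential missing idea in your sketch.
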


\begin{proof}
  We are given $\langle p_i\mid i<\kappa^+\rangle$ a sequence of conditions from $\mathbb{Q}_{\vec{u}}$. Define $(c^i,h^i,t^i,f^i):=p^i$, $\langle f^{i,\eta}_{\alpha}\mid(\eta,\alpha)\in t^i\rangle:=f^i$ and $a^i:=a^{p^i}$. We use our ability to extend conditions as in lemma \ref{squareOff} to assume there are $A^i$ and $B^i$ such that $t^i = (a^i\cap\sup a^i)\times A^i$ and for all $(\eta,\alpha)\in t^i$ that $\dom f^{i,\eta}_{\alpha}=B^i$.

  For each $i<\kappa^+$, $\alpha\in A^i$, $\zeta,\zeta'\in B^i$ and lower part $s$ we have ``$\zeta \dot{\mathcal{E}}_{\alpha}\zeta'$'' a binary name, so by Lemma \ref{nameAnalysis} we can take $h'\leq h^i$ and a $\mathbb{R}_{\max s}\times\mathbb{B}(\max s, \gamma)$-name $\dot{y}^i_{\alpha,s}(\zeta,\zeta')$ for some $\gamma<\kappa$ with $s\frown((\vec{u},h'))\forces \zeta \dot{\mathcal{E}}_{\alpha}\zeta'\leftrightarrow \dot{y}^i_{\alpha,s}(\zeta,\zeta')$. For each $i$ we will use the $\kappa$-closure of upper parts to assume, by shrinking as necessary, that $s\frown((\vec{u},h^i))$ forces this for all such $\alpha$, $\zeta$ and $\zeta'$ and for all lower parts $s$ with $\kappa(\max s)\leq\sup a^i$ and the third co-ordinate of $\max s$ equal to zero.

  Enumerate $\bigcup_{i<\kappa^+}A^i\subseteq\kappa^{+3}$ as $\{\beta(j) \mid j<\kappa^+\}$, and for each $i<\kappa^+$ enumerate $R^i:=\{j<\kappa^+\mid \beta(j)\in A^i\}$ in increasing order as $\{j^i_{\epsilon}\mid\epsilon<\mu^i\}$ for some $\mu^i<\kappa$. Fix $\{ t(k)\mid k<\kappa^+\}$ an enumeration of points in $T$ (the tree from which the branches $x_{\alpha}$ come) and define $T^i$ to be the set of $k<\kappa^+$ such that $f^{i,\eta}_{\alpha}(\zeta)=(t(k),\nu)$ for some $\eta$, $\alpha$, $\zeta$ and $\nu$. Construct functions as follows:
  \begin{itemize}
    \item $F_1(i)=(c^i,\mu^i, R^i\cap i, B^i\cap i,T^i\cap i)$.
    \item $F_2(i)$ is the set of tuples $(\eta, \epsilon,\zeta, k,\nu)$ such that $\eta\in a^i\cap\sup a^i$, $\epsilon<\mu^i$, $\zeta<i$, $k<i$ and $f^{i,\eta}_{\beta(j^i_{\epsilon})}(\zeta)=(t(k),\nu)$.
    \item $F_3(i)$ is the set of tuples $(\epsilon, s, \zeta,\zeta', \dot{y}^i_{\beta(j^i_{\epsilon}),s}(\zeta,\zeta'))$ for $\epsilon<\mu^i$, $s$ a lower part of the form described above, and $\zeta,\zeta'\in B^i\cap i$.
  \end{itemize}
  Then we define $F(i)=(F_1(i),F_2(i),F_3(i))$. Note that $F(i)$ will be a member of
    $$(V_{\kappa}^2\times([i]^{<\kappa})^3)\times(\kappa^2\times i^2\times\kappa)^{<\kappa}\times(\kappa\times V_{\kappa}\times i^2\times V_{\kappa})^{<\kappa}.$$
  Fix an injection $G$ from
    $$(V_{\kappa}^2\times([\kappa^+]^{<\kappa})^3)\times(\kappa^2\times (\kappa^+)^2\times\kappa)^{<\kappa}\times(\kappa\times V_{\kappa}\times (\kappa^+)^2\times V_{\kappa})^{<\kappa}$$
  to $\kappa^+$. We have $\kappa^{<\kappa}=\kappa$ which implies $(\kappa^+)^{<\kappa}=\kappa^+$ so we can find a club $C_0$ such that for all points $i$ in $C_0\cap\cof\kappa$,
    $$G``((V_{\kappa}^2\times([i]^{<\kappa})^3)\times(\kappa^2\times i^2\times\kappa)^{<\kappa}\times(\kappa\times V_{\kappa}\times i^2\times V_{\kappa})^{<\kappa})\subseteq i.$$
  This will make $G\circ F:\kappa^+\rightarrow\kappa^+$ regressive on $C_0$.

  Define $C_1$ to be the club subset of $\kappa^+$ consisting of points $i'$ such that for all $i<i'$ we have that $R^i, B^i, T^i\subseteq i'$ and for all $\alpha\neq\beta$ in $A^i$ that $x_{\alpha}\upharpoonright i'\neq x_{\beta}\upharpoonright i'$.

  We will prove that the regressive function $G\circ F$ and club $C_0\cap C_1$ together serve as witnesses to the strong $\kappa^+$-chain condition. So given $i<i'$ in $C_0\cap C_1$ such that $G(F(i))=G(F(i'))$ we wish to show that $p^i$ is compatible with $p^{i'}$. Note that the properties of $C_1$ plus the fact that $F_1(i)=F_1(i')$ mean that $R^i\cap R^{i'}$, $R^i-R^{i'}$ and $R^{i'}-R^i$ are positioned in increasing order as subsets of $\kappa^+$, and likewise for $B^i$ and $T^i$.

  First consider any $\eta\in a^i\cap\sup a^i=a^{i'}\cap\sup a^{i'}$, $\alpha\in A^i\cap A^{i'}$ and $\zeta\in B^i\cap B^{i'}$. It is clear that the first co-ordinates of $f^{i,\eta}_{\alpha}(\zeta)$ and $f^{i',\eta}_{\alpha}(\zeta)$ agree, since they are just $x_{\alpha}\upharpoonright\zeta$; say this is equal to $t(k)$ and then as $k\in T^i$ and $i'\in C_1$ we get $k<i'$. Let $\alpha=:\beta(j)$, with $j\in R^i\cap R^{i'}$ so by the increasing enumeration, $j=:j^i_{\epsilon}=j^{i'}_{\epsilon}$ for some $\epsilon<\mu^i=\mu^{i'}$. We have $\zeta\in B^i\subseteq i'$. Thus the tuple $(\eta,\epsilon,\zeta,k,\pi_2(f^{i',\eta}_{\beta(j^{i'}_{\epsilon})}(\zeta)))$ will be a member of $F_2(i')$ and so also of $F_2(i)$ and we have $f^{i,\eta}_{\alpha}(\zeta)=f^{i',\eta}_{\alpha}(\zeta)$.

  The preceding argument allows us to define a putative lower bound $p^*=(c^*,h^*,t^*,f^*)$ for $p^i$ and $p^{i'}$ given by $c^*=c^i=c^{i'}$, $h^*$ any upper part below $h^i$ and $h^{i'}$, $t^*=t^i\cup t^{i'}$, and $f^{*,\eta}_{\alpha}$ equal to either $f^{i,\eta}_{\alpha}\cup f^{i',\eta}_{\alpha}$, $f^{i,\eta}_{\alpha}$ or $f^{i',\eta}_{\alpha}$ depending on whether $\alpha$ is in $A^i\cap A^{i'}$, $A^i-A^{i'}$ or $A^{i'}-A^i$ respectively. It is clear that this $p^*$ will satisfy the first four clauses of the definition of $\mathbb{Q}_{\vec{u}}$ so it remains to show the fifth.

  Define $a^*:=a^i=a^{i'}$. We will be given $\eta\in a^*\cap\sup a^*$, $\alpha,\beta\in t^{*,\eta}$, $s$ harmonious with $c^*$ past $\eta$ and $\zeta,\zeta'\in \dom f^{*,\eta}_{\alpha}\cap f^{*,\eta}_{\beta}$ such that $f^{*,\eta}_{\alpha}(\zeta) = f^{*,\eta}_{\beta}(\zeta)\neq f^{*,\eta}_{\alpha}(\zeta')=f^{*,\eta}_{\beta}(\zeta')$. We wish to show that
  $$s\frown((\vec{u},h^*))\forces \zeta\dot{\mathcal{E}}_{\alpha}\zeta'\leftrightarrow\zeta\dot{\mathcal{E}}_{\beta}\zeta'.$$
  We see that
  $$(\alpha,\zeta),(\alpha,\zeta'),(\beta,\zeta),(\beta,\zeta')\in (A^i\times B^i)\cup(A^{i'}\times B^{i'}),$$
  which compels that either all the co-ordinates occur in a single one of $A^i\times B^i$ or $A^{i'}\times B^{i'}$, from which the result is obvious, or (without loss of generality) that we have one of the following two cases.

  \begin{case1}
  $\alpha,\beta \in A^i\cap A^{i'}$, $\zeta\in B^i-B^{i'}$ and $\zeta'\in B^{i'}-B^i$.
  \end{case1}

  We may assume $\alpha\neq\beta$. The definition of $C_1$ and the fact that $\alpha,\beta \in A^i$ ensures that $x_{\alpha}\upharpoonright i'\neq x_{\beta}\upharpoonright i'$. But $B^{i'}\cap i'=B^i\cap i$ so we must have have $\zeta'\geq i'$, giving $x_{\alpha}\upharpoonright \zeta'\neq x_{\beta}\upharpoonright \zeta'$. This contradicts $f^{*,\eta}_{\alpha}(\zeta')=f^{*,\eta}_{\beta}(\zeta')$.

  \begin{case2}
  $\alpha\in A^i-A^{i'}$, $\beta\in A^{i'}-A^i$ and $\zeta,\zeta'\in B^i\cap B^{i'}$.
  \end{case2}

  Take $j$ such that $\beta(j)=\alpha$, $j'$ such that $\beta(j')=\beta$, $\epsilon$ such that $j^i_{\epsilon}=j$ and $\epsilon'$ such that $j^{i'}_{\epsilon'}=j'$. Take $k$ such that $x_{\alpha}\upharpoonright\zeta=x_{\beta}\upharpoonright\zeta=t(k)$ and $k'$ such that $x_{\alpha}\upharpoonright\zeta'=x_{\beta}\upharpoonright\zeta=t(k')$; note that $k,k'\in T^i\cap T^{i'}\subseteq i$. Likewise $\zeta,\zeta'< i$. Combining all this information tells us that the tuples $(\eta, \epsilon, \zeta, k, \nu)$ and $(\eta, \epsilon, \zeta', k', \nu')$ appear in $F_2(i)$ for some $\nu$ and $\nu'$. Hence they also appear in $F_2(i')$ and we have
  $$f^{i',\eta}_{\beta(j^{i'}_{\epsilon})}(\zeta)=f^{i,\eta}_{\beta(j^i_{\epsilon})}(\zeta)=f^{*,\eta}_{\alpha}(\zeta)=f^{*,\eta}_{\beta}(\zeta)=f^{i',\eta}_{\beta}(\zeta)$$
  and similarly for $\zeta'$. These equalities occur entirely inside $p^{i'}$ so we can invoke its conditionhood to get
  $$s\frown((\vec{u},h^{i'}))\forces \zeta\dot{\mathcal{E}}_{\beta(j^{i'}_{\epsilon})}\zeta'\leftrightarrow\zeta\dot{\mathcal{E}}_{\beta}\zeta'.$$

  Define $\tilde{s}$ to be equal to $s$ except that the third co-ordinate of $\max \tilde{s}$ should be trivial. We will have $(\epsilon, \tilde{s}, \zeta,\zeta', \dot{y}^i_{\beta(j^i_{\epsilon}),\tilde{s}}(\zeta,\zeta'))$ in $F_3(i)$ and thus in $F_3(i')$, with $\dot{y}^i_{\alpha,\tilde{s}}(\zeta,\zeta')=\dot{y}^i_{\beta(j^i_{\epsilon}),\tilde{s}}(\zeta,\zeta'))=\dot{y}^{i'}_{\beta(j^{i'}_{\epsilon}),\tilde{s}}(\zeta,\zeta'))$. Since $s$ is below $\tilde{s}$ we know
  $$s\frown((\vec{u},h^i))\forces\zeta\dot{\mathcal{E}}_{\alpha}\zeta'\leftrightarrow\dot{y}^i_{\alpha,\tilde{s}}(\zeta,\zeta')$$
  and
  $$s\frown((\vec{u},h^{i'}))\forces\zeta\dot{\mathcal{E}}_{\beta(j^{i'}_{\epsilon})}\zeta'\leftrightarrow\dot{y}^{i'}_{\beta(j^{i'}_{\epsilon}),\tilde{s}}(\zeta,\zeta').$$
  Putting all these results together yields what we want.
\end{proof}

\section{Construction of the model} \label{construction_of_model}

We now perform an iteration of length $\kappa^{+4}$ of preparatory forcings, under the following assumptions. Note that the behaviour of the power-set function given here can be obtained from any model in which $\kappa$ is supercompact whilst preserving supercompactness.

\begin{setting}
  Let $\kappa$ be supercompact, $2^{\kappa}=\kappa^+$, $2^{\kappa^+}=\kappa^{+3}$, $2^{\kappa^{+3}}=\kappa^{+4}$ and $\lambda<\kappa$ regular uncountable.
\end{setting}

\subsection{The forcing construction}

Fix $\langle x_{\epsilon}\mid \epsilon<\kappa^{+3}\rangle$ an enumeration of the branches of the complete binary tree $T$ on $\kappa^+$.

\begin{lemma} \label{propertiesOfP}
  Let $\mathbb{P}$ be a $<\kappa$-support iteration of length $\kappa^{+4}$ of forcings that are either trivial or of the form $\mathbb{Q}_{\vec{u}}$ for some $\vec{u}$. Then $\mathbb{P}$ is $\kappa$-directed closed and has the $\kappa^+$-chain condition. Also $2^{\kappa}=2^{\kappa^+}=\kappa^{+3}$ at intermediate stages, and $2^{\kappa}=\kappa^{+4}$ at the end of the iteration.
\end{lemma}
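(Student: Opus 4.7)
The plan is to establish the three claims (directed closure, chain condition, cardinal arithmetic) by induction on the length $\gamma \leq \kappa^{+4}$ of the iteration, using the results proved for the individual factors together with standard preservation arguments for $<\kappa$-support iterations at an inaccessible.

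For $\kappa$-directed closure, each factor $\mathbb{Q}_{\vec{u}^\beta}$ is $\kappa$-compact by Lemma \ref{Qcompact}, so in particular $\kappa$-directed closed. At successor stages the two-step iteration of a $\kappa$-directed closed poset with a name for a $\kappa$-directed closed poset is $\kappa$-directed closed. At limit stages $\gamma$, the union of the supports of a directed family of fewer than $\kappa$ conditions has size less than $\kappa$ (since $\kappa$ is regular and each support has size $<\kappa$), so one builds a lower bound coordinate-by-coordinate using the inductive hypothesis. As a by-product, $\kappa$-directed closure preserves $\kappa^{<\kappa} = \kappa$ throughout the iteration.

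For the $\kappa^+$-chain condition, each factor satisfies the strong $\kappa^+$-cc by Lemma \ref{Qstrong}, and I would show inductively that $\mathbb{P}_\gamma$ inherits it. Given a sequence $\langle p_\alpha : \alpha < \kappa^+\rangle$, the supports have size $<\kappa$, so the $\Delta$-system lemma (applied using $\kappa^{<\kappa}=\kappa$) thins the sequence to one whose supports form a $\Delta$-system with root $r$. When $r$ is bounded in $\gamma$, the inductive hypothesis on $\mathbb{P}_{\sup r + 1}$ supplies a regressive function and club matching compatibility classes on the root, and the strong cc at each coordinate in $r$ via Lemma \ref{Qstrong} is combined with this to produce the desired witnesses on the full conditions. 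When $r$ is unbounded in $\gamma$ (only possible at limit $\gamma$ with small cofinality), a further thinning exploiting the $<\kappa$-support reduces to the bounded case. Fodor's lemma then assembles the partial data into the single regressive function and club required by the definition.

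For the cardinal arithmetic, I would first note that $|\mathbb{Q}_{\vec{u}}| \leq \kappa^+$: the components $c$, $t$, $f$ have size $<\kappa$, contributing $\kappa^{<\kappa} = \kappa$ choices in total, while the upper part $h \subseteq V_\kappa \times V_\kappa$ contributes at most $2^\kappa = \kappa^+$ choices. With $<\kappa$-support and the Setting assumptions $2^\kappa = \kappa^+$, $2^{\kappa^+} = \kappa^{+3}$, $2^{\kappa^{+3}} = \kappa^{+4}$, a routine computation yields $|\mathbb{P}_\gamma| \leq \kappa^{+3}$ for $\gamma \leq \kappa^{+3}$ and $|\mathbb{P}_{\kappa^{+4}}| \leq \kappa^{+4}$. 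By $\kappa^+$-cc and nice-name counting, $2^\kappa \leq |\mathbb{P}_\gamma|^\kappa$ and $2^{\kappa^+} \leq |\mathbb{P}_\gamma|^{\kappa^+}$, which (using $(2^{\kappa^+})^{\kappa^+} = 2^{\kappa^+}$ and the analogue at $\kappa^{+3}$) gives the upper bounds claimed. The lower bound $2^{\kappa^+} \geq \kappa^{+3}$ is inherited from the ground model since the $\kappa^+$-cc preserves $\kappa^{+3}$, while $2^\kappa \geq \kappa^{+3}$ at sufficiently late intermediate stages and $2^\kappa \geq \kappa^{+4}$ at the end follow from the pairwise distinct generic functions $g_\beta$ added by the individual $\mathbb{Q}_{\vec{u}^\beta}$-forcings. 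The main obstacle I expect is the preservation of the strong $\kappa^+$-cc at limit stages: the regressive-function formulation requires a coherent assembly of compatibility witnesses across the iteration, and the interplay between the $<\kappa$-support, Fodor's lemma, and the strong form of the cc at each factor is precisely what makes this delicate step go through.
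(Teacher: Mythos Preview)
Your approach to $\kappa$-directed closure matches the paper's. The other two parts diverge in ways worth noting.

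\textbf{Chain condition.} The paper does not carry out the $\Delta$-system-and-Fodor argument you outline. Instead it observes that $\kappa$-compactness implies a property called \emph{countably parallel closed} (any two pointwise-compatible descending $\omega$-sequences have a common lower bound), and then invokes \cite[Theorem~1.2]{5author}: a $<\kappa$-support iteration of forcings that are countably parallel closed, $\kappa$-closed, and strongly $\kappa^+$-cc is itself strongly $\kappa^+$-cc. This black-box is precisely the ``framework'' result that the whole line of papers is built on, and it absorbs exactly the delicate limit-stage assembly you flag as the main obstacle. Your direct approach is the classical strategy, but your sketch is too vague at the crucial point (combining the regressive witnesses across coordinates of the root) to be convincing as stated; the countable-parallel-closure hypothesis is what the cited theorem uses to make that step go through, and you never mention it.

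\textbf{Cardinal arithmetic.} Your bound $|\mathbb{Q}_{\vec{u}}|\leq\kappa^+$ is wrong at intermediate stages. The upper part $h$ is a partial function $V_\kappa\rightharpoonup V_\kappa$, and while $V_\kappa$ itself is preserved by the $\kappa$-closed iteration, the set of such functions is not: each non-trivial stage adds a new one (the generic $g$), so $(2^\kappa)^{V^{\mathbb{P}_\gamma}}$ climbs to $\kappa^{+3}$, and with it $|\mathbb{Q}_{\vec{u}}|^{V^{\mathbb{P}_\gamma}}\leq\kappa^{+3}$ rather than $\kappa^+$. The paper therefore runs the induction by simultaneously maintaining $|\mathbb{P}_\gamma|=\kappa^{+3}$ and $(2^{\kappa})^{V^{\mathbb{P}_\gamma}}\leq\kappa^{+3}$: the first feeds the nice-name count for the second, and the second feeds the size of the next iterand. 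Your final numerical conclusions happen to be correct, but the intermediate bookkeeping as written does not support them.
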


\begin{proof}
  We have from lemma \ref{Qcompact} that the $\mathbb{Q}_{\vec{u}}$-forcings are $\kappa$-compact, hence $\kappa$-directed closed. It is clear that a $<\kappa$-support iteration of such forcings will remain $\kappa$-directed closed.

  A forcing is said to be {\em countably parallel closed} if any two descending $\omega$-sequences in it that are pointwise compatible have a common lower bound. It is clear that this property follows from $\kappa$-compactness. We also have that the component forcings are $\kappa$-closed and have the strong $\kappa^+$-chain condition, so we can invoke \cite[Theorem 1.2]{5author} to deduce that $\mathbb{P}$ has the strong $\kappa^+$-chain condition, and hence the usual $\kappa^+$-cc.

  Call the intermediate stages of the forcing $\mathbb{P}_{\gamma}$ for $\gamma<\kappa^{+4}$. We can prove by induction on $\gamma$ that $|\mathbb{P}_{\gamma}|=\kappa^{+3}$ and $(2^{\kappa^+})^{V^{\mathbb{P}_{\gamma}}}=(2^{\kappa^+}\times\kappa^{+3})^{\kappa}=\kappa^{+3}$. The latter follows from the former by the usual analysis of names together with the $\kappa^+$-cc. Conversely, conditions from $\dot{\mathbb{Q}}_{\vec{u}}$ for $\vec{u}\in V^{\mathbb{P}_{\gamma}}$ are members of $(V_{\kappa}\times 2^{\kappa}\times [\kappa\times\kappa^{+3}]^{<\kappa}\times [\kappa\times\kappa^{+3}\times\kappa]^{<\kappa})^{V^{\mathbb{P}_{\gamma}}}$, where we drop the first co-ordinate of the $f^{\eta}_{\epsilon}(\zeta)$ since it can be deduced from $\zeta$ and $\epsilon$. Thus we can use the $\kappa^+$-cc of $\mathbb{P}_{\gamma}$ and the fact that $(2^{\kappa})^{V^{\mathbb{P}_{\gamma}}}=\kappa^{+3}$ to encode them as member of $\kappa^{+3}$. Hence $|\mathbb{P}_{\gamma+1}|=\kappa^{+3}$ and the induction proceeds. Limit stages for $\gamma<\kappa^{+4}$ are immediate by the $<\kappa$-support, and then at the end we get $2^{\kappa}=\kappa^{+4}$ are desired.
\end{proof}

Define $\mathbb{L}$ to be the Laver preparatory forcing to make $\kappa$ indestructible under $\kappa$-directed closed forcing, as given in \cite{Laver}. After this forcing we still have $2^{\kappa^{+3}}=\kappa^{+4}$ so by a result from \cite{ShelahDiamond} we have a $\lozenge_{\kappa^{+4}}(\kappa^{+4}\cap\cof(\kappa^{++}))$-sequence $\langle S_{\gamma}\mid\gamma<\kappa^{+4}\rangle$. We will perform an iteration $\mathbb{P}$ of the type described above
but before doing so we wish to establish a list in $V^{\mathbb{L}}$ of all possible $\mathbb{P}$-names for subsets of $\kappa$, regardless of the sequence of $\vec{u}_{\gamma}$ we end up using to construct $\mathbb{P}$. We can do so by inductively building a list of possible $\mathbb{P}_{\gamma}$-names for subsets of $\kappa$:
\begin{itemize}
  \item For $\gamma=\delta+1$ a $\mathbb{P}_{\gamma}$-name for a subset of $\kappa$ is a $\mathbb{P}_{\delta}$-name for a $\dot{\mathbb{Q}}_{\delta}$-name for a subset of $\kappa$. Such a $\dot{\mathbb{Q}}_{\delta}$-name is, by the $\kappa^+$-cc, a function from $\kappa$ to $\dot{\mathbb{Q}}_{\delta}\times 2$ and as in the proof of \ref{propertiesOfP} members of $\dot{\mathbb{Q}}_{\delta}$ can be encoded as members of $(2^{\kappa})^{V^{\mathbb{L}*\mathbb{P}_{\delta}}}$. We note that this encoding can be done merely be looking at the shape of possible conditions, without knowledge of $\vec{u}_{\delta}$. The list of possible $\mathbb{P}_{\delta}$-names for subsets of $\kappa$ can now be used to list all the possible $\mathbb{P}_{\gamma}$-names for subsets of $\kappa$.
  \item For $\gamma$ limit the listing is straightforward because of the $\kappa^+$-cc.
\end{itemize}
Members of $\mathcal{U}^{V^{\mathbb{P}*\mathbb{L}}}$ are essentially subsets of $(2^{\kappa})^{V^{\mathbb{P}*\mathbb{L}}}$ so our listing allows us to translate between subsets of $\kappa^{+4}$ in $V^{\mathbb{L}}$ and anything that could possibly turn out to be a $\mathbb{P}$-name for a member of $\mathcal{U}$.

We are now ready to define the $<\kappa$-support iteration $\mathbb{P}=\langle \mathbb{P}_{\gamma},\mathbb{Q}_{\delta}\mid \gamma\leq\kappa^{+4},\delta<\kappa^{+4}\rangle$. At stage $\gamma$, apply the translation just established to $S_{\gamma}\subseteq\kappa^{+4}$. If the result is a $\mathbb{P}$-name for a member of $\mathcal{U}$ that is in fact already a $\mathbb{P}_{\gamma}$-name then instantiate this name in $\mathbb{P}_{\gamma}$ and call the result $\vec{u}^{\gamma}$. Use \ref{propertiesOfP} to fix $\langle \dot{\mathcal{E}}^{\gamma}_{\epsilon}\mid \epsilon<\kappa^{+3}\rangle$ an enumeration of the $\mathbb{R}_{\vec{u}^{\gamma}}$-names for graphs on $\kappa^+$. Define $\mathbb{Q}_{\gamma}=\mathbb{Q}_{\vec{u}^{\gamma}}$, working with respect to the sequences $\langle x_{\epsilon}\mid \epsilon<\kappa^{+3}\rangle$ and $\langle \dot{\mathcal{E}}^{\gamma}_{\epsilon}\mid \epsilon<\kappa^{+3}\rangle$. Otherwise take $\mathbb{Q}_{\gamma}$ to be the trivial forcing.

Let $G*H$ be $\mathbb{L}*\mathbb{P}$-generic. If $\mathbb{Q}_{\gamma}$ is non-trivial then $H(\gamma)$ will add a potential upper part which we call $h^{\gamma}$, and a sequence of functions which we call $F^{\gamma}=\langle F^{\gamma,\eta}_{\alpha}\mid \eta =\kappa(\vec{w}), \vec{w}\in\dom h^{\gamma}, \alpha<\kappa^{+3}\rangle$.

\begin{lemma} \label{staty1}
  Let $\vec{u}\in\mathcal{U}^{V[G][H]}$. Then in $V[G][H]$ there is a stationary set of $\gamma<\kappa^{+4}$ of cofinality $\kappa^{++}$ such that $\vec{u}^{\gamma}$ is the restriction of $\vec{u}$ to $V[G][H\upharpoonright \gamma]$ and $\mathbb{Q}_\gamma=\mathbb{Q}_{\vec{u}^{\gamma}}$.
\end{lemma}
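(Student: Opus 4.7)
The plan is to apply the $\lozenge_{\kappa^{+4}}(\cof(\kappa^{++}))$-sequence after reflecting $\vec{u}$ down to a $\mathbb{P}_\gamma$-name on a club of $\gamma$ of cofinality $\kappa^{++}$. Working in $V[G]$, fix a $\mathbb{P}$-name $\dot{\vec{u}}$ for $\vec{u}$ and let $A \subseteq \kappa^{+4}$ be its code under the translation set up immediately before the lemma. Because this coding is defined inductively from the shape of possible conditions and $|\mathbb{P}_\gamma| = \kappa^{+3}$ at each stage, a bookkeeping argument produces a club $C_0 \subseteq \kappa^{+4}$ such that for $\gamma \in C_0$ the initial segment $A \cap \gamma$ codes a $\mathbb{P}_\gamma$-name; denote by $\vec{u}^{(\gamma)}$ its interpretation in $V[G][H \upharpoonright \gamma]$.

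The heart of the argument is to show that for $\gamma$ of cofinality $\kappa^{++}$ in a second club $C_1$ the sequence $\vec{u}^{(\gamma)}$ is exactly the restriction of $\vec{u}$ to $V[G][H \upharpoonright \gamma]$ and lies in $\mathcal{U}^{V[G][H \upharpoonright \gamma]}$. This is a reflection argument using that $\mathbb{L} * \mathbb{P}$ is $\kappa$-directed closed (so $V_\kappa$ is preserved) and that $\mathbb{P}$ has the $\kappa^+$-cc (Lemma \ref{propertiesOfP}). Under these hypotheses every $\mathbb{P}$-name for a subset of $V_\kappa$ has support of size at most $\kappa$, and so is a $\mathbb{P}_\gamma$-name whenever $\cf(\gamma) > \kappa^+$. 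Clauses (1)--(5) of the definition of $\mathcal{U}'$ all quantify over subsets of $V_\kappa$ or partial functions $V_\kappa \to V_\kappa$, so they reflect routinely from $V[G][H]$ down to $V[G][H \upharpoonright \gamma]$ on such a club $C_1$.

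The main obstacle is verifying the capturing clause (6), which quantifies over upper parts $h$ and $^*$-open sets $X$ of lower parts; both are again objects coded by subsets of $V_\kappa$, and membership $s \frown ((\vec{u}, h)) \in X$ is decidable from those codes, so the same $\kappa^+$-cc reflection shows that on $C_1$ the upper parts and $^*$-open sets available in $V[G][H]$ concerning $\vec{u}^{(\gamma)}$ coincide with those in $V[G][H \upharpoonright \gamma]$, and capturing reflects. Finally, by the diamond principle $D := \{\gamma \in \kappa^{+4} \cap \cof(\kappa^{++}) : A \cap \gamma = S_\gamma\}$ is stationary in $V[G]$; by the $\kappa^+$-cc of $\mathbb{P}$ it remains stationary in $V[G][H]$. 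Then $D \cap C_0 \cap C_1$ is the required stationary set: for each of its members $\gamma$, the translation of $S_\gamma = A \cap \gamma$ at stage $\gamma$ of the iteration equals $\vec{u}^{(\gamma)}$, which is the restriction of $\vec{u}$ and lies in $\mathcal{U}^{V[G][H \upharpoonright \gamma]}$, so the construction of this section sets $\mathbb{Q}_\gamma = \mathbb{Q}_{\vec{u}^\gamma}$.
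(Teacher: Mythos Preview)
Your proposal is correct and follows essentially the same approach as the paper: code $\vec{u}$ as a subset $A\subseteq\kappa^{+4}$, apply the diamond sequence to get a stationary set where $S_\gamma=A\cap\gamma$, and intersect with a club on which the restriction to $V[G][H\upharpoonright\gamma]$ lands in $\mathcal{U}^{V[G][H\upharpoonright\gamma]}$. The paper compresses your reflection argument for clauses (1)--(6) into the single observation that all the defining properties of $\mathcal{U}$ are $\Pi^1_2$ over $V_\kappa$, but the content is the same; you are also slightly more explicit than the paper about preserving the stationarity of the diamond set when passing from $V[G]$ to $V[G][H]$ via the $\kappa^+$-cc.
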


\begin{proof}
  There is a club of points $\gamma$ in $\kappa^{+4}$ where the members of $(2^{\kappa})^{V[G][H]}$ listed as above by ordinals below $\gamma$ are exactly $\bigcup_{\delta<\gamma}(2^{\kappa})^{V[G][H\upharpoonright\delta]}$. For such $\gamma$ of cofinality at least $\kappa^+$ the $\kappa^+$-cc of $\mathbb{P}_{\gamma}$ makes this equal to $(2^{\kappa})^{V[G][H\upharpoonright\gamma]}$. Take a $\mathbb{P}$-name for $\vec{u}$ and use the above translation to convert it into a subset of $\kappa^{+4}$; the diamond sequence then gives us a stationary set of $\gamma<\kappa^{+4}$ of cofinality $\kappa^{++}$ such that $\vec{u}^{\gamma}$ is given by restricting $\vec{u}$ to subsets of $\kappa$ that belong to $V[G][H\upharpoonright\gamma]$. Now all the properties in the definition of $\mathcal{U}$ are $\Pi^1_2$ over $V_{\kappa}$, so there a club of $\gamma$ where the restriction of $\vec{u}$ to $V[G][H\upharpoonright\gamma]$ is a member of $\mathcal{U}^{V[G][H\upharpoonright\gamma]}$. Combining these two facts gives a stationary set of $\gamma$ where $\vec{u}$ restricts to $\vec{u}^{\gamma}$ and $\mathbb{Q}_\gamma=\mathbb{Q}_{\vec{u}^{\gamma}}$.
\end{proof}

Observe that by the properties of the Laver preparation and the fact that $\mathbb{P}$ is $\kappa$-directed closed (by lemma \ref{propertiesOfP}) we can take $j:V\rightarrow M$ witnessing that $\kappa$ is highly supercompact and $j(\mathbb{L})(\kappa)=\mathbb{P}$, and then find a master condition allowing us to extend $j$ to an embedding $j:V[G]\rightarrow M[G][H][I]$ where $I$ is generic for a highly closed forcing. We can then use the methods of section 2 to derive $\vec{u}\in\mathcal{U}^{V[G][H][I]}$ from $j$, and observe by the closure that in fact $\vec{u}\in V[G][H]$. It will then be possible to apply the above lemma to $\vec{u}$. However we will actually need to be more careful than this in the construction of our master condition, because we want to ensure that $h^{\gamma}\in \mathcal{F}_{\vec{u}}$ stationarily-often.

\begin{lemma}
  There is $\vec{u}\in\mathcal{U}^{V[G][H]}$ such that in $V[G][H]$ there is a stationary set of $\gamma<\kappa^{+4}$ of cofinality $\kappa^{++}$ such that $\vec{u}^{\gamma}$ is the restriction of $\vec{u}$ to $V[G][H\upharpoonright \gamma]$, $\mathbb{Q}_\gamma=\mathbb{Q}_{\vec{u}^{\gamma}}$, and $h^{\gamma} \in \mathcal{F}_{\vec{u}}$.
\end{lemma}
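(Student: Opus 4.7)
The plan is to refine the embedding-based construction behind Lemma~\ref{staty1} so that it controls not only the restriction of $\vec{u}$ and the identification of $\mathbb{Q}_\gamma$, but also the membership of the generic upper part $h^\gamma$ in $\mathcal{F}_{\vec{u}}$. I would start with $j_0:V\to M$ a sufficiently strong supercompactness embedding, arranged via the Laver preparation so that $j_0(\mathbb{L})(\kappa)=\mathbb{P}$, and lift standardly to $j:V[G]\to M[G]$. Since $\mathbb{P}$ is $\kappa$-directed closed (Lemma~\ref{propertiesOfP}), a master-condition argument produces a generic $I$ for the tail of $j(\mathbb{P})$ above stage $\kappa$ over $M[G][H]$ and lifts to $j:V[G][H]\to M[G][H][I]$; closure of $I$ ensures that any ultrafilter sequence derived from $j$ via Section~\ref{ultrafilter_sequences} lies in $V[G][H]$.

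The new ingredient is the choice of master condition at stage $\kappa$ of $j(\mathbb{P})$. Arrange, via the Laver function, that $j(\mathbb{Q}_\cdot)(\kappa)$ is $\mathbb{Q}_{\vec{u}}$ itself, so that $\vec{u}$ plays the dual role of the sequence being built and the index of the stage-$\kappa$ forcing. I then select the master condition so that the upper part $h_\kappa$ it forces into the generic equals $j(\vec{h})(\kappa)$, where $\vec{h}=\langle h^\gamma\mid\gamma<\kappa^{+4}\rangle$ is the sequence of generic upper parts coming from $H$. Since the $h^\gamma$'s (for $\gamma$ ranging over a club identified by the diamond sequence) are sufficiently coherent, and since $\mathbb{Q}_{\vec{u}}$ is $\kappa$-compact by Lemma~\ref{Qcompact}, such an amalgamation can be assembled inside $M[G][H]$ and serves as the upper-part coordinate of our master condition.

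I then run the Section~\ref{ultrafilter_sequences} construction using this lifted $j$. When choosing each $H^*_i$ by the usual $\kappa^{+5}$-closed recursion in the collapse ultrapower, I additionally arrange that the Boolean value $[h_\kappa \upharpoonright \{\vec{v}\mid\lh\vec{v}=i\}]_{u_i}$, which is non-zero by construction of the master condition, is placed into $\Fil(H^*_i)$; this makes $h_\kappa\in\mathcal{F}_{\vec{u}}$ by the very definition of $\mathcal{F}_{\vec{u},i}$. The statement ``the generic upper part at the stage-$\kappa$ forcing lies in $\mathcal{F}_{\vec{u}}$'' is then true in the image model, so by elementarity of $j$ applied to this statement, combined with the diamond-reflection argument of Lemma~\ref{staty1}, there is a stationary set of $\gamma<\kappa^{+4}$ of cofinality $\kappa^{++}$ at which $\vec{u}^\gamma=\vec{u}\upharpoonright V[G][H\upharpoonright\gamma]$, $\mathbb{Q}_\gamma=\mathbb{Q}_{\vec{u}^\gamma}$, and $h^\gamma\in\mathcal{F}_{\vec{u}}$.

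The hard part is verifying that the candidate $j(\vec{h})(\kappa)$ is genuinely a valid master-condition upper part: one must show that the amalgamation respects the fifth clause in the definition of $\mathbb{Q}_{\vec{u}}$ (concerning the $\dot{\mathcal{E}}_\alpha$-names), that it is compatible with the lower parts already fixed by the lifting through $\mathbb{P}$, and that the resulting $I$ can be built generically over $M[G][H]$ below it. This weaves together the diamond predictions, the Laver function choices, and the coherence of the generic upper parts, and is the principal technical obstacle in the argument.
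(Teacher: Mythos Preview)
Your outline has a circularity that the paper carefully avoids. You want to arrange $j(\mathbb{Q}_\cdot)(\kappa)=\mathbb{Q}_{\vec{u}}$ and build the master condition so that $h_\kappa\in\mathcal{F}_{\vec{u}}$, but $\vec{u}$ is \emph{derived from the lifted embedding}, which in turn depends on the master condition and on the generic $I$ you build below it. You cannot know $\vec{u}$, let alone $\mathcal{F}_{\vec{u}}$, at the time you are choosing the master condition. Moreover, the iteration $\mathbb{P}$ is indexed by $\gamma<\kappa^{+4}$ and guided by the diamond sequence, not by the Laver function, so there is no mechanism to ``arrange via the Laver function'' that some particular stage is $\mathbb{Q}_{\vec{u}}$. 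The elementarity step at the end is also misapplied: a statement about the single object $h_\kappa$ in the target model does not reflect via $j$ to give stationarily many $\gamma$ with $h^\gamma\in\mathcal{F}_{\vec{u}}$; the domains simply do not match up.

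The paper's argument is structurally different. The master condition is built coordinate by coordinate at \emph{every} $j(\gamma)$ for $\gamma<\kappa^{+4}$, and the crucial point is the definition of $\tilde{c}^\gamma$: its domain is enlarged past $\kappa$ to include all those $\vec{w}\in\mathcal{U}^{V[G][H]}$ with $\kappa(\vec{w})=\kappa$ whose restriction to $V[G][H\upharpoonright\gamma]$ is some $\vec{u}^\gamma\upharpoonright i$ and for which $\bigwedge_{(c,h,t,f)\in H(\gamma)}j(h)(\vec{w})\neq 0$, with $\tilde{c}^\gamma(\vec{w})$ set equal to that infimum. Only \emph{after} forcing below this master condition and lifting $j$ does one derive $\vec{u}$ (and the auxiliary $\vec{u}^*$, $K^*_i$). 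Then, for each $\gamma$ in the stationary set already provided by Lemma~\ref{staty1}, one checks directly that $\vec{u}\upharpoonright i\in\dom\tilde{c}^\gamma$ and that $\tilde{c}^\gamma(\vec{u}\upharpoonright i)\geq j(K^*_i)(\vec{u}^*\upharpoonright i)$; since forcing below $q_\gamma$ makes $\tilde{c}^\gamma$ an initial segment of $j(h^\gamma)$, this yields $j(h^\gamma)(\vec{u}\upharpoonright i)\geq j(K^*_i)(\vec{u}^*\upharpoonright i)$, and unwinding the definition of $\mathcal{F}_{\vec{u},i}=\Fil(K^*_i)$ gives $h^\gamma_i\in\mathcal{F}_{\vec{u},i}$. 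The verification that $q_\gamma\in j(\mathbb{Q}_\gamma)$, including the fifth clause, is a separate claim handled by a density argument. Your proposal is missing this extension of $\tilde{c}^\gamma$ above $\kappa$, which is the actual mechanism linking $h^\gamma$ to $\mathcal{F}_{\vec{u}}$.
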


\begin{proof}
  Take $\mu$ large and $j:V\rightarrow M$ witnessing that $\kappa$ is $\mu$-supercompact with $j(\mathbb{L})(\kappa)=\mathbb{P}$ and $j(\mathbb{L})(\alpha)$ trivial for $\alpha\in(\kappa,\mu)$. We have $j$ fixing $G$ pointwise so we can extend $j$ to $j:V[G]\rightarrow M[G][H][I]$ where $I$ is some $j(\mathbb{L})/(\mathbb{L}*\mathbb{P})$-generic over $M$. We will now build a master condition in $j(\mathbb{P})$ by inductively defining a descending sequence $p_{\gamma}\in j(\mathbb{P}_{\gamma})$ for $\gamma<\kappa^{+4}$ such that $\forces p_{\gamma}\leq j``(H\upharpoonright\gamma)$.

  For $\gamma$ limit take $p_{\gamma}$ to be any lower bound of $\langle p_{\delta}\mid \delta<\gamma\rangle$, using that the forcing is highly closed. We will have $p_{\gamma+1}:=p_{\gamma}\frown(q_{\gamma})$ for $q_{\gamma}$ to be defined. We can force below $p_{\gamma}$ to lift $j$ to $j:V[G][H\upharpoonright\gamma]\rightarrow M[G][H][I][j(H\upharpoonright\gamma)]$. If $\mathbb{Q_{\gamma}}$ is the trivial forcing then so is $j(\mathbb{Q}_{\gamma})$ and we take $q_{\gamma}$ to be its unique member. Otherwise we set $q_{\gamma}=(\tilde{c}^{\gamma},\tilde{h}^{\gamma},\tilde{t}^{\gamma},\tilde{f}^{\gamma})$ with definitions as follows.
  $$\dom \tilde{c}^{\gamma} :=\dom h^{\gamma}\cup \left\{\vec{w} \in \mathcal{U}^{V[G][H]} \middle| \begin{array}{l}
    \exists i<\lambda: \vec{w}\cap V[G][H\upharpoonright\gamma]=\vec{u}^{\gamma}\upharpoonright i, h^{\gamma}_{<i}\in \mathcal{F}_{\vec{w}},\\
    \forall(c,h,t,f)\in H(\gamma): \vec{w}\in\dom j(h), \\
    \bigwedge_{(c,h,t,f)\in H(\gamma)}j(h)(\vec{w})\neq 0
  \end{array}\right\},$$
  with $\tilde{c}^{\gamma}(\vec{w}):=h^{\gamma}(\vec{w})$ for $\vec{w}\in\dom h^{\gamma}$, and
  $$\tilde{c}^{\gamma}(\vec{w}):=\bigwedge_{(c,h,t,f)\in H(\gamma)} j(h)(\vec{w})$$
  for $\vec{w}\in\dom\tilde{c}^{\gamma}-\dom h^{\gamma}$. We set
  $$\tilde{h}^{\gamma}:=\bigwedge_{(c,h,t,f)\in H(\gamma)} j(h),$$
  $\tilde{t}^{\gamma}:=(a^{(\tilde{c}^{\gamma},\tilde{h}^{\gamma})}\cap\kappa) \times j``\kappa^{+3}$, and $\dom (\tilde{f}^{\gamma})^{\eta}_{j(\alpha)}=j``\kappa^+$ and $(\tilde{f}^{\gamma})^{\eta}_{j(\alpha)}(j(\zeta))=j(F^{\gamma,\eta}_{\alpha}(\zeta))$ for all $\eta \in a^{(\tilde{c}^{\gamma},\tilde{h}^{\gamma})}\cap\kappa$, $\alpha\in\kappa^{+3}$ and $\zeta\in\kappa^+$.

  \begin{claim}
    $(\tilde{c}^{\gamma},\tilde{h}^{\gamma},\tilde{t}^{\gamma},\tilde{f}^{\gamma})\in j(\mathbb{Q}_{\gamma})$.
  \end{claim}

  \begin{proof}
    The requirement that $h^{\gamma}_{<i}\in \mathcal{F}_{\vec{w}}$ for those $\vec{w}\in \dom\tilde{c}^{\gamma}$ with $\kappa(\vec{w})=\kappa$ ensures that $\tilde{c}^{\gamma}$ is an acceptable first co-ordinate for a condition in $j(\mathbb{M}_{\vec{u}^{\gamma}})$. The first four clauses of the definition then follow from the fact that $j(\kappa)$ is large. For the fifth we are given $\eta \in a^{(\tilde{c}^{\gamma},\tilde{h}^{\gamma})}\cap\kappa$, $\alpha,\beta\in\kappa^{+3}$, $s$ a lower part for $j(\mathbb{R}_{\vec{u}^{\gamma}})$ that is harmonious with $\tilde{c}^{\gamma}$ past $\eta$, and $\zeta,\zeta'\in\kappa^+$ such that $(\tilde{f}^{\gamma})^{\eta}_{j(\alpha)}(j(\zeta))=(\tilde{f}^{\gamma})^{\eta}_{j(\beta)}(j(\zeta))\neq (\tilde{f}^{\gamma})^{\eta}_{j(\alpha)}(j(\zeta'))=(\tilde{f}^{\gamma})^{\eta}_{j(\beta)}(j(\zeta'))$. By elementarity this last assertion is equivalent to $F^{\gamma,\eta}_{\alpha}(\zeta)=F^{\gamma,\eta}_{\beta}(\zeta)\neq F^{\gamma,\eta}_{\alpha}(\zeta')=F^{\gamma,\eta}_{\beta}(\zeta')$.

    If $\kappa(\max s)<\kappa$ then use Lemma \ref{squareOff} to take a condition $(c,h,t,f)\in H(\gamma)$ with $\eta\in a^{(c,h)}\cap\sup a^{(c,h)}$, $\alpha,\beta \in t^{\eta}$, $s$ harmonious with $c$ past $\eta$, and $\zeta,\zeta'\in \dom f^{\eta}_{\alpha}\cap \dom f^{\eta}_{\beta}$. Then
    $f^{\eta}_{\alpha}(\zeta)=f^{\eta}_{\beta}(\zeta)\neq f^{\eta}_{\alpha}(\zeta')=f^{\eta}_{\beta}(\zeta')$ so we get
    $$s\frown((\vec{u}^{\gamma},h))\forces \zeta\dot{\mathcal{E}}^{\gamma}_{\alpha}\zeta' \leftrightarrow \zeta\dot{\mathcal{E}}^{\gamma}_{\beta}\zeta'.$$
    Now $s\frown((j(\vec{u}^{\gamma}),\tilde{h}^{\gamma}))\leq s\frown((j(\vec{u}^{\gamma}),j(h)))$ so together with elementarity we obtain
    $$s\frown((j(\vec{u}^{\gamma}),\tilde{h}^{\gamma})) \forces j(\zeta) j(\dot{\mathcal{E}}^{\gamma}_{\alpha})j(\zeta') \leftrightarrow j(\zeta) j(\dot{\mathcal{E}}^{\gamma}_{\beta})j(\zeta')$$
    as required.

    Otherwise we can write $s$ as $s_1\frown((\vec{w}, d, p))$ for some $\vec{w}\in\dom \tilde{c}^{\gamma}-\dom h^{\gamma}$. We will show that $s\frown((j(\vec{u}^{\gamma}),\tilde{h}^{\gamma}))$ forces what we want by a density argument. Suppose we are given an extension $s^*\frown((j(\vec{u}^{\gamma}),h^*))$; express $s^*$ as $s^*_1\frown((\vec{w}, d^*,p^*))\frown s^*_2$. Lemma \ref{openHarmony} tells us that $s^*_1\frown((\vec{w}, d^*,p^*))$ remains harmonious with $\tilde{c}^{\gamma}$ past $\eta$, so we can use Lemma \ref{squareOff} to take $(c,h,t,f)\in H(\gamma)$ with $\eta\in a^{(c,h)}\cap\sup a^{(c,h)}$, $\alpha,\beta \in t^{\eta}$, $s^*_1$ harmonious with $c$ past $\eta$, and $\zeta,\zeta'\in \dom f^{\eta}_{\alpha}\cap \dom f^{\eta}_{\beta}$. As before the conditionhood of $(c,h,t,f)$ followed by the elementarity of $j$ give that
    $$s^*_1\frown((j(\vec{u}^{\gamma}),j(h))) \forces j(\zeta) j(\dot{\mathcal{E}}^{\gamma}_{\alpha})j(\zeta') \leftrightarrow j(\zeta) j(\dot{\mathcal{E}}^{\gamma}_{\beta})j(\zeta').$$
    The harmoniousness of $s$ with $\tilde{c}^{\gamma}$ tells us that $d^*\leq d\leq h^{\gamma}\leq c\cup h$ so we can refine $s^*$ to $s^{**}$ by strengthening $d^*$ to $d^{**}\leq h =j(h)\upharpoonright\kappa$. We also have $\vec{w}\in\dom\tilde{c}^{\gamma}\subseteq\dom j(h)$ and $p^*\leq p \leq \tilde{c}^{\gamma}(\vec{w})\leq j(h)(\vec{w})$, so $(\vec{w},d^{**},p^*)$ is addable below $(j(\vec{u}^{\gamma}),j(h))$. So is  $s^*_2$ (because it is addable below $\tilde{h}^{\gamma}\leq j(h)$) yielding
    $$s^{**}\frown((j(\vec{u}^{\gamma}),h^*)) \forces j(\zeta) j(\dot{\mathcal{E}}^{\gamma}_{\alpha})j(\zeta') \leftrightarrow j(\zeta) j(\dot{\mathcal{E}}^{\gamma}_{\beta})j(\zeta').$$
    And $s^{**}\frown((j(\vec{u}^{\gamma}),h^*))$ is also below $s^*\frown((j(\vec{u}^{\gamma}),h^*))$, concluding the proof of the claim.
  \end{proof}

  It is immediate that $\forces q_{\gamma}\leq j``H(\gamma)$ so $\forces p_{\gamma}\leq j``(H\upharpoonright\gamma)$; this finishes the inductive definition. Take $p$ a lower bound of the sequence of $p_{\gamma}$ as our master condition and force below it to obtain a $j(\mathbb{P})$-generic filter. Then we can extend $j$ to $j:V[G][H]\rightarrow M[G][H][I][j(H)]$, where $j(H)$ is the filter for $j(\mathbb{P})$ just obtained. This embedding will witness a high degree of generic supercompactness so as in section 2 we can in $V[G][H][I][j(H)]$ derive an ultrafilter sequence $\vec{u}$ from it, and show $\vec{u}\in\mathcal{U}$; we also get the associated supercompact ultrafilter sequence $\vec{u}^*=\langle z, u^*_i, K^*_i \mid i<\lambda\rangle$ and the associated projection $\pi$. The $\mu$-closure of the $j(\mathbb{L})/(\mathbb{L}*\mathbb{P})*j(\mathbb{P})$-forcing gives us that $\vec{u}\in V[G][H]$. Then we can invoke Lemma \ref{staty1} to see that there are stationarily-many $\gamma<\kappa^{+4}$ where $\vec{u}$ restricts to $\vec{u}^{\gamma}$ and $\mathbb{Q}_{\gamma}=\mathbb{Q}_{\vec{u}^{\gamma}}$. We wish to show that $h^{\gamma}\in \mathcal{F}_{\vec{u}}$ for such $\gamma$ and will do so by proving by induction on $i$ that $h^{\gamma}_i\in \mathcal{F}_{\vec{u},i}$.

  Given any $(c,h,t,f)\in H(\gamma)$ we have $h_i\in \mathcal{F}_{\vec{u}^{\gamma},i}\subseteq \mathcal{F}_{\vec{u},i}$ so $\dom h_i\in u_i$, which gives
  \begin{align*}
    & \pi^{-1}` ` \dom h_i \in u^*_i \\
    \Rightarrow& \forall_{u^*_i}\vec{w}^* \pi(\vec{w}^*)\in\dom h_i \\
    \Rightarrow& j(\pi)(\vec{u}^*\upharpoonright i)\in\dom j(h_i) \\
    \Rightarrow& \vec{u}\upharpoonright i \in\dom j(h_i)
  \end{align*}
  Also by definition of $\Fil(K^*_i)$ there is an $A\in u^*_i$ with $h_i\geq b(K^*_i,A)$, from which
  \begin{align*}
    & \forall\vec{w}\in\dom h_i: h_i(\vec{w}) \geq \bigvee\{K^*_i(\vec{w}^*)\mid \vec{w}^*\in A, \pi(\vec{w}^*)=\vec{w}\}\\
    \Rightarrow& \forall\vec{w}^*\in A: h_i(\pi(\vec{w}^*)) \geq K^*_i(\vec{w}^*)\\
    \Rightarrow& j(h_i)(\vec{u}\upharpoonright i) \geq j(K^*_i)(\vec{u}^*\upharpoonright i),
  \end{align*}
  using that $A\in u^*_i$ and $j(\pi)(\vec{u}^*\upharpoonright i)=\vec{u}\upharpoonright i$.

  Therefore $j(K^*_i)(\vec{u}^*\upharpoonright i)$ witnesses that $\bigwedge_{(c,h,t,f)\in H(\gamma)}j(h)(\vec{u}\upharpoonright i)\neq 0$. By the induction hypothesis we have $h^{\gamma}_{<i}\in \mathcal{F}_{\vec{u}\upharpoonright i}$ so we established have all of the requirements necessary for $\vec{u}\upharpoonright i\in\dom\tilde{c}^{\gamma}$. We have also shown that $\tilde{c}^{\gamma}(\vec{u}\upharpoonright i)\geq j(K^*_i)(\vec{u}^*\upharpoonright i)$. Now forcing below $p_{\gamma+1}$ ensures that $\tilde{c}^{\gamma}$ is an initial segment of $j(h^{\gamma})$ so we have
  \begin{align*}
    & j(h^{\gamma})(\vec{u}^\upharpoonright i)\geq j(K^*)(\vec{u}^*\upharpoonright i)\\
    \Rightarrow& \forall\vec{w}^*\in B: h^{\gamma}(\pi(\vec{w}^*))\geq K^*(\vec{w}^*) \mbox{ for some } B\in u^*_i\\
    \Rightarrow& \forall\vec{w}\in\pi ` `B: h^{\gamma}_i(\vec{w})\geq \bigvee\{K^*_i(\vec{w}^*)\mid \vec{w}^*\in B, \pi(\vec{w}^*)=\vec{w}\}\\
    \Rightarrow& h^{\gamma}_i\geq b(K^*_i,B)
  \end{align*}
  which gives $h^{\gamma}_i\in \mathcal{F}_{\vec{u},i}$ as desired.
\end{proof}

Fix a $\vec{u}$ and $S\subseteq\kappa^{+4}$ stationary as given by this lemma. Take $J$ that is $\mathbb{R}_{\vec{u}}$-generic over $V[G][H]$, forcing below an upper part whose domain is made up of sequences of length less than $\lambda$, so that $J$ generates a generic sequence $\langle \vec{w}_{\alpha}, g_{\alpha}\mid\alpha<\lambda\rangle$ as discussed in sub-section \ref{defineR}. For any $\gamma\in S$ we observe by the characterisation of genericity in Lemma \ref{characteriseGenericity} that $J$ is geometric for $\mathbb{R}_{\vec{u}}$ and hence $\mathbb{R}_{\vec{u}^{\gamma}}$, so it is generic for $\mathbb{R}_{\vec{u}^{\gamma}}$ and we can form the extension $V[G][H\upharpoonright \gamma][J]$.

\subsection{The jointly universal family}

We now fix some $\gamma \in S$ and define a graph $\mathcal{E}^{\gamma}$ on $T\times\kappa$ that is intended to be universal with respect to the graphs in $V[G][H\upharpoonright \gamma][J]$. We have $h^{\gamma}\in \mathcal{F}_{\vec{u}}$ so start by fixing $\beta<\lambda$ such that $\lh\vec{w}_{\beta}=0$ and for all $\alpha>\beta$ we have $\vec{w}_{\alpha} \in \dom h^{\gamma}$ and $h^{\gamma}(\vec{w}_{\alpha})\in g_{\alpha}$. Define $\eta:=\kappa(\vec{w}_{\beta})$. Define $\mathcal{E}^{\gamma}_{\epsilon}$ to be the realisation of $\dot{\mathcal{E}}^{\gamma}_{\epsilon}$ in $V[G][H\upharpoonright \gamma][J]$. For $z, z' \in T\times\kappa$ we define $z \mathcal{E}^{\gamma} z'$ if there exist $\epsilon$, $\zeta$ and $\zeta'$ such that $F^{\gamma,\eta}_{\epsilon}(\zeta)=z$ and $F^{\gamma,\eta}_{\epsilon}(\zeta')=z'$ with $\zeta \mathcal{E}^{\gamma}_{\epsilon} \zeta'$ in $V[G][H\upharpoonright \gamma][J]$.

\begin{lemma}
  Let $\gamma\in S$, $\eta$ as above and $\epsilon<\kappa^{+3}$. Then in $V[G][H\upharpoonright \gamma][J]$ the function $F^{\gamma,\eta}_{\epsilon}$ is an embedding of $\mathcal{E}^{\gamma}_{\epsilon}$ into $\mathcal{E}^{\gamma}$.
\end{lemma}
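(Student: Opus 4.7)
The plan is to verify three things: that $F^{\gamma,\eta}_{\epsilon}$ is injective, that it preserves the edge relation, and that it reflects it (so the image is an induced subgraph). Injectivity follows from clause (4) in the definition of $\mathbb{Q}_{\vec{u}}$: the first coordinate of $F^{\gamma,\eta}_{\epsilon}(\zeta)$ is $x_{\epsilon}\upharpoonright\zeta$, and distinct ordinals $\zeta$ give initial segments of different lengths. Edge preservation, that $\zeta\mathcal{E}^{\gamma}_{\epsilon}\zeta'$ implies $F^{\gamma,\eta}_{\epsilon}(\zeta)\mathcal{E}^{\gamma}F^{\gamma,\eta}_{\epsilon}(\zeta')$, is immediate from the definition of $\mathcal{E}^{\gamma}$ by taking $\epsilon$ itself as the witness.

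The substantive task is edge reflection. Suppose $F^{\gamma,\eta}_{\epsilon}(\zeta)\mathcal{E}^{\gamma}F^{\gamma,\eta}_{\epsilon}(\zeta')$; unpacking the definition of $\mathcal{E}^{\gamma}$ yields $\delta<\kappa^{+3}$ and $\zeta_1,\zeta_2<\kappa^{+}$ with $F^{\gamma,\eta}_{\delta}(\zeta_1) = F^{\gamma,\eta}_{\epsilon}(\zeta)$, $F^{\gamma,\eta}_{\delta}(\zeta_2) = F^{\gamma,\eta}_{\epsilon}(\zeta')$, and $\zeta_1\mathcal{E}^{\gamma}_{\delta}\zeta_2$. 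Matching first coordinates forces $\zeta_1=\zeta$ and $\zeta_2=\zeta'$, since $x_{\delta}\upharpoonright\zeta_1$ and $x_{\epsilon}\upharpoonright\zeta$ have the same length. So I reduce to showing $\zeta\mathcal{E}^{\gamma}_{\epsilon}\zeta' \leftrightarrow \zeta\mathcal{E}^{\gamma}_{\delta}\zeta'$ under the hypothesis that $F^{\gamma,\eta}_{\epsilon}$ and $F^{\gamma,\eta}_{\delta}$ agree at both $\zeta$ and $\zeta'$.

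I would obtain this biconditional from clause (5) of the definition of $\mathbb{Q}_{\vec{u}^{\gamma}}$. By Lemma \ref{squareOff}, applied inside $V[G][H\upharpoonright\gamma]$, the set of conditions $(c,h,t,f)\in\mathbb{Q}_{\vec{u}^{\gamma}}$ with $\eta\in a^{(c,h)}\cap\sup a^{(c,h)}$, with $(\eta,\epsilon),(\eta,\delta)\in t$, and with $\zeta,\zeta'\in\dom f^{\eta}_{\epsilon}\cap\dom f^{\eta}_{\delta}$ is dense in $H(\gamma)$; and since the realized $F^{\gamma,\eta}_{\alpha}(\zeta)$ is determined by the value of $f^{\eta}_{\alpha}(\zeta)$ in any condition of $H(\gamma)$ that has $(\eta,\alpha)\in t$ and $\zeta\in\dom f^{\eta}_{\alpha}$, by further density we can pick such a condition in $H(\gamma)$ whose $f$-values already witness $f^{\eta}_{\epsilon}(\zeta) = f^{\eta}_{\delta}(\zeta)$ and $f^{\eta}_{\epsilon}(\zeta') = f^{\eta}_{\delta}(\zeta')$. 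Clause (5) then yields that for every lower part $s$ for $\mathbb{R}_{\vec{u}^{\gamma}}$ harmonious with $c$ past $\eta$,
$$s\frown((\vec{u}^{\gamma},h)) \forces_{\mathbb{R}_{\vec{u}^{\gamma}}} \zeta\dot{\mathcal{E}}^{\gamma}_{\epsilon}\zeta' \leftrightarrow \zeta\dot{\mathcal{E}}^{\gamma}_{\delta}\zeta'.$$

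The main obstacle is to produce a condition in $J$ that refines some such $s\frown((\vec{u}^{\gamma},h))$. Since $\gamma\in S$, Proposition \ref{characteriseGenericity} makes $J$ generic for $\mathbb{R}_{\vec{u}^{\gamma}}$, so it suffices to establish density below an arbitrary Radin condition. Here I exploit the interaction between $h^{\gamma}$ and the Radin sequence: for $\alpha>\beta$ we have $\vec{w}_{\alpha}\in\dom h^{\gamma}$ and $h^{\gamma}(\vec{w}_{\alpha})\in g_{\alpha}$, while $h^{\gamma}(\vec{v})=c(\vec{v})$ for $\vec{v}\in\dom c$, so $c(\vec{w}_{\alpha})\in g_{\alpha}$ whenever $\vec{w}_{\alpha}\in\dom c$. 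Given any condition in $J$, each lower-part triple $(\vec{w}_{\alpha_k},e_k,q_k)$ with $\kappa(\vec{w}_{\alpha_k})>\eta$ can be strengthened by replacing $q_k$ with $q_k\wedge c(\vec{w}_{\alpha_k})$ (nonzero as both lie in the filter $g_{\alpha_k}$) and refining $e_k$ pointwise below $c\upharpoonright\kappa(\vec{w}_{\alpha_k})$; the triple at $\eta$, if present, is $\vec{w}_{\beta}$ of length $0$ and so is automatically harmonious, and triples below $\eta$ impose no constraint. The upper part is simultaneously strengthened below $h$ using the $\kappa$-completeness of $\mathcal{F}_{\vec{u}^{\gamma}}$. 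By density such a refined condition appears in $J$, so the forced biconditional holds in $V[G][H\upharpoonright\gamma][J]$, and combined with $\zeta\mathcal{E}^{\gamma}_{\delta}\zeta'$ we conclude $\zeta\mathcal{E}^{\gamma}_{\epsilon}\zeta'$ as required.
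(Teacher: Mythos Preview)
Your reduction to the clause-(5) biconditional is right, and so is your identification of Lemma \ref{squareOff} and the tail property of $h^{\gamma}$ as the key tools. The gap is in the density argument producing a condition of $J$ whose lower part is harmonious with $c$ past $\eta$.

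You fix the $\mathbb{Q}_{\vec{u}^{\gamma}}$-condition $(c,h,t,f)\in H(\gamma)$ first, and then attempt to refine an arbitrary condition of $J$ to one harmonious with $c$. This cannot work: harmoniousness past $\eta$ requires every lower-part triple $(\vec{w}_{\alpha_k},e_k,q_k)$ with $\kappa(\vec{w}_{\alpha_k})>\eta$ to satisfy $\vec{w}_{\alpha_k}\in\dom c$. But $\dom c$ is bounded in $V_{\kappa}$, and any extension of a Radin condition retains all the $\vec{w}_{\alpha_k}$ already present. Since $J$ contains conditions whose lower parts reach arbitrarily high $\vec{w}_{\alpha}$, conditions of $J$ with a triple above $\sup a^{(c,h)}$ can never be refined to ones harmonious with $c$. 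Your step ``replacing $q_k$ with $q_k\wedge c(\vec{w}_{\alpha_k})$'' tacitly assumes $\vec{w}_{\alpha_k}\in\dom c$, which you only verified conditionally; and your handling of the upper part does not explain why the strengthened condition still lies in $J$.

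The paper reverses the order: first choose a specific $s\frown((\vec{u}^{\gamma},h))\in J$ forcing $\zeta\dot{\mathcal{E}}^{\gamma}_{\epsilon'}\zeta'$, with $\vec{w}_{\beta}$ occurring in $s$; \emph{then} invoke Lemma \ref{squareOff} to get $(c,h',t,f)\in H(\gamma)$ with $c$ extending past $\max s$ and $h'\leq h$. This guarantees that every triple of $s$ above $\eta$ lies in $\dom c$. Even so, one must append a further triple at $\vec{w}_{\beta'}$ (the highest member of the generic sequence in $\dom c$) to obtain $s'$, and then verify directly from Definition \ref{defnGenericFilter} that $s'\frown((\vec{u}^{\gamma},h'))\in J$. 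The case $\alpha>\beta'$ uses that $(c,h')\in H(\gamma)$ forces $h^{\gamma}(\vec{w}_{\alpha})\leq h'(\vec{w}_{\alpha})$ by the $\mathbb{M}_{\vec{u}^{\gamma}}$-ordering, so $h'(\vec{w}_{\alpha})\in g_{\alpha}$. The extra triple at $\vec{w}_{\beta'}$ is precisely what bridges the gap between $\dom c$ and $\dom h'$; without it the membership-in-$J$ check fails for $\alpha$ between $\max s$ and $\beta'$.
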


\begin{proof}
  It is clear from the definition that every edge in $\mathcal{E}^{\gamma}_{\epsilon}$ is mapped to one in $\mathcal{E}^{\gamma}$, so we need to show the converse. Consider $\epsilon$, $\zeta$ and $\zeta'$ such that $F^{\gamma,\eta}_{\epsilon}(\zeta) \mathcal{E}^{\gamma} F^{\gamma,\eta}_{\epsilon}(\zeta)$. Observe first that the values of $\zeta$ and $\zeta$ are deducible from their targets under $F^{\gamma,\eta}_{\epsilon}$, so there must be some $\epsilon'$ with $\zeta \mathcal{E}^{\gamma}_{\epsilon'} \zeta'$ such that
  $$F^{\gamma,\eta}_{\epsilon}(\zeta)=F^{\gamma,\eta}_{\epsilon'}(\zeta)\neq F^{\gamma,\eta}_{\epsilon}(\zeta')=F^{\gamma,\eta}_{\epsilon'}(\zeta').$$
  Take a condition $s\frown((\vec{u}^{\gamma},h)) \in J$ such that $s\frown((\vec{u}^{\gamma},h))\forces\zeta \dot{\mathcal{E}}^{\gamma}_{\epsilon'}\zeta'$, $s$ extends past $\eta$, and $\vec{w}_{\beta}$ occurs in $s$. Use Lemma \ref{squareOff} to take a condition $(c,h',t,f)\in H(\gamma)$ such that $c$ extends past $\max s$, $a^{(c,h')}$ has a maximal element, $h'\leq h$, $(\eta,\epsilon),(\eta,\epsilon') \in t$ and $\zeta,\zeta'\in \dom f^{\eta}_{\epsilon}\cap \dom f^{\eta}_{\epsilon'}$. Our aim is to find a lower part $s'$ such that:
  \begin{enumerate} [(i)]
    \item $s'$ is harmonious with $c$ past $\eta$.
    \item $s'\frown((\vec{u}^{\gamma},h'))\leq s\frown((\vec{u}^{\gamma},h))$.
    \item $s'\frown((\vec{u}^{\gamma},h'))\in J$.
  \end{enumerate}
  Then we will use (i) to invoke the fifth clause of the definition of $\mathbb{Q}_{\vec{u}^{\gamma}}$ for $(c,h',t,f)$ to see that
  $$s'\frown((\vec{u}^{\gamma},h'))\forces \zeta \dot{\mathcal{E}}^{\gamma}_{\epsilon} \zeta' \leftrightarrow \zeta \dot{\mathcal{E}}^{\gamma}_{\epsilon'} \zeta'$$
  which by (ii) will give $s'\frown((\vec{u}^{\gamma},h'))\forces\zeta \dot{\mathcal{E}}^{\gamma}_{\epsilon}\zeta'$ and then by (iii) we will be done.

  We construct $s'$ from $s$ as follows:
  \begin{itemize}
    \item Leave triples $(\vec{w}_{\alpha},d,p)$ with $\kappa(\vec{w}_{\alpha})\leq\eta$ (i.e. $\alpha\leq\beta$) unchanged.
    \item For $(\vec{w}_{\alpha},d,p)\in s$ with $\alpha>\beta$ replace with $(\vec{w}_{\alpha},d\wedge(c\upharpoonright\kappa(\vec{w}_{\alpha})),p\wedge c(\vec{w}_{\alpha}))$.
    \item The set of $\kappa(\vec{w}_{\alpha})$ is a club, and $\ssup a^{(c,h')}$ is a successor. This means we can take $\beta'$ maximal such that $\vec{w}_{\beta'}\in \dom c$. Then add $(\vec{w}_{\beta'},(h\upharpoonright\kappa(\vec{w}_{\beta'}))\wedge(c\upharpoonright\kappa(\vec{w}_{\beta'})),h(\vec{w}_{\beta'})\wedge c(\vec{w}_{\beta'}))$ to the end of $s$.
  \end{itemize}
  For $(\vec{w}_{\alpha},d,p)\in s$ with $\alpha>\beta$ note that $c$ is an initial segment of an upper part $h^{\gamma}$ and $\vec{w}_{\alpha}\in\dom h^{\gamma}$ so we are guaranteed that $c\upharpoonright\kappa(\vec{w}_{\alpha})\in \mathcal{F}_{\vec{w}_{\alpha}}$ for such $\alpha$. Also $p\in g_{\alpha}$, and $c(\vec{w}_{\alpha})=h^{\gamma}(\vec{w}_{\alpha})\in g_{\alpha}$ by choice of $\beta$, so $p$ and $c(\vec{w}_{\alpha})$ are compatible. The same holds for $\beta'$, ensuring $s'$ is a valid lower part. Now we check that it has the required properties.
  \begin{enumerate} [(i)]
    \item This is immediate from the definition (and the reason for the appearance of $c$ in it).
    \item The new triple of $s'$ must be addable to $s\frown((\vec{u}^{\gamma},h))$ on account of its being in $J$, and we have taken care to respect $h$ here.
    \item We will use the requirements from Definition \ref{defnGenericFilter} for a condition to belong to the generic filter $J$ associated with $\langle \vec{w}_{\alpha}, g_{\alpha}\mid\alpha<\lambda\rangle$. The first clause is clear so we consider the second. For $\alpha<\beta$ we have $\vec{w}_{\alpha}$ below a triple of $s$ that is not modified, so all is well. For $\beta<\alpha<\beta'$ we have $\vec{w}_{\alpha}\in h^{\gamma}$ with $h^{\gamma}(\vec{w}_{\alpha})\in g_{\alpha}$. Now $c$ is an initial segment of $h^{\gamma}$ that extends to $\vec{w}_{\beta'}$ so $\vec{w}_{\alpha}\in \dom c$ and $c(\vec{w}_{\alpha})\in g_{\alpha}$. This means that the modifications made to the members of $s$ are unproblematic. (It is for this step that we had to add the extra triple to $s'$.) Finally for $\alpha>\beta'$ we have that $\kappa(\vec{w}_{\alpha})>\kappa(\max \dom c)$ so the fact that $(c,h',t,f)\in H(\gamma)$ and $\vec{w}_{\alpha}\in\dom h^{\gamma}$ tells us that $\vec{w}_{\alpha}\in\dom h'$; likewise $g_{\alpha}\ni h^{\gamma}(\vec{w}_{\alpha})\leq h'(\vec{w}_{\alpha})$.
  \end{enumerate}
\end{proof}

We can now conclude the proof. Take a sequence $\langle\delta_i\mid i<\kappa^{++}\rangle$ of points from $S$ such that $\delta:=\sup \delta_i$ is in $S$. Our final model will be $V[G][H\upharpoonright\delta][J]$ and the family of universal graphs will be $\{\mathcal{E}^{\delta_i}\mid i<\kappa^{++}\}$. Given some graph $\mathcal{E}$ in the model, take a $\mathbb{R}_{\vec{u}^{\delta}}$-name $\dot{\mathcal{E}}$ in $V[G][H\upharpoonright\delta]$ for it. By the $\kappa^+$-cc of $\mathbb{R}_{\vec{u}^{\delta}}$ this name can be coded as a subset of $\kappa^+$ and then by the $\kappa^+$-cc of the forcing iteration we can find some $i<\kappa^{++}$ such that $\dot{\mathcal{E}}$ is in $V[G][H\upharpoonright\delta_i]$. Since $\vec{u}^{\delta_i}$ is a restriction of $\vec{u}^{\delta}$ we see that $\mathbb{R}_{\vec{u}^{\delta_i}}$ will also interpret the name as $\mathcal{E}$, and the lemma above shows that it can be embedded into $\mathcal{E}^{\delta_i}$.

By Lemma \ref{propertiesOfP} $\mathbb{L}\times\mathbb{P}_{\gamma}$ preserves all cardinals, and then by Proposition \ref{preserveCardinals} $\mathbb{R}_{\vec{u}}$ changes $\kappa$ to $\aleph_{\lambda}$ and preserves all larger cardinals. We have proved the following theorem,

\begin{theorem}
  Let $\kappa$ be supercompact and $\lambda<\kappa$ regular uncountable. Then there is a forcing extension in which $\kappa=\aleph_{\lambda}$, $2^{\aleph_{\lambda}}=2^{\aleph_{\lambda+1}}=\aleph_{\lambda+3}$ and there is a jointly universal family of graphs on $\aleph_{\lambda+1}$ of size $\aleph_{\lambda+2}$.
\end{theorem}

\chapter{Set-theoretic geology}

\section{Easton-support iteration of Cohen forcing} \label{easton_generic_mantle}

The following result of set-theoretic geology is well-known, for example it follows from \cite{FHR}[Theorem 65].

\begin{theorem}
  Assume $V=L$. Let $\mathbb{P}$ be a class Easton-support product of $\Add(\kappa,1)$ at $\kappa$ regular with generic $G$. Then $M^{V[G]}=gM^{V[G]}=V$.
\end{theorem}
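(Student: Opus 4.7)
The plan is to establish the two inclusions $V\subseteq gM^{V[G]}$ and $M^{V[G]}\subseteq V$; combined with the general fact $gM^{V[G]}\subseteq M^{V[G]}$ this gives the theorem. The first inclusion is immediate from $V=L$: every generic ground $W$ of $V[G]$ is a transitive inner model of ZFC of some set-generic extension $V[G][H]$, which satisfies $L^{V[G][H]}=L^V=V$, and so $V=L\subseteq W$.

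For $M^{V[G]}\subseteq V$, I would exploit the product structure of $\mathbb{P}$ to produce many set-forcing grounds. Decompose the generic as $G=\langle G(\kappa)\mid \kappa\text{ regular}\rangle$ and, for each regular $\kappa$, write $G^{-\kappa}$ for the generic for the Easton product $\mathbb{P}^{-\kappa}:=\prod_{\lambda\neq\kappa,\,\lambda\text{ reg}}\Add(\lambda,1)$. Mutual genericity of the product factors gives that $G(\kappa)$ is $\Add(\kappa,1)$-generic over $W_\kappa:=V[G^{-\kappa}]$ and that $V[G]=W_\kappa[G(\kappa)]$; since $\Add(\kappa,1)\in W_\kappa$ is a set, $W_\kappa$ is a set-forcing ground of $V[G]$. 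Hence $M^{V[G]}\subseteq\bigcap_{\kappa\text{ reg}} W_\kappa$, and it suffices to show this intersection lies in $V$.

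Take $x\in\bigcap_\kappa W_\kappa$ and, by coding, assume $x\subseteq\beta$ for some ordinal $\beta$. The tail $\mathbb{P}^{>\beta}:=\prod_{\kappa>\beta,\,\kappa\text{ reg}}\Add(\kappa,1)$ is $\beta^+$-closed, so $x\in V[G^{\leq\beta}]$, which is a set-forcing extension of $V$ via the set product $\mathbb{P}^{\leq\beta}:=\prod_{\kappa\leq\beta,\,\kappa\text{ reg}}\Add(\kappa,1)$. Let $\mathbb{B}:=\ro(\mathbb{P}^{\leq\beta})$ and, for each $\gamma<\beta$, let $B_\gamma\in\mathbb{B}$ be the Boolean value of $\check\gamma\in\dot x$ for some $\mathbb{P}^{\leq\beta}$-name $\dot x\in V$. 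The assumption that $x\in W_\kappa$ for every regular $\kappa\leq\beta$ gives, after intersecting with $V[G^{\leq\beta}]$, that $x\in V[G^{\leq\beta}\setminus G(\kappa)]$; whence each $B_\gamma$ lies in the complete sub-algebra $\mathbb{B}^{-\kappa}\subseteq\mathbb{B}$ induced by the product factor that omits coordinate $\kappa$. For a product of non-trivial forcings one has $\bigcap_{\kappa\leq\beta}\mathbb{B}^{-\kappa}=\{0,1\}$, so each $B_\gamma$ is already decided in $V$, and $x\in V$.

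The main obstacle is the Boolean-algebraic step $\bigcap_{\kappa\leq\beta}\mathbb{B}^{-\kappa}=\{0,1\}$ for the set-sized product, together with the identification $W_\kappa\cap V[G^{\leq\beta}]=V[G^{\leq\beta}\setminus G(\kappa)]$. The former generalises the standard two-factor statement that the canonical copies of $\ro(\mathbb{P})$ and $\ro(\mathbb{Q})$ inside $\ro(\mathbb{P}\times\mathbb{Q})$ meet only in $\{0,1\}$; it can be obtained by noting that any $b\in\mathbb{B}$ with $0<b<1$ is represented by a maximal antichain of Easton-supported conditions whose combined supports identify some coordinate $\kappa$ witnessing $b\notin\mathbb{B}^{-\kappa}$. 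An alternative that sidesteps the Boolean algebra is an automorphism argument: any $x\in V[G^{\leq\beta}]\setminus V$ is moved by some automorphism of $\Add(\kappa,1)$ extended trivially to the product, contradicting $x\in W_\kappa$.
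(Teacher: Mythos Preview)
Your first paragraph (the inclusion $V\subseteq gM^{V[G]}$ from $V=L$) is fine and matches what is implicit in the paper.

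For $M^{V[G]}\subseteq V$ the paper takes a much simpler route than yours. Given $x\in V[G]\setminus V$ with $x\subseteq\kappa$, it splits $\mathbb{P}$ as $\mathbb{P}_0\times\mathbb{P}_1$ where $\mathbb{P}_0$ is the product up through $\Add(\kappa,1)$ and $\mathbb{P}_1$ is the tail from $\Add(\kappa^+,1)$ onward. The tail is $\kappa^+$-closed, so $x\notin V[G_1]$; but $V[G_1]$ is a set-forcing ground of $V[G]$, so $x\notin M^{V[G]}$. One tail ground per $x$ suffices; no intersection, no Boolean-algebra computation.

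Your approach via single-coordinate-deleted grounds has a genuine gap at the step ``whence each $B_\gamma$ lies in $\mathbb{B}^{-\kappa}$''. Knowing $x\in V[G^{\leq\beta}\setminus G(\kappa)]$ for the specific generic $G$ tells you only that \emph{some} $\mathbb{B}^{-\kappa}$-name realises $x$; it does not force the Boolean values $[\![\check\gamma\in\dot x]\!]$ of the \emph{fixed} name $\dot x$ to lie in $\mathbb{B}^{-\kappa}$. (Trivial illustration: take $x\in V$ but choose a name $\dot x$ that gratuitously mentions coordinate $\kappa$; then $x\in V\subseteq V[G^{-\kappa}]$ yet $B_\gamma\notin\mathbb{B}^{-\kappa}$.) What you would need is a condition $p\in G^{\leq\beta}$ forcing $\dot x$ to equal a $\mathbb{B}^{-\kappa}$-name, and then the analysis only works below $p$; getting a single $p$ that works for all $\kappa$ simultaneously is exactly the hard part, and you have not supplied it.

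The automorphism alternative has the same defect. Automorphisms act on names, not on sets: if $\pi$ is an automorphism supported on coordinate $\kappa$ and $x\in W_\kappa$, then $x$ has a $\mathbb{B}^{-\kappa}$-name $\dot y$ with $\pi\dot y=\dot y$, so $\pi$ does not ``move $x$'' in any sense that yields a contradiction. The claim that every $x\in V[G^{\leq\beta}]\setminus V$ is moved by an automorphism of a \emph{single} factor is simply false when $x$ genuinely depends on several coordinates. The conclusion $\bigcap_\kappa W_\kappa=V$ you are aiming for may well be true, but it is harder than the paper's argument and you have not proved it; meanwhile the tail-ground argument gets the theorem in two lines.
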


\begin{proof}
  Consider any $x\in V[G]-V$; we may assume $x\subseteq\kappa$ for some regular $\kappa$. Then split $\mathbb{P}$ as $\mathbb{P}_0\times\mathbb{P}_1$ where $\mathbb{P}_0$ is the product up to and including $\Add(\kappa,1)$, and $\mathbb{P}_1$ is the product from $\Add(\kappa^+,1)$ onwards; split $G$ correspondingly as $G_0\times G_1$. Then the $\kappa^+$-closure of $\mathbb{P}_1$ implies that $x\notin V[G_1]$, but $V[G_1][G_0]=V[G]$ so this is a ground of $V[G]$ omitting $x$. We obtain $x\notin M^{V[G]}$ and since the generic mantle is always a subclass of the mantle this also means that $x\notin gM^{V[G]}$.
\end{proof}

This contrasts with the class Easton support iteration of $\Add(\kappa,1)$ at $\kappa$ regular with generic $G$, which was used by Hamkins, Reitz and Woodin in \cite{HRW} to construct a model in which $V[G]=M^{V[G]}$ but $V[G]\neq\HOD^{V[G]}$. We will now show that $gM^{V[G]}=V[G]$, answering the question posed by Fuchs, Hamkins and Reitz in \cite[Question 69]{FHR} which asks for the generic mantle of this model. To this end we will first prove the following lemma.

For a complete subposet $\mathbb{M}$ of a poset $\mathbb{N}$ and an $\mathbb{N}$-name $\dot{x}$ we will write $\forces_{\mathbb{N}}\dot{x}\in V[\mathbb{M}]$ to mean that for every $\mathbb{N}$-generic $G$ we have that $\dot{x}[G]\in V[G\cap\mathbb{M}]$; this is a weaker notion than $\dot{x}\in V^{\mathbb{M}}$. Similarly $\forces_{\mathbb{N}}\dot{x}\notin V[\mathbb{M}]$ denotes that $\dot{x}[G]\notin V[G\cap\mathbb{M}]$ for every $\mathbb{N}$-generic $G$.

We recall that for a poset $\mathbb{P}$ and $\mathbb{P}$-name for a poset $\dot{\mathbb{Q}}$ the \textit{termspace} forcing $\mathbb{A}(\mathbb{P}, \dot{\mathbb{Q}})$ consists of all $\mathbb{P}$-names $\dot{q}$ such that $\forces\dot{q}\in\dot{\mathbb{Q}}$ and $\dot{q}$ is rank-minimal among all $\mathbb{P}$-names $\dot{q}'$ such that $\forces_{\mathbb{P}}\dot{q}=\dot{q}'$. Note that this is guaranteed to be a set rather than a proper class. Its ordering is given by $\dot{q}\leq\dot{q}'$ iff $\forces\dot{q}\leq\dot{q}'$.

More information about termspace forcing can be found in \cite{saturated_ideals}. In particular it is easy to see that if $\dot{\mathbb{Q}}$ is forced by $\mathbb{P}$ to be $\lambda$-strategically closed then $\mathbb{A}(\mathbb{P},\dot{\mathbb{Q}})$ is $\lambda$-strategically closed. Also whenever we have $(p,\dot{q})\leq(p',\dot{q}')$ in $\mathbb{P}*\dot{\mathbb{Q}}$ we can use the maximum principle to find $\dot{q}^*$ such that $\forces\dot{q}^*\leq q'$ and $p\forces\dot{q}^*=\dot{q}$, from which $(p,\dot{q}^*)$ is equivalent to $(p,\dot{q})$ in $\mathbb{P}*\dot{\mathbb{Q}}$.

\begin{lemma} \label{EastonHelp}
  Let $\mathbb{P}_0$ be a poset with $|\mathbb{P}_0|\leq\kappa$, and let $\dot{\mathbb{R}}$ and $\dot{\mathbb{P}}_1$ be $\mathbb{P}_0$-names for posets such that $\forces_{\mathbb{P}_0}|\dot{\mathbb{R}}|\leq\kappa$ and $\forces_{\mathbb{P}_0}\dot{\mathbb{P}}_1\;\kappa^+$-strategically-closed ($\dot{\mathbb{P}}_1$ may be a name for either a class or a set forcing).

  Let $\dot{x}$ and $\dot{z}$ be names such that $\forces_{\mathbb{P}_0}\dot{x}\subseteq\kappa$ and $\forces_{\mathbb{P}_0*(\dot{\mathbb{R}}\times\dot{\mathbb{P}}_1)}\dot{z}\notin V[\mathbb{P}_0*\dot{\mathbb{R}}]$.

  Then $\forces_{\mathbb{P}_0*(\dot{\mathbb{R}}\times\dot{\mathbb{P}}_1)}\dot{x}\in V[\dot{z}]$.
\end{lemma}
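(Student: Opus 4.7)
The plan is to argue by contrapositive. I would assume for contradiction that some condition $q\in\mathbb{P}_0*(\dot{\mathbb{R}}\times\dot{\mathbb{P}}_1)$ forces $\dot{x}\notin V[\dot{z}]$, and work to derive that $q$ already forces $\dot{z}\in V[\mathbb{P}_0*\dot{\mathbb{R}}]$, contradicting the lemma's hypothesis on $\dot{z}$.

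The first step is a reduction to a product via termspace forcing. Writing $\mathbb{A}:=\mathbb{A}(\mathbb{P}_0,\dot{\mathbb{P}}_1)$, one has that $\mathbb{A}$ is $\kappa^+$-strategically closed in $V$ and there is a natural projection
$$(\mathbb{P}_0*\dot{\mathbb{R}})\times\mathbb{A}\twoheadrightarrow\mathbb{P}_0*(\dot{\mathbb{R}}\times\dot{\mathbb{P}}_1).$$
Working below a lift of $q$ in the product, I would force with $\mathbb{P}_0*\dot{\mathbb{R}}$ to pass to $V[G_0*R]$. Because $\dot{x}$ is a $\mathbb{P}_0$-name, its realisation $x\subseteq\kappa$ is fixed in $V[G_0]$, and the residual forcing $\mathbb{A}/G_0$ remains $\kappa^+$-strategically closed in $V[G_0*R]$, since a winning strategy for Player II in $V[G_0]$ does not consult the small generic $R$. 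The reinterpretation $\dot{z}^*$ of $\dot{z}$ is an $\mathbb{A}/G_0$-name, and our target becomes: $\dot{z}^*$ is decided in $V[G_0*R]$.

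The main construction takes place in $V[G_0*R]$. Coding $\dot{z}$ as a name for a set of ordinals, I would build a descending sequence of conditions in $\mathbb{A}/G_0$ by following a fixed winning strategy for Player II, at each stage extending to decide one more piece of information about $\dot{z}^*$. The $\kappa^+$-strategic closure delivers a lower bound $a^*$ under which $\dot{z}^*$ is fully determined by the sequence of decisions already made, and hence lies in $V[G_0*R]$. This contradicts the hypothesis that $\dot{z}\notin V[\mathbb{P}_0*\dot{\mathbb{R}}]$.

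The main obstacle, and the place where the assumption $\dot{x}\notin V[\dot{z}]$ is essential, is ensuring that the choices made at each step of the construction depend only on data in $V[G_0*R]$ and not covertly on the fixed value $x$. The non-recovery hypothesis supplies a homogeneity principle: if two runs of the strategy could produce distinct realised values of $\dot{z}^*$, then $x$ would be decodable from the resulting $z$ via the Boolean values of the atomic statements $\check{\alpha}\in\dot{x}$ in $\mathrm{ro}(\mathbb{P}_0)$, contradicting $x\notin V[\dot{z}^*]$. Coordinating this swapping argument with the $\kappa$-length strategic play, so that the hypothesis $|\mathbb{P}_0*\dot{\mathbb{R}}|\leq\kappa$ and the bound $\kappa^+$ on the closure fit together, is the delicate technical point of the proof.
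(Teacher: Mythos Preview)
Your proposed approach has a genuine gap at the ``main construction'' step. You plan to build, in $V[G_0*R]$, a descending sequence in the termspace quotient deciding $\dot{z}^*$ bit by bit. But $\dot{z}$ is only assumed to be a set of ordinals, say $\dot{z}\subseteq\theta$, and there is no bound on $\theta$ in terms of $\kappa$; in the application (the Easton iteration) $\dot{z}$ is typically a subset of an ordinal far above $\kappa$. Since the strategic closure of $\mathbb{A}$ is only $\kappa^+$, a play of length $\kappa$ cannot decide $\theta$-many atomic statements about $\dot{z}$. So the sequence you describe simply cannot terminate with $\dot{z}^*$ fully determined. Notice also that, as written, your argument never really uses the contradiction hypothesis $\dot{x}\notin V[\dot{z}]$: if the construction worked, it would show $\dot{z}\in V[\mathbb{P}_0*\dot{\mathbb{R}}]$ outright. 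Your last paragraph attempts to repair this, but the stated obstacle is not the real one: since $\dot{x}$ is a $\mathbb{P}_0$-name, $x$ already lives in $V[G_0]\subseteq V[G_0*R]$, so there is nothing to worry about in ``choices depending on $x$''. The homogeneity/swapping idea you sketch does not address the cardinality mismatch.

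The paper's proof runs in the opposite direction. Rather than trying to pin down $\dot{z}$, it exploits its \emph{non}-determinability. Working in $V$ with the termspace $\mathbb{A}=\mathbb{A}(\mathbb{P}_0,\dot{\mathbb{P}}_1)$, the key claim (proved by a $\kappa$-length diagonal over $\mathbb{P}_0\times\dot{\mathbb{R}}$, using $|\mathbb{P}_0|,|\dot{\mathbb{R}}|\leq\kappa$) is that some $(p,\dot{u},\dot{s})$ has the perfect-splitting property: below every $\dot{r}\leq\dot{s}$ in $\mathbb{A}$ there are $\dot{q},\dot{q}'\leq\dot{r}$ and $j<\theta$ with $(p,\dot{u},\dot{q})$ and $(p,\dot{u},\dot{q}')$ disagreeing on $j\in\dot{z}$. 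One then mixes $\dot{q}$ and $\dot{q}'$ according to whether $\alpha\in\dot{x}$, and runs a $\kappa$-length strategic descent to produce a single condition below which $\alpha\in\dot{x}\leftrightarrow j_\alpha\in\dot{z}$ for a fixed sequence $\langle j_\alpha\mid\alpha<\kappa\rangle\in V$. This only needs $\kappa$ steps because it is $\dot{x}\subseteq\kappa$ that is being coded into $\dot{z}$, not the other way round. So the role of $\dot{z}\notin V[\mathbb{P}_0*\dot{\mathbb{R}}]$ is to guarantee the splitting, and the $\kappa^+$-closure is matched against $|\dot{x}|$, not $|\dot{z}|$.
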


We pause to remark that a simpler version of this lemma in which $\dot{\mathbb{R}}$ is trivial suffices to show that the mantle is equal to the whole model.

\begin{proof}
  We may assume that $\forces\dot{z}\subseteq\theta$ for some $\theta\in\OR$. Define the termspace forcings $\mathbb{A}:=\mathbb{A}(\mathbb{P}_0,\dot{\mathbb{P}}_1)$ and $\mathbb{B}:=\mathbb{A}(\mathbb{P}_0,\dot{\mathbb{R}})$, observing that the former is $\kappa^+$-strategically-closed. We will say that two conditions in $\mathbb{P}_0$ \textit{disagree} on an assertion if one forces that it is true and the other that it is false.

  \begin{claim}
    $$\exists p\in\mathbb{P}_0\;\exists\dot{u}\in\mathbb{B}\;\exists \dot{s}\in\mathbb{A}\;\forall\dot{r}\leq\dot{s}\;\exists j<\theta\;\exists\dot{q},\dot{q}'\leq\dot{r}:(p,\dot{u},\dot{q})\forces j\in\dot{z}, (p,\dot{u},\dot{q}')\forces j\notin\dot{z}.$$
  \end{claim}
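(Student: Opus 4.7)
The plan is to proceed by contradiction: suppose the claim fails, so that for every triple $(p,\dot u,\dot s)\in\mathbb{P}_0\times\mathbb{B}\times\mathbb{A}$ there is some $\dot r\leq\dot s$ with the ``no splitting'' property---for each $j<\theta$ no pair $\dot q,\dot q'\leq\dot r$ in $\mathbb{A}$ realises both $(p,\dot u,\dot q)\forces j\in\dot z$ and $(p,\dot u,\dot q')\forces j\notin\dot z$. I will use this to place $\dot z[G]$ inside $V[\mathbb{P}_0*\dot{\mathbb{R}}]$, contradicting the hypothesis $\forces\dot z\notin V[\mathbb{P}_0*\dot{\mathbb{R}}]$.

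First I would fix a generic $G=G_0*(G_1\times G_2)$ for $\mathbb{P}_0*(\dot{\mathbb{R}}\times\dot{\mathbb{P}}_1)$ and work inside $V[G_0][G_1]=V[\mathbb{P}_0*\dot{\mathbb{R}}]$; let $\dot z^*$ denote the $\mathbb{P}_1[G_0]$-name to which $\dot z$ reduces once $G_0*G_1$ has been evaluated. The target is density of
\[T:=\{r\in\mathbb{P}_1[G_0]\mid\text{no }\mathbb{P}_1[G_0]\text{-splitting for }\dot z^*\text{ below }r\}\]
in $\mathbb{P}_1[G_0]$. Any $r\in G_2\cap T$ forces $\dot z^*=Y_r$, where $Y_r:=\{j<\theta\mid\exists q\leq r,\;q\forces_{\mathbb{P}_1[G_0]}j\in\dot z^*\}$; since $Y_r\in V[G_0][G_1]$, this drops $\dot z[G]$ into the intermediate model and completes the contradiction.

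To try to produce density of $T$ below a given $s\in\mathbb{P}_1[G_0]$, I would lift $s$ to some $\dot s\in\mathbb{A}$, pick $(p,\dot u)\in G_0*G_1$, and apply the failure of the claim to obtain $\dot r\leq\dot s$ no-splitting below $(p,\dot u,\dot r)$, hoping that $r:=\dot r[G_0]$ lies in $T$. The hard part will be reconciling the two levels of forcing: the relation $(p,\dot u,\dot q)\forces_V j\in\dot z$ is universal over all $V$-generics containing $(p,\dot u,\dot q)$ and is strictly stronger than the fibrewise $\dot q[G_0]\forces_{\mathbb{P}_1[G_0]}j\in\dot z^*$ in the specific extension $V[G_0][G_1]$; accordingly a $\mathbb{P}_1[G_0]$-splitting below $r$ would only lift (via the maximum principle for $\mathbb{A}$) to a $V$-splitting below some $(p^*,\dot u^*,\dot r)$ with $(p^*,\dot u^*)\leq(p,\dot u)$ a proper extension, which the failure of the claim at $(p,\dot u)$ alone does not preclude.

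To overcome this, my plan is to iterate the failure. Using the $\kappa^+$-strategic closure of $\mathbb{A}$ together with the $\kappa^+$-chain condition of $\mathbb{P}_0*\dot{\mathbb{R}}$ (which comes from the size bounds on $\mathbb{P}_0$ and $\dot{\mathbb{R}}$), I would build in $V$ a descending chain $\langle\dot r_\alpha\mid\alpha<\kappa^+\rangle$ in $\mathbb{A}$ below $\dot s$, at each successor stage choosing an extension $(p_\alpha,\dot u_\alpha)\leq(p,\dot u)$ from a cofinal enumeration of a maximal antichain of such extensions and applying the failure of the claim to $(p_\alpha,\dot u_\alpha,\dot r_\alpha)$ to pick $\dot r_{\alpha+1}\leq\dot r_\alpha$ no-splitting against $(p_\alpha,\dot u_\alpha)$; strategic closure handles the limits. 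A terminal $\dot r^*$ will then be no-splitting uniformly against the enumerated extensions, and in particular against whichever $(p^*,\dot u^*)\in G_0*G_1$ would be needed to witness a hypothetical $\mathbb{P}_1[G_0]$-splitting below $\dot r^*[G_0]$; this delivers the density of $T$ and closes the contradiction.
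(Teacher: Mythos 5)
Your overall strategy is the right one, and it tracks the paper's proof closely even though you phrase it semantically (fixing $G_0*G_1$ and arguing density of the ``no-splitting'' set $T$ in $\mathbb{P}_1[G_0]$) while the paper works syntactically (building a single $\dot s$ in $V$ and a name $\dot w$ that would compute $\dot z$ below $(0,0,\dot s)$). You also correctly isolate the central difficulty: the hypothesized failure of the claim only gives no-splitting for $V$-forcing statements $(p,\dot u,\dot q)\forces j\in\dot z$, while a $\mathbb{P}_1[G_0]$-splitting below $\dot r^*[G_0]$ lifts only to a $V$-splitting at some \emph{proper extension} $(p^*,\dot u^*)$ of $(p,\dot u)$.

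However, the proposed fix has a genuine gap. Taking a maximal antichain $\{(p_\alpha,\dot u_\alpha)\}$ below $(p,\dot u)$ and making $\dot r^*$ no-splitting against each $(p_\alpha,\dot u_\alpha)$ does not suffice: the witness $(p^*,\dot u^*)\in G_0*G_1$ is merely \emph{compatible} with some $(p_\alpha,\dot u_\alpha)$, not below it, and no-splitting at $(p_\alpha,\dot u_\alpha)$ says nothing about its extensions --- $(p^{**},\dot u^{**},\dot q)\forces j\in\dot z$ for a common extension $(p^{**},\dot u^{**})$ does not imply $(p_\alpha,\dot u_\alpha,\dot q)\forces j\in\dot z$. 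What is needed is a family $D$ with the property that \emph{below} any $(p^*,\dot u^*)$ there is a member of $D$; a maximal antichain has the ``compatible'' property, not the ``below'' property. The paper achieves this by enumerating $\mathbb{P}_0$ as $\{p_\alpha\}_{\alpha<\kappa}$ together with $\kappa$ names $\{\dot u_\beta\}_{\beta<\kappa}\subseteq\mathbb{B}$ forced to enumerate $\dot{\mathbb{R}}$, and no-splitting $\dot s$ against all pairs $(p_\alpha,\dot u_\beta)$; then for any $(p^*,\dot u^*)$ one finds $p^{**}\leq p^*$ and $\beta$ with $p^{**}\forces\dot u^*=\dot u_\beta$, so $(p^{**},\dot u_\beta)$ lies both below $(p^*,\dot u^*)$ (up to forcing equivalence) and inside the no-splitting family, and the splitting transfers to it. Separately, your chain has length $\kappa^+$, which $\kappa^+$-strategic closure of $\mathbb{A}$ does not bound; the family you no-split against has size $\kappa$, so the chain should have length $\kappa$ (or $\kappa\cdot\kappa=\kappa$, as in the paper).
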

  \begin{proof}
    Supposing the claim is false gives us
    \begin{equation} \label{1}
      \forall p\:\forall\dot{u}\:\forall\dot{s}\:\exists\dot{r}\leq\dot{s}\;\forall j<\theta\;\forall\dot{q},\dot{q}'\leq\dot{r}: \neg(p,\dot{u},\dot{q}),(p,\dot{u},\dot{q}') \mbox{ disagree on }j\in\dot{z}.
    \end{equation}
    Enumerate $\mathbb{P}_0$ as $\{p_{\alpha}\mid\alpha<\kappa\}$ (with repetition if necessary) and take $\{\dot{u}_{\alpha}\mid\alpha<\kappa\}\subseteq\mathbb{B}$ that are forced by $\mathbb{P}_0$ to enumerate $\dot{\mathbb{R}}$. Using (\ref{1}), and the $\kappa^+$-strategic-closure of $\mathbb{A}$ build inductively a descending sequence $\langle\dot{s}_{\kappa\alpha+\beta}\mid\alpha,\beta<\kappa\rangle$ such that
      $$\forall j<\theta\;\forall\dot{q},\dot{q}'\leq\dot{s}_{\kappa\alpha+\beta}:\neg(p_{\alpha},\dot{u}_{\beta},\dot{q}),(p_{\alpha},\dot{u}_{\beta},\dot{q}')\mbox{ disagree on }j\in\dot{z}.$$
    Take $\dot{s}$ a lower bound for this sequence, so
    \begin{equation} \label{2}
      \forall p\in\mathbb{P}_0\:\forall\beta<\kappa\:\forall j<\theta\:\forall\dot{q},\dot{q}'\leq\dot{s}: \neg(p,\dot{u}_{\beta},\dot{q}),(p,\dot{u}_{\beta},\dot{q})\mbox{ disagree on }j\in\dot{z}.
    \end{equation}
    Define a $\mathbb{P}_0*\dot{\mathbb{R}}$-name $\dot{w}\subseteq\theta$ by
      $$\dot{w}:=\{((p,\dot{u}),\check{\jmath})\mid \exists\dot{q}\leq\dot{s}:(p,\dot{u},\dot{q})\forces j\in\dot{z}\}.$$
    We were given that $\forces\dot{z}\notin V[\mathbb{P}_0\times\dot{\mathbb{R}}]$, so $\forces_{\mathbb{P}_0*(\dot{\mathbb{P}}_1\times\mathbb{R})}\dot{z}\neq\dot{w}$. Thus we can extend $(0,0,\dot{s})$ to some $(p,\dot{u},\dot{q})\in\mathbb{P}_0*(\dot{\mathbb{R}}\times\dot{\mathbb{P}}_1)$ that decides a point of disagreement, giving $(p,\dot{u},\dot{q})\leq(0,0,\dot{s})$ and $j<\theta$ such that $(p,\dot{u},\dot{q})\forces j\in\dot{z}-\dot{w}$ or $(p,\dot{u},\dot{q})\forces j\in\dot{w}-\dot{z}$. Modify $\dot{q}$ if necessary to ensure $\dot{q}\leq\dot{s}$ in the termspace forcing $\mathbb{A}$.

    One possibility is that $(p,\dot{u},\dot{q})\forces j\in\dot{z}$ and $(p,\dot{u})\forces j\notin\dot{w}$. But by the definition of $\dot{w}$ the first statement gives $(p,\dot{u})\forces j\in\dot{w}$, contradicting the second statement.

    Alternatively we have $(p,\dot{u},\dot{q})\forces j\notin\dot{z}$ and $(p,\dot{u})\forces j\in\dot{w}$. Then from the latter statement we can find $(p^*,\dot{u}^*)\leq(p,\dot{u})$ such that $((p^*,\dot{u}^*),\check{\jmath})\in\dot{w}$ so there is $\dot{q}'\leq\dot{s}$ such that $(p^*,\dot{u}^*,\dot{q}')\forces j\in\dot{z}$. Take $\beta<\kappa$ and $p^{**}\leq p^*$ such that $p^{**}\forces\dot{u}^*=\dot{u}_{\beta}$. Then we see that $(p^{**},\dot{u}_{\beta},\dot{q})$ and $(p^{**},\dot{u}_{\beta},\dot{q}')$ disagree on whether $j\in\dot{z}$, which contradicts (\ref{2}). This concludes the proof of the claim.
  \end{proof}

  Find $p$, $\dot{u}$ and $\dot{s}$ as given by the preceding claim. Temporarily fix any $\dot{r}\leq\dot{s}$ and any $\alpha<\kappa$. The claim gives us $j<\theta$ and $\dot{q},\dot{q}'\leq\dot{r}$ such that $(p,\dot{u},\dot{q})\forces j\in\dot{z}$ and $(p,\dot{u},\dot{q}')\forces j\notin\dot{z}$. Now we can use the maximum principle to construct a new name $\dot{q}''\leq\dot{r}$ such that $\forces_{\mathbb{P}_0*(\dot{\mathbb{R}}\times\dot{\mathbb{P}}_1)}(\alpha\in\dot{x}\rightarrow\dot{q}''=\dot{q})\wedge(\alpha\notin\dot{x}\rightarrow\dot{q}''=\dot{q}')$. This yields
    $$\forall\dot{r}\leq\dot{s}\;\forall\alpha<\kappa\;\exists j<\theta\;\exists\dot{q}''\leq\dot{r}: (p,\dot{u},\dot{q}'')\forces j\in\dot{z}\leftrightarrow\alpha\in\dot{x}.$$
  Next in $V$ use the strategic closure of $\mathbb{A}$ to build $\langle\dot{r}_{\alpha}\mid\alpha<\kappa\rangle$ a descending sequence below $\dot{s}$ in $\mathbb{A}$ according to the strategy, together with $\langle j_{\alpha}\mid\alpha<\kappa\rangle$ from $\theta$ such that
    $$(p,\dot{u},\dot{r}_{\alpha})\forces j_{\alpha}\in\dot{z}\leftrightarrow\alpha\in\dot{x}.$$
  The sequence has a lower bound $\dot{r}$, and then $(p,\dot{u},\dot{r})\forces\forall\alpha<\kappa:j_{\alpha}\in\dot{z}\leftrightarrow\alpha\in\dot{x}$, so $(p,\dot{u},\dot{r})\forces\dot{x}\in V[\dot{z}]$.
\end{proof}

Using the lemma we have the following theorem, which applies for example when $V=L$.

\begin{theorem}
  Assume GCH. Let $\mathbb{P}$ be a class Easton support iteration of $\Add(\kappa,1)$ at $\kappa$ regular with generic $G$ such that $V\subseteq gM^{V[G]}$. Then $gM^{V[G]}=V[G]$.
\end{theorem}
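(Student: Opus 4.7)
Take any $x\in V[G]$; I show $x$ lies in every generic ground of $V[G]$, which combined with the trivial inclusion $gM^{V[G]}\subseteq V[G]$ (since $V[G]$ is its own generic ground) gives equality. Fix a generic ground $W$, witnessed by $\mathbb{R}\in V[G]$, $\mathbb{S}\in W$ and generics $I$, $J$ with $V[G][I]=W[J]$; the hypothesis $V\subseteq gM^{V[G]}$ gives $V\subseteq W$. Assume toward a contradiction that $x\notin W$. Choose a regular $\kappa$ with $x\subseteq\kappa$, $|\mathbb{R}|^{V[G]}<\kappa$ and $|\mathbb{S}|^{W}<\kappa$, and split $\mathbb{P}=\mathbb{P}_0*\dot{\mathbb{P}}_1$ at stage $\kappa$. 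Under GCH $|\mathbb{P}_0|=\kappa$, $\dot{\mathbb{P}}_1$ is forced $\kappa^+$-closed, and so $x,\mathbb{R}\in V[G_0]$. By Easton's lemma $\mathbb{R}$ and $\mathbb{P}_1$ commute over $V[G_0]$, so $V[G][I]=V[G_0][I\times G_1]$ is a $\mathbb{P}_0*(\dot{\mathbb{R}}\times\dot{\mathbb{P}}_1)$-generic extension of $V$, precisely the setting of Lemma \ref{EastonHelp}.

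If $W\not\subseteq V[G_0][I]$, pick $z\in W\setminus V[G_0][I]$ coded as a subset of some ordinal and take a name $\dot{z}$ for it. Since $V[G_0][I]$ is definable in $V[G][I]$ from the generic, the statement ``$\dot{z}\notin V[\mathbb{P}_0*\dot{\mathbb{R}}]$'' is decided by the forcing, and some condition in our generic forces it. Working below such a condition, Lemma \ref{EastonHelp} yields $\dot{x}\in V[\dot{z}]$, hence $x\in V[z]\subseteq W$ (using $V\subseteq W$ and $z\in W$), contradicting $x\notin W$.

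The main obstacle is the remaining case $W\subseteq V[G_0][I]$. Here Grigorieff's intermediate model theorem, applied to $V\subseteq W\subseteq V[G_0][I]$ with $V[G_0][I]$ set-generic over $V$ via $\mathbb{P}_0*\dot{\mathbb{R}}$ of size $\leq\kappa$, yields $W=V[K]$ for some $K$ generic over $V$ for a complete subalgebra of $\ro(\mathbb{P}_0*\dot{\mathbb{R}})$; under GCH this algebra has size $\leq 2^\kappa=\kappa^+$. Thus $W[J]=V[K*J]$ is a set-forcing extension of $V$ of size $\leq\kappa^+$, and $V[G]\subseteq W[J]$. Set $\kappa'=\kappa^{+3}$; at stage $\kappa'$ the iteration $\mathbb{P}$ adds a Cohen subset $c_{\kappa'}$, which by absoluteness of $\Add(\kappa',1)$ is $\Add(\kappa',1)^V$-generic over $V$. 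A second application of the intermediate model theorem gives $V[c_{\kappa'}]=V[K^*]$ with $K^*$ generic over $V$ for a complete subalgebra $\mathbb{B}$ of $\ro(K*J)$, so $|\mathbb{B}|\leq 2^{\kappa^+}=\kappa^{++}$. But by the uniqueness of the Boolean algebra representing a given generic extension, $\ro(\Add(\kappa',1)^V)\cong\mathbb{B}$ in $V$, so $\kappa^{+3}\leq|\ro(\Add(\kappa',1)^V)|=|\mathbb{B}|\leq\kappa^{++}$---a contradiction, completing the proof.
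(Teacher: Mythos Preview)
Your Case~1 is exactly the paper's argument, and the paper in fact never performs a case split: it simply asserts that one can always find $\dot z$ with $\forces\dot z\in W\setminus V[\mathbb{P}_0*\dot{\mathbb{R}}]$ ``since $\mathbb{P}$ is a class forcing,'' and then applies Lemma~\ref{EastonHelp} once. The implicit reason your Case~2 cannot occur is short: if $W\subseteq V[G_0][I]$ then by the intermediate model theorem $W$, and hence $W[J]=V[G][I]$, is a set-generic extension of $V$; but then so is $V[G]$, which is impossible since the Easton iteration adds a new subset of every regular cardinal and no set forcing can do that (for $\lambda$ above the size of the putative set forcing, the $\lambda$-many Boolean values $[\![\alpha\in\dot c_\lambda]\!]$ would have to be distinct in a Boolean algebra of size $<\lambda$).

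Your explicit Case~2 argument, however, has a gap. The claim that $c_{\kappa'}$ is $\Add(\kappa',1)^V$-generic over $V$ ``by absoluteness'' is false: the earlier stages of the iteration (for instance $\Add(\omega,1)$) add new partial functions $\kappa'\rightharpoonup 2$ of size $<\kappa'$, so $\Add(\kappa',1)^{V[G\upharpoonright\kappa']}\supsetneq\Add(\kappa',1)^V$, and there is no reason for $c_{\kappa'}$ to meet every dense subset of the smaller poset lying in $V$. The idea can be repaired by working with $V[G\upharpoonright(\kappa'+1)]$ rather than $V[c_{\kappa'}]$ and bounding $|\ro(\mathbb{P}\upharpoonright(\kappa'+1))|$ from below, but at that point you are essentially reproving the one-line observation the paper makes to avoid the case split entirely.
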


\begin{proof}
  We are given $x\in V[G]$ together with a generic ground $W$ and wish to show that $x\in W$. Take forcings $\mathbb{R}=\dot{\mathbb{R}}[G]\in V[G]$ and $\mathbb{S}\in W$ with generics $I$ and $J$ respectively such that $V[G][I]=W[J]$. Take $\kappa$ successor such that (without loss of generality) $x\subseteq\kappa$ and $\mathbb{R}\in H^{V[G]}_{\kappa}$. Split $\mathbb{P}$ as $\mathbb{P}_0*\dot{\mathbb{P}}_1$ where $\mathbb{P}_0$ is the iteration up to and including $\kappa$, and $\dot{\mathbb{P}}_1$ is the iteration beyond; note by GCH that $|\mathbb{P}_0|\leq\kappa$ and $\forces_{\mathbb{P}_0}\dot{\mathbb{P}}_1\;\kappa^+$-closed. Split $G$ correspondingly as $G_0*G_1$ and observe that $x\in V[G_0]$ so we can take $\dot{x}$ a $\mathbb{P}_0$-name for it. Also $\mathbb{R}\in V[G_0]$ so we can split the forcing $\mathbb{P}*\dot{\mathbb{R}}$ as $\mathbb{P}_0*(\dot{\mathbb{R}}\times\dot{\mathbb{P}}_1)$.

  Since $\mathbb{P}$ is a class forcing we can find $\dot{z}$ such that $\forces_{\mathbb{P}_0*(\dot{\mathbb{R}}\times\dot{\mathbb{P}}_1)}\dot{z}\in W - V[\mathbb{P}_0*\dot{\mathbb{R}}]$. Invoking \ref{EastonHelp} gives us that $\forces\dot{x}\in V[\dot{z}] \subseteq W$, using that $V\subseteq gM^{V[G]}$ so $V\subseteq W$.
\end{proof}

\section{Intersection of set-forcing extensions} \label{set_forcing_intersection}

We are interested in generalising our analysis of the generic mantle to a wider range of class forcing extensions. We are therefore interested in situations where we have a class forcing $\mathbb{P}$ with generic $G$ together with a generic ground of $V[G]$. To aid us we shall first consider the situation in which $\mathbb{P}$ is only a set forcing and (assuming $V\subseteq W$) the intermediate model theorem then gives us that $W$ is a set extension of $V$.

Here and for most of the remaining results we assume Global Choice. This ensures that there is a class of ordinals $X$ with $V=L[X]$. Then for any (set or class) generic extension $V[G]$ and class $Y\subseteq\OR$ thereof we can understand $V[Y]$ to mean $L[X, Y]$.

\begin{theorem} \label{main_sets}
  Assume Global Choice. Let $\mathbb{P}*\dot{\mathbb{R}}$ and $\mathbb{Q}*\dot{\mathbb{S}}$ be set forcings with respective generics $G*I$ and $H*J$ such that $V[G, I] = V[H, J]$ with $\forces_{\mathbb{P}}|\dot{\mathbb{R}}|<\kappa$ and $\forces_{\mathbb{Q}}|\dot{\mathbb{S}}|<\kappa$, where $\kappa$ is a cardinal in $V[G,I]$. Then there is an inner model $V\subseteq U\subseteq V[G]\cap V[H]$ such that $V[G]$ and $V[H]$ are both $\kappa$-cc forcing extensions of $U$ (via forcings that are quotients of $\ro(\mathbb{P})$ and $\ro(\mathbb{Q})$ respectively).
\end{theorem}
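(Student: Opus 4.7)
My plan is to construct $U$ as a $V$-generic extension by the meet of the complete Boolean subalgebras of the common extension corresponding to $V[G]$ and $V[H]$.

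First, I would work in $V$ with $\mathbb{D} := \ro(\mathbb{P} * \dot{\mathbb{R}})$; since $V[G,I] = V[H,J]$, there is a canonical identification $\mathbb{D} \cong \ro(\mathbb{Q} * \dot{\mathbb{S}})$ respecting the generic points, so that $\mathbb{D}$ contains complete subalgebras $\mathbb{B} \cong \ro(\mathbb{P})$ (via the natural embedding $p \mapsto (p, 1)$, corresponding to the intermediate model $V[G]$) and $\mathbb{C} \cong \ro(\mathbb{Q})$ (corresponding to $V[H]$). Set $\mathbb{A} := \mathbb{B} \cap \mathbb{C}$, a complete subalgebra of $\mathbb{D}$ that lies in $V$; let $K$ be the restriction of the $\mathbb{D}$-generic to $\mathbb{A}$, and define $U := V[K]$. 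Standard intermediate-model theory then gives $U \subseteq V[G] \cap V[H]$, together with $V[G] = U[\bar{G}]$ and $V[H] = U[\bar{H}]$ for $U$-generic filters $\bar{G}, \bar{H}$ on the quotients $\mathbb{B}/\mathbb{A}$ and $\mathbb{C}/\mathbb{A}$---which are the required quotients of $\ro(\mathbb{P})$ and $\ro(\mathbb{Q})$ respectively.

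The remaining task is to establish the $\kappa$-cc of $\mathbb{B}/\mathbb{A}$ over $U$; the case of $\mathbb{C}/\mathbb{A}$ is symmetric. The key idea is that $\mathbb{D}/\mathbb{A}$ admits two factorizations over $U$, namely as $(\mathbb{B}/\mathbb{A}) * \dot{\mathbb{R}}$ and as $(\mathbb{C}/\mathbb{A}) * \dot{\mathbb{S}}$, whose second factors are each forced to have cardinality strictly below $\kappa$. Given a putative antichain $\{b_\alpha\}_{\alpha<\kappa} \subseteq \mathbb{B}/\mathbb{A}$ in $U$, pass to $\mathbb{D}/\mathbb{A}$ and represent each $b_\alpha$ via the second factorization as a pair $(c_\alpha, \dot{s}_\alpha)$; the antichain condition on the $b_\alpha$ implies that for any $\mathbb{C}/\mathbb{A}$-generic $\bar{H}$, the collection $\{\dot{s}_\alpha[\bar{H}] : c_\alpha \in \bar{H}\}$ is pairwise incompatible in $\mathbb{S}$, hence of size less than $\kappa$ in $V[H]$. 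Since $\kappa$ is a cardinal in $V[G,I]$ and hence also in $U$ and $V[H]$, cardinal preservation transfers such bounds back to $U$, enabling a pigeonhole argument on the $(c_\alpha, \dot{s}_\alpha)$-pairs to derive a contradiction.

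The main obstacle is to close out this final step rigorously. The naive idea---defining a Boolean embedding $\phi: \mathbb{B}/\mathbb{A} \to \ro(\mathbb{S})$ in $V[H]$ by $\phi(b) := \lVert \check{b} \in \dot{\bar{G}} \rVert_{\mathbb{S}}$---fails injectivity in general, because its kernel contains every $b \in \mathbb{B}/\mathbb{A}$ whose $\mathbb{C}/\mathbb{A}$-projection falls outside the fixed $\bar{H}$. Overcoming this requires applying both factorizations of $\mathbb{D}/\mathbb{A}$ simultaneously to exploit the smallness of both $\dot{\mathbb{R}}$ and $\dot{\mathbb{S}}$, together with the maximality of $\mathbb{A} = \mathbb{B} \cap \mathbb{C}$, which ensures that no additional shared content is being omitted from $U$ and that no proper refinement of the antichain could survive in $U$ without being detected by the symmetric analysis.
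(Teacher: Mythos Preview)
Your setup is reasonable (modulo the usual caveat that the identification $\mathbb{D}\cong\ro(\mathbb{Q}*\dot{\mathbb{S}})$ only holds below suitable conditions and via an isomorphism that must be extracted from the generics), but the proof has a genuine gap precisely where you flag it: the $\kappa$-cc of $\mathbb{B}/\mathbb{A}$ is not established. Your sketch breaks down at the step ``represent each $b_\alpha$ via the second factorization as a pair $(c_\alpha,\dot{s}_\alpha)$''. An arbitrary element $b_\alpha\in\mathbb{D}/\mathbb{A}$ is not a pair in $(\mathbb{C}/\mathbb{A})*\dot{\mathbb{S}}$; it is a general Boolean combination of such pairs, and choosing some pair below $b_\alpha$ throws away the information needed to bound the antichain. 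Even granting your incompatibility claim for the $\dot{s}_\alpha[\bar H]$, you only bound $|\{\alpha:c_\alpha\in\bar H\}|$, not $\kappa$ itself, and the vague appeal to ``maximality of $\mathbb{A}=\mathbb{B}\cap\mathbb{C}$'' does not close this. There is no evident way to run a pigeonhole here without some additional structure on $U$.

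The paper's route is genuinely different and supplies exactly this missing structure. Rather than taking $\mathbb{A}=\mathbb{B}\cap\mathbb{C}$ abstractly, it builds in $V$ explicit operators $S^1,T^1$ that track which conditions of $\mathbb{Q}$ (resp.\ $\mathbb{P}$) are forced into $\dot H$ (resp.\ $\dot G$) by pairs $(p,\dot r)$ with $\dot r$ ranging over a fixed set $R$ of fewer than $\kappa$ representative $\dot{\mathbb{R}}$-names. Iterating and closing off yields a set $E(G)\subseteq\mathbb{P}$ which is shown to lie in $V[G]\cap V[H]$, and one takes $U=V[E(G)]$. The crucial point is that $E(G)$ is, in $V[G]$, a union of fewer than $\kappa$ \emph{filters} (one for each $\dot r\in R$). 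The $\kappa$-cc of the quotient then follows because any antichain of quotient conditions embeds into $E(G)$ as pairwise incompatible elements, which must land in distinct filters. This concrete filter decomposition is the engine of the argument and has no analogue in your approach; indeed it is not even clear that your $U=V[K]$ coincides with the paper's $V[E(G)]$, and without a comparable structural handle on $U$ the $\kappa$-cc does not follow.
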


\begin{proof}
  We may assume that $\mathbb{P}$ and $\mathbb{Q}$ are forcings whose underlying sets are containing in the ordinals. Fix $\dot{G}$ a $\mathbb{Q}*\dot{\mathbb{S}}$-name such that $\dot{G}[H*J]=G$, and $\dot{H}$ a $\mathbb{P}*\dot{\mathbb{R}}$-name such that $\dot{H}[G*I]=H$. Since $\forces_{\mathbb{P}}|\dot{\mathbb{R}}|<\kappa$ we can choose a set $R$ of size less than $\kappa$ of $\mathbb{P}$-names for members of $\dot{\mathbb{R}}$ such that for any $\mathbb{P}$-name $\dot{r}$ for a member of $\dot{\mathbb{R}}$ there are densely many $p\in\mathbb{P}$ such that there exists an $\dot{r}'\in R$ for which $p\forces\dot{r}=\dot{r}'$. Choose a similar $S\subseteq\dot{\mathbb{S}}$ also of size less than $\kappa$.

  For $p\in\mathbb{P}$ define
    $$S^1(p):=\{q\in\mathbb{Q}\mid\exists \dot{r}\in R:(p,\dot{r})\forces q\in\dot{H}\}$$
  and for $A\subseteq\mathbb{P}$ define $S^1(A):=\bigcup_{p\in A}S^1(p)$. Similarly for $q\in\mathbb{Q}$ define
    $$T^1(q):=\{p\in\mathbb{P}\mid\exists\dot{s}\in S:(q,s)\forces p\in\dot{G}\}$$
  and for $B\subseteq\mathbb{Q}$ define $T^1(B):=\bigcup_{q\in B}T^1(q)$. Now construct inductively $S^{n+1}(p) := \bigcup_{q\in S^1(p)}\bigcup_{p'\in T^1(q)}S^n(p')$ and $S^{n+1}(A) := \bigcup_{q\in S^1(A)}\bigcup_{p'\in T^1(q)}S^n(p')$, followed by $S(p) := \bigcup_{n<\omega}S^n(p)$ and $S(A) := \bigcup_{n<\omega}S^n(A)$. Make equivalent definitions for $T(q)$ and $T(B)$. All of this can be done in $V$.

  For any $q\in H$ we can take $(p,\dot{r})\in G*I$ such that $(p,\dot{r})\forces q\in\dot{H}$. Then by density there will be a $p'\leq p$ with $p'\in G$ together with an $\dot{r}'\in R$ such that $p'\forces\dot{r}=\dot{r}'$. Thus $(p',\dot{r}')$ is below $(p,\dot{r})$ and also forces that $q$ is in $\dot{H}$, so $q\in S^1(p')\subseteq S^1(G)$. Therefore $H\subseteq S^1(G)$ so $S^1(T(H))\subseteq S^1(T(S^1(G))) = S(G)$, and likewise $G\subseteq T^1(H)$ so $S(G)\subseteq S(T^1(H)) = S^1(T(H))$. This gives us that $S(G) = S^1(T(H))$ and so it is a member of both $V[G]$ and $V[H]$.

  For convenience define $E(G)$ to be $T^1(S(G))$ and note that this is contained in $\mathbb{P}$ and is interdefinable with $S(G)$. Furthermore for $\dot{r}\in\dot{\mathbb{R}}$ we have that $\{q\in\mathbb{Q}\mid\exists p\in G:(p,\dot{r})\forces q\in\dot{H}\}$ is a filter, so $S^1(G)$ and hence $E(G)$ is (in $V[G]$) a union of $<\kappa$-many filters. The inner model $U:=V[E(G)]$ will be our candidate. We have $V\subseteq V[E(G)]\subseteq V[G]$ so by the intermediate model theorem there is a complete subalgebra $\mathbb{A}$ of $\mathbb{P}$ such that $V[E(G)] = V[\mathbb{A}\cap G]$, together with $\mathbb{C}$ the quotient forcing $\mathbb{P}/(\mathbb{A}\cap G)$.

  \begin{claim}
    $\mathbb{C}$ has the $\kappa$-cc.
  \end{claim}
  \begin{proof}
    Let $\dot{\Gamma}$ be the standard $\mathbb{P}$-name for the generic of $\mathbb{P}$ and $\dot{\mathcal{E}}$ the $\mathbb{A}$-name for $E(\dot{\Gamma})$; this will exist because $\mathbb{A}$ is generated by the conditions $\{[\![p\in E(\dot{\Gamma})]\!]\mid p\in\mathbb{P}\}$. Take $\dot{\mathbb{C}}$ an $\mathbb{A}$-name for $\mathbb{C}$, so by \cite[Lemma 25.5]{old_jech} there are conditions below which $\mathbb{A}*\dot{\mathbb{C}}$ is isomorphic to $\mathbb{P}$; we can assume these conditions are trivial.

    Given any $\mathbb{A}$-name $\dot{x}$ for a member of $\mathbb{C}$, define $p_{\dot{x}}:=(1, \dot{x})\in\mathbb{A}*\dot{\mathbb{C}}$, which we can regard as a member of $\mathbb{P}$. Take some $\mathbb{C}$-generic $K$ that contains $\dot{x}[G\cap\mathbb{A}]$ and define $\bar{G}:=(\mathbb{A}\cap G)*K$, a $\mathbb{P}$-generic. Now $p_{\dot{x}}$ is in $\bar{G}$ and hence $E(\bar{G})$. Therefore $[\![ p_{\dot{x}}\in \dot{\mathcal{E}}]\!]$ is in $\bar{G}\cap\mathbb{A}$ which equals $G\cap\mathbb{A}$, from which $p_{\dot{x}}\in E(G)$.

    We have in $V[G]$ that $E(G)$ was formed as a union of $<\kappa$-many filters, into one of which each $p_{\dot{x}}$ must fall. Suppose there is $\dot{X}$ an $\mathbb{A}$-name for an antichain in $\mathbb{C}$ of size $\kappa$. For any $\dot{x}$ and $\dot{y}$ such that $\forces_{\mathbb{A}}\dot{x},\dot{y}\in\dot{X}$ we have $\forces_{\mathbb{A}}\dot{x}\perp\dot{y}$ so $p_{\dot{x}}\perp p_{\dot{y}}$, which means that $p_{\dot{x}}$ and $p_{\dot{y}}$ are members of different filters whose union is $E(G)$. But there are $\kappa$-many of these $\dot{x}$ and $<\kappa$-many filters, and $\kappa$ remains a cardinal in $V[G]$. This is a contradiction.
  \end{proof}
  By a similar analysis the forcing between $U$ and $V[H]$ also has the $\kappa$-cc.
\end{proof}

We immediately obtain the following:

\begin{corollary}
  Assume Global Choice. Let $\mathbb{P}*\dot{\mathbb{R}}$ and $\mathbb{Q}*\dot{\mathbb{S}}$ be set forcings with respective generics $G*I$ and $H*J$ such that $V[G, I] = V[H, J]$ and $V[G]\cap V[H]=V$ with $\forces_{\mathbb{P}}|\dot{\mathbb{R}}|<\kappa$ and $\forces_{\mathbb{Q}}|\dot{\mathbb{S}}|<\kappa$, where $\kappa$ is a cardinal in $V[G,I]$. Then there is $p\in G$ such that $\mathbb{P}_p$ is $\kappa$-cc.
\end{corollary}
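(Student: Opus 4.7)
The plan is to read off the corollary as a direct consequence of Theorem~\ref{main_sets} together with the additional hypothesis $V[G]\cap V[H]=V$. Applying the theorem produces an inner model $U$ with $V \subseteq U \subseteq V[G]\cap V[H]$; the proof of the theorem moreover exhibits a complete subalgebra $\mathbb{A}\subseteq \ro(\mathbb{P})$ for which $V[\mathbb{A}\cap G]=U$ and the quotient $\ro(\mathbb{P})/(\mathbb{A}\cap G)$ has the $\kappa$-cc. Under the extra hypothesis $U$ collapses to $V$, so in particular $\mathbb{A}\cap G\in V$.

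The next step is to extract an atom-like condition from the fact that a $V$-generic filter for $\mathbb{A}$ has ended up in $V$. The set $D:=\{b\in\mathbb{A}\mid b\notin\mathbb{A}\cap G\}$ belongs to $V$, and $\mathbb{A}\cap G$ must miss $D$; hence $D$ cannot be dense below any $a\in\mathbb{A}\cap G$. A standard argument against $D$ therefore yields some $a\in\mathbb{A}\cap G$ below which every non-zero $b\in\mathbb{A}$ already lies in $\mathbb{A}\cap G$, i.e.\ $\mathbb{A}$ is trivial below $a$. Using the canonical factorisation $\ro(\mathbb{P})\cong\mathbb{A}*(\ro(\mathbb{P})/\dot{\mathbb{A}})$, triviality of $\mathbb{A}$ below $a$ identifies $\ro(\mathbb{P})_a$ with the quotient below $a$, and so $\ro(\mathbb{P})_a$ inherits the $\kappa$-cc from the quotient.

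Finally I would descend back to $\mathbb{P}$. Since $\mathbb{P}$ is dense in $\ro(\mathbb{P})$ and $a\in G$, by density $G$ meets $\{q\in\mathbb{P}\mid q\leq a\}$, yielding some $p\in G\cap\mathbb{P}$ with $p\leq a$. Then $\mathbb{P}_p$ is dense in $\ro(\mathbb{P})_p\subseteq\ro(\mathbb{P})_a$, so $\mathbb{P}_p$ has the $\kappa$-cc, as required. The only slightly non-routine step is verifying the trivialisation of $\mathbb{A}$ below an atom; once that is done the rest is book-keeping about $\ro$-equivalence, and no new combinatorial content beyond Theorem~\ref{main_sets} is required.
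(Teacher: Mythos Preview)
Your argument is correct and reaches the same conclusion as the paper, but the final step is organised differently. The paper proceeds more modularly: from Theorem~\ref{main_sets} and $V[G]\cap V[H]=V$ it concludes $U=V$, so there is a $\kappa$-cc forcing $\mathbb{M}\in V$ with generic $E$ such that $V[E]=V[G]$; it then simply cites \cite[Lemma~25.5]{old_jech} to obtain $p\in G$ and $m\in E$ with $\ro(\mathbb{P})_p\cong\ro(\mathbb{M})_m$, whence $\mathbb{P}_p$ is $\kappa$-cc. Your route instead opens up the proof of Theorem~\ref{main_sets} to extract the specific complete subalgebra $\mathbb{A}\subseteq\ro(\mathbb{P})$, then argues directly that since $\mathbb{A}\cap G\in V$ is $V$-generic, $\mathbb{A}$ must have an atom $a$ in the generic, below which $\ro(\mathbb{P})_a$ coincides with the $\kappa$-cc quotient. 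This is essentially a hands-on proof of the relevant instance of Jech's lemma. The paper's version is cleaner as a citation; yours is more self-contained but depends on the internal construction of $\mathbb{A}$ rather than just the statement of the theorem. Neither approach requires any idea beyond Theorem~\ref{main_sets}.
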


\begin{proof}
  Theorem \ref{main_sets} gives us a common ground $U$ of $V[G]$ and $V[H]$ that contains $V$, so it must be $V$ itself. Therefore there is a $\kappa$-cc forcing $\mathbb{M}$ with generic $E$ such that $V[E]=V[G]$. Then by \cite[Lemma 25.5]{old_jech} there are $p\in G$ and $m\in E$ such that $\mathbb{P}_p$ equals $\mathbb{M}_m$ and so is $\kappa$-cc.
\end{proof}

To see that this result for chain condition is best possible, consider $\mathbb{P}=\mathbb{Q}=\mathbb{R}=\mathbb{S}=\Add(\lambda, 1)$ with mutually generic filters $G=J$ and $H=I$. Assuming $2^{<\lambda}=\lambda$ the forcings all have size $\lambda$ but $\mathbb{P}$ does not have the $\lambda$-cc.

In the proof of \ref{main_sets} we were not able to simply use the intersection of $V$ and its generic ground as their common ground. The following example that develops a method of Woodin illustrates that this is an unavoidable complexity, because the intersection may not be a model of ZF. An alternative proof (with assuming a strong inaccessible) can be found in \cite[Theorem 34]{FHR}.

\begin{proposition} \label{intersection_not_model_of_ZFC}
  Let $M$ be a countable model of ZFC that believes $V=L$ and that there exists a strongly inaccessible cardinal. Then there exist generics $G$ and $H$ over $M$ such that $M[G]\cap M[H]\nvDash\ZF$.
\end{proposition}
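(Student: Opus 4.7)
The approach is to adapt Woodin's countable-model bookkeeping construction, working with the L\'evy collapse $\mathbb{P}:=\Coll(\omega,<\kappa)^M$ and viewing a $\mathbb{P}$-generic $G$ as an increasing union $G=\bigcup_{\alpha<\kappa}G_\alpha$ of $\Coll(\omega,<\alpha)$-generic filters. The plan is to construct two such generics $G,H$ over $M$ in tandem, exploiting the countability of $M$ to diagonalise against the countably many dense subsets of $\mathbb{P}$ lying in $M$, arranging that (a)~for every $\alpha<\kappa$, $G_\alpha\in M[H]$ and $H_\alpha\in M[G]$; (b)~$G\notin M[H]$ and $H\notin M[G]$; and (c)~the $G$-conditions can be distinguished from the $H$-conditions inside any model containing both, via a pre-fixed parity marker in the initial collapse component.

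Condition (a) would be achieved by reserving, in the tail of $H$, blocks of conditions above an address ordinal $\beta(\alpha)>\alpha$ that code $G_\alpha$ by means of a ground-model $M$-name (and symmetrically for $H_\alpha$ in $G$). Condition (b) would be secured by letting $\beta(\alpha)$ depend on previously chosen bits of $G$, so that the address function $\alpha\mapsto\beta(\alpha)$ fails to lie in $M[H]$ and no single $M[H]$-uniform decoder simultaneously recovers every $G_\alpha$. The inaccessibility of $\kappa$ is used throughout: at each stage of the construction only a bounded, set-sized fragment of $\mathbb{P}$ is touched, leaving room both to meet the next dense set and to carry out the coding.

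Set $U:=M[G]\cap M[H]$ and suppose toward contradiction that $U\models\ZF$. By (a) every $G_\alpha$ lies in $U$, and $\kappa$ is an ordinal of $U$; by (c) the $G$-generics can be isolated in $U$ from the $H$-generics. The function $F:\alpha\mapsto G_\alpha$ then admits the inductive description in $U$: ``$F(\alpha)$ is the unique $G$-marked $\Coll(\omega,<\alpha)^M$-generic filter in $U$ extending $F(\beta)$ for every $\beta<\alpha$,'' the uniqueness following from the coherence of the chain of extensions. Applying Replacement in $U$ to $F$ and to $\kappa$ yields $\{G_\alpha\mid\alpha<\kappa\}$ as a set of $U$, and the Union axiom then gives $G=\bigcup_{\alpha<\kappa}G_\alpha\in U\subseteq M[H]$, contradicting (b).

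The principal technical obstacle will be reconciling (a) with (b): the coding must be systematic enough that each bounded $G_\alpha$ is individually recoverable from $H$, yet sufficiently $G$-dependent that no $M[H]$-definable scheme extracts the whole sequence at once, for otherwise $G$ would already lie in $M[H]$ before we even take the intersection. The remedy is exactly to let the addresses $\beta(\alpha)$ be read off $G$ rather than from a ground-model function, so from $M[H]$'s perspective the address sequence is inaccessible even though every individual $\beta(\alpha)$ and every $G_\alpha$ is present. A subsidiary check is that the simultaneous recursive construction remains consistent with genericity over $M$ at every stage, which follows from standard density arguments together with the abundance of room below $\kappa$ provided by inaccessibility.
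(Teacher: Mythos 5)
Your proposal takes a genuinely different, more direct route than the paper, but the crux step has a real gap. Inside $U=M[G]\cap M[H]$ you define $F(\alpha)$ as ``the unique $G$-marked $\Coll(\omega,<\alpha)^M$-generic filter in $U$ extending $F(\beta)$ for all $\beta<\alpha$,'' and then apply Replacement to $F$ to recover $G$. But you do not control which filters land in $U$; at a successor stage $\alpha=\gamma+1$ the definition presupposes that $U$ contains exactly one $G$-marked $\Coll(\omega,\gamma)^M$-generic over $M[G_\gamma]$ extending the chain so far. A parity marker on, say, the value of the $\gamma$-component surjection at $0$ does not cut this down to a single filter: there are a great many generics with that bit set, and your construction is by design pushing $M$-generic pieces of $G$ into $M[H]$, so extra $G$-marked filters may well appear in the intersection. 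Without uniqueness $F$ is not a function, and the Replacement step never starts. This is not a peripheral check; it is the entire content of the ``$G$-marked'' device and would need a concrete argument, which the proposal does not supply.

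There is also a gap upstream: the construction of $G$ and $H$ realising (a) $G_\alpha\in M[H]$ for all $\alpha<\kappa$, (b) $G\notin M[H]$, and (c) an intrinsic marker, is only outlined. Every $G_\alpha$ and every address $\beta(\alpha)$ is in $M[H]$, so one must argue that their assembly is nonetheless absent from $M[H]$, while simultaneously keeping $H$ genuinely $\Coll(\omega,<\kappa)^M$-generic despite all the coding blocks. That tension is exactly where such direct constructions usually fail, and the appeal to ``standard density arguments'' and the inaccessibility of $\kappa$ does not discharge it.

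For contrast, the paper's proof (following Woodin; cf.\ \cite[Theorem 34]{FHR}) is deliberately indirect and sidesteps both problems. It codes a $\Coll(\theta,\chi)^M$-generic $x$ into the overlap $\bar{g}\cap\bar{h}$ of two $\Add(\theta,1)^M$-generics, then forces with an Easton product of $\Add(\aleph_{\alpha+1},1)$'s and projects so that the intersection of the two resulting outer models contains a fresh subset of $\aleph_{\alpha+1}^M$ precisely when $\alpha\in\bar{g}\cap\bar{h}$. A ZF model would collect these observations by Replacement and recover $x$; but $x$ codes a collapse of $\chi$ and is far too complex to lie in either outer model. No uniqueness inside the intersection, no recursion on filters, and no delicate interleaving of the two generic filters is required --- only the first-order observability of a fresh-subset profile together with a size bound. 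If you push your approach far enough to close the gaps, you will likely end up re-importing a detection argument of this kind, at which point the L\'evy-collapse scaffolding is not doing any work.
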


\begin{proof}
  In $M$ fix $\theta$ strongly inaccessible and $\chi$ much larger than $\theta$. Take $x\subseteq\theta$ that encodes a $\Coll(\theta, \chi)^M$-generic over $M$. First, working in $M[x]$, we shall construct $G$ and $H$ which are $\Add(\theta, 1)^M$-generics over $M$ and induce generic functions $g, h:\theta\rightarrow 2$ such that $g^{-1}``\{1\}\cap h^{-1}``\{1\}$ encodes $x$. Enumerate the dense open subsets of $\Add(\theta, 1)^M$ from $M$ ,as $\{D_i\mid i<\theta\}$. We will inductively build $p_i, q_i\in\Add(\theta, 1)^M$ for $i<\theta$ such that $\dom p_i = \dom q_i$ and then take $g = \bigcup p_i$ and $h = \bigcup q_i$. Start with $p_0 = q_0 =\{\}$ and take $p_i = \bigcup_{j<i}p_j$ and $q_i=\bigcup_{j<i}q_j$ at $i$ limit. Given $p_i$ and $q_i$ find $p_i'\leq p_i$ whose domain is even such that $p_i'\in D_i$ and then construct $q_i'\leq q_i$ by defining $q_i(\delta)=0$ for $\delta\in\dom p_i'-\dom q_i$. Similarly find $q_i''\leq q_i'$ such that $q_i''\in D_i$ and $\dom q_i''$ is even, and construct $p_i''\leq p_i'$ by taking $p_i''(\delta)=0$ for $\delta\in\dom q_i''-p_i'$. Observe that $p_i''^{-1}``\{1\}\cap q_i''^{-1}``\{1\} = p_i^{-1}``\{1\}\cap q_i^{-1}``\{1\}$. Now define $\xi:=\dom p_i''=\dom q_i''$ and extend $p_i''$ to $p_{i+1}$ and $q_i''$ to $q_{i+1}$ by defining
  \begin{equation}
    (p_{i+1}(\xi),p_{i+1}(\xi+1))=(q_{i+1}(\xi),q_{i+1}(\xi+1))= \begin{cases}
      (1, 0) \quad\mbox{ if } i\in x\\
      (0, 1) \quad\mbox{ if } i\notin x.\\
    \end{cases}
  \end{equation}

  Define subsets of $\theta$, $\bar{g}:=g^{-1}``\{1\}$ and $\bar{h}:=h^{-1}``\{1\}$. Since we met all of the dense sets from $M$ the associated filters $G$ and $H$ must be generic over $M$, but we have ensured that $i\in x$ iff the $i^{th}$ element of $\bar{g}\cap \bar{h}$ is even, so $\bar{g}\cap \bar{h}$ codes $x$.

  Now take $F$ that is $(\prod_{\alpha<\theta}\Add(\aleph_{\alpha+1}, 1))^{M}$-generic over $M[x]$ where we use an Easton support product. In $M[x]$ there are also the sets $\bar{g}$ and $\bar{h}$ and hence the forcings $(\prod_{\alpha\in \bar{g}}\Add(\aleph_{\alpha+1}, 1))^{M[G]}$ and $(\prod_{\alpha\in \bar{h}}\Add(\aleph_{\alpha+1}, 1))^{M[H]}$. There are natural projections $\pi_0:(\prod_{\alpha<\theta}\Add(\aleph_{\alpha+1}, 1))^M\rightarrow (\prod_{\alpha\in\bar{g}}\Add(\aleph_{\alpha+1}, 1))^{M[G]}$ and $\pi_1:(\prod_{\alpha<\theta}\Add(\aleph_{\alpha+1}, 1))^M\rightarrow (\prod_{\alpha\in\bar{h}}\Add(\aleph_{\alpha+1}, 1))^{M[H]}$ from which we obtain generics $\pi_0``F$ and $\pi_1``F$. Now $\theta$ is strongly inaccessible in $M$ so the constructions of the latter two forcings are unaffected by $\theta$-closed forcing, and they will also be Easton-support products of $\Add(\aleph_{\alpha+1},1)^M$ forcings in $M[G]$ and $M[H]$ respectively. This means we can form the generic extension of $M[G]$ by $(\prod_{\alpha\in \bar{g}}\Add(\aleph_{\alpha+1}, 1))^{M[G]}$ to get $M[G][\pi_0``F]$, and of $M[H]$ by $(\prod_{\alpha\in\bar{h}}\Add(\aleph_{\alpha+1}, 1))^{M[H]}$ to get $M[H][\pi_1``F]$.

  Let us now consider $M[G][\pi_0``F]\cap M[H][\pi_1``F]$. For $\alpha\in\bar{g}\cap\bar{h}$ the generic $F(\alpha)$ for $\Add(\aleph_{\alpha+1},1)^M$ will be a fresh subset of $\aleph_{\alpha+1}^M$ (i.e. a subset all of whose initial segments are in the ground model) that is a member of $M[G][\pi_0``F]\cap M[H][\pi_1``F]$. Conversely if $\alpha\notin\bar{g}\cap\bar{h}$ then say $\alpha\notin\bar{g}$, so in $M[G][\pi_0``F]$ there will be no fresh subset of $\aleph_{\alpha+1}^M$. Therefore $M[G][\pi_0``F]\cap M[H][\pi_1``F]$ contains a fresh subset  of $\aleph_{\alpha+1}^M$ if and only if $\alpha$ is a member of both $\bar{g}$ and $\bar{h}$. If this intersection were a model of ZF then it would be able to form the set of such $\alpha$, and so it would contain $\bar{g}\cap\bar{h}$ and from this $x$. This is impossible because the forcing that gives $G*\pi_0``F$ is too small to have collapsed $\chi$.
\end{proof}

In this example the two models $M[G][\pi_0``F]$ and $M[H][\pi_1``F]$ are re-united by set forcing in the model $M[x][F]$. Take $E$ a generic for some $\chi^+$-closed class forcing in $M$. Then we have a class forcing extension of $M$ given by $M[G][\pi_0``F][E]=M[E][G][\pi_0``F]$ and a generic ground $M[E][H][\pi_1``F]$ thereof such that their intersection is not a model of ZF.

The importance of $\kappa$ remaining a cardinal of $V[G]$ in \ref{main_sets} is illustrated by the following, where the role of $\kappa$ is played by $\lambda^+$.

\begin{proposition} \label{coll_add_example}
  Let $M$ be a countable model of ZFC that believes $\lambda$ to be regular such that $2^{<\lambda}=\lambda$. Then there exist forcings $\mathbb{P}\times\mathbb{R}$ and $\mathbb{Q}\times\mathbb{S}$ in $M$ with generics $G\times I$ and $H\times J$ respectively such that $M[G, I]=M[H, J]$ and $M[G]\cap M[H]=M$ but $|\mathbb{R}|^M=|\mathbb{S}|^M=\lambda$ and for all $p\in\mathbb{P}$, $\mathbb{P}_p$ is not $\lambda^+$-cc in $M$.
\end{proposition}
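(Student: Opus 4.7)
The plan is to take $\mathbb{P}=\mathbb{Q}=\Coll(\lambda,\lambda^+)^M$ (the Levy collapse, with conditions partial functions $p:\lambda\rightharpoonup\lambda^+$ of domain size $<\lambda$ ordered by reverse inclusion) and $\mathbb{R}=\mathbb{S}=\Coll(\lambda,\lambda)^M$ (defined analogously). Using $2^{<\lambda}=\lambda$ one computes $|\mathbb{R}|^M=|\mathbb{S}|^M=\lambda$ while $|\mathbb{P}|^M=\lambda^+$. For any $p\in\mathbb{P}$, fix some $\gamma\in\lambda\setminus\dom p$ and let $\alpha$ vary over $\lambda^+$: the conditions $p\cup\{(\gamma,\alpha)\}$ form a $\lambda^+$-sized antichain below $p$, so $\mathbb{P}_p$ is not $\lambda^+$-cc.

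Let $G\times I$ be $\mathbb{P}\times\mathbb{R}$-generic over $M$. Since $\mathbb{P}$ collapses $\lambda^+$ to $\lambda$, choose in $M[G]$ any bijection $b:\lambda\to\lambda^+$ and define $\Phi_G:\mathbb{R}\to\mathbb{Q}$ by $\Phi_G(p):=b\circ p$. Because $b$ is a bijection, $\Phi_G$ is an order-isomorphism of posets; crucially, $\Phi_G$ lives in $M[G]$ but not in $M$. By the product lemma $I$ is $\mathbb{R}$-generic over $M[G]$, so $H:=\Phi_G[I]$ is $\mathbb{Q}$-generic over $M[G]$, and hence $G$ and $H$ are mutually $\mathbb{P}\times\mathbb{Q}$-generic over $M$. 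Since $\Phi_G\in M[G]$ makes $I$ and $H$ mutually definable over $M[G]$, $M[G,I]=M[G,H]$. Symmetrically, pick a bijection $b':\lambda\to\lambda^+$ in $M[H]$, set $\Phi_H(p):=b'\circ p$, and let $J:=\Phi_H^{-1}[G]$: mutual genericity makes $G$ a $\mathbb{P}$-generic over $M[H]$, so $J$ is $\mathbb{S}$-generic over $M[H]$, and $M[H,J]=M[H,G]=M[G,I]$.

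The remaining assertion $M[G]\cap M[H]=M$ is the standard product-forcing principle that mutually generic filters yield extensions whose intersection is the ground model. Explicitly: suppose $x\subseteq\alpha$ has $\mathbb{P}$-name $\tau$ and $\mathbb{Q}$-name $\sigma$ with $\tau_G=\sigma_H=x$, and fix $(p_0,q_0)\in G\times H$ forcing $\tau=\sigma$. If some $p_1\leq p_0$ forced $\beta\in\tau$, then $(p_1,q_0)$ would force $\beta\in\tau\wedge\tau=\sigma$, hence $\beta\in\sigma$; as $\sigma$ is a $\mathbb{Q}$-only name this descends to $q_0\forces\beta\in\sigma$. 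Symmetrically, any $p_2\leq p_0$ with $p_2\forces\beta\notin\tau$ would yield $q_0\forces\beta\notin\sigma$, a contradiction. So $p_0$ decides $\beta\in\tau$ for every $\beta<\alpha$, whence $x=\{\beta<\alpha\mid p_0\forces\beta\in\tau\}\in M$.

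The conceptual heart of the construction is that $\Phi_G$ exists only after passing to $M[G]$: it is the fact that $\mathbb{P}$ collapses $\lambda^+$ that allows the small forcing $\mathbb{R}\in M$ to realise a mutually $\mathbb{Q}$-generic extension of $M$ inside $M[G,I]$, which is precisely the possibility that Theorem \ref{main_sets} forecloses by assuming $\kappa=\lambda^+$ remains a cardinal in $V[G,I]$.
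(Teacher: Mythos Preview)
Your proof is correct and follows essentially the same idea as the paper: exploit that after collapsing $\lambda^+$ to $\lambda$, the posets $\Coll(\lambda,\lambda)$ and $\Coll(\lambda,\lambda^+)$ become isomorphic, so a generic for the small forcing can be converted into a mutually generic collapse. The paper's packaging differs only cosmetically: it uses injective-function variants $\Coll^*$ and $\Add^*$, starts with mutually generic $G,H$ for $\Coll^*(\lambda,\lambda^+)$ and then defines $I=J$ as the filter coming from $g^{-1}\circ h$ (so it must verify genericity of $I$ over $M[G]$ directly), whereas you start with $G\times I$ generic and push $I$ forward through the isomorphism to obtain $H$, then separately construct $J$. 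One small point you leave implicit: the surjectivity of $\Phi_G$ onto $\mathbb{Q}^M$ (not just $\mathbb{Q}^{M[G]}$) uses that $\mathbb{P}$ is ${<}\lambda$-closed, so $b^{-1}\circ q\in M$ for each $q\in\mathbb{Q}^M$; this is immediate from the regularity of $\lambda$.
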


\begin{proof}
  $\mathbb{P}$ and $\mathbb{Q}$ will be $\Coll^*(\lambda,\lambda^+)^M$, which we define to be the usual $\Coll(\lambda,\lambda^+)^M$ with the additional stipulation that the conditions must be injective partial functions; this means that the generic function it adds will be a bijection. Similarly $\mathbb{R}$ and $\mathbb{S}$ will be $\Add^*(\lambda, 1)^M$, by which we mean the forcing whose conditions are injective partial functions from $\lambda$ to $\lambda$; this is equivalent to the usual $\Add(\lambda, 1)^M$ and will create a bijective generic function from $\lambda$ to $\lambda$.

  Take $G$ and $H$ to be mutually generic filters added by $\mathbb{P}$ and $\mathbb{Q}$, and $g, h:\lambda\rightarrow\lambda^+$ the resultant bijections. Define $i=j=g^{-1}\circ h$ and let $I=J$ be the filters associated with $i$ and $j$ respectively. It is clear that $M[G]\cap M[H]=M$ and $M[G, I]=M[G\times H]=M[H, J]$ so we just need to check that $I$ is in fact generic for $\Add^*(\lambda, 1)$ over $M[G]$ (and similarly for $J$ over $M[H]$).

  Given $D\in M$ a dense subset of $\Add^*(\lambda, 1)^M$ that belongs to $M[G]$ we define $E$ to be $\{q\in\Coll^*(\lambda,\lambda^+)\mid g^{-1}\circ q\in D\}$, also in $M[G]$. Given any $q\in\Coll^*(\lambda,\lambda^+)^M$ then $g^{-1}\circ q\in\Add^*(\lambda,1)$ so there is an $r\in D$ with $r\leq g^{-1}\circ q$ and then $g\circ r\leq q$ is in $E$. Thus $E$ is dense in $\Coll^*(\lambda,\lambda^+)^M$ and we can find $q\in H\cap E$ giving $g^{-1}\circ q\in I\cap D$.
\end{proof}

In the construction used in this proof we could take an antichain $X\subseteq\Coll^*(\lambda,\lambda^+)^M$ of size $\lambda^+$ and apply \ref{main_sets} to it to obtain in $M[G]$ an injection from $X$ into the set of $\lambda$-many filters whose union forms $E(G)$, but this is not a contradiction because $\lambda^+$ has been collapsed by $\mathbb{P}$.

It is well known that for mutually generic filters $G$ and $H$ we must have $V[G]\cap V[H]=V$. We conclude this section by showing that the converse is false.

\begin{proposition}
  Let $M$ be a countable model of ZFC. Then there exist generic filters $G$ and $H$ such that $M[G]\cap M[H]=M$ but $G$ and $H$ are not mutually generic.
\end{proposition}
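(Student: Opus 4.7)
My plan is to realise $G$ and $H$ as the two coordinate projections of a generic for a proper sub-poset of the product of two copies of Cohen forcing, where the sub-poset introduces a joint constraint strong enough to destroy product-genericity but local enough that no information survives in the intersection of the two extensions.

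Concretely, I would take $\mathbb{P}=\mathbb{Q}=\mathrm{Add}(\omega,1)^{M}$ and let $\mathbb{R}$ be the sub-poset of $\mathbb{P}\times\mathbb{Q}$ consisting of all pairs $(p,q)$ of finite partial functions $\omega\to 2$ such that there is no $n\in\dom(p)\cap\dom(q)$ with $p(n)=q(n)=1$. An $\mathbb{R}$-generic over $M$ produces a pair $(c_{0},c_{1})$ of functions $\omega\to 2$ satisfying $c_{0}(n)\cdot c_{1}(n)=0$ for every $n$. I let $G$ be the Cohen filter generated by $c_{0}$ and $H$ the Cohen filter generated by $c_{1}$.

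The first step is to verify that $G$ is $\mathbb{P}$-generic over $M$ (and symmetrically $H$ is $\mathbb{Q}$-generic). Given any $D\in M$ dense in $\mathbb{P}$, I would show that $\{(p,q)\in\mathbb{R}:p\in D\}$ is dense in $\mathbb{R}$: given $(p,q)\in\mathbb{R}$, extend $p$ to some $p'\in D$ and extend $q$ by defining it to be $0$ on every new coordinate of $p'$. The resulting $(p',q')$ satisfies the constraint, so genericity of $(c_{0},c_{1})$ forces $G$ to meet $D$. For non-mutual-genericity, the set $D_{\ast}=\{(p,q)\in\mathbb{P}\times\mathbb{Q}:\exists n\in\dom(p)\cap\dom(q),\;p(n)=q(n)=1\}$ is dense in $\mathbb{P}\times\mathbb{Q}$ (any condition is extended by assigning $1$ to both components at a fresh coordinate), but $G\times H$ misses $D_{\ast}$ by construction, so $G\times H$ is not $\mathbb{P}\times\mathbb{Q}$-generic.

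The main obstacle is the trivial-intersection statement $M[G]\cap M[H]=M$. I would establish this via automorphisms of $\mathrm{RO}(\mathbb{R})$ available in $M$. At each coordinate $n$, the local values of $(c_{0}(n),c_{1}(n))$ lie in the three-element set $\{(0,0),(0,1),(1,0)\}$, so for each $n$ every permutation of this set induces a Boolean automorphism of the coordinate-$n$ sub-algebra which extends to an automorphism of $\mathrm{RO}(\mathbb{R})$ in $M$. The two crucial families are $\sigma^{(1)}_{n}$, swapping $(0,0)\leftrightarrow(0,1)$ and fixing $(1,0)$ (so that $c_{0}$ is unchanged pointwise while $c_{1}(n)$ is toggled whenever $c_{0}(n)=0$), and $\sigma^{(2)}_{n}$, swapping $(0,0)\leftrightarrow(1,0)$ and fixing $(0,1)$ (which fixes $c_{1}$ pointwise and toggles $c_{0}(n)$ whenever $c_{1}(n)=0$). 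Each $\sigma^{(1)}_{n}$ acts trivially on $M[c_{0}]$ and each $\sigma^{(2)}_{n}$ acts trivially on $M[c_{1}]$, so any $x\in M[G]\cap M[H]$ is fixed by every $\sigma^{(i)}_{n}$. I would then check that the group generated by $\{\sigma^{(1)}_{n},\sigma^{(2)}_{n}:n<\omega\}$ acts transitively on the collection of valid pairs by routing $(c_{0},c_{1})\rightsquigarrow(c_{0},0)\rightsquigarrow(0,0)\rightsquigarrow(c_{0}',0)\rightsquigarrow(c_{0}',c_{1}')$, each step controlled by one of the two families and each intermediate pair lying in $\mathbb{R}$. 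Consequently any $\mathbb{R}$-name for $x$ evaluates to the same element at every $\mathbb{R}$-generic, so $x\in M$, as required.

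The hard part is precisely this last step: the natural first attempts (forcing $G(0)=H(0)$, or taking XOR combinations of three mutually Cohen-generic reals) either leave $G,H$ still mutually generic after the Boolean-algebra reduction, or produce a parity invariant such as $c_{0}\oplus c_{1}\oplus c_{2}$ that sits inside $M[G]\cap M[H]$. The asymmetric ``at most one of $c_{0}(n),c_{1}(n)$ equals $1$'' constraint is chosen so that the per-coordinate automorphism group is the full symmetric group $S_{3}$, which is rich enough to move either generic independently while fixing the other.
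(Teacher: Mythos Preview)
Your construction is correct and genuinely different from the paper's: the paper hand-builds $G$ and $H$ over a countable $M$ by interleaving three tasks (meeting dense sets, coding a collapse real into $\bar g\cap\bar h$ to destroy mutual genericity, and explicitly forcing $\dot a_i[G]\neq\dot b_i[H]$ for every pair of names to kill the intersection), whereas you obtain both reals in one stroke as the projections of a single $\mathbb{R}$-generic for the ``no common $1$'' sub-poset. Your verification that each projection is Cohen and that $G\times H$ misses the dense set $D_\ast$ is fine (modulo the trivial fix of first padding $p$ with zeros over $\dom q$ before extending into $D$).

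The real gap is in the automorphism argument for $M[G]\cap M[H]=M$. The individual $\sigma^{(1)}_n$ only changes $c_1$ at one coordinate, so it fixes $M[c_1]$ \emph{as a model}; but to route $(c_0,c_1)\rightsquigarrow(c_0,0)$ you must apply an \emph{infinite} product of $\sigma^{(1)}_n$'s, and such a product alters $c_1$ at infinitely many places, so it no longer fixes $M[c_1]$. Concretely: if $\dot x=\pi_0^*(\dot y)$ then products of $\sigma^{(1)}_n$ fix $\dot x$, but products of $\sigma^{(2)}_n$ do not; if $\dot x=\pi_1^*(\dot z)$ the situation is reversed. You only know $r\Vdash\pi_0^*(\dot y)=\pi_1^*(\dot z)$ below some $r\in K$, not that the names coincide, so you cannot switch freely between them along the route. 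In particular, after the first step your generic has second projection $0$, and there $\pi_1^*(\dot z)$ evaluates to $\dot z[0]$, not to $x$, so the second step has no invariant name to work with.

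The claim $M[G]\cap M[H]=M$ is nevertheless true and follows from a direct mixing argument in the spirit of mutual genericity. Take $r=(p,q)\in K$ with $\dom p=\dom q$ forcing $\pi_0^*(\dot y)=\pi_1^*(\dot z)$. If $r$ fails to decide some $\alpha\in\pi_0^*(\dot y)$, then $p$ fails to decide $\alpha\in\dot y$ in $\mathbb{P}$; pick $p_0,p_1\leq p$ deciding it oppositely, and pad $q$ with zeros to $q^*$ with $\dom q^*\supseteq\dom p_0\cup\dom p_1$. Now $(p_0,q^*),(p_1,q^*)\in\mathbb{R}$, both below $r$. Any $q^{**}\leq q^*$ deciding $\alpha\in\dot z$ in $\mathbb{Q}$ still pairs with both $p_i$ inside $\mathbb{R}$ (since $\dom p_i\subseteq\dom q^*$), and pairing it with the $p_i$ of opposite sign yields a condition below $r$ forcing $\alpha\in\pi_0^*(\dot y)\triangle\pi_1^*(\dot z)$, a contradiction. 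So $r$ decides every $\alpha$, and $x\in M$. This replaces your automorphism step and completes your proof.
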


\begin{proof}
  Fix $\theta$ large and $x\subseteq\omega$ encoding a $\Coll(\omega,\theta)^M$-generic over $M$. Let $\{D_i\mid i<\omega\}$ be an enumeration of all dense open subsets of $\Add(\omega,1)$ from $M$ and $\{(\dot{a}_i,\dot{b}_i)\mid i<\omega\}$ an enumeration of pairs of $\Add(\omega,1)$-names from $M$ that are forced to be new subsets of $\OR$. We will construct $\Add(\omega, 1)$ generics $G$ and $H$ over $M$ by inductively defining $p_i, q_i$ for $i<\omega$ and then taking $g=\bigcup p_i$ and $h=\bigcup q_i$. We have already seen in \ref{intersection_not_model_of_ZFC} how to do this so that $g$ and $h$ induce generic filters $G$ and $H$ in such a way that $\bar{g}\cap\bar{h}$ codes $x$, where $\bar{g}:=g^{-1}``\{1\}$ and $\bar{h}:=h^{-1}``\{1\}$. This means $G$ and $H$ cannot be mutually generic because if they were $G\times H$ would be generic for $\Add(\omega,1)\times\Add(\omega,1)$ whilst encoding a $\Coll(\omega,\theta)^M$-generic.

  To obtain $M[G]\cap M[H]=M$ we add an additional step to the construction of $(p_{i+1},q_{i+1})$. Given $(p_i', q_i')$ which have met $D_i$ and recorded whether $i\in x$ we want to ensure that $\dot{a}_i[G]\neq\dot{b}_i[H]$, so we seek $p_{i+1}\leq p_i'$ and $q_{i+1}\leq q_i'$ and $\eta\in\OR$ such that $p_{i+1}\forces \eta\in\dot{a}_i$ and $q_{i+1}\forces \eta\notin\dot{b}_i$ (or vice versa). We also need $p_{i+1}^{-1}``\{1\}\cap q_{i+1}^{-1}``\{1\}=p_i'^{-1}``\{1\}\cap q_i'^{-1}``\{1\}$ so as not to disrupt the coding of $x$. For $s\in\Add(\omega,1)$ and $n<\omega$ we will write $s\frown\langle 0\rangle^n$ for the extension $s'$ of $s$ with $\dom s'=\dom s+n$ and $s'(m)=0$ for $m\in\dom s'-\dom s$.

  Since $\dot{b}_i$ is a name for a new subset of $\OR$ we can fix $\eta$ such that $q_i'$ does not decide whether $\eta\in\dot{b}_i$. Observe that it is impossible for there to be $k$ and $l$ such that $p_i'\frown\langle 0\rangle^{k}\forces \eta\in\dot{a}_i$ and $p_i'\frown\langle 0\rangle^{l}\forces \eta\notin\dot{a}_i$ since such conditions would be compatible. So without loss of generality say there is no $l<\omega$ with $p_i'\forces \eta\notin\dot{a}_i$. Take $q_i''\leq q_i'$ such that $q_i''\forces \eta\notin\dot{b}_i$ and define $p_i'':=p_i'\frown\langle 0\rangle^{\lh q_i''-\lh q_i'}$. Since $p_i''$ does not force $\eta\notin\dot{a}_i$ we can take $p_{i+1}\leq p_i''$ such that $p_{i+1}\forces \eta\in\dot{a}_i$ and define $q_{i+1}:=q_i''\frown\langle 0\rangle^{\lh p_{i+1}-\lh p_i''}$.

  This has given us that all sets of ordinals from $M[G]\cap M[H]$ are in $M$, and it follows that $M[G]\cap M[H]=M$.
\end{proof}

\section{Intersections with generic grounds I} \label{intersections_part_1}

We first recall some key definitions for class forcing.

\begin{definition}
  Let $\mathbb{P}$ be a class partial order. Then a class $D\subseteq\mathbb{P}$ is \textit{predense} below $p\in\mathbb{P}$ if every $q\leq p$ is compatible with an element of $D$. The forcing $\mathbb{P}$ is \textit{pre-tame} if for every $p\in\mathbb{P}$, $\lambda\in\OR$, and $V$-definable sequence of classes $\langle D_i\mid i<\lambda\rangle$ that are each pre-dense below $p$, there are $q\leq p$ and $\langle d_i\mid i<\lambda\rangle\in V$ such that $d_i\subseteq D_i$ and each $d_i$ is pre-dense below $q$.

  The forcing $\mathbb{P}$ is \textit{tame} if it is both pre-tame and forces the power set axiom to hold.

  The forcing $\mathbb{P}$ is \textit{ZFC-preserving} if for all $\mathbb{P}$-generics $G$ the model $\langle V[G], G\rangle$, in the language of set theory together with a unary predicate, satisfies ZFC, where the axioms of separation and replacement are allowed to make definitions with respect to the unary predicate.
\end{definition}

The fundamental theorems of class forcing show that the forcing relation $\forces$ is definable in $V$, that a $\mathbb{P}$-extension $V[G]$ will satisfy $\varphi(\dot{a}[G])$ iff there is $p\in G$ such that $p\forces\varphi(\dot{a})$, and that a class forcing $\mathbb{P}$ is pre-tame if and only if it preserves ZFC - Power Set (in the extended language with a unary predicate for $G$); for proofs see \cite[Chapter 8, 2.5 and 2.11]{FK}. Thus a class forcing is tame if and only if it is ZFC-preserving. The notion of ZFC-preservation is not first-order expressible but this equivalence allows us to instead speak of tameness, which constitutes a first-order axiom scheme.

\begin{definition}
  We will say a class forcing $\mathbb{P}$ is \textit{forwards} if it is tame and is the result of a an iteration $\langle\mathbb{P}_{\alpha},\dot{\mathbb{Q}}_{\alpha}\mid\alpha\in\OR\rangle$ of set forcing, such that for all cardinals $\kappa$ there is an $\alpha$ such that $\mathbb{P}_{\alpha}$ forces the tail forcing from $\dot{\mathbb{Q}}_{\alpha}$ onwards to be $\kappa$-distributive.

  We note that most interesting class forcings are of this kind but not, for example, Jensen's coding of the universe into a real. The increasing distributivity occurs because any forcing that preserves ZFC can only add set-many subsets of any given ordinal.
\end{definition}

We will need to following result from \cite[Proposition 1.4]{Unger}.

\begin{lemma} \label{kappaCCSubsets}
  Let $\mathbb{P}$ be a non-trivial $\kappa$-cc forcing. Then it adds a new subset of $\kappa$.
\end{lemma}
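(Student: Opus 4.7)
The plan is to reduce to the case where the Boolean completion of $\mathbb{P}$ is atomless, construct a binary splitting tree of conditions, and extract from the generic a new function $f\in 2^{\omega}$, which gives a new subset of $\kappa$ after identifying $\omega$ with an initial segment.

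First, I would pass to $\mathbb{B}:=\operatorname{ro}(\mathbb{P})$, which remains $\kappa$-cc and non-trivial. If $\mathbb{B}$ had an atom $a$, then the filter $\{b\in\mathbb{B}\mid b\geq a\}$, which lies in $V$, would already meet every dense subset of $\mathbb{B}$ from $V$: by density any such set contains some $d\leq a$, but atomicity forces $d=a$, so $a$ itself is in the dense set. Thus below any atom we would have $V[G]=V$. Using the $\kappa$-cc, a maximal antichain of atoms exists in $V$ and has size less than $\kappa$, so if $\mathbb{B}$ were completely atomic the generic would be determined by which atom it contains and hence lie in $V$, contradicting non-triviality. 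So there is some non-zero $b\in\mathbb{B}$ below which $\mathbb{B}$ is atomless, and we work below $b$.

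Below $b$, I would inductively construct a binary splitting tree $\langle p_s\mid s\in 2^{<\omega}\rangle$ with $p_{\emptyset}=b$ and, for each $s$, two incompatible non-zero extensions $p_{s\frown\langle 0\rangle},p_{s\frown\langle 1\rangle}\leq p_s$; this is possible since every non-zero element of an atomless Boolean algebra splits into two incompatible non-zero sub-elements. For a generic $G$ containing $b$, define $f\in 2^{\omega}$ by setting $f(n)=i$ iff there exists $s\in 2^n$ with $p_{s\frown\langle i\rangle}\in G$. For any $g\in V\cap 2^{\omega}$, the collection of $q\leq b$ such that $q\leq p_{(g\upharpoonright n)\frown\langle 1-g(n)\rangle}$ for some $n$ is dense below $b$, so by genericity $f\neq g$; hence $f\notin V$. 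Identifying $\omega$ with the first $\omega$ ordinals of $\kappa$, the set $X:=\{n<\omega\mid f(n)=1\}$ is a subset of $\kappa$ in $V[G]$ but not in $V$, as $f$ is recoverable from $X$.

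The main obstacle I expect is making the reduction to the atomless case precise: one must argue carefully, using $\kappa$-cc to locate the maximal antichain of atoms in $V$, that a completely atomic $\mathbb{B}$ really does force $V[G]=V$ and so contradicts non-triviality. Once an atomless $b$ is in hand, the tree construction and the density argument producing the new real are routine, and the coding into $\kappa$ is immediate.
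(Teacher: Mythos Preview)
Your argument has a genuine gap: the density claim is false. Take $\mathbb{P}=\Add(\omega_1,1)$ under CH, so $\mathbb{B}=\ro(\mathbb{P})$ is atomless and $\omega_2$-cc. With $p_s=[\![\,\dot G\supseteq\check s\,]\!]$ for $s\in 2^{<\omega}$, the tree $\langle p_s\rangle$ satisfies your splitting requirements, but for any $g\in 2^\omega\cap V$ the condition $[\![\,\dot G\supseteq\check g\,]\!]$ is non-zero and lies below every $p_{g\upharpoonright n}$, hence is incompatible with every $p_{(g\upharpoonright n)\frown\langle 1-g(n)\rangle}$. So the set you claim is dense below $b$ is not. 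More decisively, your argument never uses the $\kappa$-cc in the splitting construction, so if it worked it would show that every non-trivial forcing adds a new real; but $\Add(\omega_1,1)$ is $\sigma$-closed and adds no reals while still being non-trivial and $\omega_2$-cc. The conclusion of the lemma---a new subset of $\kappa$, not of $\omega$---is genuinely weaker and is all one can hope for.

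The paper's proof uses the chain condition essentially. One fixes a name $\dot B$ forced to be a fresh subset of some ordinal $\lambda$, chooses a cofinal sequence $\langle\lambda_i\mid i<\cf(\lambda)\rangle$, and forms the tree $T$ whose $i$-th level consists of the possible values of $\dot B\cap\lambda_i$; by $\kappa$-cc each level has size $<\kappa$. If $\cf(\lambda)\leq\kappa$ then $|T|\leq\kappa$ and the generic branch through $T$ is a new subset of a set of size $\leq\kappa$. If $\cf(\lambda)>\kappa$, a pressing-down argument on levels of cofinality $\kappa$ produces a single level below which all branching is resolved, contradicting freshness of $\dot B$. The point is that the $\kappa$-cc bounds the width of the approximation tree, which is exactly what lets one compress an arbitrary new set of ordinals down to a new subset of $\kappa$; your splitting-tree idea has no mechanism for this compression.
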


\begin{proof}
  Let $\dot{B}$ be forced to be a fresh subset of $\lambda$ added by $\mathbb{P}$. Fix $\langle\lambda_i\mid i<\cf(\lambda)\rangle$ cofinal in $\lambda$ and let $T$ be the tree whose $i^{th}$ level consists of all possible values of $\dot{B}\cap\lambda_i$. By the $\kappa$-cc all levels of $T$ have size less than $\kappa$. We have two cases.
  \begin{itemize}
    \item If $\cf(\lambda)\leq\kappa$ then $|T|$ has size at most $\kappa$. The forcing adds a branch through $T$ that determines the realisation of $\dot{B}$, and this branch will be a new subset of $T$.

    \item Otherwise $\cf(\lambda)>\kappa$. For any $i<\cf(\lambda)$ of cofinality $\kappa$ and $A, B\in \Lev_i(T)$ with $A\neq B$ we can find $j<i$ where $A\cap\lambda_j\neq B\cap\lambda_j$. But there are $<\kappa$-many such pairs on level $i$, so we can find a single $f(i)<i$ such that for all $A, B\in\Lev_i(T)$ with $A\neq B$ we have $A\cap\lambda_{f(i)}\neq B\cap\lambda_{f(i)}$. Now $f$ is a regressive function on the stationary set $\cf(\lambda)\cap\cof(\kappa)$ so there is a stationary subset $S$ and a fixed $k<\cf(\lambda)$ with $f(i) = k$ for all $i\in S$. Take $A\in\Lev_k(T)$. Then in any $i\in S$ there is a unique extension of $A$ to $\Lev_i(T)$, so $A$ determines the value of $\dot{B}$, which contradicts its being forced to be new.
  \end{itemize}
\end{proof}

We also need a more careful analysis of possible extensions of the universe by definability with respect to sets of ordinals.

Suppose we have $V\subseteq W$ models of ZFC, $\theta$ such that $\beth^V_{\theta}=\theta$, and $X\subseteq\theta$ a member of $W$. Then we can fix $A\subseteq\theta$ in $V$ such that $L_{\theta}[A]=V_{\theta}$ and interpret $V_{\theta}[X]$ as $L_{\theta}[A, X]$. We pause to show that this will be independent of the choice of $A$. Given another $A'\subseteq\theta$ such that $L_{\theta}[A']=V_{\theta}$ then for any $c\in L_{\theta}[A,X]$ we have $c\in L_{\gamma}[A,X]=L_{\gamma}[A\cap\gamma,X]$ for some $\gamma<\theta$; since $A\cap\gamma\in V_{\theta}=L_{\theta}[A']$ this will give $c\in L_{\theta}[A',X]$. Together with a similar argument in the other direction this yields $L_{\theta}[A,X]=L_{\theta}[A',X]$.

We can also define $V[X]$ to be $V(\{X\})$ as this will automatically be a model of the Axiom of Choice and so of ZFC.

\begin{lemma} \label{reflection_helper_2}
  Let $V\subseteq W$ be models of ZFC, $\theta$ strongly inaccessible in $W$, and $X\subseteq\theta$ a member of $W$. Then $V_{\theta}[X]=V[X]_{\theta}$.
\end{lemma}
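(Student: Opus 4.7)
The plan is to reduce the equality to a standard fact about constructibility relative to a predicate. Following the convention established in the preceding section, fix (by Global Choice) a class $B\subseteq\OR$ with $V=L[B]$, so that $V[X]$ is interpreted as $L[B,X]$. Since $\theta$ is inaccessible in $V$, $V_\theta=L_\theta[B\cap\theta]$, and we may take $A=B\cap\theta$. Because the $L$-construction at any stage $\alpha\le\theta$ only queries the predicates $B$ and $X$ on ordinals below $\theta$, we get $V_\theta[X]=L_\theta[A,X]=L_\theta[B,X]$. The lemma thus reduces to the equality $L_\theta[B,X]=(L[B,X])_\theta$.

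Next I would observe that $\theta$ remains strongly inaccessible in $V[X]$: it is a cardinal there because it is a cardinal in the larger model $W$, and the strong limit property descends from $W$, since $(2^\kappa)^{V[X]}\le(2^\kappa)^W<\theta$ for $\kappa<\theta$. The inclusion $L_\theta[B,X]\subseteq(L[B,X])_\theta$ is then routine: every member of $L_\theta[B,X]$ lies in some $L_\gamma[B,X]$ with $\gamma<\theta$, hence has $V[X]$-rank at most $\gamma<\theta$ and belongs to $V[X]=L[B,X]$.

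For the reverse inclusion I would run the standard Skolem hull plus condensation argument. Given $c\in V[X]_\theta$, strong inaccessibility makes the $V[X]$-cardinality $\kappa$ of $\TC(\{c\})$ below $\theta$. Pick a limit $\gamma>\theta$ with $c\in L_\gamma[B,X]$ such that $L_\gamma[B,X]$ is sufficiently elementary in $L[B,X]$, and form the Skolem hull $M\prec L_\gamma[B,X]$ generated by $\TC(\{c\})\cup\{c\}$; it has $V[X]$-cardinality $\kappa<\theta$. By condensation the transitive collapse of $M$ has the form $L_\beta[\bar B,\bar X]$ for some $\beta<\theta$, where $\bar B,\bar X$ are subsets of $\beta$, and the collapse map fixes $\TC(\{c\})$ pointwise, so $c\in L_\beta[\bar B,\bar X]$. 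Since $\bar B,\bar X\in V_\theta\subseteq L_\theta[B,X]$ and $\beta<\theta$, the construction of $L_\beta[\bar B,\bar X]$ can be carried out inside $L_\theta[B,X]$, yielding $c\in L_\theta[B,X]$ as required.

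The main obstacle is the condensation step: one needs to check that the Mostowski collapse of $M$ really does yield a structure of the form $L_\beta[\bar B,\bar X]$, handling $B$ and $X$ simultaneously as predicates, and to observe that although $\bar B$ and $\bar X$ need not be initial segments of $B$ and $X$, they still lie in $V_\theta$ and are therefore recognised inside $L_\theta[B,X]$. Everything else is straightforward bookkeeping about $L$-hierarchies and the downward absoluteness of inaccessibility.
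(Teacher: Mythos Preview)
Your overall strategy---Skolem hull plus Mostowski collapse---matches the paper's, but there is a genuine gap at the final step. You assert that $\bar B,\bar X\in V_\theta$, where $V_\theta$ is the rank-$\theta$ level of the ground model $V$. This is not justified: the hull $M$ is formed in $V[X]$, so the collapsed predicate $\bar X=\pi``(X\cap M)$ is an object of $V[X]$, and since it encodes information about $X$ there is no reason for it to lie in $V$. If instead you meant only that $\bar B,\bar X$ have rank below $\theta$---true, since they are subsets of $\beta<\theta$---then you are placing them in $V[X]_\theta$, and the containment $V[X]_\theta\subseteq L_\theta[B,X]$ is exactly the inclusion you are trying to establish. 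Either reading makes the argument circular, and you yourself flag this as ``the main obstacle'' before resolving it with the incorrect claim.

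The paper sidesteps the issue by building the hull $M\prec V[X]_\chi$ so that $\delta:=M\cap\theta$ is an ordinal below $\theta$. Then the collapse sends $X$ to the genuine initial segment $X\cap\delta$, which is trivially a member of $V_\theta[X]=L_\theta[A,X]$, and the reflected statement $\bar M\models x\in V_{\pi(\beta)}[X\cap\delta]_\delta$ can be read off absolutely without appealing to the unproved inclusion. Your argument is easily repaired by imposing the same condition $M\cap\theta\in\theta$ on your hull (routine, since $\theta$ is regular and $|\TC(\{c\})|<\theta$): once $M\cap\theta=\delta$, you get $\bar X=X\cap\delta$ and $\bar B\cap\delta=B\cap\delta$, and then $c\in L_\delta[B\cap\delta,X\cap\delta]\subseteq L_\theta[B,X]$ follows directly. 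Carrying the proper-class predicate $B$ through the collapse, as you do, also introduces the minor nuisance that $\bar B$ may have a tail above $\delta$; the paper avoids this altogether by reflecting the first-order statement ``$x\in V_\beta[X]_\theta$'' rather than working in $L_\gamma[B,X]$.
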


\begin{proof}
  The left-to-right containment is clear so we consider the converse. Given $x\in V[X]_{\theta}$, take a strong limit cardinal $\beta>\theta$ such that $x\in V_{\beta}[X]_{\theta}$ and then choose $\chi>>\beta$. Use the strong inaccessibility of $\theta$ to build $M\prec V[X]_{\chi}$ such that $\TC(\{x\})\subseteq M$, $\beta\in M$, $\delta:=M\cap\theta\in\theta$ and $|M|<\theta$. Collapse $\pi:M\rightarrow\bar{M}$ to a transitive model; this gives $\pi(\theta)=\delta$, $\pi(X)=X\cap\delta$ and $\pi(x)=x$. We began with $M\models x\in V_{\beta}[X]_{\theta}$, so we obtain $\bar{M}\models x\in V_{\pi(\beta)}[X\cap\delta]_{\delta}$. But $\pi(\beta)<\theta$ so by absoluteness $x\in V_{\theta}[X\cap\delta]_{\delta}\subseteq V_{\theta}[X]$.
\end{proof}

We recall the following definition, introduced by Hamkins in \cite{hamkins_extensions}.

\begin{definition}
  Let $V\subseteq W$ be models of ZFC and $\lambda$ a cardinal of $W$. We say $V$ has the \textit{$\lambda$-covering} property in $W$ if for every $\sigma\subseteq V$ with $\sigma\in V$ and $|\sigma|^W<\lambda$ there is $\tau\in V$ with $\sigma\subseteq\tau$ and $|\tau|^W<\lambda$. It has the \textit{$\lambda$-approximation} property in $W$, if for any increasing sequence $\langle\tau_{\alpha}\mid\alpha<\delta\rangle\in W$ where $\cf^W(\delta)\geq\lambda$ and each $\tau_{\alpha}\in V$ we have $\bigcup\tau_{\alpha}\in V$.
\end{definition}

It is key to the study of grounds that a universe $V$ has $\lambda$-covering and $\lambda$-approximation within any forcing extension by a forcing of size less than $\lambda$. Furthermore, Hamkins showed that given any universe $V$ and any $S\subseteq\mathcal{P}(\lambda)$ there is at most one inner model $W$ that satisfies $\lambda$-covering and $\lambda$-approximation and has $\mathcal{P}(\lambda)^W=S$. For proofs of these results see \cite[Lemma 9]{FHR}, \cite{Reitz} or \cite[Theorem 5]{HOD}. (This is what renders the grounds of a universe uniformly definable, and so makes the mantle a class, as discussed in \cite{FHR}.)

We can now use our earlier work with set forcing extensions to analyse the intersection of a class forcing extension and one of its generic grounds, in the presence of a sufficient number of weakly compact cardinals.

\begin{theorem} \label{main_class}
  Assume Global Choice. Let $\mathbb{P}$ be a forwards class forcing with generic $G$, formed from $\langle\mathbb{P}_{\alpha}\mid\alpha<\OR\rangle$ with individual generics $G_{\alpha}$. Let there be a stationary class of ordinals $\alpha$ such that the initial segment $\mathbb{P}_{\alpha}$ of $\mathbb{P}$ is formed by a direct limit of the preceding forcings, and such that $\alpha$ is weakly compact in $V[G_{\alpha}]$. Let $W$ be a generic ground of $V[G]$ via forcing of size less than $\kappa$, such that $V\subseteq W$. Then there is a common ground of $V[G]$ and $W$ via forcings with the $\kappa$-cc.
\end{theorem}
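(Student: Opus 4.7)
The strategy is to reduce the class-forcing situation to the set-forcing case handled by Theorem~\ref{main_sets} by reflecting to a suitably closed stage of the iteration $\mathbb{P}$. First I would use the forwards hypothesis on $\mathbb{P}$ together with the stationary class of weakly-compact direct-limit stages to pick $\alpha > \kappa$ satisfying: $\mathbb{P}_\alpha$ is a direct limit of the preceding forcings, $\alpha$ is weakly compact in $V[G_\alpha]$, and $\mathbb{P}_\alpha$ forces the tail $\mathbb{P}^\alpha$ to be $\lambda$-distributive over $V[G_\alpha]$ for some $\lambda$ large enough that $\mathbb{R}$, $\mathbb{S}$, $J$ and $\mathcal{P}(\kappa)^W$ already lie in $V[G_\alpha][I]$.

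Next, set $W_\alpha := W \cap V[G_\alpha][I]$. The key technical claim is that $W_\alpha$ is a model of ZFC and is precisely the $\mathbb{S}$-ground of $V[G_\alpha][I]$, so in particular $V[G_\alpha][I] = W_\alpha[J]$. Since $W$ is a $<\kappa$-sized-forcing ground of $V[G][I]$, it enjoys $\kappa$-approximation and $\kappa$-covering in $V[G][I]$, and these properties descend to $V[G_\alpha][I]$ because the very distributive tail preserves cardinals and small powersets up to the appropriate level. The uniqueness theorem for grounds with a fixed $\mathcal{P}(\kappa)$ then identifies $W_\alpha$ as the unique inner model of $V[G_\alpha][I]$ with $\mathcal{P}(\kappa)^{W_\alpha}=\mathcal{P}(\kappa)^W$ and the requisite approximation/covering, pinning it down as the $\mathbb{S}$-ground. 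The classical intermediate-model theorem applied to $V \subseteq W_\alpha \subseteq V[G_\alpha][I]$ then gives $\mathbb{Q}_\alpha \in V$ and a $V$-generic $H_\alpha$ with $W_\alpha = V[H_\alpha]$.

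Having placed ourselves entirely in the set-forcing world, I would apply Theorem~\ref{main_sets} to $\mathbb{P}_\alpha,\dot{\mathbb{R}},\mathbb{Q}_\alpha,\dot{\mathbb{S}}$ to obtain an inner model $U$ with $V \subseteq U \subseteq V[G_\alpha] \cap W_\alpha \subseteq V[G] \cap W$ such that both $V[G_\alpha]$ and $W_\alpha$ are $\kappa$-cc set-forcing extensions of $U$. It then remains to show that $U$ is in fact a common ground of $V[G]$ and $W$ via $\kappa$-cc forcings. The factorisation $U \to V[G_\alpha] \to V[G]$ composes the $\kappa$-cc set forcing from Theorem~\ref{main_sets} with the highly distributive tail $\mathbb{P}^\alpha$; because the tail adds no new antichains of size $<\kappa$ over $V[G_\alpha]$, the composite retains the $\kappa$-cc. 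A symmetric argument on the $W$-side, in which the forcing $W_\alpha \to W$ is identified with a suitable projection of the tail through $W_\alpha$ (using $W[J]=V[G][I]$ and $W_\alpha[J]=V[G_\alpha][I]$), gives the matching $\kappa$-cc factoring of $U \to W$.

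\textbf{Main obstacle.} The delicate step is the identification of $W_\alpha$ as a genuine ground of $V[G_\alpha][I]$ via $\mathbb{S}$. The weak compactness of $\alpha$ in $V[G_\alpha]$ is essential here: it is precisely the reflection needed to transfer the $\kappa$-approximation and $\kappa$-covering properties from the ambient universe $V[G][I]$ down to the set-forcing extension $V[G_\alpha][I]$ and thereby invoke uniqueness of grounds. Without this reflection the intersection $W \cap V[G_\alpha][I]$ can fail even to satisfy ZF, as Proposition~\ref{intersection_not_model_of_ZFC} already illustrates in a simpler setting. A secondary difficulty lies in the chain-condition bookkeeping of the final lifting step, since composing a $\kappa$-cc set forcing with a highly distributive class forcing does not automatically preserve the $\kappa$-cc; the specific structure of the forwards iteration, together with the distributivity of the tail, must be exploited to rule out large antichains in the composite.
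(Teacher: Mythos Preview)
Your proposal has a genuine gap in the final lifting step, and this gap is not merely a technical bookkeeping issue but a structural obstruction.

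The common ground $U$ you produce lies inside $V[G_\alpha]$, which is a set-forcing extension of $V$. But any $\kappa$-cc ground of $V[G]$ must agree with $V[G]$ above some fixed level of the cumulative hierarchy; in particular it must contain almost all of the class forcing extension. Your $U$ does not: the passage $U\to V[G_\alpha]\to V[G]$ involves the tail $\mathbb{P}^\alpha$, which is a proper class forcing. High distributivity of the tail is orthogonal to the $\kappa$-cc --- a $\lambda$-closed forcing such as $\Add(\lambda,1)$ for $\lambda>\kappa$ is as distributive as you like yet has antichains of size $\lambda$. So the composite $U\to V[G]$ is not $\kappa$-cc, and no amount of structure in the forwards iteration will change that. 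The same problem arises on the $W$-side.

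The paper sidesteps this entirely by arguing by contradiction: assume no common $\kappa$-cc ground exists, observe that this is a first-order statement about $V[G][I]$ with parameter $\mathcal{P}(\kappa)^W$, and reflect it down to some $V[G]_\theta$ with $\theta$ in the given stationary class. One then only needs to manufacture a common $\kappa$-cc ground of $V[G\cap\theta]_\theta=V[G]_\theta$ and $W_\theta$ inside this set-sized universe to obtain the contradiction; there is no lifting back up. Weak compactness is used not to transfer approximation and covering, but to find cofinal branches through two $\theta$-trees living in $V[G\cap\theta,I]$: one whose levels are $\mathbb{S}$-names for $G\cap\alpha$, and one whose levels code $W_\alpha$. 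The resulting branches (pushed into $W$ by $\kappa$-approximation) yield a single $B\subseteq\theta$ with $W_\theta=V[B]_\theta$ and an $\mathbb{S}$-name for $G\cap\theta$ in $V[B]$; only then does the intermediate model theorem and Theorem~\ref{main_sets} apply. A final use of Lemma~\ref{kappaCCSubsets} shrinks the resulting forcings below $\theta$, so that $U_\theta$ really is a common $\kappa$-cc ground inside $V[G]_\theta$.
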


\begin{proof}
  Using Global Choice we may assume that the underlying set of $\mathbb{P}$ is contained in the ordinals, so $G\subseteq\OR$. Say $W$ is a generic ground of $V[G]$ via $\mathbb{R}\in V[G]$ with generic $I$ and $\mathbb{S}\in W$ with generic $J$, so $V[G][I]=W[J]$ with $|\mathbb{R}|,|\mathbb{S}|<\kappa$.

  Suppose the result does not hold. Observe that $W$ is definable in $V[G][I]$ as the unique inner model whose powerset of $\kappa$ is $\mathcal{P}(\kappa)^W$ and for which $\kappa$-covering and $\kappa$-approximation hold, so the assertion that no such common ground exists is first-order definable; call this assertion $\varphi(\mathcal{P}(\kappa)^W)$. We may assume that in $V[G]$ it is forced by $\mathbb{R}$ that $\varphi(\mathcal{P}(\kappa)^W)$ holds. Reflecting down the $\langle V[G]_{\alpha}\mid\alpha\in\OR\rangle$ hierarchy, there is a club of $\theta>>\kappa$ such that
    $$V[G]_{\theta}\models\forces_{\mathbb{R}}\varphi(\mathcal{P}(\kappa)^W).$$
  There is a stationary class of $\theta$ such that $\mathbb{P}_{\theta}$ is formed as a direct limit and $\theta$ is weakly compact in $V[G_{\theta}]$. Additionally there is a club of $\theta$ such that the underlying set of $\bigcup_{\alpha<\theta}\mathbb{P}_{\alpha}$ is $\theta$, which if $\mathbb{P}_{\theta}$ is a direct limit means that the underlying set of $\mathbb{P}_{\theta}$ is $\theta$ and $G_{\theta}=G\cap\theta$. Finally for all $\alpha$ the increasing distributivity of $\mathbb{P}$ gives us $\beta$ such that $\forces_{\mathbb{P}}V[\dot{G}]_{\alpha}\subseteq V_{\beta}[\dot{G}]$; this yields (definably in $V$) a club of $\theta$ such that $V[G]_{\theta}=V_{\theta}[G]$. Fix a $\theta$ that has all of these properties.

  The filter $J$ will still be generic over $W_{\theta}$ and, since $|\mathbb{S}|<\theta$, every member of $V[G, I]_{\theta}$ will have a $\mathbb{S}$-name in $W_{\theta}$. This permits the forcing extension $W_{\theta}[J]=V[G,I]_{\theta}$, which means that $W_{\theta}$ is a `class' from the point of view of $V[G, I]_{\theta}$. Specifically it will be the `inner model' of $V[G, I]_{\theta}$ satisfying $\kappa$-covering and $\kappa$-approximation and with the correct $\mathcal{P}(\kappa)$ that we have assumed shares no common ground with $V[G]_{\theta}$ via $\kappa$-cc forcing.

  By the choice of $\theta$ we that have $V[G]_{\theta}$ equals $V_{\theta}[G]$, which clearly equals $V_{\theta}[G\cap\theta]$, which by \ref{reflection_helper_2} equals $V[G\cap\theta]_{\theta}$. Now $|\mathbb{R}|<\theta$ so all sets of size less than $\theta$ added by $\mathbb{R}$ will have names of size less than $\theta$; this allows us to deduce $V[G,I]_{\theta}=V[G]_{\theta}[I]=V[G\cap\theta]_{\theta}[I]=V[G\cap\theta, I]_{\theta}$.

  We have therefore managed to cut down the situation where $W$ was a generic ground of $V[G]$ to one where $W_{\theta}$ is a generic ground of $V[G\cap\theta]_{\theta}$, and by assumption the two have no common ground via $\kappa$-cc forcing. Our aim is now to extend the height of the universes in question back up to $\OR$ while leaving the forcing $\mathbb{P}_{\theta}$ unchanged; then we will be able to apply our understanding of this situation for set forcings to find a common ground and obtain a contradiction. The main difficulty in doing so is extending $W_{\theta}$, since the obvious $V(W_{\theta})$ may lack both a well-ordering of $W_{\theta}$ and an $\mathbb{S}$-name for $G\cap\theta$.

  Since $W_{\theta}$ is definable in $V[G,I]_{\theta}=V[G\cap\theta,I]_{\theta}$ we get that $W_{\theta}\in V[G\cap\theta,I]$. Now $\mathbb{P}_{\theta}$ has underlying set $\theta$ so for any $\alpha\in(\kappa,\theta)$ we may regard $\mathbb{S}$-names for $G\cap\alpha$ as subsets of $\alpha$, and therefore also as members of $W_{\theta}$. Form the tree $T\subseteq W_{\theta}$ whose $\alpha$-level consists of such $\mathbb{S}$-names for $G\cap\alpha$. Now $\theta$ is weakly compact in $V[G\cap\theta]$ and $\mathbb{R}$ is a small forcing so $\theta$ remains weakly compact in $V[G\cap\theta, I]$; since $T\in V[G\cap\theta,I]$ there is a cofinal branch of $T$ in $V[G\cap\theta,I]$. We apply the $\kappa$-approximation between $W$ and $V[G,I]$ to see that this branch will also be a member of $W$.

  Similarly, form the tree $T'\subseteq W_{\theta}$ such that for all $\alpha$ strong limit its $\alpha$-level consists of $x\subseteq\alpha$ such that $W_{\alpha}\subseteq V[x]$. We can again take a branch in this tree that will be a member of both $V[G\cap\theta,I]$ and $W$. Combine both branches into some $B\subseteq\theta$, so that $W_{\theta}\subseteq V[B]$ and $V[B]$ contains an $\mathbb{S}$-name for $G\cap\theta$. Since $B\in W$ we have in fact $W_{\theta}=V[B]_{\theta}$. Also $V[B]\subseteq V[G\cap\theta,I]$, and $V[B]$ contains $\mathbb{S}$-names for both $G\cap\theta$ and $I$ (since the latter name is of size less that $\theta$), so $V[G\cap\theta,I]$ is a forcing extension of $V[B]$ via $\mathbb{S}$ and $J$.

  Since everything has now been reduced to set forcing, we can apply the intermediate model theorem to $V\subseteq V[B]\subseteq V[G\cap\theta,I]$ to get a forcing $\mathbb{Q}$ with generic $H$ such that $V[B]=V[H]$. We can then use \ref{main_sets} to obtain a common ground $U$ of $V[G\cap\theta]$ and $V[B]$, say via $\kappa$-cc forcings $\mathbb{M}$ and $\mathbb{N}$ which are quotients of $\ro(\mathbb{P}_{\theta})$ and $\ro(\mathbb{Q})$ respectively, with corresponding generics $E$ and $F$.

  Take $\alpha<\theta$ such that $\mathcal{P}(\kappa)^{V[G\cap\theta]}=\mathcal{P}(\kappa)^{V[G_{\alpha}]}$ and $\dot{X}$ a $\mathbb{P}_{\alpha}$-name for a subset of $(2^{\kappa})^{V[G\cap\theta]}$ that encodes $\mathcal{P}(\kappa)^{V[G\cap\theta]}$. Let $\mathbb{M}'$ be the complete subalgebra of $\ro(\mathbb{P}_{\alpha})$ generated by $[\![i\in\dot{X}]\!]$ for $i<(2^{\kappa})^{V[G\cap\theta]}$. We can then apply the quotient of $\ro(\mathbb{P}_{\theta})$ to $\mathbb{M}'$ to get $\mathbb{M}^*\leq\mathbb{M}$ that adds $\mathcal{P}(\kappa)^{V[G\cap\theta]}$. This means that $\mathbb{M}/\mathbb{M}^*$ will have no subsets of $\kappa$ left to add, but will still have the $\kappa$-cc so by \ref{kappaCCSubsets} it must be trivial. Therefore we can assume that $\mathbb{M}=\mathbb{M}^*$, so it has size at most $|\mathbb{M}'|\leq |\mathbb{P}_{\alpha}|<\theta$ and is a member of $U_{\theta}$, and similarly for $\mathbb{N}$. Furthermore, since these forcings are small all sets of size less than $\theta$ will have $\mathbb{M}$- or $\mathbb{N}$-names of size less than $\theta$, giving $U_{\theta}[E]=V[G\cap\theta]_{\theta}$ and $U_{\theta}[F]=W_{\theta}$. This makes $U_{\theta}$ a ground of each of these models (it is a class by the usual approximation and covering argument), and so a common ground of them via $\kappa$-cc forcing. This contradicts the non-existence of such a ground that we reflected down to $\theta$.
\end{proof}

The approach of Usuba in \cite{usuba} gives a common ground $U$ of $V[G]$ and $W$ in this scenario, but the forcings in $U$ witnessing this are only guaranteed to be $\kappa^{++}$-cc rather than $\kappa$-cc as shown here.


The requirement of Theorem \ref{main_class} that $V\subseteq W$ is automatic when $V$ is any core model $K$; to see this it is enough to check that $K^{V[G]}=K^V$ and then invoke the standard preservation of $K$ by set forcing. We know that $K\cap H_{\mu}$ is uniformly definable without parameters in $H_{\mu}$ for any cardinal $\mu>\omega$; see \cite{jensen_steel} for details. Given any such $\mu$, fix a set-generic initial segment $G_0$ of $G$ such that $H^{V[G_0]}_{\mu}=H^{V[G]}_{\mu}$. Then $K^{V[G]}\cap H_{\mu}$ will equal $K^{H^{V[G]}_{\mu}}$ by the uniformity of the definition, which equals $K^{H^{V[G_0]}_{\mu}}=K^{V[G_0]}\cap H_{\mu}$ by the uniformity again, which by the preservation of $K$ under set forcing is equal to $K^V\cap H_{\mu}$. Therefore $K^{V[G]}=K^V$.

This requirement $V\subseteq W$ cannot be easily simplified; in particular it is not implied by $V=gM^V$. To illustrate this we construct an example of a universe $V$ and a set $x\in gM^V$ together with a class generic $F$ such that $x\notin gM^{V[F]}$.

\begin{example}
  Take $x$ a Cohen real over $L$. For $\kappa\in\OR$ and $n<\omega$ define $\mathbb{A}_{\kappa, n}$ to be
    $$\Add(\aleph_{\kappa.\omega + n.5 + 1}, \aleph_{\kappa.\omega + n.5 + 3}).$$
  In $L[x]$ define $\mathbb{P}$, $\mathbb{Q}$ and $\mathbb{R}$ to be the Easton support products over $\kappa$ of $\prod_{n<\omega}\mathbb{A}_{\kappa, n}$, $\prod_{n\in x}\mathbb{A}_{\kappa, n}$ and $\prod_{n\notin x}\mathbb{A}_{\kappa, n}$ respectively. It is clear that $\mathbb{P}\cong\mathbb{Q}\times\mathbb{R}$ and that $\mathbb{Q}$ can be regarded as a complete subposet of $\mathbb{P}$. Take $G$ a $\mathbb{P}$-generic over $L[x]$ and split it as $H\times F$ a $(\mathbb{Q}\times\mathbb{R})$-generic. Define $V = L[x][H]$, so $V[F]=L[x][G]$. We see that $x$ is in $gM^{V}$ because it is encoded cofinally in the continuum function of $V$. Now $\mathbb{P}$ is in $L$ so $x$ is Cohen generic over $L[G]$ and we have $L[G]$ a ground of $L[x][G]$ that does not contain $x$, because $\mathbb{P}$ is countably closed and so cannot add new reals. Therefore $x\notin gM^{L[x][G]} = gM^{V[F]}$.
\end{example}

Regardless, we have the following corollary which is a special case of \cite[Corollary 5.5]{usuba}.

\begin{corollary}
  Assume Global Choice. Let $\mathbb{P}$ be a forwards class forcing with generic $G$, formed from $\langle\mathbb{P}_{\alpha}\mid\alpha<\OR\rangle$ with individual generics $G_{\alpha}$. Let there be a stationary class of ordinals $\alpha$ such that $\mathbb{P}_{\alpha}$ is formed by a direct limit of the preceding forcings, and such that $\alpha$ is weakly compact in $V[G_{\alpha}]$. Let $V\subseteq gM^{V[G]}$. Then $gM^{V[G]}=M^{V[G]}$.
\end{corollary}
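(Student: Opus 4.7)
The plan is straightforward: it reduces almost immediately to Theorem \ref{main_class}. Since the inclusion $gM^{V[G]} \subseteq M^{V[G]}$ is always true, it suffices to prove the reverse containment. So I would fix an arbitrary $x \in M^{V[G]}$ and an arbitrary generic ground $W$ of $V[G]$, and show $x \in W$; ranging over all generic grounds will then give $x \in gM^{V[G]}$.

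First I would unpack the definition of ``generic ground'': there exist a set forcing $\mathbb{R} \in V[G]$ with generic $I$ and a set forcing $\mathbb{S} \in W$ with generic $J$ such that $V[G][I] = W[J]$. Fix a cardinal $\kappa$ of $V[G][I]$ with $\kappa > |\mathbb{R}| + |\mathbb{S}|$. From the hypothesis $V \subseteq gM^{V[G]}$ and the obvious fact that $gM^{V[G]} \subseteq W$ (since $W$ is a generic ground of $V[G]$ and the generic mantle is the intersection of all such grounds), we conclude $V \subseteq W$. All the hypotheses of Theorem \ref{main_class} are therefore satisfied: $\mathbb{P}$ is a forwards class forcing with the required stationary class of weakly compact direct-limit stages, $W$ is a generic ground of $V[G]$ via small forcing, and $V \subseteq W$.

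Applying Theorem \ref{main_class} then produces an inner model $U$ that is a common ground of $V[G]$ and $W$ via $\kappa$-cc forcings. In particular, $U$ is a ground of $V[G]$, so by definition of the mantle $x \in M^{V[G]} \subseteq U$. Since $U \subseteq W$, we conclude $x \in W$, which is exactly what we needed. As $W$ was an arbitrary generic ground, $x \in gM^{V[G]}$, completing the proof.

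There is no genuine obstacle here beyond invoking the preceding theorem; the only thing to verify carefully is the hypothesis $V \subseteq W$, which follows transparently from $V \subseteq gM^{V[G]}$. The real work of the corollary has already been done in establishing Theorem \ref{main_class}, where the use of weak compactness and the reflection argument are essential.
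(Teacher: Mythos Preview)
Your argument is correct and is precisely the intended derivation: the paper states this corollary without proof, leaving it as an immediate consequence of Theorem \ref{main_class}, and your write-up supplies exactly that deduction.
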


\section{Characterising the mantle} \label{characterising_the_mantle}

\begin{definition}
  For a forwards class iteration $\mathbb{P}$ with generic $G$ we say $x\in V[G]$ is \textit{caught} by $G$ if
    $$V[G]\models\forall e\;\exists_{unbdd}\theta\exists C\subseteq\theta\;\forall A\subseteq\theta\;(C\in V[e,A]\rightarrow x\in V[A]).$$
\end{definition}

From lemma \ref{EastonHelp} (in the case when $\dot{\mathbb{R}}$ is trivial) we can deduce as follows that if $\mathbb{P}$ is an Easton support iteration of $\Add(\kappa,1)$ at $\kappa$ regular, then all $x\in V[G]$ are caught by $G$. It suffices to consider $x$ that are subsets of some cardinal $\kappa$. Given $e$, take an initial segment $G_{\alpha}$ of the generic such that $x,e\in V[G_{\alpha}]$ and then choose $C$ to be any set of ordinals from $V[G]-V[G_{\alpha}]$; we may regard $C$ as a subset of $\theta$ for unboundedly large $\theta$. Given any $A$ such that $C\in V[e,A]$ then we have $A\notin V[G_{\alpha}]$ so the lemma gives us that $x\in V[A]$ as required.

In some cases we can use this definition to provide a simple characterisation of the mantle, but first we need a lemma.

\begin{lemma} \label{classInts2}
  Let $V$ be a model of ZFC, $\mathbb{P}$ a forwards class forcing with generic $G$, $W$ a model of ZFC such that $V\subseteq W$ and $e\in V[G]$ such that $W[e]=V[G]$. Then $W$ is a class and ground of $V[G]$.
\end{lemma}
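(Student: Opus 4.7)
To prove this, the plan is to exhibit a set-sized forcing $\mathbb{Q}\in W$ together with a $W$-generic filter whose associated extension equals $V[G]$; the definability theorem for grounds of Laver and Woodin (\cite{laver_geology}, \cite{woodin}) then automatically yields that $W$ is a class of $V[G]$, handling the first conclusion.

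Fix a cardinal $\kappa\geq|e|$. Using the forwards property of $\mathbb{P}$ iteratively, I would locate $\alpha\in\OR$ such that $\mathbb{P}_\alpha$ has some cardinality $\mu\geq\kappa$ and the tail forcing $\mathbb{T}:=\mathbb{P}/\mathbb{P}_\alpha$ is forced by $\mathbb{P}_\alpha$ to be $\mu^+$-distributive over $V[G_\alpha]$; this is arranged by a short ordinal recursion that pushes the distributivity threshold past $|\mathbb{P}_\beta|^+$ at each step. The distributivity then delivers $e\in V[G_\alpha]$ and $\mathcal{P}(\mu)^{V[G]}=\mathcal{P}(\mu)^{V[G_\alpha]}$. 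Applying the classical intermediate model theorem for set forcing to $V\subseteq W\cap V[G_\alpha]\subseteq V[G_\alpha]$ produces a complete subalgebra $\mathbb{A}$ of $\ro(\mathbb{P}_\alpha)$ with $W_\alpha:=W\cap V[G_\alpha]=V[G_\alpha\cap\mathbb{A}]$. Setting $\mathbb{Q}:=\mathbb{P}_\alpha/\mathbb{A}\in W_\alpha\subseteq W$ and letting $g$ denote the induced $W_\alpha$-generic, we have $V[G_\alpha]=W_\alpha[g]$, so $e\in W_\alpha[g]\subseteq W[g]$, giving $V[G]=W[e]\subseteq W[g]\subseteq V[G]$, whence $V[G]=W[g]$. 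To verify that $g$ is $W$-generic for $\mathbb{Q}$: any $D\in W$ dense in $\mathbb{Q}$ is a subset of a set of cardinality at most $\mu$, so an enumeration of $\mathbb{Q}$ in $V[G_\alpha]$ codes $D$ as a subset of $\mu$ in $V[G]$; by $\mu^+$-distributivity this code lies in $V[G_\alpha]$, whence $D\in W\cap V[G_\alpha]=W_\alpha$, and the $W_\alpha$-genericity of $g$ yields $g\cap D\neq\emptyset$. Hence $W[g]=V[G]$ displays $W$ as a set-forcing ground.

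The principal obstacle is the opening fixed-point step: the forwards property only supplies, for each fixed $\lambda$, some $\alpha_\lambda$ with tail $\lambda$-distributive, and one must pin down an $\alpha$ for which $\lambda=|\mathbb{P}_\alpha|^+$ actually works. For the well-behaved class iterations at issue (where $|\mathbb{P}_\alpha|$ does not outpace the distributivity threshold), a diagonal argument of length $\omega$ with a direct-limit analysis at the $\omega$-th stage suffices. A minor secondary subtlety is confirming that $W\cap V[G_\alpha]$ is itself a ZFC model to which the intermediate model theorem can be applied; this is where the combination of $V\subseteq W$ with the set-forcing character of $V[G_\alpha]$ over $V$ is used, furnishing enough coherence between power sets computed in $W$ and $V[G_\alpha]$ to recover the missing Power Set axiom in the intersection.
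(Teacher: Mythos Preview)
Your approach has two genuine gaps, both of which you flag but do not actually resolve.

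First, the fixed-point step. The definition of \emph{forwards} only guarantees that for each cardinal $\lambda$ there is some $\alpha$ with $\lambda$-distributive tail; it gives no bound on $|\mathbb{P}_\alpha|$ in terms of $\lambda$. Your diagonal argument to find $\alpha$ with $|\mathbb{P}_\alpha|^+$-distributive tail may simply fail to converge: the cardinals $|\mathbb{P}_{\alpha_n}|$ could outpace the distributivity thresholds indefinitely, and nothing in the hypotheses rules this out. Retreating to ``well-behaved class iterations at issue'' is not permitted by the statement of the lemma.

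Second, and more seriously, the claim that $W\cap V[G_\alpha]$ is a model of ZFC is unjustified. The appeal to ``coherence between power sets'' is not a proof, and the paper itself (Proposition~\ref{intersection_not_model_of_ZFC}) exhibits ZFC models over a common ground whose intersection fails ZF. Without knowing that $W\cap V[G_\alpha]$ satisfies ZFC, you cannot invoke the intermediate model theorem, and the construction of $\mathbb{Q}$ collapses.

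The paper's proof sidesteps both difficulties at once. Rather than intersecting $W$ with a stage, it picks $\lambda$ just above $|\mathbb{C}|$, where $\mathbb{C}$ is the small complete subalgebra of $\ro(\mathbb{P}_\alpha)$ generated by the Boolean values $[\![i\in\dot{e}]\!]$, takes $\lhd\in W$ a well-ordering of $H^W_\lambda$, and works with $V(H^W_\lambda,\lhd)$. This is automatically a ZFC model (it is $V$ extended by a single well-ordered set), and by the bare forwards property it sits inside some $V[G_\beta]$ with no fixed-point needed. The intermediate model theorem then applies cleanly. The eventual forcing $\mathbb{E}=\mathbb{C}/(G\cap\mathbb{A})$ has size $<\lambda$, so every dense subset of $\mathbb{E}$ lying in $W$ already lies in $H^W_\lambda\subseteq V(H^W_\lambda,\lhd)$, and genericity of $F$ over $W$ is immediate.
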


\begin{proof}
  Say $\mathbb{P}$ is formed by an iteration of $\mathbb{P}_{\alpha}$ for $\alpha\in\OR$, with associated generics $G_{\alpha}$. We may assume $e\subseteq\kappa$ for some cardinal $\kappa$ and take $\dot{e}$ a $\mathbb{P}$-name for $e$. Choose $\alpha$ such that $\dot{e}$ is a $\mathbb{P}_{\alpha}$-name and let $\mathbb{C}$ be the complete subalgebra of $\ro(\mathbb{P}_{\alpha})$ generated by $[\![i\in \dot{e}]\!]$ for $i<\kappa$. Observe that $|\mathbb{C}|\leq|\ro(\mathbb{P}_{\alpha})|=2^{|\mathbb{P}_{\alpha}|}$.

  Define $\lambda:=((2^{|\mathbb{P}_{\alpha}|})^+)^{V[G]}$ and take $\lhd\in W$ a well-ordering of $H^W_{\lambda}$, so $V(H^W_{\lambda},\lhd)$ will be a model of the axiom of choice and thus of ZFC. Find $\beta>\alpha$ such that $V\subseteq V(H^W_{\lambda},\lhd)\subseteq V[G_{\beta}]$. This gives us
    $$V\subseteq V(H^W_{\lambda},\lhd)\subseteq V(H^W_{\lambda},\lhd)[e]\subseteq V[G_{\beta}].$$
  Apply the intermediate model theorem to get a complete subalgebra $\mathbb{A}$ of $\mathbb{P}_{\beta}$ such that $V[G\cap\mathbb{A}]=V(H^W_{\lambda},\lhd)$. We also have the quotient $\mathbb{P}_{\beta}/(G\cap\mathbb{A})\in V(H^W_{\lambda},\lhd)$ and can form its complete subalgebra $\mathbb{E}:=\mathbb{C}/(G\cap\mathbb{A})\in W$, which we know will have size less than $\lambda$, and the associated generic which we call $F$. Therefore any dense $D\subseteq\mathbb{E}$ from $W$ will be a member of $V(H^W_{\lambda},\lhd)$, so $F$ will be generic over $W$. We have found $\mathbb{E}\in W$ with a generic $F$ such that $W[F]=W[e]=V[G]$.

  Now by the usual arguments that $W$ will have the $\lambda$-covering and $\lambda$-approximation properties in $V[G]$ and so it is a class of $V[G]$, definable from the parameter $\mathcal{P}(\lambda)^W$.
\end{proof}

We pause to note that the assumption that $W$ is within a set distance of $V[G]$ is essential not only to $W$ being a ground of $V[G]$, but also to its being a class.

\begin{example} \label{partial_lottery}
  For $\kappa$ regular let $\mathbb{A}_{\kappa}$ be $\Add(\kappa, 1)$ and let $\mathbb{T}$ be the trivial forcing $\{*\}$. For forcings $\mathbb{P}$ and $\mathbb{Q}$ we write $\mathbb{P}\oplus\mathbb{Q}$ for the lottery sum that chooses one of $\mathbb{P}$ and $\mathbb{Q}$ to force with.

  There is a complete embedding $\mathbb{A}_{\kappa}\oplus\mathbb{T}\rightarrow\mathbb{A}_{\kappa}$ given by sending $p\in\Add(\kappa,1)$ to $\langle 0\rangle\frown p$ and $*$ to $\langle 1\rangle$. We can use this to create a complete embedding from $\mathbb{Q}$ a class Easton support product of $\mathbb{A}_{\kappa}\oplus\mathbb{T}$ into $\mathbb{P}$ a class Easton support product of $\mathbb{A}_{\kappa}$. Thus we may regard $\mathbb{Q}$ as a complete subposet of $\mathbb{P}$. Take $G$ a class generic for $\mathbb{P}$ and let $S$ be the class of ordinals $\kappa$ at which non-trivial forcing is performed by $G\cap\mathbb{Q}$. Then $S$ is a class in $V[G\cap\mathbb{Q}]$; we will show that it is not one in $V[G]$, and so $V[G\cap\mathbb{Q}]$ cannot be a class in $V[G]$.

  Suppose otherwise, so there is a formula $\varphi$ and parameter $a$ such that $\varphi(a,\eta)^{V[G]}\leftrightarrow\eta\in S$ for all ordinals $\eta$. Take $\dot{a}$ a $\mathbb{P}$-name for $a$ and $\dot{S}$ a class $\mathbb{P}$-name for $S$, then find $p\in G$ such that $p\forces\forall\eta:\varphi(\dot{a},\eta)\leftrightarrow\eta\in\dot{S}$. Split $\mathbb{P}$ as $\mathbb{P}_0\times\mathbb{P}_1$ and $G$ as $G_0\times G_1$ so that $p\in\mathbb{P}_0$ and $\dot{a}$ is a $\mathbb{P}_0$-name. Then in $V[G_0]$ we have $\forces_{\mathbb{P}_1}\forall\eta:\varphi(a,\eta)\leftrightarrow\eta\in\dot{S}$. Choose $\kappa$ such that $\mathbb{A}_{\kappa}$ is part of $\mathbb{P}_1$. Then without loss of generality $\kappa\in S=\dot{S}[G]$ so $\varphi(a,\kappa)^{V[G]}$, but we can form $G'$ from $G$ by swapping the first co-ordinate of $G\cap\mathbb{A}_{\kappa}$ while maintaining $V[G']=V[G]$, so $\kappa\notin\dot{S}[G']$ but $\varphi(a,\kappa)^{V[G']}$. This is a contradiction.
\end{example}

We are now ready to give a characterisation of the mantle.

\begin{theorem}
  Assume $V=L$ and that there exists a stationary class of strongly inaccessible cardinals. Let $\mathbb{P}$ be a tame class Easton support iteration of set forcings, $G$ a generic for $\mathbb{P}$, and $x$ be a member of $V[G]$. Then $x$ is caught iff $x\in M^{V[G]}$.
\end{theorem}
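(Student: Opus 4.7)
The plan is to prove each direction separately, using Lemma~\ref{classInts2} for $(\Leftarrow)$ and Lemma~\ref{EastonHelp} for $(\Rightarrow)$.

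For $(\Leftarrow)$, suppose $x$ is caught and let $W$ be an arbitrary ground of $V[G]$. Since $V = L \subseteq W$, Lemma~\ref{classInts2} furnishes a set forcing $\mathbb{E}\in W$ with generic $F\in V[G]$ such that $W[F] = V[G]$. Apply the catching property with $e := F$ to obtain unboundedly many $\theta$ and corresponding $C\subseteq\theta$. Fix such $\theta$ large enough that an $\mathbb{E}$-name $\dot{C}\in W$ for $C$ can be taken in $H^W_\theta$. Choose $A\subseteq\theta$ in $W$ coding $H^W_\lambda$ for some $\lambda$ exceeding the rank of $\dot{C}$. Then $\dot{C}\in V[A]$, so $C = \dot{C}[F]\in V[F,A] = V[e,A]$, and catching yields $x\in V[A]\subseteq W$. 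As $W$ was arbitrary, $x\in M^{V[G]}$.

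For $(\Rightarrow)$, fix $e\in V[G]$. Choose a stage $\alpha$ with $x,e\in V[G_\alpha]$ and a cardinal $\kappa$ satisfying $|\mathbb{P}_\alpha|\leq\kappa$, $x\subseteq\kappa$ (after suitable coding), and such that the tail $\mathbb{P}/\mathbb{P}_\alpha$ is forced to be $\kappa^+$-strategically-closed; this can be arranged by combining the forwards property with the Easton support structure and the stationary class of inaccessibles. For each $\theta>\alpha$ in this stationary class at which $\mathbb{P}_\theta$ is a direct limit, take $C\subseteq\theta$ to be any set in $V[G]\setminus V[G_\alpha]$, available since $\mathbb{P}$ continues to add fresh sets at every unbounded stage. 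Now suppose $A\subseteq\theta$ in $V[G]$ satisfies $C\in V[e,A]$. Then $A\notin V[G_\alpha]$, because otherwise $V[e,A]\subseteq V[G_\alpha]$ would force $C\in V[G_\alpha]$, contradicting our choice of $C$. Lemma~\ref{EastonHelp} applied with $\mathbb{P}_0 = \mathbb{P}_\alpha$, $\dot{\mathbb{R}}$ trivial, $\dot{\mathbb{P}}_1$ the tail, and $\dot{z}$ a name for $A$, then yields $x\in V[A]$, establishing the catching condition.

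The principal obstacle is verifying that the tail $\mathbb{P}/\mathbb{P}_\alpha$ is $\kappa^+$-strategically-closed as required by Lemma~\ref{EastonHelp}, since the forwards hypothesis only directly guarantees $\kappa$-distributivity past suitable stages. In the natural tame class Easton support iterations built from sufficiently closed factors $\dot{\mathbb{Q}}_\alpha$, strategic closure of the tail follows past a closure point, but the general case requires careful attention to the structure of the iteration. A secondary observation is that the $(\Rightarrow)$ argument uses only $x\in V[G]$ rather than $x\in M^{V[G]}$, so in the intended setting one implicitly obtains the stronger conclusion $M^{V[G]} = V[G]$, generalising the Hamkins--Reitz--Woodin result to tame iterations in this framework.
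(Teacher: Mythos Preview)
Your direction labels are swapped; in the statement ``$x$ is caught iff $x\in M^{V[G]}$'', the implication ``caught $\Rightarrow$ mantle'' is $(\Rightarrow)$, not $(\Leftarrow)$. That direction of your argument is essentially the paper's: set $e$ equal to the generic witnessing that $W$ is a ground, take $A$ coding an $\mathbb{S}$-name for $C$, and conclude $x\in V[A]\subseteq W$. (The appeal to Lemma~\ref{classInts2} is superfluous here, since $W$ is already given as a ground.)

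The genuine problem is in the other direction. Your argument there never uses the hypothesis $x\in M^{V[G]}$ at all, and you correctly observe that it would give that \emph{every} $x\in V[G]$ is caught, hence $M^{V[G]}=V[G]$. But this is false for a general tame Easton-support iteration: take the iteration which is $\Add(\omega,1)$ at stage $0$ and trivial thereafter, so $V[G]=L[g]$ for a single Cohen real $g$; then $L$ is a proper ground and $g\notin M^{V[G]}$. Any argument that does not invoke $x\in M^{V[G]}$ cannot possibly succeed. Concretely, Lemma~\ref{EastonHelp} requires the tail to be $\kappa^+$-strategically closed, and your own remarks concede this may fail; you also assume that $\mathbb{P}$ ``continues to add fresh sets at every unbounded stage'', which is not among the hypotheses.

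The paper's approach to this direction is entirely different and uses $x\in M^{V[G]}$ essentially. One argues by contrapositive: if $x$ is not caught, fix the witnessing $e$ and reflect the statement ``$x\in M$'' down to some $L[G]_\theta$ with $\theta$ strongly inaccessible and satisfying $L[G]_\theta=L_\theta[G\cap\theta]$. Now set $C=G\cap\theta$; the failure of catching supplies $A\subseteq\theta$ with $G\cap\theta\in L[e,A]$ and $x\notin L[A]$. One then computes $L_\theta[G\cap\theta]=L_\theta[A][e]$ and applies Lemma~\ref{classInts2} \emph{inside} $L_\theta[G\cap\theta]$ to see that $L_\theta[A]$ is a ground of $L[G]_\theta$ omitting $x$, contradicting the reflected mantle statement. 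So Lemma~\ref{classInts2} is the key tool for this direction, not Lemma~\ref{EastonHelp}.
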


\begin{proof}
  For the forwards direction we are given $W$ a ground of $V[G]$, say via $V[G] = W[J]$ for $J$ a $\mathbb{S}$-generic. Define $e$ to be this $J$, and take $\theta>>|\mathbb{S}|$ and $C$ from the definition of `caught'. Take a $\mathbb{S}$-name in $W$ for $C$, and let $A$ be an encoding of this name as a subset of $\theta$; then $C\in L[e, A]$, yielding $x\in V[A]\subseteq W$.

  For the converse, using Global Choice we may assume that all members $\mathbb{P}$ are ordinals. Say $\mathbb{P}$ is formed by the Easton support iteration $\langle\mathbb{P}_i\mid i<\OR\rangle$. There will be a club class of $\theta$ such that $\bigcup_{i<\theta}\mathbb{P}_i=\theta$.  When $\theta$ is also strongly inaccessible the use of Easton support gives that $\mathbb{P}_{\theta}=\bigcup_{i<\theta}\mathbb{P}_i$, so $\mathbb{P}_{\theta}$ will be a forcing on $\theta$ with generic $G\cap\theta$.

  We have that $L[G]\models x\in M$ so by reflection down the $\langle L[G]_{\theta}\mid\theta\in\OR\rangle$ hierarchy there is a club class of $\theta$ such that $L[G]_{\theta}\models x\in M$. Now for each ordinal $\alpha$ there will be some $\beta$ such that for every member $z$ of $L[G]_{\beta}$ a name for $z$ occurs in $L_{\alpha}$; this will give us that $L[G]_{\alpha}\subseteq L_{\beta}[G]$. Therefore there is a club class of cardinals $\theta$ such that $L[G]_{\theta}=L_{\theta}[G]$.

  Suppose $x$ is not caught, which is to say $$L[G]\models\exists e\;\forall_{large}\theta\forall C\subseteq\theta\;\exists A\subseteq\theta\;(C\in L[e,A]\wedge x\notin L[A]).$$ Fix such an $e$ and choose $\theta$ strongly inaccessible and in the club classes considered above such that $e\in L[G]_{\theta}$. Let $C$ be $G\cap\theta$ and take $A\subseteq\theta$ such that $G\cap\theta\in L[e,A]$ and $x\notin L[A]$.

  This gives
    $$L_{\theta}[G\cap\theta]\subseteq L[G\cap\theta]_{\theta}\subseteq L[e,A]_{\theta}\subseteq L[G]_{\theta}=L_{\theta}[G]=L_{\theta}[G\cap\theta]$$
  from which $L_{\theta}[G\cap\theta]=L[e, A]_{\theta}$. Since $A\subseteq\theta$ and $e$ is small, we can use \ref{reflection_helper_2} to see that this is in turn equal to $L_{\theta}[e,A]$, which equals $L_{\theta}[A][e]$.

  We can now apply \ref{classInts2} within $L_{\theta}[G\cap\theta]$ to see that $L_{\theta}[A]$ is a ground thereof; $x\notin L[A]$ so this ground omits $x$. But $L_{\theta}[G\cap\theta]$ equals $L[G]_{\theta}$ and so believes that $x$ is a member of its mantle, which is a contradiction.
\end{proof}



\section{Intersections with generic grounds II} \label{intersections_part_2}

We now present an alternative analysis of the intersection of a universe with one of its generic grounds, which gives a weaker result but avoids the need for large cardinal assumptions. First we recall the following definitions.

\begin{definition}
  Let $V\subseteq W$ be models of ZFC. We say that \textit{Jensen covering} holds between $V$ and $W$ if for any uncountable $X\in W$ with $X\subseteq V$ there is a $Y\in V$ such that $X\subseteq Y$ and $|X|=|Y|$ in $W$. We say \textit{weak covering} holds between $V$ and $W$ if for all singular strong limit cardinals $\lambda$ of $W$ we have $(\lambda^+)^V=(\lambda^+)^W$.
\end{definition}

When $V$ is a small core model such as $L$ or $L[0^{\#}]$ we will have Jensen covering between $V$ and its set-generic extensions. This is not true if $V$ is a larger core model, such as the core model for one Woodin cardinal, but then we will still have weak covering between $V$ and its set-generic extensions, and $V$ will also still satisfy GCH. Therefore we present slightly different arguments for the two situations.

\begin{theorem} \label{weak_main}
  Assume Global Choice and that Jensen covering holds between $V$ and its set-generic extensions. Let $\mathbb{P}$ be a forwards class forcing with generic $G$, and $W$ a generic ground of $V[G]$ such that $V\subseteq W$. Then $V[G]\cap W$ is not contained in $V[c]$ for any $c\in V[G]$.
\end{theorem}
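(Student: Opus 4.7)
I would argue by contradiction: suppose $V[G]\cap W\subseteq V[c]$ for some $c\in V[G]$.

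\emph{Step 1: reduction to set forcing.} Since $\mathbb{P}$ is forwards, I fix $\alpha\in\OR$ with $c,\mathbb{R}\in V[G_{\alpha}]$ and $\mathbb{P}/G_{\alpha}$ that is $\lambda^{+}$-distributive for some $\lambda>|\mathbb{R}|+|\mathbb{S}|$. Then $V[c]\subseteq V[G_{\alpha}]$, so the hypothesis becomes $V[G]\cap W\subseteq V[G_{\alpha}]$, and in particular
\[
V[G_{\beta}]\cap W\subseteq V[G_{\alpha}]\qquad\mbox{for all }\beta>\alpha.
\]

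\emph{Step 2: common grounds via Theorem \ref{main_sets}.} For each $\beta>\alpha$, I apply the intermediate model theorem inside $W[J]=V[G][I]$ to extract a complete subalgebra $\mathbb{A}_{\beta}$ of $\mathbb{S}$ and a generic $K_{\beta}$ over $W$ with $W[K_{\beta}]=V[G_{\beta}][I]$; note $|\mathbb{A}_{\beta}|\leq|\mathbb{S}|<\lambda$. Theorem \ref{main_sets} then yields a common ground $U_{\beta}\subseteq V[G_{\beta}]\cap W$ of $V[G_{\beta}]$ and $W$ via $\mu$-cc forcings, where $\mu=(|\mathbb{R}|+|\mathbb{S}|)^{+}$. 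Step 1 forces $U_{\beta}\subseteq V[G_{\alpha}]$, so $U_{\beta}\subseteq V[G_{\alpha}]\subseteq V[G_{\beta}]$ with the outer forcing $\mu$-cc. By the intermediate-model theorem applied to this $\mu$-cc extension, $V[G_{\beta}]$ is obtained from $V[G_{\alpha}]$ by $\mu$-cc set forcing for every $\beta>\alpha$.

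\emph{Step 3: using Jensen covering.} The final step is to argue that no such uniform $\mu$-cc description of the tail $\mathbb{P}/G_{\alpha}$ can hold. Here I would use Jensen covering between $V$ and the set-generic extensions $V[G_{\beta}][I]$: any uncountable subset of $V$ appearing in $V[G_{\beta}][I]$ admits a same-size $V$-cover. Combined with $V\subseteq W$, this forces the following: the $V$-covers of the new subsets of ordinals added by $\mathbb{P}$ past stage $\alpha$ are themselves in $W$, and an appropriate diagonalization against these covers produces an antichain of size $\geq\mu$ inside a forcing equivalent to $\mathbb{P}_{\beta}/G_{\alpha}$ for suitably large $\beta$. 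This contradicts the $\mu$-cc conclusion of Step 2.

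\emph{Main obstacle.} The delicate point is Step 3: converting Jensen covering plus $V\subseteq W$ into a concrete large antichain in the tail forcing. Unlike Theorem \ref{main_class}, which used weak compactness to pull branches out of trees definable in $W$, here the leverage must come from the global covering property forcing the cardinal arithmetic and cofinality structure of $V$ to be preserved well enough in $W$ that the combinatorial growth of $\mathbb{P}$ beyond $\alpha$ cannot be hidden inside a $\mu$-cc extension of $V[G_{\alpha}]$.
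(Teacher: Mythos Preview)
Your Step 2 has a genuine gap. Theorem~\ref{main_sets} requires both models to be \emph{set}-generic extensions of a common ground, but nothing in the hypotheses makes $W$ set-generic over $V$: we only know $V\subseteq W$ and $W[J]=V[G][I]$, and $V[G]$ is a class-forcing extension of $V$. Your attempted fix, writing $W[K_\beta]=V[G_\beta][I]$ via the intermediate model theorem, would need $W\subseteq V[G_\beta][I]$, which is false in general since $W$ may contain sets added only at arbitrarily late stages of $\mathbb{P}$. A further red flag: if Step 2 did go through, Step 3 would be superfluous. You chose $\alpha$ so that the tail $\mathbb{P}/G_\alpha$ is $\lambda^+$-distributive with $\lambda\geq\mu$; by Lemma~\ref{kappaCCSubsets} a nontrivial $\mu$-cc forcing adds a new subset of $\mu\leq\lambda$, so the tail would have to be trivial---a contradiction with no appeal to covering at all. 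That the covering hypothesis becomes idle signals that Step 2 has overclaimed.

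The paper's argument never treats $W$ itself as set-generic over $V$. Instead it fixes set-sized approximations $V(W_\beta,\lhd)$, realises these as $V[H]$ by the intermediate model theorem, and splits both $G$ and $H$ into pieces so that a suitable tail segment $G^2$ corresponds to a piece $H^2$ via an isomorphism $\pi$ of forcings living in an intermediate model $U$. For each condition $a$ in the (size~$\leq\kappa$) generic $F$ determining $U$, the set $\dom\pi_a\cap G^2$ is computable in both $V[G]$ and in $W$ (via $\pi^{-1}$ applied to $H^2$), hence lies in $V[c]\subseteq V[G^0]\subseteq U$. The family $S=\{\dom\pi_a\cap G^2\mid a\in F\}$ is therefore a subset of $U$ of size $\leq\kappa$ living in $U'=U[G^2]$, and Jensen covering between $U$ and $U'$ yields a $\kappa$-sized cover $T\in U$; from $f^{-1}``S\subseteq\kappa$ one then recovers $G^2$ inside $U[X]$ for a fixed encoding $X$ of $\mathcal{P}(\kappa)$, contradicting the choice of $G^2$. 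Covering is used exactly at this last step, and its role cannot be eliminated.
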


\begin{proof}
  Say $V[H]$ is a generic ground via $V[G][I]=V[H][J]$ where $I$ is $\mathbb{R}$-generic over $V[G]$ and $J$ is $\mathbb{S}$-generic over $V[H]$. Suppose there is $c\in V[G]$ with $V[G]\cap V[H]\subseteq V[c]$.

  Split $\mathbb{P}$ in $V$ as $\mathbb{P}^0*\dot{\mathbb{P}^{>0}}$ and correspondingly $G$ as $G^0*G^{>0}$ such that $\mathbb{R}, c\in V[G^0]$ and $J\in V[G^0,I]$. Observe that $I$ and $G^{>0}$ are mutually generic over $V[G^0]$ so we can regard the extension $V[G,I]$ as $V[G^0,I][G^{>0}]$. Take $\beta$ such that $G^0, I\in W_{\beta}[J]$ and $\lhd$ a well-ordering of $W_{\beta}$. This gives us
    $$V[G^0,I]\subseteq V(W_{\beta},\lhd)[J]\subseteq V[G^0,I][G^{>0}]$$
  and we can apply the intermediate model theorem in $V[G^0,I]$ to split $\mathbb{P}^{>0}:=\dot{\mathbb{P}}^{>0}[G^0]$ as $\mathbb{P}^1*\dot{\mathbb{P}}^{>1}$ and $G^{>0}$ correspondingly as $G^1*G^{>1}$ so that $U:=V[G^0,I][G^1]=V(W_{\beta},\lhd)[J]$. (Technically we must first cut down $\mathbb{P}^{>0}$ to a large enough set initial segment and then apply the theorem to that.)

  Take $\kappa$ an uncountable cardinal in $V[G,I]$ such that $U\models |G^0*I*G^1|\leq\kappa$, then $X\subseteq\OR$ encoding $\mathcal{P}(\kappa)^{V[G,I]}$. Since $V[G]$ is a forwards class forcing extension it is possible to split $\mathbb{P}^{>1}:=\dot{\mathbb{P}}^{>1}[G^0*I*G^1]$ in $V[G^0*I*G^1]$ as $\mathbb{P}^2*\dot{\mathbb{P}}^{>2}$ and $G^{>1}$ as $G^2*G^{>2}$ so that $G^2\notin V[G^0,I,G^1, X]$. Find $\beta'>\beta$ such that $G^2, I\in W_{\beta}[J]$ and $\lhd'$ a well-ordering of $W_{\beta'}$. We get
    $$V\subseteq V(W_{\beta},\lhd)\subseteq V(W_{\beta'},\lhd')\subseteq V[G^0, I, G^{>1}]$$
  so the intermediate models theorem will yield a forcing $\mathbb{Q}\in V$ together with a generic $H$ such that $V[H]=V(W_{\beta'},\lhd')$, then a second application will split them as $\mathbb{Q}=\mathbb{Q}^1*\dot{\mathbb{Q}}^{>1}$ and $H=H^1*H^{>1}$ such that $V[H^1]=V(W_{\beta},\lhd)$. Now we have
    $$U\subseteq U[G^2]= V[G^0,I,G^1,G^2]\subseteq V(W_{\beta}',\lhd')[J]=V(W_{\beta},\lhd)[H^{>1}][J]$$
  where $\mathbb{Q}^{>1}:=\dot{\mathbb{Q}}^{>1}[H^1]$ and $\mathbb{S}$ are both members of $V(W_{\beta},\lhd)$, so $H^{>1}$ and $J$ are mutually generic and can be swapped to get
    $$U\subseteq U[G^2]\subseteq V(W_{\beta},\lhd)[J][H^{>1}]=U[H^{>1}]$$
  allowing us to use the intermediate model theorem in $U$ to split $\mathbb{Q}^{>1}$ as $\mathbb{Q}^2*\dot{\mathbb{Q}}^3$ and $H^{>1}$ as $H^2*H^3$ such that $U':=U[H^2]=U[G^2]$.

  Next by \cite[Lemma 25.5]{old_jech} applied in $U$ there are $p\in G^2$ and $q\in H^2$ together with an isomorphism $\pi:\mathbb{P}^2_p\rightarrow\mathbb{Q}^2_q$ such that $\pi``G^2_p=H^2_q$; we may assume $p$ and $q$ are trivial. (This theorem applies only to set forcing which is one reason why we need to cut down $V[G,I]$ to $U'$ first.)

  Define $\mathbb{A}:=\mathbb{P}^0*\dot{\mathbb{R}}*\dot{\mathbb{P}}^1$ and $F:=G^0*I*G^1$, so $U=V[F]$. Take $\dot{\mathbb{Q}}^2$ an $\mathbb{A}$-name for $\mathbb{Q}^2$ and $\dot{\pi}$ an $\mathbb{A}$-name for $\pi$. For each $a\in\mathbb{A}$ define $\pi_a\in V$ to be the set of pairs $(\dot{p},\dot{q})$ of $\mathbb{A}$-names, the first a member of $\dot{\mathbb{P}}^2$ and the second a member of $\dot{\mathbb{Q}}^2$, such that $a\forces\dot{\pi}(\dot{p})=\dot{q}$. Then $\bigcup_{a\in F}\pi_a$ will be realised as $\pi$ under $F$.

  Now for each $a\in F$ we have $\dom\pi_a\cap G^2\in V[G]$ but also $\dom\pi_a\cap G^2 = \pi^{-1}``(\im\pi_a\cap H^2)\in V[H]$; recalling that $V[G]\cap V[H] \subseteq V[c]$ we get $\dom\pi_a\cap G^* \in V[c]\subseteq V[G_0]$. Note that for $a\notin F$ the partial function $\pi_a$ will not take $G^2$ to $H^2$ so $\dom\pi_a\cap G^2$ may not be in $V[H]$; this means that $U$ is unable to construct a set of the relevant $\dom\pi_a\cap G^2$ and so cannot simply take their union to recover $G^2$.

  However, there is in $U'$ the set $S:=\{\dom\pi_a\cap G^2\mid a\in F\}$ which is a subset of $U$. Covering holds between $V$ and $U'$, and hence between $U$ and $U'$. Thus we can find $T\in U$ of size $\kappa$ with $S\subseteq T$. Fix a bijection $f:\kappa\rightarrow T$ in $U$; then we have $f^{-1}``S\subseteq\kappa$ with $f^{-1}``S\in U'$ and $G^2\in U[f^{-1}``S]\subseteq U[X]=V[G^0,I,G^1, X]$, contradicting the choice of $G^2$.
\end{proof}

We can weaken the assumption of covering to weak covering between $U'$ and $V$ if we also assume that the GCH holds on a tail in $V$, as follows.

\begin{theorem}
  Assume Global Choice, that GCH holds on a tail in $V$, and that weak covering holds between $V$ and its set-generic extensions. Let $\mathbb{P}$ be a forwards class forcing with generic $G$, formed from $\langle\mathbb{P}_{\alpha}\mid\alpha\in\OR\rangle$ such that for all $\kappa$ there is a stationary class of singular cardinals $\theta$ with cofinality greater than $\kappa$ for which $\mathbb{P}_{\theta}$ is a direct limit. Let $W$ be a generic ground of $V[G]$ such that $V\subseteq W$. Then $V[G]\cap W$ is not contained in $V[c]$ for any $c\in V[G]$.
\end{theorem}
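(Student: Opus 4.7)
The plan is to follow the structure of Theorem \ref{weak_main} through the setup and isomorphism-construction phases, modifying only the final covering step to use weak covering together with the direct-limit hypothesis and GCH on a tail, in place of Jensen covering. Assuming $V[G]\cap W\subseteq V[c]$ for contradiction, I would split $\mathbb{P}$ as $\mathbb{P}^0*\dot{\mathbb{P}}^{>0}$ so that $\mathbb{R}$, $c$, $J$ all lie in $V[G^0,I]$; further as $\mathbb{P}^0*\dot{\mathbb{P}}^1*\dot{\mathbb{P}}^{>1}$ so that $U:=V[G^0,I,G^1]$ equals $V(W_\beta,\lhd)[J]$; and finally split $\mathbb{P}^{>1}$ as $\mathbb{P}^2*\dot{\mathbb{P}}^{>2}$ so that $G^2\notin V[G^0,I,G^1,X]$, where $X\subseteq\OR$ encodes $\mathcal{P}(\kappa)^{V[G,I]}$ for a suitable $\kappa$. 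Two applications of the intermediate model theorem produce an isomorphism $\pi:\mathbb{P}^2\cong\mathbb{Q}^2$ between the $V$-side and $W$-side quotients, together with the family $\{\pi_a\mid a\in\mathbb{A}\}\in V$ (where $\mathbb{A}:=\mathbb{P}^0*\dot{\mathbb{R}}*\dot{\mathbb{P}}^1$) and the set $S=\{\dom\pi_a\cap G^2\mid a\in F\}\in U'$, each element of which lies in $V[c]\subseteq V[G^0]$ by the assumption.

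The new ingredient is the choice of $\kappa$ and the cover of $S$. Using the stationary-class hypothesis, I would select a singular cardinal $\kappa$ of $V$ with $\cf\kappa>|\mathbb{A}|^U$ and with $\mathbb{P}_\kappa$ a direct limit; by GCH on a tail of $V$ this $\kappa$ is also strong limit in $V$, and weak covering applied between $V$ and the set-generic extension $U'$ yields $(\kappa^+)^V=(\kappa^+)^{U'}$. The direct-limit property ensures that $\mathbb{P}_\kappa$ is an ascending union of set-sized forcings and can be taken to have underlying set $\kappa$; combined with $\cf\kappa>|F|^U$, every element $\dom\pi_a\cap G^2$ for $a\in F$ is a subset of $\mathbb{P}^2$ whose relevant data is controlled below $\kappa$. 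GCH in $V$ counts the collection of such potential subsets, and weak covering transfers the count from $V$ to $U$; a cofinality argument using $\cf\kappa>|F|^U$ then extracts a covering set $T\in U$ with $S\subseteq T$ and $|T|^U\leq\kappa$. A bijection $f:\kappa\to T$ in $U$ gives $f^{-1}``S\in U'\subseteq V[G,I]$ as a subset of $\kappa$, hence encoded by $X$, which forces $G^2\in V[G^0,I,G^1,X]$ and contradicts the choice of $G^2$.

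The main obstacle is achieving $|T|^U\leq\kappa$ rather than the naive bound $\kappa^+$ coming from a GCH count of $\mathcal{P}(\mathbb{P}^2)\cap V[G^0]$ alone. This is precisely where the direct-limit structure of $\mathbb{P}_\kappa$ is essential: without Jensen covering, there is no automatic ``cover by a set of the same size,'' and the direct-limit hypothesis is what reduces the family of possible $\dom\pi_a\cap G^2$ to one indexed by bounded pieces below $\kappa$, so that the $\cf\kappa>|F|^U$ bound on $F$ translates into a $\kappa$-sized cover rather than a $\kappa^+$-sized one. Weak covering is then used to lift this control from $V$ to $U$. Once $T$ is constructed, the remainder of the argument proceeds verbatim as in Theorem \ref{weak_main}.
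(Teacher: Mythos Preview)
Your proposal has a genuine gap at the interface between the covering step and the final contradiction. You keep the endgame of Theorem~\ref{weak_main} verbatim: choose $X$ encoding $\mathcal{P}(\kappa)^{V[G,I]}$, choose $G^2\notin V[G^0,I,G^1,X]$, and at the end argue that $f^{-1}``S\subseteq\kappa$ is encoded by $X$. But the GCH-based counting you propose only controls subsets of $\mathbb{P}^2$ if $\mathbb{P}^2$ itself has size at most your singular $\kappa$; and if you arrange $|\mathbb{P}^2|\leq\kappa$ then $G^2$ is (essentially) a subset of $\kappa$ in $V[G,I]$ and is therefore already encoded by $X$, so the choice $G^2\notin V[G^0,I,G^1,X]$ is impossible. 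Conversely, if you keep $\mathbb{P}^2$ unbounded as in Theorem~\ref{weak_main}, then the elements of $S$ are subsets of an arbitrarily large forcing, and GCH at $\kappa$ tells you nothing about how many there are in $U$. Either way the argument breaks, and the vague phrase ``whose relevant data is controlled below $\kappa$'' does not resolve this.

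The paper's route is genuinely different at this point. It abandons the $X$-based contradiction entirely and instead chooses $\theta$ singular of large cofinality with $\mathbb{P}_\theta$ a direct limit on underlying set $\theta$, then takes $\mathbb{P}^2$ specifically so that $\mathbb{P}^0*\dot{\mathbb{P}}^1*\dot{\mathbb{P}}^2=\mathbb{P}_\theta$; this forces $\dom\dot{\pi}_a\subseteq\theta$. The contradiction is replaced by two reflection-type conditions on $\theta$: (b)~every element of $V[G,I]_\theta$ lies in some $V[G\cap\alpha,I]$ with $\alpha<\theta$, and (c)~$G\cap\theta$ does not. The covering then proceeds in two stages: enumerate $\mathcal{P}(\theta)^U$ in order-type $\theta^+$ using GCH, observe that the encoding function $e:\kappa\to\theta^+$ (with the paper's small $\kappa\geq|F|$) is bounded below $\theta^+$ because weak covering preserves $\theta^+$ in $U'$, re-index to get $e':\kappa\to\theta$, and use $\cf\theta>\kappa$ to bound $e'$ below $\theta$. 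This places $e'\in U'_\theta$, and now (b) and (c) give the contradiction. Your ``cofinality argument'' corresponds only to the second of these two bounding steps; the first step, which is where weak covering is actually used, is what your sketch is missing.
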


\begin{proof}
  We will follow the argument of \ref{weak_main} except that we shall make different choices of $\mathbb{P}^2$ and $G^2$, and our conclusion will be based on weak covering rather than Jensen covering.

  By Global Choice we can assume that $\mathbb{P}$ has the ordinals as its underlying class. Proceed as in \ref{weak_main} up to the choice of $\kappa$. We seek a singular strong limit $\theta$ of cofinality greater than $\kappa$ for which:
  \begin{enumerate}[(a)]
    \item The underlying set of $\mathbb{P}_{\theta}$ is $\theta$.
    \item $\forall x\in V[G,I]_{\theta}\;\exists\alpha<\theta:x\in V[G\cap\alpha,I]$
    \item $\neg\exists\alpha<\theta: G\cap\theta\in V[G\cap\alpha, I]$
    \item $2^{\theta}=\theta^+$ in $V$.
  \end{enumerate}
  There is a stationary class of $\theta$ at which $\mathbb{P}_{\theta}$ is formed as a direct limit, and a club class for which $\bigcup_{\beta<\theta}\mathbb{P}_{\beta}$ has underlying set $\theta$, giving a stationary class where (a) holds. For any $x\in V[G, I]$ there is some ordinal $\alpha$ such that $x\in V[G\cap\alpha,I]$, so there is a club of $\theta$ on which (b) holds. We can assume each step in the iteration forming $\mathbb{P}$ is non-trivial so (c) will hold whenever $\theta$ is a limit ordinal. Finally (d) holds on a tail of $\theta$. Hence finding a $\theta$ which satisfies all of these requirements is possible.

  Split $\mathbb{P}^{>2}$ as $\mathbb{P}^2*\dot{\mathbb{P}}^{>2}$ so that $\mathbb{P}^0*\dot{\mathbb{P}}^1*\dot{\mathbb{P}}^2$ is $\mathbb{P}_{\theta}$. Split $G^{>2}$ as $G^2*G^{>2}$ accordingly.

  We resume the argument of \ref{weak_main} up to the construction of the $\pi_a$ functions. Then in $U$ use (d) to enumerate $\mathcal{P}(\theta)^U$ as $\{ X_i\mid i<\theta^+\}$ and fix an injection $k:F\rightarrow\kappa$. Now $\mathbb{P}^2$ is the quotient of $\mathbb{P}^0*\dot{\mathbb{P}}^1*\dot{\mathbb{P}}^2$ by $G^0*G^1$ so its name $\dot{\mathbb{P}}^2$ has the same underlying set as $\mathbb{P}^0*\dot{\mathbb{P}}^1*\dot{\mathbb{P}}^2$, namely $\theta$. Thus $\dom\dot{\pi}_a\subseteq\theta$ for $a\in F$, and we can recover $G^2$ over $U$ by specifying in $U'$ a function $e:\kappa\rightarrow\theta^+$ such that $\dom\pi_a\cap G^2=X_{e(k(a))}$ for all $a\in F$.

  Take a $\mathbb{P}^2$-name $\dot{e}\in U$ for $e$. By weak covering $(\theta^+)^U$ is preserved in $W$ so $\dot{e}$ must be forced to be bounded in $\theta$, say by $j<\theta^+$. Then $U$ can take the enumeration $\{X_i\mid i<j\}$ and modify it to an enumeration $\{Y_i\mid i<\theta\}$; now $G^2$ is recoverable over $U$ from a similar function $e':\kappa\rightarrow\theta$. However $\cf(\theta)>\kappa$ in $U'$ so again $e'$ must be bounded in $\theta$, and hence a member of $U'_{\theta}=V[G^0,I,G^1,G^2]_{\theta}$. So $G^2\in V[G^0,I,G^1,e']$, contradicting the combination of conditions (b) and (c) for $\theta$, and we are done.
\end{proof}

\section{An intermediate model theorem for class forcing} \label{class_intermediate_models}

We recall that a tame class forcing is always ZFC-preserving, by which we mean that for all generics $G$ the model $\langle V[G],G\rangle$ satisfies ZFC in the language of set theory together with a unary predicate. The presence of this predicate $G$ imposes some difficulties since, as in \ref{partial_lottery}, it may not be a class of $V[G]$. We are interested in building models of theories in the language of set theory without any additional predicates so we must avoid reliance on definability with respect to $G$.

For a class forcing $\mathbb{P}$ the $\mathbb{P}$-names for new sets are always themselves sets. This means that we do not necessarily have the maximum principle, as maximal antichains may still be class-sized. For a class $X$ in the extension $V[G]$ there will be a formula $\varphi$ and parameter $a=\dot{a}[G]$ such that $x\in X$ is equivalent to $\varphi(x, a)^{V[G]}$, so we can understand a `class name' $\dot{X}$ for $X$ as being the class of $(\dot{x}, p)$ such that $\dot{a}$ is a $\mathbb{P}$-name and $p$ is a member of $\mathbb{P}$ such that $p\forces\varphi(\dot{x},\dot{a})$.

We now seek a version of the intermediate model theorem that applies to class forcing. The first step is to form a Boolean algebra version of a given class forcing, as is done in \cite[Lemma 61]{Reitz}. For a detailed treatment of Boolean completions of class forcings see \cite{HKLNS}. Here we say two class forcings $\mathbb{P}$ and $\mathbb{Q}$ are \textit{forcing-equivalent} if from any $\mathbb{P}$-generic $G$ we can define a $\mathbb{Q}$-generic $H$ such that $V[G]=V[H]$, and vice-versa.

\begin{lemma} \label{form_algebra}
  Let $\mathbb{P}$ be a forwards class forcing with generic $G$. Then there is a class Boolean algebra that is complete under set-sized supremums and infimums and is forcing-equivalent to $\mathbb{P}$.
\end{lemma}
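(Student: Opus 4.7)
The plan is to build the desired Boolean algebra as a direct limit of set-sized regular open algebras along the iteration. Since $\mathbb{P}$ is forwards, write it as $\langle\mathbb{P}_{\alpha},\dot{\mathbb{Q}}_{\alpha}\mid\alpha\in\OR\rangle$ and set $\mathbb{B}_{\alpha}:=\ro(\mathbb{P}_{\alpha})$ in the usual set-forcing sense. For $\alpha\leq\beta$ the inclusion of $\mathbb{P}_{\alpha}$ as a complete subposet of $\mathbb{P}_{\beta}$ lifts to a complete embedding $\iota_{\alpha,\beta}:\mathbb{B}_{\alpha}\hookrightarrow\mathbb{B}_{\beta}$, and these form a directed system. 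Identifying elements along the $\iota_{\alpha,\beta}$, define $\mathbb{B}:=\bigcup_{\alpha\in\OR}\mathbb{B}_{\alpha}$. This is a $V$-definable class and inherits a Boolean-algebra structure coordinate-wise, since any two elements lie in a common $\mathbb{B}_{\beta}$ where $\wedge$, $\vee$ and $\neg$ are available.

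Next I would verify set-completeness. Given a set $A\subseteq\mathbb{B}$, Replacement produces some $\beta$ with $A\subseteq\mathbb{B}_{\beta}$. The supremum $\bigvee A$ computed inside $\mathbb{B}_{\beta}$ persists as the supremum in each $\mathbb{B}_{\gamma}$ for $\gamma\geq\beta$, because $\iota_{\beta,\gamma}$ is a complete embedding, and hence serves as the supremum of $A$ in the whole of $\mathbb{B}$. Infima are dual. No such stabilisation argument is available for class-sized $A$, which is precisely why $\mathbb{B}$ need not be class-complete.

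For forcing-equivalence, given a $\mathbb{P}$-generic $G$ set $H:=\{b\in\mathbb{B}\mid\exists\alpha\,\exists p\in G\cap\mathbb{P}_{\alpha}:[p]_{\mathbb{B}_{\alpha}}\leq b\}$; conversely from a $\mathbb{B}$-generic $H$ one recovers $G:=\{p\in\mathbb{P}\mid[p]\in H\}$. It is routine that $V[G]=V[H]$ in either direction, and the main content is genericity. If $D\subseteq\mathbb{B}$ is a $V$-definable dense class then its pullback $D':=\{p\in\mathbb{P}\mid[p]\in D\}$ is a $V$-definable dense subclass of $\mathbb{P}$, so $G$ meets $D'$ and hence $H$ meets $D$; the converse direction is handled level-by-level, using that $H\cap\mathbb{B}_{\alpha}$ is $\mathbb{B}_{\alpha}$-generic because every dense $E\subseteq\mathbb{B}_{\alpha}$ in $V$ extends (by taking upward closure in $\mathbb{B}$) to a class dense in $\mathbb{B}$.

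The main obstacle will be the second direction of the genericity verification: because $\mathbb{B}$ is a proper class, a $\mathbb{B}$-generic filter is only required to meet $V$-definable dense classes of $\mathbb{B}$, and one must rule out the possibility that a $V$-definable dense class of $\mathbb{P}$ corresponds to a \emph{non}-definable dense subclass of $\mathbb{B}$. Here I would lean on the pre-tameness inherited from the forwards structure of $\mathbb{P}$: any $V$-definable pre-dense class of $\mathbb{P}$ below a given condition can be refined on each set-sized level to a set-sized pre-dense fragment, and the corresponding operation on $\mathbb{B}_{\alpha}$ is itself $V$-definable, so dense classes in the two posets correspond uniformly in $V$ and the two notions of genericity agree.
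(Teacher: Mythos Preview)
Your approach is essentially the same as the paper's: both construct $\mathbb{B}$ as the direct limit of the regular open algebras $\mathbb{B}_{\alpha}=\ro(\mathbb{P}_{\alpha})$ along complete embeddings induced by the iteration structure. The emphasis differs slightly: the paper devotes most of its effort to explicitly verifying that the map $A\mapsto\dcl(A)$ from $\mathbb{B}_{\alpha}$ to $\mathbb{B}_{\beta}$ is a complete embedding (checking regularity, preservation of meets, and preservation of complements), which you treat as a standard fact; conversely, you give more detail on set-completeness and on the correspondence of generics, which the paper dispatches in two sentences by converting names level-by-level. Your discussion of the ``main obstacle'' concerning definability of dense classes is more than the paper needs, since forcing-equivalence here is established simply by observing that every set-sized $\mathbb{P}$-name is already a $\mathbb{P}_{\alpha}$-name for some $\alpha$ and hence a $\mathbb{B}_{\alpha}$-name, and vice versa.
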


\begin{proof}
  Say $\mathbb{P}$ is given by the iteration of $\mathbb{P}_{\alpha}$ for $\alpha\in\OR$. For each $\alpha$ define $\mathbb{B}_{\alpha}$ to be the regular open algebra generated by $\mathbb{P}_{\alpha}$. For $\alpha<\beta$ we can embed as follows:
  \begin{align*}
    i_{\alpha,\beta}:\mathbb{B}_{\alpha}&\hookrightarrow\mathbb{B}_{\beta}\\
    A&\mapsto\dcl(A)
  \end{align*}
  where $\dcl(A)$ is the downwards closure of $A$ in $\mathbb{P}_{\beta}$. Clearly $\dcl(A)$ will be open in $\mathbb{P}_{\beta}$ so we just need to check it is regular, which is to say that for every $p\in\mathbb{P}_{\beta}$ with $\dcl(A)$ dense below $p$ then in fact $p\in\dcl(A)$. Given such a $p$, we we will show that $A$ is dense below $p\upharpoonright\mathbb{P}_{\alpha}$ in $\mathbb{P}_{\alpha}$ and then the regularity of $A$ will tell us that $p\upharpoonright\mathbb{P}_{\alpha}\in A$ and since $p\upharpoonright\mathbb{P}_{\alpha}\leq p$ we will be done.

  Given $u\leq p\upharpoonright\mathbb{P}_{\alpha}$ in $\mathbb{P}_{\alpha}$ take $q\in\mathbb{P}_{\beta}$ such that $q\upharpoonright\mathbb{P}_{\alpha}=u$ and $q\leq p$. Then we can find $r\leq q$ with $r\in\dcl(A)$; this yields $r\upharpoonright\mathbb{P}_{\alpha}\leq u$ and $r\upharpoonright\mathbb{P}_{\alpha}\in A$ so we are done.

  We now check that this embedding is complete. First, given $\{A_k\mid k<\lambda\}\subseteq\mathbb{B}_{\alpha}$ we want $i_{\alpha,\beta}(\bigwedge_k A_k)=\bigwedge_k i_{\alpha,\beta}(A_k)$, which is to say that $\dcl(\bigcap_k A_k)=\bigcap_k\dcl(A_k)$. Now for any $x\in\mathbb{B}_{\beta}$ we have that $x\in\dcl(\bigcap_k A_k)$ is equivalent to $x\upharpoonright\mathbb{P}_{\alpha}\in\bigcap_k A_k$, which is equivalent to $x\upharpoonright\mathbb{P}_{\alpha}\in A_k$ for all $k<\lambda$ and thence to $x\in\bigcap_k\dcl(A_k)$.

  Second, given any $A\in\mathbb{B}_{\alpha}$ we want $i_{\alpha,\beta}(\neg A)=\neg i_{\alpha,\beta}(A)$, which means that $\dcl(\mathring{A^C})=\mathring{\dcl(A)^C}$. Note that any $x\in\mathbb{P}_{\beta}$ can be regarded as a pair $(x_0,\dot{x}_1)$ such that $x_0\in\mathbb{P}_{\alpha}$. Now $x\in\dcl(\mathring{A^C})$ is equivalent $x_0\in\mathring{A^C}$ and thus to $\dcl(\{x_0\})\cap A=\phi$. Meanwhile $x\in\mathring{\dcl{A}^C}$ is equivalent to $\dcl(\{x\})\cap\dcl(A)=\phi$. Given $z\in\dcl(\{x\})\cap\dcl(A)$ then splitting $z$ in the same way as $x$ we have $z_0\in A$ with $z_0\leq x_0$ so $z_0\in\dcl(\{x_0\})\cap A$. Conversely given $a\in\dcl(\{x_0\})\cap A$ then we can form $(a,\dot{x}_1)\in\dcl(\{x\})\cap\dcl(A)$. Therefore $i_{\alpha,\beta}$ is a complete embedding.

  These embeddings allow us to take the direct limit of the $\mathbb{B}_{\alpha}$ to form a class Boolean algebra $\mathbb{B}$ which will be set-complete, though not class-complete. Then any $\mathbb{P}$-name for a set will be a $\mathbb{P}_{\alpha}$-name for some $\alpha$, and so can be converted into a $\mathbb{B}_{\alpha}$-name. Similarly any $\mathbb{B}$-name for a set can be converted into a $\mathbb{P}$-name so they will give the same generic extension.
\end{proof}

Given a ground $W$ of a class forcing extension $V[G]$ such that $V\subseteq W\subseteq V[G]$ we would now like to mirror the proof of the intermediate model theorem for sets by finding a sub-algebra class forcing which generates $W$. Unfortunately there are difficulties in doing so, as illustrated by the next result.

\begin{proposition} \label{lottery_sum}
  There is a forwards class forcing $\mathbb{P}$ together with generic $G$ such that if we construct the Boolean algebra $\mathbb{B}$ from \ref{form_algebra} with associated generic $G^*$ then there is a subalgebra $\mathbb{C}\leq\mathbb{B}$, complete under set-sized operations, such that $G^*\cap\mathbb{C}$ is not generic for $\mathbb{C}$.
\end{proposition}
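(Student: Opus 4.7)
The plan is to build on the construction of Example \ref{partial_lottery}. I would take $\mathbb{P}$ to be the class Easton support product of $\Add(\kappa,1)$ at $\kappa$ regular, with $G$ a $\mathbb{P}$-generic and $\mathbb{B}, G^*$ as furnished by Lemma \ref{form_algebra}. Alongside this take $\mathbb{Q}$ the class Easton support product of $\Add(\kappa,1)\oplus\mathbb{T}$, with the complete embedding $\iota\colon\mathbb{Q}\hookrightarrow\mathbb{P}$ described in the example. Applying Lemma \ref{form_algebra} to $\mathbb{Q}$ yields a forcing-equivalent set-complete Boolean algebra $\mathbb{B}_{\mathbb{Q}}$, and $\iota$ induces canonically a Boolean embedding $\hat\iota\colon\mathbb{B}_{\mathbb{Q}}\hookrightarrow\mathbb{B}$ preserving set-sized suprema and infima. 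I would take $\mathbb{C}:=\hat\iota(\mathbb{B}_{\mathbb{Q}})$, which is then a set-complete sub-Boolean-algebra of $\mathbb{B}$ (inheriting set-completeness from $\mathbb{B}_{\mathbb{Q}}$).

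The first routine check is that $G^*\cap\mathbb{C}$ corresponds under $\hat\iota$ to the filter $G\cap\iota(\mathbb{Q})$ on $\iota(\mathbb{Q})$. Letting $e_\kappa$ denote the image under $\hat\iota$ of the element of $\mathbb{B}_{\mathbb{Q}}$ representing the trivial choice at coordinate $\kappa$, unwinding the definition of $\iota$ gives $e_\kappa=[\![\dot b_\kappa=1]\!]$ in $\mathbb{B}$. Consequently the class $S$ of Example \ref{partial_lottery}, consisting of those $\kappa$ at which the Add option was chosen, admits the first-order definition $S=\{\kappa\text{ regular}\mid e_\kappa\notin G^*\cap\mathbb{C}\}$ in terms of the predicate $G^*\cap\mathbb{C}$.

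The main argument proceeds by contradiction: suppose that $G^*\cap\mathbb{C}$ is generic for $\mathbb{C}$ in the sense of class forcing, namely it meets every $V$-definable pre-dense subclass of $\mathbb{C}$. Under the framework of \cite[Chapter 8]{FK}, in order that the generic extension $V[G^*\cap\mathbb{C}]$ be constructible via the usual names construction, the generic filter $G^*\cap\mathbb{C}$ must itself be a class of the ambient model $V[G]$, definable there with a set parameter. Granting this, the class $S$ as defined above would then also be a class of $V[G]$. This directly contradicts the conclusion of Example \ref{partial_lottery} that $S$ is not a class of $V[G]$, and so $G^*\cap\mathbb{C}$ cannot be generic for $\mathbb{C}$.

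The main obstacle will be carefully justifying the passage from set-completeness of $\mathbb{C}$ to the standard class-forcing framework: I need to verify that $\mathbb{C}$ equipped with its Boolean ordering may be treated as a class partial order in the sense of \cite[Chapter 8]{FK}, and that the embedding $\hat\iota$ transfers the notion of generic filter correctly between $\mathbb{C}$ and $\mathbb{Q}$ via the forcing-equivalence of Lemma \ref{form_algebra}. Put differently, the proposition is essentially exposing the fact that set-completeness (rather than class-completeness) of the subalgebra is insufficient to guarantee that the $\mathbb{B}$-generic restricts to a class-forcing generic on the subalgebra; once this alignment is set up, the contradiction with Example \ref{partial_lottery} provides the desired failure of genericity.
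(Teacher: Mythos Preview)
Your argument has a genuine gap at the step ``the generic filter $G^*\cap\mathbb{C}$ must itself be a class of the ambient model $V[G]$.'' This implication is false, and in fact Example \ref{partial_lottery} is precisely a witness to its failure. In that example the embedding $\iota\colon\mathbb{Q}\hookrightarrow\mathbb{P}$ is a \emph{complete} embedding of class partial orders (maximal antichains map to maximal antichains), so $\iota^{-1}(G)$ is genuinely $\mathbb{Q}$-generic over $V$; the point of the example is that the resulting generic, and hence the class $S$, fails to be definable in $V[G]$ without the predicate $G$. The framework of class forcing does not require the generic to be a parameter-definable class of a larger model; the extension is $\langle V[H],H\rangle$ with $H$ carried as a predicate. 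So your proposed contradiction never materialises: in your construction $G^*\cap\mathbb{C}$ \emph{is} generic for $\mathbb{C}$, it is simply not a class of $V[G]$, and these are different phenomena.

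The paper proceeds quite differently. It takes $\mathbb{P}$ to be the class-sized \emph{lottery sum} (not product) of the forcings $\Add(\kappa,1)$ for $\kappa$ regular, realised as a full-support iteration to make it forwards. The subalgebra $\mathbb{C}$ corresponds to the same lottery but restricted to $\kappa>\omega$, and the embedding $\ro(\mathbb{Q})\hookrightarrow\ro(\mathbb{P})$ is designed to preserve set-sized Boolean operations but to \emph{fail} at a specific class-sized supremum: the join of the $\mathbb{A}_\kappa$ for $\kappa>\omega$ is $1$ in $\mathbb{Q}$ but strictly below $1$ in $\mathbb{P}$. One then chooses the generic $G$ to lie in $\mathbb{A}_\omega$, so that $G^*\cap\mathbb{C}$ contains only the ``cofinite-type'' elements and visibly misses a concrete dense subclass of $\mathbb{C}$. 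The essential idea you are missing is that the subalgebra must be set-complete in $\mathbb{B}$ while failing to be class-complete there; a subalgebra arising from a complete embedding of posets, as in your proposal, will never exhibit this failure.
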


\begin{proof}
  Consider $\mathbb{P}$ the class forcing that is given by a class-size lottery sum of the individual set forcings $\mathbb{A}_{\kappa}:=\Add(\kappa, 1)$ for $\kappa$ regular, which we form by taking their disjoint union and adding a top element. This forcing will select a single regular cardinal and then add a Cohen subset of it, so it is clear that it is ZFC-preserving and therefore tame.

  \begin{claim}
    For any sub-class $X$ of $\mathbb{P}$ the following are equivalent:
    \begin{itemize}
      \item $X$ is regular and open in $\mathbb{P}$.
      \item $X\cap\mathbb{A}_{\kappa}$ is regular and open for each $\kappa$, and $1_{\mathbb{P}}\in X$ iff $\coprod_{\kappa}\mathbb{A}_{\kappa}\subseteq X$.
    \end{itemize}
  \end{claim}
  \begin{proof}
    For the forwards direction we are told that $X$ is regular and open in $\mathbb{P}$. Clearly $X\cap\mathbb{A}_{\kappa}$ is open in each $\mathbb{A}_{\kappa}$. Given $p\in\mathbb{A}_{\kappa}$ such that $X$ is dense below $p$ in $\mathbb{A}_{\kappa}$ then $X$ is also dense below $X$ in $\mathbb{P}$ and so $p\in X$; therefore $X$ is regular in $\mathbb{A}_{\kappa}$. If $1_{\mathbb{P}}\in X$ then $\coprod_{\kappa}\mathbb{A}_{\kappa}\subseteq X$ by openness, and if $\coprod_{\kappa}\mathbb{A}_{\kappa}\subseteq X$ then $1_{\mathbb{P}}\in X$ by regularity.

    For the reverse direction it is clear that $X$ is open in $\mathbb{P}$. So we consider $p\in\mathbb{P}$ such that $X$ is dense below $p$ in $\mathbb{P}$. If $p\in\mathbb{A}_{\kappa}$ for some $\kappa$ then $X$ is dense below $p$ in $\mathbb{A}_{\kappa}$, and $X\cap\mathbb{A}_{\kappa}$ is regular here so $p\in X$. Otherwise $p=1_{\mathbb{P}}$ and for any $q\in\mathbb{A}_{\kappa}$ we have the $X$ is dense below $q$ in $\mathbb{A}_{\kappa}$, from which $q\in X$. Thus $\coprod_{\kappa}\mathbb{A}_{\kappa}\subseteq X$ so $p=1_{\mathbb{P}}\in X$.
  \end{proof}

  We can therefore form a notion of $\ro(\mathbb{P})$ by taking all regular open classes $X\subseteq\mathbb{P}$ such that either $X\cap\mathbb{A}_{\kappa}=\phi$ for all but set-many $\kappa$ or $\mathbb{A}_{\kappa}\subseteq X$ for all but set-many $\kappa$. Classes of this form are uniformly definable from sets, so we can regard $\ro(\mathbb{P})$ as a class. It will be closed under negations and set-size suprema and infima, though not class-size ones.

  Define $\mathbb{Q}$ in the same way, except that it will be a lottery sum over all $\Add(\kappa, 1)$ for regular $\kappa>\omega$. We can embed $\ro(\mathbb{Q})$ into $\ro(\mathbb{P})$ by sending $X$ to itself if $X\cap\mathbb{A}_{\kappa}=\phi$ for all but set-many $\kappa$, and to $X\cup\mathbb{A}_{\omega}$ if $\mathbb{A}_{\kappa}\subseteq X$ for all but set-many $\kappa$. This embedding will respect all of the set-size Boolean algebra operations in $\mathbb{Q}$ and $\mathbb{P}$. It will not however respect certain class-size operations that it is possible to perform; for example the supremum of $\mathbb{A}_{\kappa}$ for $\kappa>\omega$ in $\mathbb{Q}$ will be $\mathbb{Q}$ which is then embedded as $\mathbb{P}$; the same supremum in $\mathbb{P}$ will be $\mathbb{P}-\mathbb{A}_{\omega}$.

  Consider now a generic $G$ for $\mathbb{P}$ that is a subset of $\mathbb{A}_{\omega}$, and $G^*$ the induced generic for $\ro(\mathbb{P})$, which will consist of all regular open subsets that meet $G\subseteq\mathbb{A}_{\omega}$. The only members of $\ro(\mathbb{Q})$ (as embedded in $\ro(\mathbb{P})$) that meet $\mathbb{A}_{\omega}$ are those $X$ such that $\mathbb{A}_{\kappa}\subseteq X$ for all but set-many $\kappa$, so the filter $G^*\cap\ro(\mathbb{Q})$ on $\ro(\mathbb{Q})$ contains only $X$ of this form. This makes it disjoint from many dense sub-classes of $\ro(\mathbb{Q})$, for example the class of all $X$ such that $X\cap\mathbb{A}_{\kappa}=\phi$ for all but set-many $\kappa$. Therefore $G^*\cap\ro(\mathbb{Q})$ is not generic for the subalgebra $\ro(\mathbb{Q})$.

  It remains to show that the forcing $\mathbb{P}$ can be constructed as a full-support iteration. We build the iteration $\langle\mathbb{P}_{\alpha},\dot{\mathbb{R}}_{\alpha}\mid\alpha\in\OR\rangle$ by defining $\mathbb{P}_1=\mathbb{R}_0 = \Add(\omega, 1)\oplus\{*\}$ the lottery sum of $\Add(\omega, 1)$ and the trivial forcing. If $\aleph_{\alpha}$ is not regular then $\dot{\mathbb{R}}_{\alpha}$ will be trivial forcing. Otherwise use the maximum principle in $\mathbb{P}_{\alpha}$ to define $\dot{\mathbb{R}_{\alpha}}$ as $\{*\}$ if any co-ordinate of $\mathbb{P}_{\alpha}$ has a non-trivial condition, and as $\Add(\aleph_{\alpha},1)\oplus\{*\}$ if all co-ordinates of $\mathbb{P}_{\alpha}$ are performing trivial forcing. Thus will result in $\mathbb{P}$ having a dense subset consisting of sequences which have exactly one non-trivial co-ordinate, which will be a member of some $\Add(\aleph_{\alpha},1)$. The use of full-support iteration means that this $\alpha$ may be arbitrarily large.
\end{proof}

For any forwards class forcing $\mathbb{P}$ that is not $\OR$-cc it is possible to produce a similar generic $G$ and set-complete subalgebra $\mathbb{C}$ such that $G\cap\mathbb{C}$ is not generic for $\mathbb{C}$. To do so fix a maximal antichain $\{p_{\alpha}\mid\alpha\in\OR\}$ and use the downwards cones below the $p_{\alpha}$ in place of the $\Add(\kappa,1)$.

This counterexample means that our attempts to construct an intermediate class subalgebra may result in its having an ultrafilter that is not truly generic. Fortunately it will still meet all dense sets, which is enough for most procedures involving the set-sized names. The proof that a ZFC-preserving forcing is tame depends on the use of genuine generics, so we will also have to renounce tameness for our intermediate class subalgebras. Fortunately the main purpose of tameness is the proving of ZFC-preservation, and we are already guaranteed by definition that an intermediate model will satisfy ZFC. We make the following definition.

\begin{definition}
  A \textit{pseudo-class forcing} is a (not necessarily tame) set-complete class Boolean algebra $\mathbb{B}$. A \textit{pseudo-generic} for $\mathbb{B}$ is a an ultrafilter $G\subseteq\mathbb{B}$ such that if $\bar{\mathbb{B}}$ is a set-size complete subalgebra of $\mathbb{B}$ then $G\cap\bar{\mathbb{B}}$ is a generic ultrafilter for $\bar{\mathbb{B}}$, and such that $V[G]$ is a model of ZFC.
\end{definition}

\begin{proposition}\label{classInts}
  Assume Global Choice. Let $\mathbb{P}$ be a forwards class forcing, $\varphi$ a formula and $\dot{a}$ a $\mathbb{P}$-name such that $\mathbb{P}$ forces that $\varphi(-,-,\dot{a})$ is a global well-ordering of $V[\mathbb{P}]$. Let there be a proper class of cardinals $\theta$ that are forced to remain strongly inaccessible, and such that $\varphi(-,-,\dot{a})$ is forced to reflect to $V[\mathbb{P}]_{\theta}$. Let $G$ be generic for $\mathbb{P}$ and $W$ a ground of $V[G]$ such that $V\subseteq W$. Then there is a pseudo-class forcing $\mathbb{Q}$ with pseudo-generic $H$ such that $V[H]=W$ and $H$ is definable from $G$ in $V[G]$.
\end{proposition}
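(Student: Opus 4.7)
The plan is to mimic the classical intermediate model theorem for set forcing, working with the pseudo-class Boolean algebra supplied by Lemma \ref{form_algebra}. First I would replace $\mathbb{P}$ by the forcing-equivalent set-complete class Boolean algebra $\mathbb{B}$ produced by that lemma, with $G$ corresponding to an ultrafilter $G^{*}\subseteq\mathbb{B}$. Using the global well-ordering $\varphi(-,-,\dot{a})$ together with Global Choice, I would fix a uniform $V$-definable procedure assigning to each set $x$ in the extension a canonical $\mathbb{B}$-name $\dot{x}\in V$; in particular every set of ordinals $x\in W$ comes equipped with such a canonical name.

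Next I would define $\mathbb{Q}$ to be the sub-Boolean-algebra of $\mathbb{B}$ generated under set-sized infima, suprema and complements by the Boolean values $[\![\check{\alpha}\in\dot{x}]\!]$ as $x$ ranges over sets of ordinals in $W$ and $\alpha$ over $\OR$. That $\mathbb{Q}$ is actually a class in $V[G]$, rather than a proper-class-sized collection with no uniform description, is where the reflection hypothesis on $\varphi$ is used: for each reflection point $\theta$ in the proper class of inaccessibles, the Boolean values coming from sets of ordinals in $W\cap V[G]_\theta$ generate a set-sized complete subalgebra $\mathbb{Q}_\theta$ of $\mathbb{B}_\theta$, and $\mathbb{Q}=\bigcup_\theta\mathbb{Q}_\theta$ becomes a class because the defining predicate reflects uniformly across all such $\theta$. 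Set-completeness of $\mathbb{Q}$ as a Boolean algebra follows because any set-sized Boolean operation occurs within a single $\mathbb{Q}_\theta$ for sufficiently large $\theta$ and is computed there using set-completeness of $\mathbb{B}_\theta$.

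I would then set $H:=G^{*}\cap\mathbb{Q}$, which is plainly definable from $G$ in $V[G]$. To see that $H$ is a pseudo-generic for $\mathbb{Q}$, observe that any set-sized complete subalgebra $\bar{\mathbb{Q}}\subseteq\mathbb{Q}$ sits inside some $\mathbb{B}_\theta$ as a complete subalgebra, so $G^{*}\cap\bar{\mathbb{Q}}$ is a genuine $\bar{\mathbb{Q}}$-generic over $V$; and $V[H]$ will equal the ZFC model $W$. The identity $V[H]=W$ then splits into two directions. The inclusion $W\subseteq V[H]$ comes from writing each set of ordinals $x\in W$ as $\{\alpha:[\![\check{\alpha}\in\dot{x}]\!]\in H\}$ and noting that $\dot{x}$ can be produced inside $V[H]$ from $x$ via the $V$-definable canonical-naming procedure; since $V\subseteq V[H]$ and $W$ is determined by its sets of ordinals, this gives $W\subseteq V[H]$. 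For the converse inclusion $V[H]\subseteq W$: we have $H\subseteq\mathbb{B}\subseteq V\subseteq W$ and, more importantly, $H$ is definable as a class of $W$, because inside $W$ we can locate the canonical names $\dot{x}$ for each $x\in W$ and then decide membership in $H$ by unwinding the Boolean-algebra description of $\mathbb{Q}$; consequently every set first-order definable from $V$-parameters and the predicate $H$ lies in $W$.

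The principal obstacle is combining these two tasks: exhibiting $\mathbb{Q}$ as a genuine class in $V[G]$ while simultaneously arranging that the corresponding $H$ admits a definition from within $W$, so that $V[H]\subseteq W$ holds as an inclusion of inner models. Both rely crucially on the reflection of $\varphi(-,-,\dot{a})$ down to $V[G]_\theta$ at a proper class of inaccessibles $\theta$: without this one cannot give a uniform description of the canonical names that picks them out consistently across $\OR$, and the definition of $\mathbb{Q}$ would fail to reach beyond set-size. The passage from genuine genericity to pseudo-genericity is unavoidable in view of Proposition \ref{lottery_sum}, which is precisely why the target of the theorem must be a pseudo-class forcing rather than a class forcing in the usual sense.
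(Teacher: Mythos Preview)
Your argument for $V[H]\subseteq W$ has a genuine gap. You claim that ``inside $W$ we can locate the canonical names $\dot{x}$ for each $x\in W$'', but your canonical name for $x$ is the $\varphi$-least $\mathbb{B}$-name evaluating to $x$ under $G$, and both ingredients of this choice---the well-ordering $\varphi(-,-,a)$ with $a\in V[G]$, and the relation ``$\dot{y}[G]=x$''---live in $V[G]$, not in $W$. There is no reason $W$ can reconstruct either, so $W$ cannot compute the generating set of $\mathbb{Q}$, cannot compute $\mathbb{Q}_\theta$ as a set, and hence cannot verify that $H\cap\mathbb{Q}_\theta\in W$. Relatedly, your argument never uses the hypothesis that $W$ is a \emph{ground} of $V[G]$; if it worked as written it would apply to an arbitrary intermediate ZFC model, which is too much to expect.

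The paper's proof is organised differently and this is where the missing hypotheses enter. First one transfers global choice from $V[G]$ to $W$ (via the $\mathbb{S}$-name for the surjection $\OR\to V[G]$), so that $W$ can be coded by a single class of ordinals $\{\eta:\psi(a,\eta)^W\}$ with $a\in W$. One then fixes a $\mathbb{P}$-name $\dot{a}$ and generates the subalgebra $\mathbb{C}$ from the Boolean values $[\![\psi(\dot{a},\eta)^{\dot W}]\!]$ for $\eta\in\OR$; the point is that the indexing is by \emph{ordinals} rather than by elements of $W$, so $\mathbb{C}$ is a class of $V$. The nontrivial step is showing that each such Boolean value exists in the merely set-complete $\mathbb{B}$: here one uses that $W$ is a ground (so $W_\theta$ is definable in $V[G]_\theta$ by $\kappa$-approximation and covering) and that $\mathbb{P}$ is forwards (so $V[G]_\theta=V[G_0]_\theta$ for a set-sized $\mathbb{P}_0$), together with the reflection of $\psi$, to get a set-sized name for the truth value. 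The inclusion $V[G^*\cap\mathbb{C}]\subseteq W$ then follows cleanly because any set-sized piece of $G^*\cap\mathbb{C}$ is determined by the single set $\{\eta<\theta:\psi(a,\eta)^W\}\in W$---no appeal to $W$ recovering names from $V[G]$ is needed.
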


\begin{proof}
  Say $W$ is a ground of $V[G]$ via $V[G]=W[J]$ where $J$ is generic for a poset $\mathbb{S}\in W$. The global choice in $V[G]$ means there is a class surjection $f:\OR\rightarrow V[G]$ definable there. Fix $\dot{f}$ a class $\mathbb{S}$-name for $f$ that is definable in $W$. Then in $W$ we can define a surjective partial function from $\mathbb{S}\times\OR$ to $W$ by sending $(s,\gamma)$ to $w$ iff $s\forces\dot{f}(\gamma)=w$.

  Therefore global choice holds in $W$, and we can find a formula $\psi$ with parameter $a\in W$ such that $\psi(a,-)$ defines in $W$ a class of ordinals that encodes $W$, and we may assume that this fact is forced by $\mathbb{P}$. Fix $\dot{a}$ a $\mathbb{P}$-name for $a$. Our assumptions about the reflectivity of the global choice in $V[G]$ also mean that we have a proper class of cardinals $\theta$ that are strongly inaccessible in $V[G]$ and for which
    $$\forces_{\mathbb{P}}\forall\eta<\theta:\psi(\eta,\dot{a})^{\dot{W}}\leftrightarrow\psi(\eta,\dot{a})^{W_{\theta}}$$
  where the class name $\dot{W}$ here comes from some formula that, in $V[G]$, defines $W$.
  Take $\kappa$ such that $|\mathbb{S}|<\kappa$ and $\dot{a}\in V_{\kappa}$.

  \begin{claim}
    Let $\eta\in\OR$. Then there is in $V$ a set-sized Boolean $\mathbb{P}$-name for whether $\psi(\dot{a},\eta)^{\dot{W}}$.
  \end{claim}
  \begin{proof}
    Fix $\theta>\kappa,\eta$ a cardinal that is forced by $\mathbb{P}$ to remain strongly inaccessible. We can use the increasing distributivity of the iteration forming $\mathbb{P}$ to split it as $\mathbb{P}_0*\dot{\mathbb{P}}_1$ where $\mathbb{P}_0$ is a set forcing and $\mathbb{P}_0$ forces that $\dot{\mathbb{P}}_1$ is $\theta$-distributive; split $G$ correspondingly as $G_0*G_1$. Now $W_{\theta}$ is definable within $V[G]_{\theta}$ as the unique inner model satisfying $\kappa$-covering and $\kappa$-approximation whose power set of $\kappa$ is $\mathcal{P}(\kappa)^W$. But $V[G]_{\theta}=V[G_0]_{\theta}$ so there is a $\mathbb{P}_0$-name for $V[G]_{\theta}$.

    Now in $\mathbb{P}_0$ we can find a Boolean name for whether $\psi(\dot{a},\eta)^{\dot{W}_{\theta}}$, which is forced to equal $\psi(\dot{a},\eta)^{\dot{W}}$. Since $\mathbb{P}_0$ embeds into $\mathbb{P}$ this gives us a set-sized name in $\mathbb{P}$.
  \end{proof}

  Use \ref{form_algebra} to form a set-complete Boolean algebra $\mathbb{B}$ from $\mathbb{P}$. For any ordinal $\eta$ the set-sized name from the claim allows us to take a set-sized supremum in $\mathbb{B}$ to get a valuation $[\![\psi(\dot{a},\eta)^{\dot{W}}]\!]\in\mathbb{B}$. Define $\mathbb{C}$ to be the subalgebra of $\mathbb{B}$ generated (under set-sized supremums) by these valuations for $\eta\in\OR$. Let $G^*$ be the generic of $\mathbb{B}$ induced by $G$; we claim that $W=V[G^*\cap\mathbb{C}]$.

  For any $x\in W$ there is a $\theta$ such that $x$ is encoded by $\{\eta<\theta\mid\psi(a,\eta)^W\}$, which is equal to $\{\eta<\theta\mid [\![\psi(\dot{a},\eta)^{\dot{W}}]\!]\in G^*\}$, a member of $V[G^*\cap\mathbb{C}]$. Conversely given any $x\in V[G^*\cap\mathbb{C}]$ take $\dot{x}$ a name for $x$ and $\beta$ such that $\dot{x}$ is a $\mathbb{C}\cap\mathbb{B}_{\beta}$-name. Take $\theta$ such that $\mathbb{C}\cap\mathbb{B}_{\beta}$ is contained in the subalgebra generated by $\{[\![\psi(\dot{a},\eta)^{\dot{W}}]\!]\mid\eta<\theta\}$. Now $G^*\cap\mathbb{C}\cap\mathbb{B}_{\beta}$, and hence $x$, can be obtained from $\{\eta<\theta\mid\psi(a,\eta)^W\}\in W$.

  We already know that $W$ is a model of ZFC, so $\mathbb{C}$ is a pseudo-class forcing with pseudo-generic $G^*\cap\mathbb{C}$.
\end{proof}

The precise requirements on the notion of global choice in the forcing extension by $\mathbb{P}$ are important here. We could try to take an arbitrary global well-ordering in $V[G]$ and reflect it down the $\langle V[G]_{\theta}\mid\theta\in\OR\rangle$ hierarchy to find $\theta$ such that the well-ordering of $V[G]_{\theta}$ is definable in $V[G]_{\theta}$. However, we might not be able to find $\theta$ such that the well-ordering is forced to be definable in $V[\mathbb{P}]_{\theta}$. For example in the construction of \ref{lottery_sum}, starting from $V=L$, there is forced to be a $\theta$ such that the formula ``$\exists x\notin L$'' reflects to $V[\mathbb{P}]_{\theta}$, but we cannot in $V$ fix a $\theta$ such that ``$\exists x\notin L$'' is forced to reflect to $V[\mathbb{P}]_{\theta}$.

Unfortunately the need here for global choice in $V[G]$ is problematic, for example it does not hold if $\mathbb{P}$ is an Easton-support iteration of $\Add(\kappa,1)$, as follows. Suppose there was a class well-ordering $\lhd$ of $V[G]$, defined with respect to parameter $a$ and with $\mathbb{P}$-name $\dot{\lhd}$. Split $\mathbb{P}$ as $\mathbb{P}_0*\dot{\mathbb{P}}_1$ and $G$ as $G_0*G_1$ so that $x\in V[G_0]$. Take $\delta$ so that $\mathcal{P}(\delta)^{V[G]}\not\subseteq\mathcal{P}(\delta)^{V[G_0]}$. Then in $V[G]$ we can define ``the $\dot{\lhd}$-least subset of $\delta$ not in $V[G_0]$'' using only parameters from $V[G_0]$, which since $\mathbb{P}_1:=\dot{\mathbb{P}}_1[G_0]$ is weakly homogeneous means it is a member of $V[G_0]$. This is a contradiction.

An additional difficulty when attempting to use this approach to apply the argument of \ref{main_sets} to class forcing is that it is not clear that the $\mathbb{P}$-generic $G$ will be definable from the $\mathbb{C}$ and $\mathbb{S}$-generics $G^*\cap\mathbb{C}$ and $J$, and this definability is necessary in order to form a $\mathbb{C}*\mathbb{S}$-name for $G$. If we are in the special case that $G$ is a class of $V[G]$ then this difficulty disappears, and we also obtain the needed Global Choice in $V[G]$ by taking a surjection $e:\OR\rightarrow V^{\mathbb{P}}$ and defining $e':\OR\rightarrow V[G]$ by $\gamma\mapsto e(\gamma)[G]$.

We leave open the question of the extent to which the global choice requirements for $V[G]$ in \ref{classInts} can be weakened or discarded. If this is possible then we might also hope to generalise it to cover any intermediate model between $V$ and $V[G]$, and not just grounds of $V[G]$.

\bibliographystyle{plain}

\end{document}